\documentclass[12pt]{article}

\usepackage{amsmath, amsthm, amssymb, stmaryrd}
\usepackage[all]{xy}
\usepackage{fullpage}
\usepackage{titlesec}
\usepackage{mathrsfs}
\usepackage{amsbsy}
\usepackage{turnstile}
\usepackage{bbm}
\usepackage{yhmath}
\usepackage{tensor}
\usepackage{graphics}
\usepackage{enumitem}
\usepackage{calligra}
\usepackage{verbatim}
\usepackage{setspace}
\usepackage{hhline}
\usepackage{array}

\usepackage[pdftex,bookmarks=true]{hyperref}
\usepackage[svgnames]{xcolor}
\hypersetup{
    linkbordercolor = {PaleTurquoise},
    citebordercolor = {PaleGreen},
}

\usepackage{titletoc}

\titlecontents{section}
[0pt]                                               
{}%
{\contentsmargin{0pt}                               
    \thecontentslabel\enspace%
    }
{\contentsmargin{0pt}}                        
{\titlerule*[.5pc]{.}\contentspage}                 
[]                                                  

\interfootnotelinepenalty=10000

\makeatletter
\newcommand*{\@old@slash}{}\let\@old@slash\slash
\def\slash{\relax\ifmmode\delimiter"502F30E\mathopen{}\else\@old@slash\fi}
\makeatother

\titleformat{\section}{\normalsize\bfseries}{\thesection}{1em}{}
\titleformat{\subsection}{\normalsize\bfseries}{\thesubsection}{1em}{}

\numberwithin{equation}{subsection}

\theoremstyle{plain}

\newtheorem{PropSub}[subsection]{Proposition}
\newtheorem{LemSub}[subsection]{Lemma}
\newtheorem{CorSub}[subsection]{Corollary}
\newtheorem{ThmSub}[subsection]{Theorem}

\newtheorem{PropSubSub}[subsubsection]{Proposition}

\newtheorem{CorSubSub}[subsubsection]{Corollary}
\newtheorem{ThmSubSub}[subsubsection]{Theorem}

\theoremstyle{definition}

\newtheorem{DefSub}[subsection]{Definition}
\newtheorem{ExaSub}[subsection]{Example}
\newtheorem{RemSub}[subsection]{Remark}
\newtheorem{ParSub}[subsection]{}

\newtheorem{DefSubSub}[subsubsection]{Definition}
\newtheorem{RemSubSub}[subsubsection]{Remark}
\newtheorem{ExaSubSub}[subsubsection]{Example}
\newtheorem{ParSubSub}[subsubsection]{}

\newtheorem*{Acknowledgement}{Acknowledgement}

\newcommand*{\emptybox}{\leavevmode\hbox{}}

\newdir{ >}{{}*!/-10pt/@{>}}

\DeclareMathAlphabet{\mathpzc}{OT1}{pzc}{m}{it}
\DeclareMathAlphabet{\mathcalligra}{T1}{calligra}{m}{n}

\newcommand{\pref}[1]{\textnormal{(\ref{#1})}}

\newcommand{\A}{\ensuremath{\mathscr{A}}}
\newcommand{\B}{\ensuremath{\mathscr{B}}}
\newcommand{\C}{\ensuremath{\mathscr{C}}}

\newcommand{\F}{\ensuremath{\mathscr{F}}}
\newcommand{\G}{\ensuremath{\mathscr{G}}}
\newcommand{\sH}{\ensuremath{\mathscr{H}}}

\newcommand{\normalJ}{\ensuremath{\mathscr{J}}}
\newcommand{\J}{\ensuremath{{\kern -0.4ex \mathscr{J}}}}

\newcommand{\JF}{\ensuremath{{\kern -0.4ex \mathscr{J}_{\kern -0.05ex F}}}}

\newcommand{\K}{\ensuremath{\mathscr{K}}}

\newcommand{\sP}{\ensuremath{\mathscr{P}}}

\newcommand{\sS}{\ensuremath{\mathscr{S}}}
\newcommand{\T}{\ensuremath{\mathscr{T}}}
\newcommand{\U}{\ensuremath{\mathscr{U}}}
\newcommand{\V}{\ensuremath{\mathscr{V}}}
\newcommand{\W}{\ensuremath{\mathscr{W}}}
\newcommand{\X}{\ensuremath{\mathscr{X}}}
\newcommand{\Y}{\ensuremath{\mathscr{Y}}}

\newcommand{\uV}{\ensuremath{\mkern2mu\underline{\mkern-2mu\mathscr{V}\mkern-6mu}\mkern5mu}}

\newcommand{\CAT}{\ensuremath{\operatorname{\textnormal{\text{CAT}}}}}

\newcommand{\VCAT}{\ensuremath{\V\textnormal{-\text{CAT}}}}

\newcommand{\CC}{\ensuremath{\mathbb{C}}}
\newcommand{\DD}{\ensuremath{\mathbb{D}}}

\newcommand{\MM}{\ensuremath{\mathbb{M}}}
\newcommand{\NN}{\ensuremath{\mathbb{N}}}
\newcommand{\PP}{\ensuremath{\mathbb{P}}}
\newcommand{\QQ}{\ensuremath{\mathbb{Q}}}

\newcommand{\RR}{\ensuremath{\mathbb{R}}}
\newcommand{\SSS}{\ensuremath{\mathbb{S}}}
\newcommand{\TT}{\ensuremath{\mathbb{T}}}

\newcommand{\UU}{\ensuremath{\mathbb{U}}}

\newcommand{\ZZ}{\ensuremath{\mathbb{Z}}}

\newcommand{\ob}{\ensuremath{\operatorname{\textnormal{\textsf{ob}}}}}
\newcommand{\mor}{\ensuremath{\operatorname{\textnormal{\textsf{mor}}}}}

\newcommand{\ca}[1]{\scalebox{0.85}{\raisebox{0.3ex}{$|$}} #1\scalebox{0.85}{\raisebox{0.3ex}{$|$}}}

\newcommand{\Shv}{\ensuremath{\textnormal{\textnormal{Shv}}}}

\newcommand{\Th}{\ensuremath{\textnormal{Th}}}

\newcommand{\ThJ}{\ensuremath{\Th_{\kern -0.5ex \normalJ}}}
\newcommand{\SubThJ}{\ensuremath{\textnormal{SubTh}_{\kern -0.5ex \normalJ}}}
\newcommand{\SubTh}{\ensuremath{\textnormal{SubTh}}}
\newcommand{\Vfp}{\ensuremath{\V_{\kern -0.5ex fp}}}
\newcommand{\TTperpJ}{\ensuremath{\TT^\perp_{\kern-.1ex\scriptscriptstyle\J}}}
\newcommand{\TTperpJwrt}[1]{\ensuremath{\TT^\perp_{\kern-.1ex\scriptscriptstyle\J#1}}}
\newcommand{\PPperpJ}{\ensuremath{\PP^\perp_{\kern-.1ex\scriptscriptstyle\J}}}
\newcommand{\PPperpJwrt}[1]{\ensuremath{\PP^\perp_{\kern-.1ex\scriptscriptstyle\J#1}}}

\newcommand{\Ev}{\ensuremath{\textnormal{\textsf{Ev}}}}

\newcommand{\inJ}[1]{#1 \in \normalJ\kern -1.2ex}

\newcommand{\VCATJ}{\VCAT_{\kern -0.7ex \normalJ}}
\newcommand{\PhiJ}{\Phi_{\kern -0.8ex \normalJ}}
\newcommand{\MndJ}{\Mnd_{\kern -0.8ex \normalJ}}

\newcommand{\Set}{\ensuremath{\operatorname{\textnormal{\text{Set}}}}}
\newcommand{\FinCard}{\ensuremath{\operatorname{\textnormal{\text{FinCard}}}}}
\newcommand{\DFin}{\ensuremath{\operatorname{\textnormal{\text{DFin}}}}}

\newcommand{\Bool}{\ensuremath{\operatorname{\textnormal{\text{Bool}}}}}
\newcommand{\SLat}{\ensuremath{\operatorname{\textnormal{\text{SLat}}}}}

\newcommand{\Mod}[1]{\ensuremath{#1\textnormal{-\text{Mod}}}}
\newcommand{\Aff}[1]{\ensuremath{#1\textnormal{-\text{Aff}}}}
\newcommand{\Cvx}[1]{\ensuremath{#1\textnormal{-\text{Cvx}}}}

\newcommand{\Mat}{\ensuremath{\textnormal{\text{Mat}}}}

\newcommand{\Top}{\ensuremath{\operatorname{\textnormal{\text{Top}}}}}

\newcommand{\LCHaus}{\ensuremath{\operatorname{\textnormal{\text{LCHaus}}}}}
\newcommand{\Conv}{\ensuremath{\operatorname{\textnormal{\text{Conv}}}}}

\newcommand{\OCart}{\ensuremath{\operatorname{\textnormal{\text{OCart}}}}}
\newcommand{\Mf}{\ensuremath{\operatorname{\textnormal{\text{Mf}}}}}
\newcommand{\Fro}{\ensuremath{\operatorname{\textnormal{\text{Fr\"o}}}}}
\newcommand{\Diff}{\ensuremath{\operatorname{\textnormal{\text{Diff}}}}}
\newcommand{\Cah}{\ensuremath{\operatorname{\textnormal{\text{Cah}}}}}

\newcommand{\Mnd}{\ensuremath{\operatorname{\textnormal{\text{Mnd}}}}}

\newcommand{\CMon}{\ensuremath{\operatorname{\textnormal{\text{CMon}}}}}

\newcommand{\Alg}[1]{\ensuremath{#1\kern -.5ex\operatorname{\textnormal{-\text{Alg}}}}}
\newcommand{\CAlg}[1]{\ensuremath{#1\kern -.5ex\operatorname{\textnormal{-\text{CAlg}}}}}
\newcommand{\CinftyRing}{\ensuremath{C^\infty\kern -.5ex\operatorname{\textnormal{-\text{Ring}}}}}

\newcommand{\CRProf}{\ensuremath{\operatorname{\textnormal{\text{CRProf}}}}}
\newcommand{\CRProfJ}{\ensuremath{\CRProf_{\kern -0.6ex \normalJ}}}

\newcommand{\Adjn}[6]{\xymatrix {#1 \ar@/_0.5pc/[rr]_{#2}^(0.4){#4}^(0.6){#5}^{\top} & & #6 \ar@/_0.5pc/[ll]_{#3}}}
\newcommand{\Equiv}[6]{\xymatrix {#1 \ar@/_0.5pc/[rr]_{#2}^(0.4){#4}^(0.6){#5}^{\sim} & & #6 \ar@/_0.5pc/[ll]_{#3}}}

\newcommand{\lt}{\leqslant}
\newcommand{\gt}{\geqslant}

\newcommand{\op}{\ensuremath{\textnormal{op}}}

\newcommand{\aff}{\ensuremath{\textnormal{aff}}}

\newcommand{\pushoutcorner}{\ar@{}[dr]|(.3)\ulcorner}
\newcommand{\pullbackcorner}{\ar@{}[dr]|(.3)\lrcorner}

\newcommand{\cmt}[1]{}

\begin{document}

\author{\normalsize  Rory B. B. Lucyshyn-Wright\thanks{The author gratefully acknowledges financial support in the form of an AARMS Postdoctoral Fellowship and, earlier, an NSERC Postdoctoral Fellowship.}\let\thefootnote\relax\footnote{Keywords: measure; Schwartz distribution; monad; algebraic theory; Riesz representation theorem; commutant; enriched category; closed category; monoidal category; convex space; affine space; module; rig; ring; ordered ring; convergence space; smooth space; C-infinity ring; synthetic differential geometry; Fr\"olicher space; diffeological space}\footnote{2010 Mathematics Subject Classification: 46E27, 28C05, 46A20, 46F99, 18C10, 18C15, 18C20, 18D35, 18C05, 18D20, 18D15, 18D10, 52A01, 46S99, 46M99, 16Y60, 06F25, 16W80, 08C05, 16B50, 16D90, 15B51, 06A12, 06A11, 28C15, 60B05, 46F05, 46A19, 54A20, 46E25, 18F15, 58A03}
\\
\small Mount Allison University, Sackville, New Brunswick, Canada}

\title{\large \textbf{Functional distribution monads in functional-analytic contexts}}

\date{}

\maketitle

\abstract{
We give a general categorical construction that yields several monads of measures and distributions as special cases, alongside several monads of filters.  The construction takes place within a categorical setting for generalized functional analysis, called a \textit{functional-analytic context}, formulated in terms of a given monad or algebraic theory $\T$ enriched in a closed category $\V$.  By employing the notion of \textit{commutant} for enriched algebraic theories and monads, we define the \textit{functional distribution monad} associated to a given functional-analytic context.  We establish certain general classes of examples of functional-analytic contexts in cartesian closed categories $\V$, wherein $\T$ is the theory of $R$-modules or $R$-affine spaces for a given ring or rig $R$ in $\V$, or the theory of \textit{$R$-convex spaces} for a given preordered ring $R$ in $\V$.  We prove theorems characterizing the functional distribution monads in these contexts, and on this basis we establish several specific examples of functional distribution monads.
}

\section{Introduction} \label{sec:intro}

Through work of Lawvere \cite{Law:ProbMap}, \'Swirszcz \cite{Swi}, Giry \cite{Giry} and many others it has become clear that various kinds of measures and distributions give rise to monads; see, for example, \cite{JoPl,Dob,Lu:AlgThVectInt,Kock:Dist,Lu:PhD,Ave}.  The earliest work in this regard considered monads $\MM$ for which each free $\MM$-algebra $MV$ is a space of probability measures on a space $V$, be it a measurable space \cite{Law:ProbMap,Giry} or a compact Hausdorff space \cite{Swi}, for example.  In the literature, one does not find a monad capturing \textit{arbitrary} measures on a given class of spaces, whereas one can capture measures of \textit{compact support} \cite[7.1.7]{Lu:PhD} or \textit{bounded support} \cite{Lu:AlgThVectInt}, as well as Schwartz distributions of compact support \cite[7.1.6]{Lu:PhD}\cite{QueRey:Dist}\cite[II.3.6]{MoeRey}.

In the present paper, we give a general categorical construction which yields several (new and old) monads of measures and distributions as special cases.  The general construction takes place within an abstract axiomatic setting for generalized functional analysis, called a \textit{functional-analytic context} (\ref{def:fa_ctxt}, \ref{par:func_an_ctxt_mnds}), and in such a context we define an associated monad that we call the \textit{functional distribution monad} \pref{def:cdisnt_mnd}.  By considering various particular contexts, we obtain several particular monads of measures or distributions, as well as certain monads of \textit{filters}\footnote{Formal connections between the ultrafilter monad and notions of distribution or integral are noted in \cite{Kock:MndExtQu,Lei:CodUF}.}, as instances of an abstract construction with a functional-analytic flavour.

Our starting point is the \textit{Riesz-Schwartz dualization paradigm}, which has perhaps its purest expression in \textit{cartesian closed} categories.  Given a commutative ring object $R$ in a cartesian closed category $\V$ with finite limits, one can form for each object $V$ of $\V$ a canonical `function space', namely the internal hom $[V,R]$, and we define 
\begin{equation}\label{nat_distn_mnd_for_rmod}DV = \Mod{R}([V,R],R)\end{equation}
to be the subobject of $[[V,R],R]$ described by the equations that characterize $R$-linear morphisms $\mu:[V,R] \rightarrow R$.  This construction was employed in the context of synthetic differential geometry \cite{QueRey:Dist,Kock:ProResSynthFuncAn,MoeRey}, and it yields a monad $\DD$ on $\V$.  When $\V$ and $R$ are suitably chosen, the space of all compactly supported Radon measures on a locally compact Hausdorff space $V$ is recovered as an example of one of the free $\DD$-algebras $DV$ \cite[7.1.6]{Lu:PhD}, and one can similarly capture compactly supported Schwartz distributions on a smooth manifold $V$; see \cite[7.1.6]{Lu:PhD} and \cite[II.3.6]{MoeRey}.  But in these examples, the space $DV$ is by no means locally compact (nor, respectively, a smooth manifold), and so one must embed the categories of locally compact spaces and smooth manifolds into larger categories (e.g. convergence spaces, Fr\"olicher spaces, diffeological spaces, and various toposes in synthetic differential geometry; see \S \ref{sec:background}).  

A generalization of the formula \eqref{nat_distn_mnd_for_rmod} was employed by Kock \cite{Kock:Dist} and by the author \cite{Lu:PhD}, wherein for a given commutative $\V$-enriched monad $\TT$ on any suitable closed category $\V$ we set
\begin{equation}\label{eq:nat_dist_mnd_for_t}DV = \Alg{\TT}([V,S],S)\;,\end{equation}
where $S$ is some chosen $\TT$-algebra and $[V,S]$ is the cotensor of $S$ by $V$ in the $\V$-enriched category $\Alg{\TT}$ of $\TT$-algebras.  The resulting \textit{natural distribution monad} $\DD$ (in the terminology of \cite{Lu:PhD}) was then considered alongside other given $\V$-monads that may capture other notions of distribution or measure, e.g. the \textit{abstract distribution monads} of \cite{Lu:PhD}.  The category of $\TT$-algebras supports a form of abstract functional analysis, including not only function spaces and dualization (afforded by the commutativity of $\TT$), but also completeness and density \cite{Lu:PhD,Lu:ComplClDens}, and for this reason $\TT$-algebras in this context were called \textit{linear spaces} in \cite{Lu:PhD}.

However, as we shall see, if the category of $\TT$-algebras is to be regarded as capturing an abstract form of functional analysis, and if the monad $\DD$ is to capture an associated notion of distribution or measure, then the generalization of \eqref{nat_distn_mnd_for_rmod} captured by \eqref{eq:nat_dist_mnd_for_t} is rather too direct for some purposes.  Indeed, an important further class of examples of commutative monads $\TT$ whose algebras capture a form of generalized functional analysis are those whose algebras are \textit{convex spaces} of one kind or another (see \cite{Meng,Kei:PrCpctOrd,Fri,Jac:Conv,Lu:CvxAffCmt} and \ref{def:rcvx_sp}), yet the associated natural distribution monad $\DD$ in such cases does not capture a recognizable notion of measure or distribution, regardless of the choice of $S$.  On the other hand, there is an important connection between convex spaces and \textit{probability measures} for which evidence can be found in \cite{Swi,Meng,Dob,Kei:PrCpctOrd,Fri}.  In the present paper, we will uncover a new and important facet of this connection, and this will allow us to modify the formula \eqref{eq:nat_dist_mnd_for_t} and thus capture Radon probability measures as a basic feature of the generalized functional analysis of convex spaces.

Indeed, we shall establish a different generalization of the Riesz-Schwartz paradigm \eqref{nat_distn_mnd_for_rmod}, one that is more subtle and robust than \eqref{eq:nat_dist_mnd_for_t} and allows us to capture important examples that are not available by way of \eqref{eq:nat_dist_mnd_for_t}, including spaces of Radon probability measures.  Again we shall begin with a symmetric monoidal closed category $\V$ and a given commutative $\V$-enriched monad $\TT$ on $\V$, whose associated $\V$-category of $\TT$-algebras is construed as a setting for an abstract form of functional analysis.  Again we shall assume that we are given a $\TT$-algebra $S$ to play the role of `dualizing object'.  But this time we shall assume further that $\TT$ is a \textit{$\J$-ary monad} for a given \textit{eleutheric system of arities} $\J \hookrightarrow \V$ in the sense of \cite{Lu:EnrAlgTh}, generalizing the notion of \textit{finitary $\V$-monad} defined by Kelly \cite{Ke:FL}.  For each object $V$ of $\V$, we define the \textit{object of functional distributions} on $V$ as
\begin{equation}\label{eq:func_distn}D_{\scriptscriptstyle(\J,\TT,S)}(V) = \Alg{\TTperpJ}([V,S],S)\end{equation}
wherein the role that was played by $\TT$ in \eqref{eq:nat_dist_mnd_for_t} is now played instead by an associated $\V$-monad $\TTperpJ$ called the \textit{$\J$-ary commutant} of $\TT$ with respect to $S$, introduced in \cite[10.8]{Lu:Cmtnts}.  The quadruple $(\V,\J,\TT,S)$ is called a \textbf{functional-analytic context} if it satisfies a further axiom \pref{par:func_an_ctxt_mnds}, in which case we call the resulting $\V$-monad $\DD_{\scriptscriptstyle(\J,\TT,S)}$ the \textbf{functional distribution monad} in the given context.  We can describe $\J$-ary monads $\TT$ equivalently as enriched algebraic theories $\T$ with arities $\J$ \cite[4.1, 11.8]{Lu:EnrAlgTh}, also called \textit{$\J$-theories}, and we shall often employ this viewpoint in studying functional distribution monads (\ref{def:fa_ctxt}, \ref{def:cdisnt_mnd}).

Notably, the basic Riesz-Schwartz formula for $R$-modules \eqref{nat_distn_mnd_for_rmod} can be recovered as an instance of this formula \eqref{eq:func_distn} because of the following key result (\ref{thm:cmtnt_th_lrmods}, \ref{thm:th_lrmods_sat_bal_iff_rcomm}), applicable in any suitable cartesian closed category $\V$ with a commutative (unital) ring object $R$, or more generally a commutative rig\footnote{By a \textit{rig} or \textit{semiring} we mean a set $R$ equipped with two monoid structures $(R,+,0)$ and $(R,\bullet,1)$ such that $+$ is commutative and $\bullet:R^2 \rightarrow R$ preserves $+$ and $0$ in each variable separately---i.e. a unital ``ring without negatives''.  Rig objects and their modules are discussed in \ref{def:rig_ring_module_bim_cmons} and \ref{sec:rmods}.} object:
$$
\begin{minipage}{5.2in}
\textit{In the case where $\TT$ is the $\V$-monad whose algebras are $R$-modules in $\V$, there is a natural choice of a system of arities $\J$ for which the $\J$-ary commutant $\TTperpJ$ of $\TT$ with respect to $R$ is just $\TT$ itself; i.e. $\TTperpJ \cong \TT$.  Hence we say that the $\J$-ary monad $\TT$ is \textnormal{\textbf{balanced}} with respect to $R$.
}
\end{minipage}
$$
Indeed, although $R$-modules in $\V$ are \textit{not} the algebras of an ordinary $\Set$-based Lawvere theory \cite[\S 1]{BoDay}, they are the algebras of a $\V$-enriched algebraic theory of the type studied by Borceux and Day \cite{BoDay}, where the arities are finite cardinals, or equivalently, `finite discrete' objects $1+1+...+1$ in $\V$.  Consequently, the resulting $\V$-monad $\TT$ is a $\J$-ary monad for a suitable system of arities $\J = \DFin_\V \hookrightarrow \V$ consisting of the finite discrete objects of $\V$ \pref{par:efth}.  We call $\J$-theories for this particular system of arities \textit{discretely finitary theories}, and their corresponding $\J$-ary monads we call \textit{discretely finitary $\V$-monads}.  Correspondingly, any functional-analytic context of the form $(\V,\DFin_\V,\TT,S)$ will be called a \textit{(discretely) finitary functional-analytic context}.

In this paper, we develop certain general classes of examples of finitary functional-analytic contexts $(\V,\DFin_\V,\TT,S)$ in cartesian closed categories $\V$, wherein $\TT$-algebras are, respectively, $R$-modules or $R$-affine spaces in $\V$ for a given rig $R$ in $\V$, or $R$-\textit{convex spaces} for a given preordered ring $R$ in $\V$ \pref{exa:cls_exa_ffa_ctxts}.  We prove results that characterize the functional distribution monads in these cases (\ref{thm:charn_fdistns_in_sc_rlin_ctxt}, \ref{thm:cmtnt_th_raff_sp}, \ref{thm:enr_commutant_theorem}, \ref{thm:pos_rcvx_distn_charn_thm}).

On this basis, we establish the following specific examples of the functional distribution monad $\DD_{\scriptscriptstyle(\J,\TT,S)}$ associated to a finitary functional-analytic context.  We write $\Conv$ for the category of convergence spaces \cite{BeBu}, $\Fro$ for the category of Fr\"olicher's smooth spaces \cite{Fro:SmthStr,Fro:CccAnSmthMaps,FroKr}, $\Diff$ for the category of diffeological spaces (see, e.g., \cite{BaHo,Sta}), $\LCHaus$ for the category of locally compact Hausdorff topological spaces, and $\Cah$ for the Cahiers topos \cite{Dub:ModSDG,Kock:SDG} (or indeed any topos of sheaves on a \textit{product-closed $C^\infty$-site}, \ref{par:cinfty_rings}).
\begin{center}
\begin{tabular}{ |>{\raggedright}p{7ex} |>{\raggedright}p{19.5ex} | p{3ex} |>{\raggedright}p{20.1ex} |>{\raggedright}p{23.4ex}|}
\hline
                    &               &         &                  & \tabularnewline [-1em]
$\V$                & $\Alg{\TT}$    & $S$     & $\Alg{\TTperpJ}$ & $D_{\scriptscriptstyle(\J,\TT,S)}(V)$ \tabularnewline
                    &               &         &                  & \tabularnewline [-1em]
\hhline{|=|=|=|=|=|}
$\Conv$ & $\Mod{\RR}(\V)$ \newline{\footnotesize($\RR$-module objects\newline in $\V$)} & $\RR$   & \Mod{\RR}(\V)        & {\footnotesize Radon measures of compact support ($V \in \LCHaus$)}\tabularnewline \hline
$\Conv$ & $\Mod{\RR_+}(\V)$ & $\RR_+$ & \Mod{\RR_+}(\V)      & {\footnotesize Non-negative Radon measures of compact support ($V \in \LCHaus$)}\tabularnewline \hline
$\Conv$ & $\Cvx{\RR}(\V)$ \newline {\footnotesize($\RR$-convex spaces\newline in $\V$)} & $\RR_+$ & $\Mod{\RR_+}^*(\V)$ {\footnotesize(pointed $\RR_+$-modules in $\V$, \ref{par:th_pted_right_rmods})} & {\footnotesize Radon probability measures of compact support ($V \in \LCHaus$)}\tabularnewline \hline
$\Fro$, $\Diff$, {\footnotesize or} $\Cah$ & $\Mod{\RR}(\V)$ & $\RR$ & \Mod{\RR}(\V) & {\footnotesize Schwartz distributions of compact support \newline ($V$ smooth manifold)}\tabularnewline \hline
$\Set$ & $\SLat_{\wedge\top}$ \newline {\footnotesize(meet semilattices, \ref{par:slat_filt})} & 2 & $\SLat_{\wedge\top}$ & {\footnotesize Filters on the set $V$}\tabularnewline \hline
$\Set$ & $\SLat_\wedge$ \newline {\footnotesize (binary-meet semilattices, \ref{par:slat_filt})} & 2 & $\SLat_{\wedge\top\bot}$ \newline {\footnotesize(meet semilattices with bottom element)} & {\footnotesize Proper filters on the set $V$}\tabularnewline \hline
$\Set$ & $\Set$ & 2 & $\Bool$ \newline {\footnotesize(Boolean algebras)} & {\footnotesize Ultrafilters on the set $V$}\tabularnewline \hline
\end{tabular}
\end{center}

In addition to this basic collection of examples of the functional distribution monad, we will establish further examples within a forthcoming paper.  Therein we will show that by employing the theory of convex spaces and the unit interval $[0,1]$ as $S$, one can capture \textit{arbitrary} Radon probability measures, rather than just those of compact support.  Further, we will show that certain \textit{hyperspaces} of closed and compact subsets are captured by functional distribution monads.

Up to equivalence, a system of arities in $\V$ is given by a full subcategory of $\V$ that is closed under the monoidal product and contains the unit object \cite[3.8]{Lu:EnrAlgTh}.  Hence $\V$ itself is a system of arities, for which $\V$-ary monads are simply \textit{arbitrary $\V$-monads on $\V$} \cite[11.3(2), 11.10]{Lu:EnrAlgTh}.  For this system of arities $\V$, we call $\V$-ary commutants \textit{absolute commutants} and write them as $\TT^\perp$ rather than $\TT^\perp_\V$.

With this terminology, we show in \ref{thm:cdistn_mnd_dbl_cmtnt} that the functional distribution monad $\DD_{\scriptscriptstyle(\J,\TT,S)}$ associated to a functional-analytic context $(\V,\J,\TT,S)$ may be defined succinctly as \textit{the absolute commutant of the $\J$-ary commutant of $\TT$} (w.r.t. $S$), i.e.
\begin{equation}\label{eq:fdistn_mnd_dbl_cmtnt}\DD_{\scriptscriptstyle(\J,\TT,S)} = (\TTperpJ)^\perp\;.\end{equation}
Related to this, one of our axioms for a functional-analytic context asserts precisely that the $\J$-ary \textit{double commutant} of $\TT$ should be $\TT$ itself, i.e. that
\begin{equation}\label{eq:sat}(\TTperpJ)^\perp_{\kern-.1ex\scriptscriptstyle\J} \cong \TT,\end{equation}
so we say that the $\J$-ary monad $\TT = (T,\eta,\mu)$ is \textit{saturated} with respect to $S$.  As a consequence of \eqref{eq:sat} and \eqref{eq:fdistn_mnd_dbl_cmtnt}, the $\V$-endofunctor $D_{\scriptscriptstyle(\J,\TT,S)}$ agrees with $T$ when restricted to the full subcategory $\J \hookrightarrow \V$ \pref{thm:t_jary_restn_of_d}, and we say that the monad $\TT$ is the \textit{$\J$-ary restriction} of $\DD_{\scriptscriptstyle(\J,\TT,S)}$ \pref{def:jary_restn}.  This generalizes the classical fact that the free vector space on a finite set $J$ is equally the space of all Radon measures on the finite discrete space $J$, and also the fact that the free convex space on a finite set $J$ is the space of all Radon probability measures on $J$.

Contrastingly, the natural distribution monad \eqref{eq:nat_dist_mnd_for_t} determined by $\TT$ is simply the absolute commutant $\TT^\perp$ of $\TT$ with respect to $S$ \pref{thm:charn_abs_cmtnt}, whose $\J$-ary restriction does \textit{not} in general coincide with $\TT$; however, in the special case when the $\J$-ary monad $\TT$ is \textit{balanced} (w.r.t. $S$) we have $\TTperpJ \cong \TT$ and so this issue is (or rather was) hidden.

In \S\ref{sec:background} we begin with a review of some general background material on enriched category theory, order theory, convergence spaces, smooth spaces, and $C^\infty$-rings, as well as internal rings, rigs, and modules; we also introduce some notation for working with finite products \pref{par:notn_fp}.  In \S \ref{sec:background_enr_alg} we survey certain basic elements of the study of $\J$-theories \pref{sec:enr_alg_th_sys_ar} and discretely finitary theories \pref{sec:disc_fin_enr_alg_th}, including the notions of commutation, commutativity, and commutant for $\J$-theories and $\J$-ary monads \pref{sec:cmtn} as well as basics on $\J$-algebraic symmetric monoidal closed $\V$-categories \pref{sec:alg_smc_vcats}.  In \S \ref{sec:cdistn_mnds} we define the notion of functional distribution monad, and in \S \ref{sec:exa_cdistn_mnds} we develop various specific examples on the basis of the results proved later in the paper.  In \S\ref{sec:further_fun} we develop further fundamental aspects of enriched algebra required for the remainder of the paper, concerning modules for monoids in $\J$-algebraic symmetric monoidal closed $\V$-categories \pref{sec:mod_jalg_smcvcats}, coslices of $\V$-categories of $\T$-algebras \pref{sec:coslices}, the \textit{free} discretely finitary theory generated by an ordinary Lawvere theory \pref{sec:free_enr_th}, as well as enriched-categorical aspects of the study of modules over rigs and rings in cartesian closed categories \pref{sec:rmods}.  In \S \ref{sec:cmtnt_th_lrmods} we show that for a given rig object $R$ in $\V$, the theories of left $R$-modules and right $R$-modules, respectively, are commutants of one another with respect to $R$, thus generalizing to the enriched context a result of the author in the set-based context \cite[5.14]{Lu:CvxAffCmt}.  In \S \ref{sec:aff_cvx_sp} we define the \textit{affine core} of a discretely finitary theory (generalizing from the $\V = \Set$ case Lawvere's notion \cite[\S 3]{Law:ProbsAlgTh}), and we employ this to define the notion of \textit{left $R$-affine space} (in $\V$) for a rig $R$ in $\V$ and, in particular, the notion of \textit{left $R$-convex space} for a preordered ring object $R$ in $\V$.  In \S \ref{sec:th_aff_cvx_sp_as_cmtnts} we show that for any rig object $R$ in $\V$, the theory of left $R$-affine spaces in $\V$ is the commutant (w.r.t. $R$) of the theory of \textit{pointed right $R$-modules} in $\V$, thus generalizing to the enriched context a result of the author in the set-based context \cite[7.2]{Lu:CvxAffCmt}; in particular, this result applies to left $R$-convex spaces for a preordered ring $R$ in $\V$, equivalently left $R_+$-affine spaces.  In \S \ref{sec:cmtnt_of_th_cvx_sp} we establish conditions on $R$ and $\V$ under which the commutant of the theory of left $R$-convex spaces in $\V$ is the theory of pointed right $R_+$-modules; in particular, we build on a recent result of the author in the set-based context \cite[10.20]{Lu:CvxAffCmt} in order to prove that it is sufficient to require that $\V$ have a class of generators $V$ for which the preordered ring $\V(V,R)$ is a \textit{firmly archimedean algebra over the dyadic rationals}, assuming also that the inclusion $R_+ \hookrightarrow R$ is a strong monomorphism.  This result applies not only to various \textit{concrete} categories $\V$ over $\Set$ in which the real numbers $\RR$ underlie an ordered ring object, but also to the Cahiers topos $\V = \Cah$.  Therein, the line object $R$ is a preordered ring for which the hom $\V(V,R)$ is firmly archimedean whenever $V = K \times W$ where $K$ is a closed ball in some Euclidean $n$-space and $W$ is the spectrum of a Weil algebra \pref{thm:cah}.

Elements of the present work were announced in the author's recent conference talks \cite{Lu:CT2015,Lu:CT2016}.  This paper is the first part of a forthcoming series of papers that will study the abstract functional analysis intrinsic to a given functional-analytic context $(\V,\J,\TT,S)$ and its application to the study of the functional distribution monad, the axiomatics of related monads, and the associated notion of vector-valued integration, in analogy with the author's work in connection with the natural distribution monad in \cite{Lu:PhD}.

\begin{Acknowledgement}
The author thanks the anonymous referee for helpful suggestions and remarks.  The diagram \eqref{eq:cmtnt_diagr} was suggested by the referee, as were the key elements of the discussion of the codensity monad of a $\T$-algebra in \ref{par:cmtnts_mnds}.  Also, the current title of this paper was suggested by the referee; an earlier preprint of the paper had been distributed under the title \textit{Measure and distribution monads in categorical functional analysis, I: The functional distribution monad}.
\end{Acknowledgement}

\section{General background and notation}\label{sec:background}

\begin{ParSub}[\textbf{Enriched category theory}]\label{par:enr_cat_th}
Unless otherwise specified, $\V$ will denote a given symmetric monoidal closed category with equalizers.  Throughout, $\V$ is tacitly assumed locally small; further assumptions on $\V$ will be imposed later.  We shall employ the theory of categories enriched in $\V$, as documented in \cite{EiKe,Ke:Ba,Dub}.  Given a $\V$-category $\C$ we denote by $\C_0$ the ordinary category underlying $\C$.  A morphism $f:C \rightarrow D$ in $\C$ (i.e., in $\C_0$) is equally a morphism $I \rightarrow \C(C,D)$ in $\V$, yet we shall denote the latter morphism in $\V$ by $[f]$.  There is a $\V$-category $\uV$ whose hom-objects are the internal homs $\uV(V,W)$ of the closed category $\V$ and whose underlying ordinary category $\uV_0$ may be identified with $\V$ itself.  We will distinguish terminologically between $\V$-categories (resp. $\V$-functors) and (ordinary) categories (resp. functors), but when we apply terms like \textit{limit}, \textit{colimit}, or \textit{fully faithful} to given $\V$-categories and $\V$-functors, we mean to apply the relevant $\V$-enriched notion.  In particular, by a \textit{(co)limit} in a given $\V$-category, we mean a $\V$-\textit{enriched weighted (co)limit} (called an \textit{indexed (co)limit} in \cite{Ke:Ba}).  We shall say that a weight $W:\B \rightarrow \uV$ is \textit{objectwise-countable} if the class of objects of $\B$ is a countable set; colimits for objectwise-countable weights are called \textbf{objectwise-countable colimits}.  Given a $\V$-category $\C$, a \textit{conical limit} of an ordinary functor $D:\K \rightarrow \C_0$ is given by an ordinary limit of $D$ that is preserved by each functor $\C(C,-):\C_0 \rightarrow \V$ with $C \in \ob\C$.  We can form the free $\V$-category $\K_\V$ on $\K$ as in \cite[2.5]{Ke:Ba} provided that the copower $\K(K,L) \cdot I$ in $\V$ exists for all $K,L \in \ob\K$; in this case, a conical limit of $D$ can be expressed as a certain $\V$-enriched weighted limit as in \cite[2.5]{Ke:Ba}.  A \textit{product} (resp. \textit{equalizer}, \textit{power}, ...) in a given $\V$-category $\C$ is, by definition, a conical product in $\C$.
\end{ParSub}

\begin{ParSub}[\textbf{Semilattices and filters}]\label{par:slat_filt}
By definition, a \textbf{meet semilattice} is a partially ordered set with finite meets (equivalently, binary meets and a top element).  In many texts this term is used to refer to partially ordered sets that are merely assumed to have binary meets, but these we shall instead call \textbf{binary-meet semilattices} herein.  A \textbf{homomorphism of meet (resp. binary-meet) semilattices} is a mapping between meet semilattices that preserves finite meets (resp. binary meets); we denote by $\SLat_{\wedge\top}$ and $\SLat_\wedge$ the categories of meet semilattices (resp. binary-meet semilattices) and their homomorphisms.  These categories are isomorphic to the categories of \textbf{join semilattices} and \textbf{binary-join semilattices}, respectively, where these notions are defined dually and the isomorphism is given on objects by taking the opposite order.

A subset $S$ of a meet semilattice $L$ is said to be a \textbf{filter in} $L$ if its characteristic function $\chi_S:L \rightarrow 2$ is a homomorphism of meet semilattices, where $2 = \{0,1\}$ denotes the two-element meet semilattice with top element $1$, whose preimage under $\chi_S$ is $S$.  In the case where the meet semilattice $L$ also has a bottom element $\bot$, a filter $S$ in $L$ is said to be \textbf{proper} if $\bot \notin S$, equivalently, if $\chi_S$ preserves the bottom element.

If $L$ is the meet semilattice underlying a Boolean algebra $L$, then a subset $S$ of $L$ is said to be an \textbf{ultrafilter in} $L$ if its characteristic function $\chi_S:L \rightarrow 2$ is a homomorphism of Boolean algebras.  Given a set $X$, a \textbf{filter} (resp. \textbf{proper filter}, \textbf{ultrafilter}) \textbf{on} $X$ is, by definition, a filter (resp. proper filter, ultrafilter) in the powerset $\sP(X)$ of $X$.  The Boolean algebra $\sP(X)$ may be identified with the $X$-fold power $2^X$ of the Boolean algebra $2$, so for each element $x$ of $X$, the projection map $\pi_x:2^X \rightarrow 2$ is a homomorphism of Boolean algebras and so corresponds to an ultrafilter on $X$, called the \textbf{principal ultrafilter} for $x$.  When $X$ is finite, every ultrafilter on $X$ is principal, so any homomorphism of Boolean algebras $2^X \rightarrow 2$ is of the form $\pi_x$ for a unique $x \in X$.
\end{ParSub}

\begin{ParSub}[\textbf{Convergence spaces and Radon measures}]\label{par:conv_sp_rad_meas}
A \textbf{convergence space} (see \cite{BeBu}) is a set $X$ equipped with an assignment to each point $x \in X$ a set $\K_x$ of proper filters on $X$, which are said to \textit{converge} to $x$, such that (1) the principal ultrafilter for $x$ converges to $x$, (2) if $\X,\Y \in \K_x$ then $\X \cap \Y \in \K_x$, and (3) if $\X$ and $\Y$ are proper filters such that $\Y \supseteq \X \in \K_x$, then $\Y \in \K_x$.  Given convergence spaces $X$ and $Y$, a mapping $f:X \rightarrow Y$ is \textbf{continuous} if whenever a filter $\X$ on $X$ converges to a point $x$ of $X$, the \textit{image filter} $f[\X] = \{B \subseteq Y \mid f^{-1}(B) \in \X \}$ converges to $f(x)$ in $Y$.  Convergence spaces and continuous maps form a complete and cocomplete category $\Conv$ into which the familiar category $\Top$ of topological spaces embeds as a full, reflective subcategory \cite[1.1.4(ii), 1.3.9, 1.8]{BeBu}; the objects of this reflective subcategory are called \textit{topological} convergence spaces.  Further, $\V = \Conv$ is cartesian closed, where the internal hom $\uV(X,Y)$ is the set of all continuous maps from $X$ to $Y$, equipped with the relation of \textit{continuous convergence}; explicitly, a filter $\F$ on $\V(X,Y)$ converges to $g \in \V(X,Y)$ if and only if for every $x \in X$ and every filter $\X$ converging to $x$ in $X$, the associated filter $\F[\X]$ converges to $g(x)$ in $Y$.  Here $\F[\X]$ denotes the filter on $Y$ with a basis consisting of the sets $F(A) = \{f(a) \mid f \in F, a \in A\}$ with $F \in \F$ and $A \in \X$.  Now supposing that $X$ is a locally compact Hausdorff topological space and $Y$ is a regular topological space, the convergence space $\uV(X,Y)$ is topological and carries the familiar \textit{compact-open topology} \cite[1.5.16]{BeBu}, also called the \textit{topology of compact convergence} \cite[\S 3, No. 4, D\'ef. 1]{Bou:EspFonc}.  Hence, in the special case where $Y$ is the topological ring $R = \RR$ or $\CC$, continuous $R$-linear maps $\mu:\uV(X,R) \rightarrow R$ are in bijective correspondence with $R$-valued Radon measures of compact support on $X$ \cite[Ch. IV \S 4, No. 8, Prop. 14]{Bou}.
\end{ParSub}

\begin{ParSub}[\textbf{Fr\"olicher spaces and diffeological spaces}]\label{par:fro_diff}
We shall denote by $\Mf$ the category of second-countable Hausdorff smooth manifolds, which are necessarily paracompact and will be called simply \textit{smooth manifolds}; the morphisms in $\Mf$ are arbitrary smooth maps.  We shall make use of certain cartesian closed categories $\V$ into which $\Mf$ embeds as a full subcategory, as follows.  

Firstly, a \textbf{Fr\"olicher smooth space} (or \textbf{Fr\"olicher space}) \cite{Fro:SmthStr,Fro:CccAnSmthMaps,FroKr} is a set $X$ equipped with a specified set of mappings $\C_X \subseteq \Set(\RR,X)$ called \textit{smooth curves} and a set of mappings $\F_X \subseteq \Set(X,\RR)$ called \textit{smooth functions}, such that $\C_X = \{c \in \Set(\RR,X) \mid \forall f \in \F_X \,:\, f \cdot c \in \Mf(\RR,\RR) \}$ and $\F_X = \{f \in \Set(X,\RR) \mid \forall c \in \C_X \,:\, f \cdot c \in \Mf(\RR,\RR)\}$.  Fr\"olicher spaces form a cartesian closed category $\Fro$ in which the morphisms are mappings that preserve smooth curves \cite[\S 1, 2]{Fro:SmthStr}, and there is a full embedding of $\Mf$ into $\Fro$ \cite[\S 3]{Fro:SmthStr}.

Let $\OCart$ denote the full subcategory of $\Mf$ consisting of all open subsets of the spaces $\RR^n$ $(n \gt 0)$.  A \textbf{diffeological space} (or \textit{Souriau space}) is a set $X$ equipped with an assignment to each object $U$ of $\OCart$ a set $\sP_X(U)$ of functions from $U$ to $X$, called \textit{smooth plots}, subject to certain axioms; see \cite{BaHo}.  Diffeological spaces are the objects of a category $\Diff$ in which a morphism is simply a mapping that preserves smooth plots.  $\Diff$ is equivalent to the category of \textit{concrete sheaves} on $\OCart$ with respect to the open cover topology \cite{BaHo}.  Consequently, $\Diff$ is a complete and cocomplete \textit{quasitopos} \cite{BaHo} and, in particular, is cartesian closed, and there is a full, dense embedding $\OCart \hookrightarrow \Diff$.  Every Fr\"olicher space $X$ determines a diffeological space with the same underlying set, and with smooth plots $\sP_X(U) = \Fro(U,X)$ \cite[\S 5]{Sta}.  This yields a full reflective embedding $\Fro \hookrightarrow \Diff$ \cite{Sta}.  Since the dense embedding $\OCart \hookrightarrow \Diff$ factors through $\Fro \hookrightarrow \Diff$, it follows that the embedding $\Fro \hookrightarrow \Diff$ is dense \cite[Thm. 5.13]{Ke:Ba}.  Since this embedding $\Fro \hookrightarrow \Diff$ is dense and preserves finite products, it readily follows that it preserves exponentials.
\end{ParSub}

\begin{ParSub}[\textbf{$C^\infty$-rings and the Cahiers topos}]\label{par:cinfty_rings}
The category of smooth manifolds $\Mf$ \pref{par:fro_diff} embeds as a full subcategory of the opposite of the category $\CinftyRing_{\textnormal{fg}}$ of \textbf{finitely generated $C^\infty$-rings}; see \cite{Kock:SDG,MoeRey}.  By a \textbf{product-closed $C^\infty$-site} we shall mean a full subcategory $\A$ of $\CinftyRing^\op_{\textnormal{fg}}$ such that (i) $\A$ is equipped with a subcanonical Grothendieck topology, (ii) $\A$ contains $C^\infty(M)$ for each manifold $M$, and (iii) $\A$ is closed under finite products in $\CinftyRing^\op_{\textnormal{fg}}$.  It follows that $\Mf$ embeds as a full subcategory of the topos $\Shv(\A)$ of sheaves on $\A$.  Certain toposes employed in synthetic differential geometry are of this form.  In particular, the \textbf{Cahiers topos} of Dubuc is obtained by taking $\A = \C^\op_\text{C}$ where $\C_\text{C} \hookrightarrow \CinftyRing_{\textnormal{fg}}$ consists of all the $C^\infty$-rings isomorphic to $C^\infty(M) \otimes_\infty W$ for some smooth manifold $M$ and some Weil algebra $W$ \cite[I.16]{Kock:SDG} (over $\RR$), where $\otimes_\infty$ denotes the coproduct in the category $\CinftyRing$ of $C^\infty$-rings; see \cite[III.9, Example (1)]{Kock:SDG}.  Indeed, $\C^\op_\text{C}$ satisfies (iii), since for any two objects $A = C^\infty(M) \otimes_\infty W$ and $A' = C^\infty(M') \otimes_\infty W'$ of $\C_\text{C}$, we deduce by \cite[p. 22]{MoeRey} that their coproduct in $\CinftyRing_{\textnormal{fg}}$ is $A \otimes_\infty A'$, but by \cite[I.2.5]{MoeRey} this is isomorphic to $C^\infty(M \times M') \otimes_\infty W \otimes_\infty W'$ and hence lies in $\C_\text{C}$, by \cite[I.3.21]{MoeRey}.
\end{ParSub}

\begin{ParSub}[\textbf{Notation for categories with finite products}]\label{par:notn_fp}
In working with rings, modules, and related structures in a category $\C$ with finite products, it can be convenient to use `elementwise' notation for some calculations involving the familiar algebraic operations.  For our purposes, a few notational conventions for working with finite products will suffice in this regard.

Firstly, given a finite product $X_1 \times X_2 \times ... \times X_n$ in $\C$, we shall not always use a standard notation such as $\pi_i:X_1 \times ... \times X_n \rightarrow X_i$ to denote each product projection, but rather we shall allow ourselves to use any given (distinct) symbols $x_1,x_2...,x_n$ to denote these projections $\pi_i$, and we shall write
$$
\begin{array}{lcl}
(x_1,x_2,...,x_n):X_1 \times X_2 \times ... \times X_n & \text{to mean that} & \textit{$x_i$ denotes the projection $\pi_i$}\\
                                                       & & \textit{for each $i \in \{1,2,...,n\}$.}\\
\end{array}
$$
In the case where $n = 1$, we omit the parentheses, writing 
$$
\begin{array}{lcl}
x:X & \text{to mean that} & \textit{$x$ denotes the identity morphism $1_X$ on an object $X$,}\\
\end{array}
$$
so that then $x$ plays the role of a \textit{generic element} of $X$.

If we are given a morphism $t:C \rightarrow \prod_{i \in J}X_i$ in $\C$ whose codomain is a finite product, then for each element $i$ of the indexing set $J$ we shall write $t_i:C \rightarrow X_i$ to denote the $i$-th factor of $t$.  When $J = \{1,2,...,n\}$ we write $(t_1,...,t_n):C \rightarrow X_1 \times ... \times X_n$ to denote the induced morphism $t$.  Given also a morphism $f:X_1 
\times ... \times X_n \rightarrow Y$, we shall allow ourselves to write the composite $f \cdot (t_1,...,t_n):C \rightarrow Y$ as $f(t_1,...,t_n)$.  We shall sometimes use this notation even when $n = 1$, so that if $t:C \rightarrow X$ and $f:X \rightarrow Y$ then $f(t) = f \cdot t:C \rightarrow Y$.  In the case with $n = 0$, where $f:1 \rightarrow Y$ is a \textit{constant}, we shall write the \textit{constant morphism} $C \rightarrow 1 \xrightarrow{f} Y$ as just $f$.

Let $M$ be a commutative monoid in $\C$, with monoid operations $+,0$, and suppose that $\C$ is locally small.  For each object $C$ of $\C$, the functor $\C(C,-):\C \rightarrow \Set$ preserves products and therefore sends $M$ to a commutative monoid $\C(C,M)$ (in $\Set$).  We shall write the induced addition operation on $\C(C,M)$ in infix notation as $+$ and write the zero element of $\C(C,M)$ as $0$.  Given a morphism $m = (m_1,...,m_n):C \rightarrow M^n$ in $\C$, we shall use the usual notation $\sum_{i = 1}^n m_i$ for the sum of the elements $m_1,...,m_n$ of $\C(C,M)$.

If $\V$ is a cartesian closed category, then for all morphisms of the form $f:V \rightarrow \uV(X,Y)$ and $x:V \rightarrow X$ in $\V$, we shall write $f(x)$ to denote the composite
$$f(x) = \left(V \xrightarrow{(f,x)} \uV(X,Y) \times X \xrightarrow{\Ev} Y\right)\;,$$
where $\Ev$ denotes the evaluation morphism.

As an example, we now state definitions of some basic algebraic notions that we shall need in the sequel.
\end{ParSub}

\begin{DefSub}\label{def:rig_ring_module_bim_cmons}
Let $\C$ be a category with finite products.  Given a commutative monoid $M$ in $\C$, let us denote its underlying object in $\C$ by $\ca{M}$, or even just $M$, and write its binary and unary operations as $+$ and $0$, respectively.  Given commutative monoids $M,N,P$ in $\C$, a morphism $f:\ca{M} \times \ca{N} \rightarrow \ca{P}$ in $\C$ is said to be a \textbf{bimorphism of commutative monoids} from $M,N$ to $P$ if the following equations hold, in the notation of \ref{par:notn_fp}:
\begin{enumerate}
\item[] $\begin{array}{l}
f(0,n) = 0:N \rightarrow P \;\;\text{and}\;\; f(m,0) = 0:M \rightarrow P,\;\;\text{where $n:N$ and $m:M$,}\\
\end{array}$
\item[] $\begin{array}{lcl}
f(m_1 + m_2,n) = f(m_1,n) + f(m_2,n) & : & M \times M \times N \rightarrow P,\\
                                                      &   & \text{where $(m_1,m_2,n):M \times M \times N$,}\\
\end{array}$
\item[] $\begin{array}{lcl}
f(m,n_1 + n_2) = f(m,n_1) + f(m,n_2) & : & M \times N \times N \rightarrow P,\\
                                                      &   & \text{where $(m,n_1,n_2):M \times N \times N$.}\\
\end{array}$
\end{enumerate}

A \textbf{rig} or \textbf{semiring} in $\C$ is a commutative monoid $R = (\ca{R},+,0)$ in $\C$ equipped with morphisms $1:1 \rightarrow \ca{R}$ and $\bullet:\ca{R} \times \ca{R} \rightarrow \ca{R}$ in $\C$ such that $\bullet$ is a bimorphism of commutative monoids from $R,R$ to $R$ and $(\ca{R},\bullet,1)$ is a monoid in $\C$.  We typically denote both the rig and its carrier by $R$.  A rig $R$ in $\C$ is said to be a \textbf{ring} in $\C$ if the monoid $(\ca{R},+,0)$ is an abelian group in $\C$.  $R$ is \textbf{commutative} if the monoid $(\ca{R},\bullet,1)$ is commutative.  Given a rig $R$ in $\C$, a \textbf{left} \textbf{$R$-module} (in $\C$) is a commutative monoid $M$ in $\C$ equipped with an associative, unital action $*:\ca{R} \times \ca{M} \rightarrow \ca{M}$ in $\C$ of the monoid $(\ca{R},\bullet,1)$, such that $*$ is a bimorphism of commutative monoids from $(\ca{R},+,0), M$ to $M$.
\end{DefSub}

\begin{ParSub}[\textbf{Rigs and modules of generalized elements}]\label{par:rigs_mods_gen_elts}
Let $\C$ be a locally small category with finite products, and let $C$ be an object of $\C$.  Given a rig $R$ in $\C$ \pref{def:rig_ring_module_bim_cmons}, the product-preserving functor $\C(C,-):\C \rightarrow \Set$ sends $R$ to a rig $\C(C,R)$ (in $\Set$).  Extending the notation of \ref{par:notn_fp}, we shall use the usual notation for the rig operations on $\C(C,R)$, writing the multiplication operation induced by $\bullet:R \times R \rightarrow R$ as juxtaposition, so that if $r,s \in \C(C,R)$ then $rs$ denotes the product in $\C(C,R)$.  Given a left $R$-module $M$ in $\C$ \pref{def:rig_ring_module_bim_cmons}, the product-preserving functor $\C(C,-)$ sends $M$ to a left $\C(C,R)$-module $\C(C,M)$ (in $\Set$).  Extending the notation of \ref{par:notn_fp}, we shall use the usual notation for the left $\C(C,R)$-module structure on $\C(C,M)$, so that if $r \in \C(C,R)$ and $m \in \C(C,M)$ then the left action determines an associated element $rm$ of $\C(C,M)$.
\end{ParSub}

\section{Background on enriched algebra}\label{sec:background_enr_alg}

\subsection{Enriched algebraic theories for a system of arities}\label{sec:enr_alg_th_sys_ar}

\begin{ParSubSub}\label{par:jth}
By definition, a \textbf{system of arities} in $\V$ is a symmetric strong monoidal $\V$-functor $j:\J \rightarrow \uV$ that is fully faithful \cite[3.1]{Lu:EnrAlgTh}.  For example, the full subcategory $\FinCard \hookrightarrow \Set$ consisting of the finite cardinals is a system of arities; see \cite[\S3]{Lu:EnrAlgTh} for various further examples.  A full sub-$\V$-category $\J \hookrightarrow \uV$ is a system of arities provided that it contains the unit object $I$ and is closed under $\otimes$; every system of arities is equivalent (in a suitable sense) to one of the latter form \cite[3.8]{Lu:EnrAlgTh}, and so for many purposes we can assume that given systems of arities are of this form.  Given a system of arities $j:\J \hookrightarrow \uV$, a \textbf{$\J$-theory} \cite[4.1]{Lu:EnrAlgTh} is a $\V$-category $\T$ equipped with an identity-on-objects $\V$-functor $\tau:\J^\op \rightarrow \T$ that preserves $\J$-cotensors, i.e., cotensors by objects $J$ of $\J$ (or rather, their associated objects $j(J)$ of $\V$).  Equivalently, a $\J$-theory is a $\V$-category $\T$ whose objects are precisely those of $\J$, such that for each object $J$ of $\J$ there is a specified morphism $\gamma_J:J \rightarrow \T(J,I)$ in $\V$ that exhibits $J$ as a cotensor $[J,I]$ of $I$ by $J$ in $\T$, with the further stipulation that $\gamma_I = [1_I]:I \rightarrow \T(I,I)$ \cite[5.8]{Lu:EnrAlgTh}.  The associated identity-on-objects $\V$-functor $\tau:\J^\op \rightarrow \T$ is precisely the $\V$-functor $[-,I]$ that supplies the designated $\J$-cotensors of $I$ \cite[5.8]{Lu:EnrAlgTh}.  Collectively, $\J$-theories are the objects of a category $\ThJ$ in which the morphisms are $\V$-functors $A:\T \rightarrow \U$ that commute with the associated $\V$-functors $\J^\op \rightarrow \T$ and $\J^\op \rightarrow \U$.  Note that $\J^\op$ is therefore an initial object of $\ThJ$.  A \textbf{subtheory} of a $\J$-theory $\U$ is a $\J$-theory $\T$ equipped with a morphism of theories $\T \hookrightarrow \U$ that is faithful, as a $\V$-functor, meaning that its structure morphisms are monomorphisms in $\V$.   
\end{ParSubSub}

\begin{ParSubSub}[\textbf{Correspondence between $\J$-theories and $\J$-ary monads}]\label{par:jary_mnds}
A system of arities $j:\J \hookrightarrow \uV$ is said to be \textbf{eleutheric} if every $\V$-functor $P:\J \rightarrow \uV$ has a left Kan extension along $j$ and this left Kan extension is preserved by each $\V$-functor $\uV(J,-):\uV \rightarrow \uV$ with $J \in \ob\J$ \cite[7.1, 7.3]{Lu:EnrAlgTh}.  Various examples and equivalent characterizations of eleutheric systems of arities are provided in \cite[\S 7]{Lu:EnrAlgTh}.  Throughout the remainder of \S \ref{sec:enr_alg_th_sys_ar}, we shall assume that $j:\J \hookrightarrow \uV$ is a given eleutheric system of arities.

By definition, a \textbf{$\J$-ary monad} is a $\V$-monad $\TT = (T,\eta,\mu)$ on $\uV$ such that $T$ preserves left Kan extensions along $j:\J \hookrightarrow \uV$ \cite[11.7, 11.1]{Lu:EnrAlgTh}.  Note that a $\V$-functor $T:\uV \rightarrow \uV$ preserves left Kan extensions along $j$ if and only if $T$ preserves weighted colimits with weights of the form
\begin{equation}\label{eq:wts_for_lanj}\uV(j-,V):\J^\op \rightarrow \uV\;\;\;\;\;\;(V \in \ob\V).\end{equation}
A weight $W:\B^\op \rightarrow \uV$ is said to be \textbf{$\J$-flat} if every $W$-weighted colimit in $\uV$ is preserved by each of the $\V$-functors $\uV(J,-):\uV \rightarrow \uV$ with $J \in \ob\J$ \cite[6.2]{Lu:EnrAlgTh}.  For example, each of the weights \eqref{eq:wts_for_lanj} is $\J$-flat since $j$ is eleutheric.  A $\V$-monad $(T,\eta,\mu)$ on $\uV$ is a $\J$-ary monad if and only if $T$ \textbf{conditionally preserves $\J$-flat colimits}, meaning that for any colimit $W \star D$ in $\uV$ with a $\J$-flat weight $W$, if the colimit $W \star TD$ exists in $\uV$ then the colimit $W \star D$ is preserved by $T$ \cite[12.3]{Lu:EnrAlgTh}.  Collectively, $\J$-ary monads are the objects of a category $\MndJ(\uV)$, with the usual morphisms of $\V$-monads.

There is an equivalence 
$$\ThJ \simeq \MndJ(\uV)$$
between the category of $\J$-theories and the category of $\J$-ary monads \cite[11.8]{Lu:EnrAlgTh}.  Given a $\J$-theory $\tau:\J^\op \rightarrow \T$, the $\V$-endofunctor $T:\uV \rightarrow \uV$ underlying the corresponding $\J$-ary monad $\TT$ is a left Kan extension of $\T(\tau-,I):\J \rightarrow \T$ along $j$.  Conversely, given a $\J$-ary monad $\TT$ with Kleisli $\V$-category $\uV_\TT$, the corresponding $\J$-theory is the full sub-$\V$-category of $\uV_\TT^\op$ on the objects of $\J$ \cite[11.11]{Lu:EnrAlgTh}.

There is an eleutheric system of arities with $\J = \uV$ and $j = 1_{\uV}$ \cite[7.5(3)]{Lu:EnrAlgTh}, for which a $\uV$-ary monad is simply an \textit{arbitrary} $\V$-monad on $\uV$ \cite[11.3(2), 11.10]{Lu:EnrAlgTh}, so that we have an equivalence $\Th_{\uV} \simeq \Mnd_{\VCAT}(\uV)$ between the category of $\uV$-theories (originally studied by Dubuc \cite{Dub:EnrStrSem}) and the category of $\V$-monads on $\uV$ \cite[11.10]{Lu:EnrAlgTh}.
\end{ParSubSub}

\begin{ParSubSub}[\textbf{Algebras for a $\J$-theory}]\label{par:alg_jth}
Given a $\J$-theory $\T$ and a $\V$-category $\C$, a $\T$-\textbf{algebra} in $\C$ is a $\V$-functor $A:\T \rightarrow \C$ that preserves $\J$-cotensors; we call the object $\ca{A} = A(I)$ the \textbf{carrier} of $A$.  Now assuming that $\C$ has designated $\J$-cotensors, a \textbf{normal $\T$-algebra} in $\C$ is a $\V$-functor $A:\T \rightarrow \C$ that \textit{strictly} preserves the designated $\J$-cotensors $[J,I] = J$ of $I$ in $\T$ \pref{par:jth}, i.e. sends them to the designated $\J$-cotensors $[J,\ca{A}]$ of $\ca{A} = A(I)$ in $\C$; it then follows that $A$ preserves all $\J$-cotensors and hence is a $\T$-algebra \cite[5.10]{Lu:EnrAlgTh}.  A morphism of $\J$-theories $\T \rightarrow \U$ is equivalently defined as a normal $\T$-algebra in $\U$ with carrier $I$ \cite[5.16]{Lu:EnrAlgTh}.  Note that a normal $\T$-algebra $A:\T \rightarrow \C$ is uniquely determined by its carrier $\ca{A}$ together with its components $A_{JI}:\T(J,I) \rightarrow \uV([J,\ca{A}],\ca{A})$ with $J \in \ob\J$ \cite[3.12]{Lu:Cmtnts}, and it is sometimes convenient to construe the transposes $A_J:\T(J,I) \otimes [J,\ca{A}] \rightarrow \ca{A}$ of these morphisms as constituting the $\T$-algebra structure on $\ca{A}$.  Related to this, note also that a morphism of theories $A$ is an isomorphism iff its components $A_{JI}$ $(J \in \ob\J)$ are isomorphisms.
\end{ParSubSub}

\begin{ParSubSub}[\textbf{The $\V$-category of $\T$-algebras}]\label{par:vcat_talgs}
Given $\T$-algebras $A,B:\T \rightarrow \C$, we call $\V$-natural transformations $A \rightarrow B$ $\T$-\textbf{homomorphisms}.  If the object of $\V$-natural transformations $[\T,\C](A,B)$ exists for all $\T$-algebras $A$ and $B$ in $\C$, then we obtain a $\V$-category $\Alg{\T}_\C$ whose objects are $\T$-algebras in $\C$.  Necessary and sufficient conditions for the existence of the $\V$-category $\Alg{\T}_\C$ are given in \cite[4.9, 4.13]{Lu:Cmtnts}, and in particular, $\Alg{\T}_\C$ exists for all $\T$ and $\C$ as soon as $\V$ has intersections of $(\ob\J)$-indexed families of strong subobjects\footnote{i.e., intersections of $(\ob\J)$-indexed families of strong monomorphisms with the same codomain}.  Further, for $\C = \uV$ the $\V$-category $\Alg{\T}_{\uV}$ \textit{always} exists \cite[8.9]{Lu:EnrAlgTh}, without assuming the existence of any limits in $\V$ beyond equalizers.  We often call $\T$-algebras in $\uV$ simply $\T$-\textbf{algebras}, and we write simply $\Alg{\T}$ for $\Alg{\T}_{\uV}$.

The notion of normal $\T$-algebra is `equivalent' to that of $\T$-algebra, to the extent that $\Alg{\T}_\C$ (when it exists) is equivalent to its full sub-$\V$-category 
\begin{equation}\label{eq:equiv_talgs_ntalgs}\Alg{\T}_\C^! \overset{\sim}{\lhook\joinrel\relbar\joinrel\rightarrow} \Alg{\T}_\C\end{equation}
consisting of the normal $\T$-algebras \cite[5.14]{Lu:EnrAlgTh}, provided that $\C$ has designated $\J$-cotensors; in particular, every $\T$-algebra $A$ in $\C$ is isomorphic to an associated normal $\T$-algebra, which has the same carrier and is called the \textbf{normalization} of $A$.  As we shall do throughout the sequel, we assume that the designated cotensor $[I,C]$ of an object $C$ of $\C$ is simply $C$, with structural morphism $[1_C]:I \rightarrow \C(C,C)$.

Writing $\ca{\text{$-$}}:\Alg{\T}_\C \rightarrow \C$ for the $\V$-functor given by evaluation at $I$, sending a $\T$-algebra to its carrier, it is important to note that $\ca{\text{$-$}}$ is faithful, meaning that its structure morphisms $\Alg{\T}_\C(A,B) \rightarrow \C(\ca{A},\ca{B})$ are monomorphisms; indeed, they are in fact strong monomorphisms \cite[4.8]{Lu:Cmtnts}.  In particular, a $\T$-homomorphism from $A$ to $B$ is uniquely determined by its component at $I$, so we call a morphism $f:\ca{A} \rightarrow \ca{B}$ in $\C$ a $\T$-\textbf{homomorphism} if it lies in the image of the injective map $(\Alg{\T}_\C)_0(A,B) \hookrightarrow \C_0(\ca{A},\ca{B})$.
\end{ParSubSub}

\begin{ParSubSub}[\textbf{$\J$-algebraic $\V$-categories over $\V$}]\label{par:ntalgs_emalgs}
Given a $\J$-theory $\T$, the $\V$-category $\Alg{\T}$ of $\T$-algebras in $\uV$ is equivalent to the $\V$-category $\Alg{\TT}$ of $\TT$-algebras for the associated $\J$-ary monad $\TT$ \cite[11.14]{Lu:EnrAlgTh}.  Moreover, the $\V$-category $\Alg{\T}^!$ of normal $\T$-algebras in $\uV$ is \textit{isomorphic} to $\Alg{\TT}$ \cite[11.14]{Lu:EnrAlgTh}, and in the sequel we shall identify these $\V$-categories.  We say that a $\V$-functor $G:\A \rightarrow \uV$ is \textbf{$\J$-algebraic} (resp. \textbf{strictly} $\J$-\textbf{algebraic}) if there is a $\J$-theory $\T$ and an equivalence (resp. isomorphism) $\A \simeq \Alg{\T}^!$ that commutes, up to isomorphism, with the associated $\V$-functors to $\uV$.  By \cite[12.2, 12.3]{Lu:EnrAlgTh}, $G$ is $\J$-algebraic if and only if $G$ is $\V$-monadic and the induced $\V$-monad is $\J$-ary.  Similarly, $G$ is strictly $\J$-algebraic if and only if $G$ is strictly $\V$-monadic\footnote{i.e., has a left adjoint and satisfies the second pair of equivalent conditions in \cite[II.2.1]{Dub}} and the induced $\V$-monad is $\J$-ary.  Indeed, $\Alg{\T}^! \rightarrow \uV$ is strictly $\V$-monadic \cite[8.9]{Lu:EnrAlgTh} for a $\J$-ary monad, so the same is true for any strictly $\J$-algebraic $\V$-functor $G:\A \rightarrow \uV$.  Conversely, for a $\J$-ary monad $\TT$ on $\uV$, the Eilenberg-Moore forgetful $\V$-functor $\Alg{\TT} \rightarrow \uV$ is strictly $\J$-algebraic, by \cite[11.14]{Lu:EnrAlgTh}, so the same is true of any strictly $\V$-monadic $\V$-functor that induces $\TT$.
\end{ParSubSub}

\begin{ParSubSub}[\textbf{Pointwise limits of algebras}]\label{par:lims_algs}
Let $\T$ be a $\J$-theory, and let $\C$ be a $\V$-category for which $\Alg{\T}_\C$ exists.  Given any $\V$-functor $W:\B \rightarrow \uV$, if $\C$ has $W$-weighted limits then $W$-limits in $\Alg{\T}_\C$ can be formed pointwise.  Indeed, the pointwise $W$-weighted limit of a $\V$-functor $D:\B \rightarrow \Alg{\T}_\C$ is a $\T$-algebra in $\C$ since $W$-limits commute with $\J$-cotensors in $\C$.  Consequently, when $\C$ has designated $\J$-cotensors, $W$-limits in $\Alg{\T}^!_\C$ are obtained as the normalizations of these pointwise $W$-limits.

In particular, note that if $A:\T \rightarrow \uV$ is a $\T$-algebra and $V$ is an object of $\V$, then $\uV(V,A-):\T \rightarrow \uV$ is a cotensor $[V,A]$ of $A$ by $V$ in $\Alg{\T}$ and has carrier $\uV(V,\ca{A})$.
\end{ParSubSub}

\begin{ParSubSub}[\textbf{The full theory of an object}]\label{par:full_th}
Given an object $C$ of a $\V$-category $\C$ with designated $\J$-cotensors $[J,C]$ $(J \in \ob\J)$, we can form a $\J$-theory $\C_C$ called the \textbf{full $\J$-theory of $C$ in $\C$}, with
$$\C_C(J,K) = \C([J,C],[K,C])\;\;\;\;\;\;\;\;(J,K \in \ob\J = \ob\C_C)$$
and with composition and identities as in $\C$ \cite[3.16]{Lu:Cmtnts}.  The following observations concerning $\C_C$ are drawn from \cite[3.16]{Lu:Cmtnts} and follow readily from the definitions.  There is a fully faithful $\V$-functor $i:\C_C \rightarrow \C$ given on objects by $J \mapsto [J,C]$, and $i$ is a $\C_C$-algebra in $\C$ with carrier $C$.  Any $\T$-algebra $A:\T \rightarrow \C$ equips its carrier $\ca{A}$ with a designated choice of $\J$-cotensors $[J,\ca{A}] = A(J)$, and $A$ factors through $i:\C_{|A|} \rightarrow \C$ by way of a unique morphism of $\J$-theories $A:\T \rightarrow \C_{|A|}$.  In particular, if $\C$ has designated $\J$-cotensors of each of its objects $C$, then morphisms of $\J$-theories $\T \rightarrow \C_C$ are in bijective correspondence with normal $\T$-algebras in $\C$ with carrier $C$.
\end{ParSubSub}

\subsection{Discretely finitary enriched algebraic theories}\label{sec:disc_fin_enr_alg_th}

For the present section, let $\V$ be a countably cocomplete cartesian closed category with equalizers.  It follows that $\uV$ has objectwise-countable colimits \pref{par:enr_cat_th}.  

\begin{ParSubSub}[\textbf{The system of finite discrete arities}]\label{par:efth}
Choosing a designated $n$-th copower $n \cdot 1$ of the terminal object $1$ in $\V$ for each natural number $n \in \NN$ (including $0$), let us define a $\V$-category $\DFin_\V$ whose objects are the natural numbers, where the hom-object $\DFin_\V(n,m)$ from $n$ to $m$ is defined to be the internal hom $\uV(n\cdot 1,m \cdot 1)$ in $\V$.  Composition and identities are as in $\uV$, so that we have a fully faithful $\V$-functor 
\begin{equation}\label{eqn:j}j:\DFin_\V \rightarrowtail \uV\end{equation}
given on objects by $n \mapsto n \cdot 1$.  This $\V$-functor $j$ carries the structure of an eleutheric system of arities \cite[3.7, 7.5(5)]{Lu:EnrAlgTh}, where the monoidal product of objects in $\DFin_\V$ is the usual multiplication of natural numbers.
\end{ParSubSub}

\begin{DefSubSub}\label{def:th}
A $\J$-theory for the system of arities $\J = \DFin_\V$ will be called a \textbf{discretely finitary theory} (enriched in $\V$) or, for brevity, simply a \textbf{theory}.  We shall denote by $\Th$ the category of such theories, i.e., $\Th = \Th_{\DFin_\V}$ \pref{par:jth}.
\end{DefSubSub}

Borceux and Day studied an equivalent notion of theory in \cite{BoDay}, but with different assumptions on $\V$; see \cite[4.2(6)]{Lu:EnrAlgTh} for a detailed comparison\footnote{Discretely finitary theories are also closely related to (but distinct from) Power's \textit{discrete Lawvere theories} \cite{Pow:DiscLTh}.}.  In the case where $\V = \Set$, $\DFin_{\Set}$ is the full subcategory $\FinCard \hookrightarrow \Set$ consisting of the finite cardinals, and so we recover Lawvere's notion of \textit{algebraic theory} \cite{Law:PhD}; these are often called \textbf{Lawvere theories}.

For any object $C$ of a $\V$-category $\C$ and any $n \in \NN$, a cotensor $[j(n),C]$ of $C$ by the object $j(n) = n \cdot 1$ of $\V$ is the same as a conical $n$-th power $C^n$.  Therefore, everything stated in \S\ref{sec:enr_alg_th_sys_ar} applies to discretely finitary theories when one replaces $\J$-cotensors with conical finite powers.  In particular, by \ref{par:jth} a discretely finitary theory $\T$ is equivalently given by a $\V$-category $\T$ whose objects are the natural numbers $n \in \NN$, in which each object $n$ is equipped with a family of morphisms $\pi_i:n \rightarrow 1$ $(i = 1,...,n)$ that exhibit $n$ as a (conical) $n$-th power of $1$ in the $\V$-category $\T$, with the further requirement that for $n = 1$ the automorphism $\pi_1:1 \rightarrow 1$ must be the identity.  It can sometimes be helpful to write, say, $T$ for the object $1$ of $\T$, so that objects of $\T$ are then (conical) $n$-th powers $T^n$ of $T$, all distinct, with $T = T^1$.

\begin{ParSubSub}\label{par:subth_efth}
Given a theory $\U$, a subtheory $\T$ of $\U$ is equivalently given by a family of subobjects $\iota_{n,m}:\T(n,m) \hookrightarrow \U(n,m)$ in $\V$ $(n,m \in \NN)$ such that (i) the composition morphisms $\U(n,m)\times\U(m,\ell) \rightarrow \U(n,\ell)$ for $\U$ restrict to yield morphisms $\T(n,m)\times \T(m,\ell) \rightarrow \T(n,\ell)$, (ii) each of the designated projections $\pi_i:n \rightarrow 1$ in $\U$ lies in $\T(n,1)$, in the sense that $[\pi_i]:1 \rightarrow \U(n,1)$ factors through $\iota_{n,1}$, and (iii) the canonical isomorphisms $\U(n,1)^m \rightarrow \U(n,m)$ restrict to yield morphisms $\T(n,1)^m \rightarrow \T(n,m)$.

In the classical case of $\V = \Set$, every subtheory $\T \hookrightarrow \U$ of a Lawvere theory $\U$ is isomorphic (in $\Th \slash \U$) to one for which the associated mapping $\mor\T \rightarrow \mor\U$ is simply the inclusion of a subset, in which case we also call this subset $\mor\T \subseteq \mor\U$ a \textit{subtheory} of $\U$.  A subset $\sS \subseteq \mor\U$ is a subtheory in this sense iff the subset inclusions $\sS \cap \U(n,m) \hookrightarrow \U(n,m)$ satisfy conditions (i), (ii), (iii) of the preceding paragraph.  We shall say that a subset $\G \subseteq \mor\U$ is a \textbf{generating set of operations} for $\U$ if (a) every morphism $\mu:m \rightarrow n$ in $\G$ has codomain $n = 1$, and (b) for any subtheory $\sS \subseteq \mor\U$, if $\sS$ contains $\G$ then $\sS = \mor\U$.
\end{ParSubSub}  

\begin{ParSubSub}\label{par:pres_ops}
Let $\T$ be a Lawvere theory (with $\V = \Set$), let $A$ and $B$ be $\T$-algebras in a category $\C$.  Each morphism $f:\ca{A} \rightarrow \ca{B}$ in $\C$ induces a morphism $f^n:A(n) \rightarrow B(n)$ between the $n$-th powers $A(n) = \ca{A}^n$ and $B(n) = \ca{B}^n$ for each $n \in \NN$.  Given a morphism $\omega:n \rightarrow m$ in $\T$, we shall say that $f$ \textbf{preserves} $\omega$ if $f^m \cdot A(\omega) = B(\omega) \cdot f^n$.  It is well-known that $f$ is a $\T$-homomorphism from $A$ to $B$ iff $f$ preserves every morphism $\omega$ in $\T$.  Moreover, if $\G$ is a generating set of operations for $\T$ \pref{par:subth_efth}, then it is straightforward to show that $f$ is a $\T$-homomorphism iff $f$ preserves each $\omega \in \G$, by using the fact that the set of all morphisms preserved by $f$ is a subtheory of $\T$.
\end{ParSubSub}

\begin{ParSubSub}\label{par:disc_fin_vmonad}
Since the system of arities $j:\DFin_\V \rightarrow \uV$ is eleutheric \pref{par:efth}, we have an equivalence between the category $\Th$ of discretely finitary theories enriched in $\V$ and the category $\Mnd_{\DFin_\V}(\uV)$ of $\DFin_\V$-ary monads on $\uV$, which we shall call \textbf{discretely finitary $\V$-monads}.  We shall make use of the following convenient characterization of such $\V$-monads in terms of the notion of $\J$-flat colimit \pref{par:jary_mnds} with $\J = \DFin_\V$:
\end{ParSubSub}

\begin{PropSubSub}\label{thm:nvary_monads_via_ctbl_nv_flat_colims}
Let $\TT = (T,\eta,\mu)$ be a $\V$-monad on $\uV$.  Then $\TT$ is a discretely finitary $\V$-monad if and only if $T$ preserves objectwise-countable $\DFin_\V$-flat colimits.
\end{PropSubSub}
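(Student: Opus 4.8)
The plan is to derive both implications directly from the two descriptions of $\DFin_\V$-ary monads recalled in \ref{par:jary_mnds}, exploiting the single structural fact that the indexing $\V$-category $\DFin_\V^\op$ has object set $\NN$ and is therefore countable. Recall from \ref{par:efth} that $\J = \DFin_\V$ is eleutheric, so by \ref{par:jary_mnds} the monad $\TT = (T,\eta,\mu)$ is discretely finitary exactly when $T$ preserves left Kan extensions along $j$, equivalently when $T$ preserves every colimit weighted by a weight of the form $\uV(j-,V) : \DFin_\V^\op \to \uV$ (for $V \in \ob\V$); and, by \cite[12.3]{Lu:EnrAlgTh}, exactly when $T$ \emph{conditionally} preserves $\DFin_\V$-flat colimits. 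I would use the first description for the converse implication and the second for the forward implication.

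For the forward direction I would assume $\TT$ discretely finitary, so that $T$ conditionally preserves $\DFin_\V$-flat colimits, and take an arbitrary objectwise-countable $\DFin_\V$-flat colimit $W \star D$ (so $W$ is $\DFin_\V$-flat with countably many objects in its domain). Since $TD$ is indexed by the same countable category, $W \star TD$ is again an objectwise-countable weighted colimit, and because $\uV$ has all objectwise-countable colimits (as noted at the start of \S\ref{sec:disc_fin_enr_alg_th}, using the countable cocompleteness of $\V$), this colimit exists. The conditional-preservation property then promotes to genuine preservation of $W \star D$ by $T$, which is exactly what is wanted.

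For the converse I would assume $T$ preserves objectwise-countable $\DFin_\V$-flat colimits and verify the weight-based criterion: each weight $\uV(j-,V)$ is $\DFin_\V$-flat (as recorded in \ref{par:jary_mnds}, which is where eleuthericity of $j$ enters) and, having domain $\DFin_\V^\op$ with object set $\NN$, is objectwise-countable. Thus each such weight is an objectwise-countable $\DFin_\V$-flat weight, the hypothesis applies to it directly, and $T$ preserves the associated colimits; hence $T$ preserves left Kan extensions along $j$, making $\TT$ discretely finitary.

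The only delicate point, and the sole place the countable-cocompleteness assumption is used, is the existence proviso in \cite[12.3]{Lu:EnrAlgTh}: conditional preservation yields preservation of $W \star D$ only once $W \star TD$ is known to exist. The main step I expect to check carefully is therefore that restricting to \emph{objectwise-countable} weights guarantees this existence, which is precisely what the availability of objectwise-countable colimits in $\uV$ supplies. Beyond this, no substantive obstacle arises, since both nontrivial inputs—the weight-based and the conditional-preservation characterizations of $\DFin_\V$-ary monads—are already furnished by \cite{Lu:EnrAlgTh}.
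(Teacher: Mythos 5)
Your proposal is correct and follows essentially the same route as the paper's proof: the forward direction converts conditional preservation of $\DFin_\V$-flat colimits into genuine preservation of the objectwise-countable ones using the objectwise-countable cocompleteness of $\uV$, and the converse observes that each weight $\uV(j-,V)$ is $\DFin_\V$-flat with countable domain, so the hypothesis applies directly. Your explicit remark that $W \star TD$ exists because $TD$ is indexed by the same countable category just spells out the step the paper leaves implicit.
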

\begin{proof}
If $\TT$ is a discretely finitary $\V$-monad, then by \ref{par:jary_mnds} we know that $T$ conditionally preserves $\DFin_\V$-flat colimits, but since $\V$ is countably cocomplete it follows that $\uV$ has objectwise-countable colimits, so this entails that $T$ preserves objectwise-countable $\DFin_\V$-flat colimits.  For the converse, note that for each object $V$ of $\V$, the weight $\uV(j-,V):\DFin_\V^\op \rightarrow \uV$ is $\DFin_\V$-flat, by \ref{par:jary_mnds}, so if $T$ preserves objectwise-countable $\DFin_\V$-flat colimits then $T$ preserves $\uV(j-,V)$-weighted colimits and hence is a discretely finitary $\V$-monad by \ref{par:jary_mnds}.
\end{proof}

By \ref{par:vcat_talgs}, the $\V$-category $\Alg{\T}$ of $\T$-algebras (in $\uV$) for a theory $\T$ necessarily exists.  We shall need to make use of certain conical colimits of $\T$-algebras, furnished by the following:

\begin{PropSubSub}\label{thm:con_colims_algs}
The $\V$-category of $\T$-algebras (resp. normal $\T$-algebras) for a discretely finitary theory $\T$ has all conical countable colimits; further, it has all conical small colimits if $\V$ is cocomplete.
\end{PropSubSub}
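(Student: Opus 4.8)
The plan is to prove the statement for the $\V$-category $\Alg{\TT}$ of algebras for the discretely finitary $\V$-monad $\TT = (T,\eta,\mu)$ corresponding to $\T$; since $\Alg{\TT} \cong \Alg{\T}^!$ is the $\V$-category of normal $\T$-algebras and is equivalent to $\Alg{\T}$ via \eqref{eq:equiv_talgs_ntalgs}, and conical colimits are invariant under $\V$-equivalence, this yields both assertions at once \pref{par:ntalgs_emalgs}. The forgetful $\V$-functor $U : \Alg{\TT} \to \uV$ is strictly $\V$-monadic, with left $\V$-adjoint $F$. I would first record that $\uV$ has all the conical colimits we shall need: for any object $W$, the contravariant internal hom $\uV(-,W)$ turns colimits in $\V$ into limits by the tensor--hom adjunction, so any ordinary colimit $\col_k V_k$ in $\V$ satisfies $\uV(\col_k V_k,W) \cong \lim_k \uV(V_k,W)$ $\V$-naturally and is thus a conical colimit in $\uV$ \pref{par:enr_cat_th}. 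Hence $\uV$ has all conical countable colimits, and all conical small colimits when $\V$ is cocomplete.

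The crux is to construct conical reflexive coequalizers in $\Alg{\TT}$, which I would do by showing that $U$ creates them. The shape of a reflexive coequalizer has two objects, so its weight is objectwise-countable; it is moreover $\DFin_\V$-flat \pref{par:jary_mnds}, since each $\V$-functor $\uV(j(n),-) \cong (-)^n$ is a finite power, and in the cartesian closed category $\V$ finite products commute with reflexive coequalizers (each factor $- \times X$ being a left $\V$-adjoint, with reflexivity used to combine the variables). By \ref{thm:nvary_monads_via_ctbl_nv_flat_colims}, $T$---and hence $T^2$---preserves such colimits, so the $\V$-monadic functor $U$ creates them; as $\uV$ has reflexive coequalizers, so does $\Alg{\TT}$.

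With reflexive coequalizers in hand I would assemble the remaining colimits. Conical coproducts of free algebras exist because the left $\V$-adjoint $F$ preserves the conical coproducts of $\uV$, giving $\coprod_i F X_i \cong F(\coprod_i X_i)$. For arbitrary algebras $A_i$, I would take the canonical presentations $F U F U A_i \rightrightarrows F U A_i \to A_i$ (reflexive coequalizers of free algebras) and define $\coprod_i A_i$ as the reflexive coequalizer of $\coprod_i F U F U A_i \rightrightarrows \coprod_i F U A_i$; applying the representables $\Alg{\TT}(-,C)$, which convert these conical coproducts into products and the conical coequalizer into an equalizer, confirms the enriched universal property of a conical coproduct. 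Coequalizers of arbitrary pairs $f,g : A \rightrightarrows B$ then arise as reflexive coequalizers of $[f,1_B],[g,1_B] : A \sqcup B \rightrightarrows B$, and finally any conical colimit of $D : \K \to \Alg{\TT}$ is the coequalizer of $\coprod_{f \in \mor\K} D(\Dom f) \rightrightarrows \coprod_{k \in \ob\K} D(k)$; taking these coproducts countable (resp. small) yields all conical countable colimits (resp. all conical small colimits, once $\V$ is cocomplete).

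The main obstacle is the $\DFin_\V$-flatness claim of the second paragraph: one must verify carefully that reflexive coequalizers in $\uV$ are preserved by finite powers, which is exactly where cartesian closedness of $\V$ is indispensable, and one must invoke the enriched creation-of-colimits property of the $\V$-monadic functor $U$ (the $\V$-enriched analogue of the classical fact that $U$ creates any colimit preserved by $T$ and $T^2$). Everything else---checking that the iterated coproduct/coequalizer constructions retain the conical universal property, and tracking the countable-versus-small distinction---is routine bookkeeping once these two points are secured.
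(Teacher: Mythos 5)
Your proof is correct and follows essentially the same route as the paper: establish conical reflexive coequalizers in the category of algebras (the paper cites external results for the creation of these by the forgetful $\V$-functor, which amount to your $\DFin_\V$-flatness argument via cartesian closedness), and then obtain all countable (resp. small) colimits by Linton's argument, which the paper cites rather than spells out as you do. The only cosmetic difference is that the paper handles the conical aspect by observing that $\Alg{\T}^!$ is cotensored, so conical colimits coincide with ordinary colimits in the underlying category, whereas you verify the enriched universal property directly with representables.
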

\begin{proof}
By \eqref{eq:equiv_talgs_ntalgs}, it suffices to treat the category $\A = \Alg{\T}^!$ of normal $\T$-algebras.  The forgetful $\V$-functor $G:\A \rightarrow \uV$ creates (conical) reflexive coequalizers, by \cite[6.3, 6.5/6.7]{Lu:EnrAlgTh}, so $\A$ has reflexive coequalizers.  By \ref{par:ntalgs_emalgs}, we know that $\A$ is strictly $\V$-monadic over $\uV$.  It follows that $\A_0$ is strictly monadic over $\V$, so since $\A_0$ has reflexive coequalizers and $\V$ has countable colimits, we can apply Linton's Argument (i.e., the argument in \cite[Cor. 2]{Lin:CoeqCatsAlgs}) to deduce that $\A_0$ has all countable colimits.  But $\A$ is a cotensored $\V$-category, so conical colimits in $\A$ are the same as ordinary colimits in $\A_0$ (e.g., by \cite[\S 3.8]{Ke:Ba}).  Similar reasoning yields the result concerning small colimits.
\end{proof}

\begin{ParSubSub}\label{par:free_on_n_gens}
Given a $\V$-category $\A$ \textbf{over} $\uV$, i.e. a $\V$-category $\A$ equipped with a $\V$-functor $G:\A \rightarrow \uV$, we will say that an object $B$ of $\A$ is \textbf{free on $n$ generators}, for a given natural number $n$, if $B$ is equipped with an isomorphism $\A(B,-) \cong (G-)^n:\A \rightarrow \uV$.
\end{ParSubSub}

\begin{ParSubSub}\label{par:yoneda}
Let $\T$ be a theory.  For each $n \in \NN$, the $\V$-functor $Y(n) = \T(n,-):\T \rightarrow \uV$ is a $\T$-algebra, and by the Yoneda Lemma we have a fully faithful $\V$-functor $Y:\T \rightarrow \Alg{\T}^\op$ and isomorphisms $\Alg{\T}(Y(n),A) \cong A(n)$ that are $\V$-natural in $n \in \T$ and $A \in \Alg{\T}$.  Hence for each $n \in \NN$, the object $Y(n)$ of $\Alg{\T}$ is free on $n$ generators, since $\A(Y(n),A) \cong \ca{A}^n$, $\V$-naturally in $A \in \Alg{\T}$.
\end{ParSubSub}

Given a $\V$-category $\A$ over $\uV$, with associated $\V$-functor $G:\A \rightarrow \uV$, we say that $\A$ is \textbf{(strictly) discretely finitary algebraic} over $\uV$ if $G$ is (strictly) $\DFin_\V$-algebraic in the sense of \ref{par:ntalgs_emalgs}.  For brevity in the present paper, we will omit the qualification ``strictly'', with the understanding that strictness is implied unless otherwise indicated.  The following provides a flexible and practically useful way of finding a theory $\T$ for which $\Alg{\T}^! \cong \A$.

\begin{PropSubSub}\label{thm:assoc_th_assoc_alg}
Let $\A$ be a discretely finitary algebraic $\V$-category over $\uV$, via $G:\A \rightarrow \uV$.  Let $\T$ be a $\V$-category with $\ob\T = \NN$, and let $E:\T \rightarrow \A^\op$ be a fully faithful $\V$-functor sending each $n \in \ob\T = \NN$ to an object $E(n)$ of $\A$ that is free on $n$ generators.  Then $\T$ is a theory, and there is an isomorphism $\A \cong \Alg{\T}^!$ that commutes with the associated $\V$-functors to $\uV$.  For each object $A$ of $\A$, the corresponding normal $\T$-algebra has carrier $GA$, and its structural morphisms are the composites
$$\T(n,1) \times (GA)^n \cong GE(n) \times \A(E(n),A) \xrightarrow{1\times G_{E(n),A}} GE(n) \times \uV(GE(n),GA) \xrightarrow{\Ev} GA$$
where the first factor is the evident isomorphism.
\end{PropSubSub}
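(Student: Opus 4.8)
The plan is to identify $\T$, through its free objects $E(n)$, with the theory that already presents $\A$, and then to read off the algebra structure explicitly. First I would invoke the hypothesis that $\A$ is discretely finitary algebraic over $\uV$ \pref{par:ntalgs_emalgs}: there is a theory $\U$ together with an isomorphism $K:\A \xrightarrow{\sim} \Alg{\U}^!$ commuting with the forgetful $\V$-functors to $\uV$, so that $GA \cong \ca{KA}$ naturally. By the Yoneda description \pref{par:yoneda}, $\Alg{\U}$ has free algebras $Y(n)$ on $n$ generators with $\Alg{\U}(Y(n),B) \cong \ca{B}^n$; transporting their normalizations (via \eqref{eq:equiv_talgs_ntalgs}) along $K^{-1}$ yields, for each $n$, an object $F(n)$ of $\A$ that is free on $n$ generators in the sense of \pref{par:free_on_n_gens}, and the Yoneda isomorphisms give $\A(F(m),F(n)) \cong \U(n,m)$.

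Next I would exploit uniqueness of free objects. Both $E(n)$ and $F(n)$ are free on $n$ generators, so $\A(E(n),-)$ and $\A(F(n),-)$ are each isomorphic to $(G-)^n$ and hence to one another; the enriched Yoneda lemma supplies a unique isomorphism $E(n) \cong F(n)$ in $\A$ compatible with these representations. Since morphisms of theories are identity-on-objects \pref{par:jth}, these should assemble into an identity-on-objects isomorphism of $\V$-categories $\T \cong \U$, given on hom-objects by $\T(n,m) \cong \A^{\op}(E(n),E(m)) \cong \A^{\op}(F(n),F(m)) \cong \U(n,m)$, using full faithfulness of $E$ and $F$ and the isomorphisms $E(n)\cong F(n)$. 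The bookkeeping is to check that these hom-isomorphisms respect composition and carry the projections $\pi_i:n\to 1$ witnessing $n$ as a conical power in $\U$ to morphisms witnessing $n$ as a conical power in $\T$ (whence $\T$ acquires the structure of a theory, with $\pi_1 = \id_1$ when $n=1$). Granting this, $\T \cong \U$ in $\Th$, and precomposition along this isomorphism gives $\Alg{\U}^! \cong \Alg{\T}^!$ over $\uV$, which composed with $K$ furnishes the desired isomorphism $\A \cong \Alg{\T}^!$.

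To obtain the explicit structural morphisms, it is cleanest to describe the comparison directly as the $\V$-functor $\A(E(-),-):\A \to \Alg{\T}$. For each $A$, the composite $\T \xrightarrow{E} \A^{\op} \xrightarrow{\A(-,A)} \uV$ preserves conical powers, since under the freeness isomorphisms $\A(E(n),A) \cong (GA)^n$ and $\A(E(1),A) \cong GA$ (the latter since $\A(E(1),-)\cong(G-)^1 = G$) the projections $\pi_i$ act as the product projections $(GA)^n \to GA$; hence $\A(E(-),A)$ is a $\T$-algebra with carrier $\cong GA$, whose normalization has carrier exactly $GA$. By construction its structural morphism $\T(n,1)\otimes(GA)^n \to GA$ is the composition morphism of $\A$; rewriting $\T(n,1) \cong GE(n)$ via $G \cong \A(E(1),-)$ and using that the hom-action of $G \cong \A(E(1),-)$ is post-composition, this composition morphism is exactly the stated composite through $1 \times G_{E(n),A}$ followed by $\Ev$.

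I expect the principal obstacle to be the enriched coherence and strictness bookkeeping rather than any single hard idea. Concretely: showing that the pointwise free-object isomorphisms $E(n)\cong F(n)$ are compatible with composition and with the designated powers, so that $\T\cong\U$ is an honest identity-on-objects isomorphism in $\Th$ (and not merely an equivalence); and correspondingly verifying that normalizing $\A(E(-),A)$ reproduces precisely the displayed structural morphism. The conceptual input—uniqueness of free objects and the identification of the presenting theory with the free algebras—comes from the enriched Yoneda lemma together with \pref{par:yoneda} and \pref{par:jary_mnds}; the residual labour is to transport all the identifications through coherently and to confirm that the normalization yields the stated formula.
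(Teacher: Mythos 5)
Your proposal is correct and follows essentially the same route as the paper: identify $\A$ with $\Alg{\U}^!$ for some theory $\U$, use the uniqueness of free objects on $n$ generators (via the Yoneda algebras $Y(n)$) to obtain an identity-on-objects isomorphism $\T \cong \U$, and then read off the structural morphisms from the identification of the $\T$-algebra corresponding to $A$ with $\A(E-,A)$, rewriting composition through the representation $G \cong \A(E(1),-)$. The bookkeeping you flag (compatibility with composition and with the designated power projections) is exactly what the paper also leaves as a straightforward verification.
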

\begin{proof}
We may identify $G:\A \rightarrow \uV$ with $\ca{\text{$-$}}:\Alg{\U}^! \rightarrow \uV$ for some theory $\U$.  Writing $I:\A \hookrightarrow \Alg{\U}$ for the inclusion, we deduce that both $I^\op E:\T \rightarrow \Alg{\U}^\op$ and $Y:\U \rightarrow \Alg{\U}^\op$ \pref{par:yoneda} are fully faithful $\V$-functors sending each $n \in \ob\T = \ob\U = \NN$ to an object of $\Alg{\U}$ that is free on $n$ generators, so there is an identity-on-objects isomorphism $H:\T \xrightarrow{\sim} \U$ and an isomorphism $I^\op E \cong YH$.  Hence w.l.o.g. we may assume that $\U = \T$ and that $I^\op E \cong Y$, so that $\A = \Alg{\T}^!$.  By \ref{par:yoneda}, the $\T$-algebra $A$ is isomorphic to $\Alg{\T}(Y-,A) = \A(E-,A):\T \rightarrow \uV$, and the components of the resulting isomorphism $\A(E-,A) \cong A$ are the isomorphisms $\A(E(n),A) \cong (GA)^n = A(n)$ associated to the free objects $E(n)$ in $\A$.  The result now follows by a straightforward verification, using the $\V$-functoriality of $G$.
\end{proof}

\begin{RemSubSub}[\textbf{Associated theories}]\label{rem:assoc_th}
There is more than one canonical choice of theory $\T$ meeting the requirements of \ref{thm:assoc_th_assoc_alg}, but obviously all such choices are isomorphic.  For example, letting $F \dashv G:\A \rightarrow \uV$, we can take $\T$ to be the $\V$-category with hom-objects $\T(n,m) = \A(Fm,Fn)$ $(n,m \in \NN)$, with composition and identities as in $\A^\op$, where we have written simply $n$ to denote the $n$-th copower $n \cdot 1$ of the terminal object $1$ of $\V$.  But since $Fm$ is free on $m$ generators we have isomorphisms $\A(Fm,Fn) \cong (GFn)^m$, so there is an isomorphic theory $\T$ with $\T(n,m) = (GFn)^m$, and in particular $\T(n,1) = GFn$.  Given an object $A$ of $\A$ we deduce by \ref{thm:assoc_th_assoc_alg} that the corresponding normal $\T$-algebra has carrier $GA$ and structural morphisms
\begin{equation}\label{eq:str_morphs_corr_alg_for_assoc_th}GFn \times (GA)^n \cong GFn \times \A(Fn,A) \xrightarrow{1\times G_{Fn,A}} GFn \times \uV(GFn,GA) \xrightarrow{\Ev} GA\;.\end{equation}
\end{RemSubSub}

\begin{RemSubSub}\label{rem:power_cones_in_assoc_th}
In the situation of \ref{thm:assoc_th_assoc_alg}, the object $E(n)$ is an $n$-th copower of $E(1)$ in $\A$, for each $n \in \NN$.  Writing $(\iota_i:E(1) \rightarrow E(n))_{i=1}^n$ for the associated cocone, the fact that $E$ is fully faithful entails that for each $i = 1,...,n$ there is a unique morphism $\pi_i:n \rightarrow 1$ in $\T$ with $E(\pi_i) = \iota_i$, and in view of the proof of \ref{thm:assoc_th_assoc_alg} it is easy to show that the morphisms $\pi_i$ are the designated $n$-th power projections in $\T$.
\end{RemSubSub}

\subsection{Commutants for theories and monads}\label{sec:cmtn}

Let $j:\J \rightarrow \uV$ be an eleutheric system of arities \pref{par:jary_mnds}.

\begin{ParSubSub}[\textbf{Commutants for theories}]\label{par:cmtnts_th}
Given a morphism of $\J$-theories $A:\T \rightarrow \U$, we can construe $A$ as a $\T$-algebra in $\U$ with carrier $I$ \pref{par:alg_jth}.  Assuming that $\Alg{\T}_\U$ exists, the \textbf{commutant} $\T^\perp_A$ of $A$ (or of $\T$ with respect to $A$) is defined in \cite[7.1]{Lu:Cmtnts} as the full $\J$-theory $(\Alg{\T}_\U)_A$ of $A$ in the $\V$-category of $\T$-algebras in $\U$.  Hence $\T^\perp_A$ is an instance of \textit{the full $\J$-theory of an object} in the sense of \ref{par:full_th}, and $\T^\perp_A$ has hom-objects
\begin{equation}\label{eq:cmt}\T^\perp_A(J,K) = \Alg{\T}_\U([J,A],[K,A])\;\;\;\;\;\;(J,K \in \ob\J),\end{equation}
with composition and identities as in $\Alg{\T}_\U$.  Here $[J,A]$ and $[K,A]$ are the cotensors of $A$ by $J$ and $K$ in $\Alg{\T}_\U$, and so $\T^\perp_A(J,K)$ is the object of $\T$-homomorphisms between these $\T$-algebras.  

We can choose each pointwise cotensor $[J,A]$ with $J \in \ob\J$ in such a way that the `carrier' $\V$-functor $G = \Ev_I:\Alg{\T}_\U \rightarrow \U$ sends $[J,A]$ to the designated cotensor $[J,I] = J$ in $\U$ \pref{par:jth}.  Then the periphery of the following diagram commutes (strictly)
\begin{equation}\label{eq:cmtnt_diagr}
\xymatrix{
\J^\op \ar[rr]^{[-,A]} \ar[dr]^{b} \ar@/_{2ex}/[ddr]_{\upsilon} & & \Alg{\T}_\U \ar@/^{2ex}/[ddl]^{G}\\
& \T^\perp_A \ar[d]^k \ar[ur]^(.4){i} & \\
& \U & 
}
\end{equation}
where $\upsilon = [-,I]:\J^\op \rightarrow \U$ is the unique morphism of $\J$-theories \pref{par:jth}.  By definition, the commutant $\T^\perp_A$ is obtained by factoring $[-,A]$ as a bijective-on-objects $\V$-functor $b$ followed by a fully faithful $\V$-functor $i$ as in this diagram.  Letting $k = G \circ i$ as in the diagram, we find that $k$ is faithful since both $i$ and $G$ are so \pref{par:vcat_talgs}.  It is now immediate that $k \circ b = \upsilon$, so the diagram commutes.  But $b = [-,I]:\J^\op \rightarrow \T^\perp_A$ is the unique morphism of $\J$-theories, so $k$ presents $\T^\perp_A$ as a subtheory of $\U$ \pref{par:jth}.

Even if $\Alg{\T}_\U$ does not exist, we can form the commutant $\T^\perp_A$ as soon as the needed objects of $\V$-natural transformations \eqref{eq:cmt} exist, in which case we say that $\T^\perp_A$ \textbf{exists} \cite[7.1]{Lu:Cmtnts}.  By \ref{par:vcat_talgs}, if $\V$ has intersections of $(\ob\J)$-indexed families of strong subobjects, then every morphism of $\J$-theories has a commutant.

Given instead an arbitrary $\T$-algebra $A:\T \rightarrow \uV$ in $\uV$, we can consider the induced morphism of $\J$-theories $A:\T \rightarrow \uV_{|A|}$ \pref{par:full_th}.  The commutant $\T^\perp_A \hookrightarrow \uV_{|A|}$ of the latter morphism \textit{always} exists \cite[10.15]{Lu:Cmtnts}, and we call it the \textbf{commutant} of $\T$ with respect the $\T$-algebra $A$.  Here $\T^\perp_A$ is precisely the full $\J$-theory of $A$ in $\Alg{\T}$ \cite[7.10]{Lu:Cmtnts}.  By \ref{par:full_th} and \ref{par:lims_algs}, we deduce that $\ca{A}$ is the carrier of a $\T^\perp_A$-algebra that we sometimes denote by $A:\T^\perp_A \rightarrow \uV$, by abuse of notation, and the latter $\T^\perp_A$-algebra is normal if the $\T$-algebra $A$ is normal.
\end{ParSubSub}

\begin{ParSubSub}[\textbf{Commutation}]\label{par:cmtn}
The notion of commutant is related to a notion of \textit{commutation}, as follows.  Given morphisms of $\J$-theories $A:\T \rightarrow \U$ and $B:\sS \rightarrow \U$, let us assume that the commutants $\T^\perp_A$ and $\sS^\perp_B$ of $A$ and $B$ both exist.  Then $B$ factors through the commutant $\T^\perp_A \hookrightarrow \U$ if and only if $A$ factors through the commutant $\sS^\perp_B \hookrightarrow \U$ \cite[7.8, 5.8]{Lu:Cmtnts}.  If these equivalent conditions hold, then we say that $A$ \textbf{commutes with} $B$, written $A \perp B$.  Hence this relation $\perp$ is symmetric, i.e. $A \perp B \;\Leftrightarrow\; B \perp A$.  This notion of commutation can defined without reference to the notion of commutant, as is done in \cite[5.12]{Lu:Cmtnts}, where it is shown that the above characterization in terms of commutants is equivalent \cite[7.8]{Lu:Cmtnts}.

It is proved in \cite[10.5]{Lu:Cmtnts} that this notion of commutation is equivalent to Kock's notion of commutation of cospans of $\V$-monads on $\uV$ \cite[4.1]{Kock:DblDln}, in the sense that if $\alpha:\TT \rightarrow \UU$ and $\beta:\SSS \rightarrow \UU$ are the morphisms of $\J$-ary monads corresponding to $A$ and $B$ under the equivalence $\ThJ \simeq \MndJ(\uV)$ \pref{par:jary_mnds}, then $A$ commutes with $B$ if and only if $\alpha$ commutes with $\beta$ in Kock's sense.  In this way, the notion of commutation is independent of the choice of the system of arities $\J$. 
\end{ParSubSub}

\begin{ParSubSub}[\textbf{Commutants for monads}]\label{par:cmtnts_mnds}
Unlike the notion of commutation \pref{par:cmtn}, the notion of \textit{commutant} depends on the choice of arities $\J$ \cite[10.12]{Lu:Cmtnts}.  If $\alpha:\TT \rightarrow \UU$ is a morphism of $\J$-ary monads for which the corresponding morphism of $\J$-theories $A:\T \rightarrow \U$ has a commutant $\T^\perp_A$, then the \textbf{$\J$-ary commutant} $\TTperpJwrt{\alpha}$ of $\alpha$ is defined as the $\J$-ary monad corresponding to $\T^\perp_A$ \cite[10.8]{Lu:Cmtnts}.  For the system of arities $1_{\uV}:\uV \rightarrow \uV$, recall that $\uV$-ary monads are simply arbitrary $\V$-monads on $\uV$ \pref{par:jary_mnds}.  Given a morphism of $\V$-monads $\alpha:\TT \rightarrow \UU$ we call the $\uV$-ary commutant of $\alpha$ (if it exists) the \textbf{absolute commutant} of $\alpha$ \cite[10.8]{Lu:Cmtnts}.

In particular, if we are given a $\J$-ary monad $\TT$ and a $\TT$-algebra $A$, then upon letting $\T$ be the corresponding $\J$-theory and $A'$ the corresponding normal $\T$-algebra in $\uV$ \pref{par:ntalgs_emalgs}, the \textbf{$\J$-ary commutant} $\TTperpJwrt{A}$ of $\TT$ with respect to $A$ may be defined as the $\J$-ary monad corresponding to the commutant $\T^\perp_{A'}$, which always exists \pref{par:cmtnts_th}.  As a special case, we obtain the notion of \textbf{absolute commutant} $\TT^\perp_A := \TT^\perp_{\kern-.1ex\scriptscriptstyle\uV A}$ of an arbitrary $\V$-monad $\TT$ on $\uV$, with respect to a $\TT$-algebra $A$, which always exists.

Hence, given a $\J$-ary monad $\TT$ and a $\TT$-algebra $A$, we can form both $\TTperpJwrt{A}$ and $\TT^\perp_A$.  Again writing $\T$ for the $\J$-theory corresponding to $\TT$ and $A':\T \rightarrow \uV$ for the normal $\T$-algebra corresponding to $A$, we claim that the \textit{codensity $\V$-monad} $\SSS$ of $A'$ exists and is isomorphic to the \textit{absolute} commutant $\TT^\perp_A$.  One can consult \cite[II]{Dub} for a definition of the codensity $\V$-monad $\SSS$, whose underlying $\V$-functor $S:\uV \rightarrow \uV$ is the right Kan extension of $A'$ along $A'$, given by 
\begin{equation}\label{eq:codensity_ends}SV = \int_{J \in \T}\uV(\uV(V,A'J),A'J)\end{equation}
$\V$-naturally in $V \in \uV$.  Note that this right Kan extension exists, since the needed ends \eqref{eq:codensity_ends} are precisely the hom-objects $\Alg{\T}([V,A'],A')$ in $\Alg{\T}$, which necessarily exists \pref{par:vcat_talgs}.  The Kleisli $\V$-category $\uV_\SSS$ for $\SSS$ has hom-objects 
$$\uV_{\SSS}(W,V) = \uV(W,SV) \cong \int_{J \in \T}\uV(\uV(V,A'J),\uV(W,A'J)) = \Alg{\T}([V,A'],[W,A'])$$
for all $V,W \in \ob\V$.  Moreover, if we let $\sS = \uV_\SSS^\op$ and let $\F$ denote the full $\uV$-theory of $A'$ in $\Alg{\T}$, then by means of \cite[II.3]{Dub} or a straightforward verification we find that these isomorphisms constitute an isomorphism of $\uV$-theories $\sS \cong \F$, recalling that $\sS$ is the $\uV$-theory corresponding to $\SSS$ \pref{par:jary_mnds}.  The $\V$-monad $\TT$ has a corresponding $\uV$-theory $\sP = \uV_\TT^\op$, with a normal $\sP$-algebra $B$ corresponding to $A$, and we know that $\Alg{\T} \simeq \Alg{\TT} \simeq \Alg{\sP}$, so $\F$ is isomorphic to the full $\uV$-theory of $B$ in $\Alg{\sP}$, i.e. $\F$ is isomorphic to the commutant $\sP^\perp_B$.  Hence we have an isomorphism of $\uV$-theories $\sS \cong \sP^\perp_B$, and by passing to the corresponding $\V$-monads we deduce that $\SSS \cong \TT^\perp_A$.  A similar description of the \textit{$\J$-ary} commutant $\TTperpJwrt{A}$ as a codensity $\V$-monad is not available; rather, Proposition \ref{thm:jary_cmt_rest_abs_cmt} below entails that $\TTperpJwrt{A}$ is the $\J$-\textit{ary restriction} of the codensity $\V$-monad $\SSS \cong \TT^\perp_A$, in the sense of \ref{def:jary_restn}.
\end{ParSubSub}

\begin{ParSubSub}[\textbf{Commutativity}]\label{par:comm}
A $\J$-theory $\T$ is \textbf{commutative} if its corresponding $\J$-ary monad $\TT$ is commutative in the sense defined by Kock \cite[3.1]{Kock:Comm}, equivalently, if the identity morphism $1_\T:\T \rightarrow \T$ commutes with itself \cite[10.6]{Lu:Cmtnts} in the sense of \cite[5.12]{Lu:Cmtnts}, cf. \ref{par:cmtn}.  The commutant of $1_\T$ is called the \textbf{centre} of $\T$ and is denoted by $Z(\T)$ \cite[7.14]{Lu:Cmtnts}, provided that it exists.  Assuming that it does, $\T$ is commutative if and only if the subtheory inclusion $Z(\T) \hookrightarrow \T$ is an isomorphism.
\end{ParSubSub}

\begin{ParSubSub}[\textbf{Saturated and balanced theories over a base}]\label{par:sat_bal}
Fixing a $\J$-theory $\U$, a \textbf{$\J$-theory over $\U$} is, by definition, an object of the slice category $\ThJ \slash \U$, i.e. a $\J$-theory $\T$ equipped with a morphism $\T \rightarrow \U$.  The \textbf{commutant} $\T^\perp$ of a $\J$-theory $\T$ over $\U$ is, by definition, the commutant of the associated morphism $\T \rightarrow \U$.  

Let us assume that the commutant of every $\J$-theory over $\U$ exists.  Then there is a functor $(-)^\perp:(\ThJ\slash\U)^\op \rightarrow \ThJ\slash\U$ given on objects by $\T \mapsto \T^\perp$, and $(-)^\perp$ is right adjoint to its formal dual \cite[8.6]{Lu:Cmtnts}.  Further, this adjunction restricts to a Galois connection on the preordered class $\SubTh(\U)$ of subtheories of $\U$, i.e. an adjunction $(-)^\perp \dashv (-)^\perp:\SubTh(\U)^\op \rightarrow \SubTh(\U)$ \cite[8.7]{Lu:Cmtnts}.

A $\J$-theory $\T$ over $\U$ is said to be \textbf{saturated} if $\T^{\perp\perp} \cong \T$ (in $\ThJ\slash\U$), and $\T$ is said to be \textbf{balanced} if $\T^\perp \cong \T$.  Clearly $\T$ is saturated if and only if $\T$ is (isomorphic to) the commutant of some $\J$-theory over $\U$, so every saturated $\J$-theory over $\U$ is necessarily a subtheory of $\U$.  In particular, any balanced $\J$-theory over $\U$ is necessarily a saturated subtheory of $\U$.  By \cite[8.10]{Lu:Cmtnts}, a subtheory $\T$ of $\U$ is commutative if and only if $\T \lt \T^\perp$ as subtheories of $\U$.  Hence a balanced $\J$-theory over $\U$ is necessarily a commutative, saturated subtheory of $\U$.

Recall that the commutant of a $\J$-theory $\T$ with respect to a given $\T$-algebra in $\uV$ necessarily exists \pref{par:cmtnts_th}.  In view of \ref{par:full_th}, it follows that if we let $\U = \uV_{V}$ be the full $\J$-theory of a given object $V$ of $\V$ with designated $\J$-cotensors, then the commutant of every $\J$-theory over $\U$ exists.  Given a $\T$-algebra $A:\T \rightarrow \uV$, we shall say that $\T$ is \textbf{saturated} (resp. \textbf{balanced}) with respect to $A$ if the induced morphism of $\J$-theories $A:\T \rightarrow \uV_{|A|}$ \pref{par:full_th} exhibits $\T$ as a saturated (resp. balanced) subtheory of $\uV_{|A|}$.

In view of the equivalence between $\J$-theories and $\J$-ary monads \pref{par:jary_mnds}, the above notions yield corresponding concepts for $\J$-ary monads.  In particular, given a $\J$-ary monad $\TT$ and a $\TT$-algebra $A$, we can ask whether $\TT$ is \textbf{saturated} (resp. \textbf{balanced}) with respect to $A$.
\end{ParSubSub}

\subsection{Algebraic symmetric monoidal closed \texorpdfstring{$\V$}{V}-categories}\label{sec:alg_smc_vcats}

Recall that a \textit{symmetric monoidal closed $\V$-category} is a $\V$-category equipped with symmetric monoidal closed structure that is entirely $\V$-functorial and $\V$-natural; see, e.g., \cite[7.1]{Lu:RelSMCI} for a definition.  Let $j:\J \hookrightarrow \uV$ be an eleutheric system of arities \pref{par:jary_mnds}.

The following is a corollary of a well-known result of Kock \cite{Kock:ClsdCatsGenCommMnds}.

\begin{PropSubSub}\label{thm:talgs_comm_th_smcvcat}
Let $\T$ be a commutative $\J$-theory such that the category of normal $\T$-algebras $\Alg{\T}^!_0$ has reflexive coequalizers.  Then $\Alg{\T}^!$ carries the structure of a symmetric monoidal closed $\V$-category.  Further, the associated $\V$-adjunction $F \dashv G:\Alg{\T}^! \rightarrow \uV$ carries the structure of a symmetric monoidal $\V$-adjunction.
\end{PropSubSub}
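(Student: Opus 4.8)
The plan is to reduce the statement to the $\V$-enriched form of Kock's theorem on commutative monads, by translating from the $\J$-theory $\T$ to its associated $\J$-ary monad. First I would use \ref{par:ntalgs_emalgs} to identify the $\V$-category $\Alg{\T}^!$ of normal $\T$-algebras with the $\V$-category $\Alg{\TT}$ of Eilenberg--Moore algebras of the corresponding $\J$-ary monad $\TT$, under which $G = \ca{\text{$-$}}$ becomes the Eilenberg--Moore forgetful $\V$-functor and $F$ its left $\V$-adjoint. By \ref{par:comm}, the hypothesis that $\T$ is commutative says exactly that $\TT$ is a commutative $\V$-monad on $\uV$ in Kock's sense. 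Since $\V$ is by standing assumption symmetric monoidal closed with equalizers, the ambient hypotheses of Kock's theorem are met.

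Next I would check the single cocompleteness hypothesis that Kock's construction needs. The internal hom $[A,B]$ of two $\TT$-algebras has a carrier cut out of $\uV(\ca{A},\ca{B})$ by an equalizer expressing $\TT$-linearity, which exists because $\V$ has equalizers; so the internal homs are available with no further assumption. The tensor product of algebras, by contrast, is built as a reflexive coequalizer: one presents $A$ and $B$ by their canonical reflexive coequalizers of free algebras, uses that $F$ preserves tensors of free objects, and forms the induced reflexive coequalizer in $\Alg{\TT}$. Thus the only extra requirement is that $\Alg{\TT}_0$ have reflexive coequalizers, which is precisely the given hypothesis on $\Alg{\T}^!_0$ transported across the identification above.

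With these hypotheses in place, Kock's theorem equips the $\V$-category $\Alg{\TT} \cong \Alg{\T}^!$ with a symmetric monoidal closed structure --- unit $FI$, tensor the coequalizer just described, internal hom as above --- all of it $\V$-functorial and $\V$-natural. The same construction makes the coherence isomorphisms $F(V \otimes W) \cong FV \otimes FW$ and $FI \cong FI$ into the data of a strong symmetric monoidal $\V$-functor structure on $F$; by doctrinal adjunction $G$ then carries a canonical lax symmetric monoidal structure and the unit and counit become monoidal, which is exactly the assertion that $F \dashv G$ is a symmetric monoidal $\V$-adjunction.

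The main obstacle I anticipate is not a genuine mathematical difficulty but a matter of confirming that Kock's result, often stated for strong/commutative monads, produces an honestly $\V$-\emph{enriched} symmetric monoidal closed structure rather than merely one on the underlying ordinary category, and that \emph{reflexive} coequalizers suffice in place of all coequalizers. The latter is the standard remark that the defining coequalizer for the algebra tensor is reflexive; the former follows from the fact that every step of Kock's construction --- the linearity equalizer and the bilinearity coequalizer --- is carried out $\V$-enriched, so the resulting structure is automatically $\V$-functorial and $\V$-natural.
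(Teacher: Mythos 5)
Your proposal is correct and follows essentially the same route as the paper: identify $\Alg{\T}^!$ with the Eilenberg--Moore $\V$-category of the associated commutative $\V$-monad $\TT$ via \ref{par:ntalgs_emalgs} and \ref{par:comm}, then invoke the enriched form of Kock's construction (the paper cites its own references for the $\V$-enriched version, which is exactly the point you flag and resolve), with equalizers supplying the internal homs and the reflexive-coequalizer hypothesis supplying the algebra tensor. Your additional remarks on doctrinal adjunction for the monoidal adjunction structure on $F \dashv G$ match the content of the cited results.
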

\begin{proof}
Letting $\TT$ denote the commutative $\V$-monad on $\uV$ associated to $\T$ \pref{par:comm}, we may identify $\Alg{\T}^!$ with the $\V$-category of $\TT$-algebras and identify the $\V$-adjunction $F \dashv G$ with the Eilenberg-Moore $\V$-adjunction for $\TT$ \pref{par:ntalgs_emalgs}.  The result now follows from \cite[5.5.4, 5.5.6]{Lu:PhD} and \cite[11.2]{Lu:RelSMCI}.
\end{proof}

\begin{DefSubSub}\label{def:bimorph}
Let $A,B,C$ be $\T$-algebras for a $\J$-theory $\T$.  We say that a morphism $f:\ca{A}\times\ca{B} \rightarrow \ca{C}$ is a $\T$-\textbf{bimorphism} from $A,B$ to $C$ if both transposes $f_1:\ca{A} \rightarrow \uV(\ca{B},\ca{C})$ and $f_2:\ca{B} \rightarrow \uV(\ca{A},\ca{C})$ of $f$ are $\T$-homomorphisms when $\uV(\ca{B},\ca{C})$ and $\uV(\ca{A},\ca{C})$ are regarded as the carriers of the cotensors $[\ca{B},C]$ and $[\ca{A},C]$ of $C$ by $\ca{B},\ca{A}$ in $\Alg{\T}$, respectively \pref{par:lims_algs}. 
\end{DefSubSub}

\begin{PropSubSub}\label{thm:desc_mon_str_in_talgs}
Let $\T$ be a commutative $\J$-theory such that the category of normal $\T$-algebras $\Alg{\T}^!_0$ has reflexive coequalizers.  Then the unit object in $\Alg{\T}^!$ is the free normal $\T$-algebra on the unit object $I$ of $\V$.  Given normal $\T$-algebras $A$ and $B$, there is a bijection between morphisms $A \otimes B \rightarrow C$ in $\Alg{\T}^!$ and $\T$-bimorphisms $\ca{A} \times \ca{B} \rightarrow \ca{C}$, natural in $C \in \Alg{\T}^!_0$, where $\otimes$ denotes the monoidal product in $\Alg{\T}^!$.
\end{PropSubSub}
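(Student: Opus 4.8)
The plan is to build directly on \ref{thm:talgs_comm_th_smcvcat}, which already endows $\Alg{\T}^!$ with symmetric monoidal closed $\V$-structure and presents the free-algebra $\V$-adjunction $F \dashv G : \Alg{\T}^! \to \uV$ as a symmetric monoidal $\V$-adjunction; thus the monoidal product $\otimes$, the unit object, and the internal hom of $\Alg{\T}^!$ are all at our disposal. The first assertion becomes a statement about the unit, and the second a statement about the universal property of $\otimes$, each extracted by identifying the relevant structure concretely.

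For the first assertion I would invoke the standard fact (doctrinal adjunction) that the left $\V$-adjoint of a symmetric monoidal $\V$-adjunction is strong symmetric monoidal; hence $F$ carries the unit object $I$ of $\V$ to the unit object of $\Alg{\T}^!$. Since $FI$ is by definition the free normal $\T$-algebra on $I$ (the carrier $\V$-functor $G$ being the forgetful one under the identification of \ref{par:ntalgs_emalgs}), this is exactly the claim. For the bijection I would first identify the internal hom $[B,C]$ of two normal $\T$-algebras. The cotensor $[\ca{B},C]$ of $C$ by the object $\ca{B}$ of $\V$ has carrier $\uV(\ca{B},\ca{C})$ \pref{par:lims_algs}, with algebra structure computed pointwise. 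The point I would establish (or extract from \cite[5.5.4, 5.5.6]{Lu:PhD}, ultimately resting on Kock \cite{Kock:ClsdCatsGenCommMnds}) is that $[B,C]$ is the subalgebra of $[\ca{B},C]$ whose carrier is the object $\Alg{\T}(B,C)$ of $\T$-homomorphisms, sitting inside $\uV(\ca{B},\ca{C})$ via the faithful carrier $\V$-functor \pref{par:vcat_talgs}. This is precisely where commutativity of $\T$ is used: it is exactly what guarantees that the pointwise operations on $\uV(\ca{B},\ca{C})$ restrict to $\Alg{\T}(B,C)$, so that the object of homomorphisms is a subalgebra serving as internal hom, with $\Alg{\T}^!(A \otimes B, C) \cong \Alg{\T}^!(A,[B,C])$ natural in $C$.

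Granting this, the bijection follows by unwinding. A morphism $A \to [B,C]$ in $\Alg{\T}^!_0$ is a $\T$-homomorphism $g : \ca{A} \to \Alg{\T}(B,C)$; composing with the subalgebra inclusion $[B,C] \hookrightarrow [\ca{B},C]$ gives a $\T$-homomorphism into the cotensor, whose $\V$-transpose is a morphism $f : \ca{A} \times \ca{B} \to \ca{C}$ whose first transpose $f_1$ is the composite $\ca{A} \xrightarrow{g} \Alg{\T}(B,C) \hookrightarrow \uV(\ca{B},\ca{C})$. Here $f_1$ being a $\T$-homomorphism into the cotensor $[\ca{B},C]$ is exactly the first condition of \ref{def:bimorph}, while $g$ factoring through the subalgebra $[B,C]$ says that each value $f(a,-) : \ca{B} \to \ca{C}$ is a homomorphism $B \to C$, which is exactly the second condition (that the second transpose $f_2$ is a homomorphism). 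Conversely every $\T$-bimorphism $\ca{A} \times \ca{B} \to \ca{C}$ arises in this way, and the correspondence is natural in $C$ because it is assembled from the closedness bijection $\Alg{\T}^!(A \otimes B, C) \cong \Alg{\T}^!(A,[B,C])$, which is. Composing the two then yields the asserted natural bijection between morphisms $A \otimes B \to C$ and $\T$-bimorphisms.

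I expect the main obstacle to be the concrete identification of the internal hom $[B,C]$ as the subalgebra of homomorphisms inside the cotensor, together with the care needed to see that the two \emph{a priori} asymmetric conditions of \ref{def:bimorph} match the two parts of ``being a morphism into $[B,C]$'' (homomorphy into the cotensor, plus factoring through the subalgebra). The apparent asymmetry is resolved by the pointwise description of the algebra structure on cotensors \pref{par:lims_algs}: it shows that ``$f_1$ a homomorphism'' and ``$f_2$ a homomorphism'' are precisely homomorphy in the first and the second variable separately, which is what lines them up with the two parts above.
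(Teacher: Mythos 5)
Your proposal is correct, and its first half coincides with the paper's proof: both invoke doctrinal adjunction (\cite[1.4]{Ke:Doctr}) to conclude that the left adjoint $F$ of the symmetric monoidal $\V$-adjunction of \ref{thm:talgs_comm_th_smcvcat} is strong monoidal, so that $FI$ is the unit object. For the bijection, however, you take a genuinely different route. The paper proceeds by citation: morphisms $A \otimes B \rightarrow C$ correspond naturally to $\TT$-bimorphisms in the monad-theoretic sense by \cite[4.1]{Jac:SemWkngContr} or \cite[2.3.4]{Seal:TensMndActn}, and remarks of Kock \cite[p.~101]{Kock:Dist} identify that notion with the two-transposes condition of \ref{def:bimorph}. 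You instead derive the bijection from the closed structure: you identify the internal hom $[B,C]$ of $\Alg{\T}^!$ as the subalgebra of the cotensor $[\ca{B},C]$ carried by $\Alg{\T}(B,C) \hookrightarrow \uV(\ca{B},\ca{C})$ --- which is indeed where commutativity enters, and which is the content of Kock's construction \cite{Kock:ClsdCatsGenCommMnds} (resp. \cite[5.5.4]{Lu:PhD}) rather than something visible from the bare statement of \ref{thm:talgs_comm_th_smcvcat} --- and then unwind $\Alg{\T}^!(A\otimes B,C) \cong \Alg{\T}^!(A,[B,C])$. The crux, which you handle correctly, is that homomorphy of $f_1$ into the cotensor $[\ca{B},C]$ is one half of \ref{def:bimorph}, while the factorization of $f_1$ through $\Alg{\T}(B,C)$ is, via the representation $\V(V,\Alg{\T}(B,C)) \cong \Alg{\T}_0(B,[V,C])$ and the symmetry of the product, precisely homomorphy of $f_2$ into $[\ca{A},C]$; reflecting homomorphisms along the subalgebra inclusion is licit because the carrier $\V$-functor is faithful \pref{par:vcat_talgs}. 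Your route buys an explicit, self-contained description of the bijection (and makes the naturality in $C$ transparent, being inherited from the closedness adjunction), at the price of needing the concrete form of the internal hom; the paper's is shorter but outsources both the universal property of $\otimes$ and the translation between the two notions of bimorphism.
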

\begin{proof}
In the notation of \ref{thm:talgs_comm_th_smcvcat}, the left adjoint $F$ is necessarily strong monoidal, by \cite[1.4]{Ke:Doctr}, so $F(I)$ is the unit object of $\Alg{\T}^!$.  Recall that in \ref{thm:talgs_comm_th_smcvcat} we identified $\Alg{\T}^!$ with the $\V$-category $\Alg{\TT}$ of $\TT$-algebras for the associated commutative $\V$-monad $\TT$.  By \cite[4.1]{Jac:SemWkngContr} or \cite[2.3.4]{Seal:TensMndActn} we know that there is a natural bijection between $\TT$-homomorphisms $A \otimes B \rightarrow C$ and $\TT$-\textit{bimorphisms} $f:A \times B \rightarrow C$ (\cite[\S5]{Jac:SemWkngContr} \cite[2.1]{Seal:TensMndActn}), where we now freely omit notational distinctions between $\TT$-algebras and their carriers.  But remarks in \cite[p. 101]{Kock:Dist} entail that $f$ is a $\TT$-bimorphism iff both transposes $f_1:A \rightarrow \uV(B,C)$ and $f_2:B \rightarrow \uV(A,C)$ are $\TT$-homomorphisms, where $\uV(A,C),\uV(B,C)$ are here regarded as the cotensors of $C$ by the carriers of $A,B$ in $\Alg{\TT} = \Alg{\T}^!$, respectively.
\end{proof}

\section{Functional distribution monads and functional-analytic contexts}\label{sec:cdistn_mnds}

With reference to \S\ref{sec:intro}, the $\V$-category of $\T$-algebras for a commutative $\J$-theory $\T$ can be seen as supporting an abstract form of functional analysis with respect to any suitable `dualizing object' $S$, and this leads to the following definition:

\begin{DefSub}\label{def:fa_ctxt}\emptybox
\begin{enumerate}
\item A \textbf{functional-analytic context} $(\V,\J,\T,S)$ consists of (1) a symmetric mon\-oid\-al closed category $\V$ with equalizers, (2) an eleutheric system of arities $\J \hookrightarrow \uV$, (3) a commutative $\J$-theory $\T$, and (3) a $\T$-algebra $S$ in $\uV$ such that $\T$ is saturated with respect to $S$ \pref{par:sat_bal}.
\item A \textbf{(discretely) finitary functional-analytic context} is a functional-analytic context $(\V,\J,\T,S)$ in which $\V$ is a cartesian closed category with countable colimits and $\J$ is the system of arities $\DFin_\V$ \pref{eqn:j}.  Equivalently, a finitary functional-analytic context is given by a triple $(\V,\T,S)$ in which $\V$ is a cartesian closed category with equalizers and countable colimits, $\T$ is a commutative, discretely finitary theory enriched in $\V$, and $S$ is a $\T$-algebra in $\uV$ such that $\T$ is saturated with respect to $S$ \pref{par:sat_bal}.
\item A functional-analytic context $(\V,\J,\T,S)$ is said to be \textbf{balanced} if $\T$ is balanced with respect to $S$ \pref{par:sat_bal}.
\end{enumerate}
\end{DefSub}

\begin{ParSub}[\textbf{Functional-analytic contexts via monads}]\label{par:func_an_ctxt_mnds}
In view of \ref{par:jary_mnds}, \ref{par:ntalgs_emalgs}, \ref{par:comm}, and \ref{par:sat_bal}, a functional-analytic context is equivalently given by a quadruple $(\V,\J,\TT,S)$ where $\J \hookrightarrow \uV$ is an eleutheric system of arities in a symmetric monoidal closed category $\V$ with equalizers, $\TT$ is a commutative $\J$-ary monad \pref{par:jary_mnds}, and $S$ is a $\TT$-algebra such that $\TT$ is saturated with respect to $S$ \pref{par:sat_bal}.  In particular, a finitary functional-analytic context is equivalently given by a suitable triple $(\V,\TT,S)$ in which $\TT$ is a discretely finitary $\V$-monad \pref{par:disc_fin_vmonad}.
\end{ParSub}

\begin{ExaSub}\label{exa:cls_exa_ffa_ctxts}
The following general classes of examples of finitary functional-analytic contexts will be developed in subsequent sections.  Here $\V$ is an arbitrary cartesian closed category with equalizers, countable colimits, and intersections of countable families of strong subobjects.
\begin{enumerate}
\item Given a commutative rig $R$ in $\V$ \pref{def:rig_ring_module_bim_cmons}, there is a theory $\T = \Mat_R$ such that normal $\T$-algebras may be identified with $R$-modules in $\uV$ \pref{thm:th_lrmods}.  Taking $S = R$, we obtain a balanced finitary functional-analytic context $(\V,\Mat_R,R)$ \pref{par:sc_rlin_ctxt}, which we call the \textbf{scalar $R$-linear context} in $\V$.
\item Given a commutative rig $R$ in $\V$, there is a theory $\T = \Mat_R^\aff$ such that normal $\T$-algebras are \textbf{$R$-affine spaces} in $\V$ \pref{def:raff_sp}.  Taking $S = R$, we obtain a finitary functional-analytic context $(\V,\Mat_R^\aff,R)$ \pref{par:sc_raff_ctxt}, which we call the \textbf{scalar $R$-affine context} in $\V$.
\item Given a commutative \textit{preordered ring} $R$ in $\V$ \pref{par:preord_ring}, the \textit{positive part} $R_+$ of $R$ is a rig in $\V$ \pref{par:preord_ring}.  By definition, an \textbf{$R$-convex space} in $\V$ is an $R_+$-affine space in $\V$ \pref{def:rcvx_sp}.  We define the \textbf{positive $R$-convex context} in $\V$ as the scalar $R_+$-affine context $(\V,\Mat_{R_+}^\aff,R_+)$.
\end{enumerate}
\end{ExaSub}

\begin{ParSub}[\textbf{Notation}]\label{par:notn}
Given a functional-analytic context $(\V,\J,\T,S)$, we shall write $\T^\perp$ to denote the commutant of $\T$ with respect to $S$ \pref{par:cmtnts_th}, recalling that commutants with respect to $\T$-algebras in $\uV$ necessarily exist \pref{par:cmtnts_th}.  By \ref{par:cmtnts_th}, the carrier $\ca{S}$ of $S$ is also the carrier of a $\T^\perp$-algebra, which we denote also by $S$, by abuse of notation.  By \ref{par:lims_algs}, we know that $\Alg{\T^\perp}$ is a cotensored $\V$-category, so there is a $\V$-adjunction 
\begin{equation}\label{eq:hom_cot_for_the_tperp_alg_s}[-,S] \dashv \Alg{\T^\perp}(-,S)\;\;:\;\;\Alg{\T^\perp}^\op \rightarrow \uV\end{equation}
in which the $\V$-functor $[-,S]$ sends each object $V$ of $\V$ to the cotensor $[V,S]$ of $S$ by $V$ in $\Alg{\T^\perp}$, whose carrier is the internal hom $\uV(V,\ca{S})$ in $\V$ \pref{par:lims_algs}.
\end{ParSub}

\begin{DefSub}\label{def:cdisnt_mnd}
Given a functional-analytic context $(\V,\J,\T,S)$, we define the \textbf{functional distribution monad} 
$$\DD_{\scriptscriptstyle(\T,S)} = (D_{\scriptscriptstyle(\T,S)},\delta,\kappa)$$
determined by $(\V,\J,\T,S)$ as the $\V$-monad on $\uV$ induced by the $\V$-adjunction \eqref{eq:hom_cot_for_the_tperp_alg_s}.
Hence the underlying $\V$-endofunctor $D_{\scriptscriptstyle (\T,S)}:\uV \rightarrow \uV$ is given by
$$D_{\scriptscriptstyle(\T,S)}(V) = \Alg{\T^\perp}([V,S],S)$$
$\V$-naturally in $V \in \V$, so that $D_{\scriptscriptstyle(\T,S)}(V)$ is the object of $\T^\perp$-algebra homomorphisms from $[V,S]$ to $S$.  Individual $\T^\perp$-algebra homomorphisms $\mu:[V,S] \rightarrow S$ will be called \textbf{$(\T,S)$-distributions on $V$} or \textbf{functional distributions on $V$} in the context $(\V,\J,\T,S)$.  Hence we call $D_{\scriptscriptstyle(\T,S)}(V)$ the \textbf{object of $(\T,S)$-distributions on $V$}, and so we also call $\DD_{\scriptscriptstyle(\T,S)}$ the \textbf{$(\T,S)$-distribution monad}.
\end{DefSub}

The next section (\S\ref{sec:exa_cdistn_mnds}) describes several examples of functional distribution monads.

\begin{ParSub}[\textbf{Monad-theoretic notation for functional distributions}]\label{par:mnd_notn_fdistn}
Given a functional-analytic context $(\V,\J,\TT,S)$, formulated in monad-theoretic terms as in \ref{par:func_an_ctxt_mnds}, we denote by $\DD_{\scriptscriptstyle(\J,\TT,S)}$ the functional distribution monad determined by $(\V,\J,\TT,S)$, thus explicitly indicating the system of arities $\J$ within the notation, since the datum $\TT$ alone does not serve to specify $\J$.  We denote by $\TTperpJ$ the $\J$-ary commutant of $\TT$ with respect to $S$, which necessarily exists \pref{par:cmtnts_mnds}.  As in \ref{par:notn}, we know that the carrier of $S$ also carries the structure of a $\TTperpJ$-algebra, which we denote also by $S$, by abuse of notation.  
\end{ParSub}

By taking a purely monad-theoretic view, we obtain the following characterization of the functional distribution monad as a kind of `double commutant':

\begin{ThmSub}\label{thm:cdistn_mnd_dbl_cmtnt}
Let $(\V,\J,\TT,S)$ be a functional-analytic context.  Then the functional distribution monad $\DD_{\scriptscriptstyle(\J,\TT,S)}$ is (isomorphic to) the absolute commutant of $\TTperpJ$ with respect to $S$, i.e.
$$\DD_{\scriptscriptstyle(\J,\TT,S)} \cong (\TTperpJ)^\perp\;.$$
\end{ThmSub}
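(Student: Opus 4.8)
The plan is to prove the isomorphism at the level of $\uV$-theories, using the equivalence $\Th_{\uV} \simeq \Mnd_{\VCAT}(\uV)$ of \ref{par:jary_mnds} between $\uV$-theories and arbitrary $\V$-monads on $\uV$, under which a $\V$-monad $\TT$ corresponds to the opposite $\uV_\TT^\op$ of its Kleisli $\V$-category. Both $\V$-monads appearing in the statement are ordinary $\V$-monads on $\uV$, hence $\uV$-ary, so it suffices to exhibit a single $\uV$-theory to which both correspond. I claim this common $\uV$-theory is the full $\uV$-theory of $S$ in $\Alg{\T^\perp}$ in the sense of \ref{par:full_th}, i.e. the full sub-$\V$-category of $\Alg{\T^\perp}$ spanned by the cotensors $[V,S]$ $(V \in \ob\uV)$, with hom-objects $\Alg{\T^\perp}([V,S],[W,S])$. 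Here $\T^\perp$ denotes the $\J$-theory corresponding to $\TTperpJ$, and $S$ denotes also the $\T^\perp$-algebra with carrier $\ca{S}$ (\ref{par:notn}, \ref{par:mnd_notn_fdistn}).

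For the absolute commutant this is nearly a matter of unwinding definitions. By \ref{par:cmtnts_mnds}, $(\TTperpJ)^\perp$ is the absolute commutant of the $\V$-monad $\TTperpJ$ with respect to the $\TTperpJ$-algebra $S$, defined as the $\V$-monad corresponding to the commutant of $S$ taken over the $\uV$-theory $\uV_{\TTperpJ}^\op$ of $\TTperpJ$. By \ref{par:cmtnts_th} (for the arities $\J = \uV$) this commutant is precisely the full $\uV$-theory of $S$ in $\Alg{\TTperpJ}$, and since $\Alg{\TTperpJ} \simeq \Alg{\T^\perp}$ (\ref{par:ntalgs_emalgs}) this is the full $\uV$-theory of $S$ in $\Alg{\T^\perp}$, as claimed.

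For the functional distribution monad I would use its definition (\ref{def:cdisnt_mnd}) as the $\V$-monad induced by the $\V$-adjunction $[-,S] \dashv \Alg{\T^\perp}(-,S)$ of \eqref{eq:hom_cot_for_the_tperp_alg_s}. For a $\V$-monad induced by a $\V$-adjunction $L \dashv R$, the Kleisli comparison $\V$-functor identifies the Kleisli $\V$-category with the full sub-$\V$-category of the domain of $R$ on the objects in the image of $L$; concretely, with $L = [-,S]$ and $R = \Alg{\T^\perp}(-,S) : \Alg{\T^\perp}^\op \to \uV$, the defining cotensor adjunction yields $\Alg{\T^\perp}^\op([V,S],[W,S]) \cong \uV(V, \Alg{\T^\perp}([W,S],S)) = \uV(V, D_{\scriptscriptstyle(\T,S)}W)$, $\V$-naturally in $V,W$. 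Passing to opposites, $\uV_{\DD_{\scriptscriptstyle(\J,\TT,S)}}^\op$ is thus the full sub-$\V$-category of $\Alg{\T^\perp}$ on the cotensors $[V,S]$, i.e. the full $\uV$-theory of $S$ in $\Alg{\T^\perp}$. Hence $\DD_{\scriptscriptstyle(\J,\TT,S)}$ corresponds to the same $\uV$-theory, and the equivalence $\Th_{\uV} \simeq \Mnd_{\VCAT}(\uV)$ then yields $\DD_{\scriptscriptstyle(\J,\TT,S)} \cong (\TTperpJ)^\perp$.

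The one point requiring genuine care --- and the main (if mild) obstacle --- is to confirm that the identification of $\uV_{\DD_{\scriptscriptstyle(\J,\TT,S)}}^\op$ with the full $\uV$-theory is an isomorphism of $\uV$-theories, and not merely a level-wise isomorphism of hom-objects: one must check that it is identity-on-objects and that Kleisli composition is carried to composition in $\Alg{\T^\perp}$. This follows from the $\V$-naturality of the cotensor adjunction \eqref{eq:hom_cot_for_the_tperp_alg_s} together with the standard fact that the comparison $\V$-functor out of a Kleisli $\V$-category is a fully faithful $\V$-functor onto the full image of the left adjoint; I would confirm this compatibility by a direct computation with the unit and counit of \eqref{eq:hom_cot_for_the_tperp_alg_s}, which is routine.
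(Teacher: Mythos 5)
Your proposal is correct and follows essentially the same route as the paper: the paper's own proof proceeds via Lemma \ref{thm:charn_abs_cmtnt} (the absolute commutant of $\PP$ w.r.t.\ $A$ is the monad induced by $[-,A] \dashv \Alg{\PP}(-,A)$), whose proof in turn rests on Lemma \ref{lem:recovering_monad_from_vth} identifying the Kleisli left adjoint with the cotensor functor $[-,I]$ into the associated $\uV$-theory --- exactly the ``standard fact'' about the Kleisli comparison that you invoke. The only difference is organizational: you compare the two $\uV$-theories directly (both being the full $\uV$-theory of $S$ in $\Alg{\T^\perp}$), whereas the paper compares the two inducing adjunctions; the content, including the careful point about the identification being identity-on-objects and compatible with composition, is the same.
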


In order to prove this theorem, we will require some lemmas.  Recall that there is an equivalence between $\V$-monads on $\uV$ and $\uV$-theories, for the system of arities $1_{\uV}:\uV \rightarrow \uV$ \pref{par:jary_mnds}.  Given a $\V$-monad $\UU$ on $\uV$, let $\U$ denote the associated $\uV$-theory.  Then $\U$ is a cotensored $\V$-category, so the object $I$ of $\U$ determines an associated $\V$-adjunction
\begin{equation}\label{eq:hom_cot_adj_for_i_in_vth}[-,I] \dashv \U(-,I)\;\;:\;\;\U^\op \rightarrow \uV\end{equation}
whose left adjoint supplies the cotensors of $I$ in $\U$.

\begin{LemSub}\label{lem:recovering_monad_from_vth}
Given any $\V$-monad $\UU$ on $\uV$, with associated $\uV$-theory $\U$, the $\V$-monad induced by the $\V$-adjunction \eqref{eq:hom_cot_adj_for_i_in_vth} is isomorphic to $\UU$.
\end{LemSub}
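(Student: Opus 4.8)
The plan is to identify the cotensor $\V$-adjunction \eqref{eq:hom_cot_adj_for_i_in_vth} with the Kleisli $\V$-adjunction of $\UU$, and then invoke the standard fact that the Kleisli $\V$-adjunction of a $\V$-monad induces that monad (see, e.g., \cite{Dub}). Write $\UU = (U,\eta,\mu)$. Recall from \ref{par:jary_mnds} that the $\uV$-theory $\U$ associated to $\UU$ is the opposite $\uV_\UU^\op$ of the Kleisli $\V$-category of $\UU$ — the full sub-$\V$-category on \emph{all} objects of $\V$, since here the arities are $\J = \uV$. Hence $\U^\op = \uV_\UU$, so \eqref{eq:hom_cot_adj_for_i_in_vth} is an adjunction between $\uV_\UU$ and $\uV$, which I will argue is isomorphic to the Kleisli adjunction $F_\UU \dashv G_\UU : \uV_\UU \to \uV$.

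First I would verify the claim on underlying $\V$-endofunctors, as a guide. Writing $F = [-,I]$ and $G = \U(-,I)$, the object $[V,I]$ is the designated cotensor of $I$ by $V$, which in a $\J$-theory is the object $V$ itself \pref{par:jth}. Thus the induced $\V$-endofunctor sends $V$ to $GF(V) = \U([V,I],I) = \U(V,I) = \uV_\UU(I,V) = \uV(I,UV) \cong UV$, using the canonical closed-category isomorphism $\uV(I,-) \cong 1_{\uV}$. This already exhibits the underlying $\V$-functor of the induced monad as isomorphic to $U$.

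Next I would upgrade this to an isomorphism of $\V$-adjunctions. Under the identification $\U^\op = \uV_\UU$, the left adjoint $F = [-,I]$ is identity-on-objects, as is the Kleisli inclusion $F_\UU$, and I would check that their hom-actions agree by unwinding the cotensor universal property; in particular the structural cotensor morphism $\gamma_V : V \to \U(V,I)$ \pref{par:jth} should correspond, under $\uV(I,U-) \cong U$, to the monad unit $\eta_V$. Dually, $G = \U(-,I)$ is carried to $G_\UU$ by the same isomorphism. Since isomorphic $\V$-adjunctions induce isomorphic $\V$-monads, and the Kleisli $\V$-adjunction of $\UU$ induces $\UU$, it follows that the $\V$-monad induced by \eqref{eq:hom_cot_adj_for_i_in_vth} is isomorphic to $\UU$.

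I expect the main obstacle to lie precisely in this last step — matching the adjunction \emph{data} rather than mere objects. One must confirm that the counit of the cotensor adjunction, and hence the multiplication $G\epsilon F$ of the induced monad, matches the Kleisli counit (equivalently $\mu$) after transport along $\uV(I,U-) \cong U$. Concretely, this reduces to checking that composition in the cotensored $\V$-category $\U^\op$ is exactly Kleisli composition in $\uV_\UU$, which is built from $\mu$ — a compatibility that is implicit in the equivalence between $\J$-theories and $\J$-ary monads \pref{par:jary_mnds}, but which I would make explicit here in order to conclude that the isomorphism of underlying $\V$-endofunctors is in fact an isomorphism of $\V$-monads.
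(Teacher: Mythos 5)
Your proposal is correct and takes essentially the same route as the paper: identify $\U^\op$ with the Kleisli $\V$-category $\uV_\UU$ and match the cotensor-hom adjunction \eqref{eq:hom_cot_adj_for_i_in_vth} with the Kleisli adjunction. The step you flag as the main obstacle is dispatched more cheaply in the paper: the Kleisli left adjoint $F$ preserves tensors and sends $I$ to $I$, so $FV \cong F(V \otimes I) \cong V \otimes I = [V,I]$ $\V$-naturally, and since left adjoints determine their adjunctions up to isomorphism, isomorphic left adjoints induce isomorphic $\V$-monads --- no separate verification of the counit or of the multiplication against Kleisli composition is required.
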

\begin{proof}
Concretely, $\U$ is the opposite $\uV_\UU^\op$ of the Kleisli $\V$-category $\uV_\UU$ \pref{par:jary_mnds}.  Letting $F \dashv G:\uV_\UU \rightarrow \uV$ denote the Kleisli $\V$-adjunction, the left adjoint $F:\uV \rightarrow \U^\op$ preserves tensors and sends $I$ to $I$.  Hence since tensors in $\U^\op$ are the same as cotensors in $\U$, we compute that $FV \cong F(V \otimes I) \cong [V,I]$ in $\U$, $\V$-naturally in $V \in \uV$.  Therefore $F$ is isomorphic to the $\V$-functor $[-,I]$ appearing in \eqref{eq:hom_cot_adj_for_i_in_vth}, so since $\UU$ is induced by the Kleisli $\V$-adjunction, the result follows.
\end{proof}

\begin{LemSub}\label{thm:charn_abs_cmtnt}
Let $\PP$ be a $\V$-monad on $\uV$, let $A$ be a $\PP$-algebra, and let $\PP^\perp$ denote the absolute commutant of $\PP$ with respect to $A$ (which necessarily exists, \ref{par:cmtnts_mnds}).  Then $\PP^\perp$ is isomorphic to the $\V$-monad induced by the `cotensor-hom' $\V$-adjunction
\begin{equation}\label{eq:hom_cot_adj_emalg}[-,A] \dashv \Alg{\PP}(-,A):\Alg{\PP}^\op \rightarrow \uV\end{equation}
for the object $A$ of $\Alg{\PP}$.
\end{LemSub}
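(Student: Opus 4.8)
The plan is to reduce the claim to Lemma~\ref{lem:recovering_monad_from_vth} by identifying the $\uV$-theory associated with the absolute commutant $\PP^\perp$ as the full $\uV$-theory of $A$ in $\Alg{\PP}$, and then showing that the cotensor-hom adjunction \eqref{eq:hom_cot_adj_emalg} factors compatibly through the fully faithful inclusion of that full theory.

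First I would unwind the definition of the absolute commutant in the present $\uV$-ary setting (the system of arities $1_{\uV}:\uV \to \uV$). Let $\sP = \uV_\PP^\op$ be the $\uV$-theory corresponding to $\PP$, and let $B$ be the normal $\sP$-algebra corresponding to $A$ under the identification $\Alg{\sP}^! \cong \Alg{\PP}$ \pref{par:ntalgs_emalgs}. By the definition of the absolute commutant \pref{par:cmtnts_mnds}, $\PP^\perp$ is the $\uV$-ary monad corresponding to the commutant $\sP^\perp_B$, and by \ref{par:cmtnts_th} (applied with $\J = \uV$) this commutant is precisely the full $\uV$-theory of $B$ in $\Alg{\sP}$. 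Transporting along the equivalence $\Alg{\sP} \simeq \Alg{\PP}$, which carries $B$ to $A$, I may identify $\sP^\perp_B$ with the full $\uV$-theory $\Q$ of $A$ in $\Alg{\PP}$ \pref{par:full_th}, whose hom-objects are $\Q(V,W) = \Alg{\PP}([V,A],[W,A])$.

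Next, Lemma~\ref{lem:recovering_monad_from_vth} tells me that $\PP^\perp$ is isomorphic to the $\V$-monad induced by the cotensor-hom adjunction $[-,I] \dashv \Q(-,I):\Q^\op \to \uV$ for the object $I$ of $\Q$. It therefore remains to see that this adjunction induces the same $\V$-monad as \eqref{eq:hom_cot_adj_emalg}. For this I would invoke the fully faithful $\V$-functor $i:\Q \to \Alg{\PP}$ of \ref{par:full_th}, which sends each object $V$ to the cotensor $[V,A]$ and, in particular, sends $I$ to $[I,A] = A$ \pref{par:vcat_talgs}. Since $\Q(V,W) = \Alg{\PP}(iV,iW)$ by construction, the right adjoint factors as $\Q(-,I) = \Alg{\PP}(i(-),A) = \Alg{\PP}(-,A) \circ i^\op$, while the left adjoints satisfy $[-,A] = i^\op \circ [-,I]$ because $i$ carries the cotensor $[V,I]_\Q$ of $I$ by $V$ in $\Q$ (namely the object $V$ itself) to $[V,A]$. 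Thus the left adjoint of \eqref{eq:hom_cot_adj_emalg} factors through the fully faithful $\V$-functor $i^\op:\Q^\op \to \Alg{\PP}^\op$, and the right adjoint of \eqref{eq:hom_cot_adj_emalg} restricts along $i^\op$ to the right adjoint $\Q(-,I)$.

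Finally I would appeal to the standard fact that when the left adjoint of a $\V$-adjunction factors through a fully faithful $\V$-functor, the induced $\V$-monad is unchanged: because $i^\op$ is fully faithful, the adjunction bijections of \eqref{eq:hom_cot_adj_emalg}, restricted along $i^\op$, coincide with those of $[-,I] \dashv \Q(-,I)$, so the units and multiplications of the two $\V$-monads agree. Composing the resulting isomorphism with the one supplied by Lemma~\ref{lem:recovering_monad_from_vth} identifies the $\V$-monad induced by \eqref{eq:hom_cot_adj_emalg} with $\PP^\perp$. I expect the only step requiring real care to be the bookkeeping in the first paragraph --- matching the definition of the absolute commutant as a $\uV$-ary commutant with the full $\uV$-theory of $A$ in $\Alg{\PP}$, and keeping the identifications $\Alg{\sP} \simeq \Alg{\PP}$, $[V,I]_\Q = V$, and $[I,A] = A$ straight; once these are in place, both the factorization and the monad comparison are formal.
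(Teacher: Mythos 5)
Your proposal is correct and follows essentially the same route as the paper's proof: identify $\PP^\perp$ with the $\V$-monad of the commutant $\uV$-theory, apply Lemma \bref{lem:recovering_monad_from_vth}, and then transport the cotensor-hom adjunction along the fully faithful cotensor-preserving $\V$-functor onto the full sub-$\V$-category of cotensors of $A$. Your final paragraph merely spells out the "the result follows" step of the paper --- that restricting the right adjoint along a fully faithful functor through which the left adjoint factors leaves the induced $\V$-monad unchanged --- which is a reasonable amount of extra detail.
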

\begin{proof}
Let $\sP$ denote the $\uV$-theory determined by $\PP$.  By definition, $\PP^\perp$ is the $\V$-monad determined by the commutant $\U := \sP^\perp$ of $\sP$ with respect to the normal $\sP$-algebra corresponding to $A$.  Letting $\A = \Alg{\PP}$, we may identify $\A$ with the isomorphic $\V$-category $\Alg{\sP}^!$ \pref{par:ntalgs_emalgs}.  By \ref{lem:recovering_monad_from_vth}, $\PP^\perp$ is isomorphic to the $\V$-monad induced by the $\V$-adjunction $[-,I] \dashv \U(-,I):\U^\op \rightarrow \uV$.  But by \ref{par:cmtnts_th} and \ref{par:full_th}, the mapping $\ob\V = \ob\U \rightarrow \ob\A$ given by $V \mapsto [V,A]$ underlies a cotensor-preserving $\V$-functor $\iota:\U \rightarrow \A$ that is fully faithful and maps $\U$ onto the full sub-$\V$-category $\A' \hookrightarrow \A$ consisting of the cotensors of $A$.  Hence $\PP^\perp$ is isomorphic to the $\V$-monad induced by the $\V$-adjunction $[-,A] \dashv \A'(-,A):(\A')^\op \rightarrow \uV$, and the result follows.
\end{proof}

\begin{proof}[Proof of Theorem \ref{thm:cdistn_mnd_dbl_cmtnt}]
Given a functional-analytic context $(\V,\J,\TT,S)$, we can take $\PP := \TTperpJ$ and invoke \ref{thm:charn_abs_cmtnt} to obtain a characterization of the absolute commutant $\PP^\perp$ with respect to $S$, from which the result follows.
\end{proof}

Let $j:\J \hookrightarrow \uV$ be an eleutheric system of arities.

\begin{DefSub}\label{def:jary_restn}
Given any $\uV$-theory $\U$, we denote by $\U|_\J$ the full sub-$\V$-category of $\U$ on the objects of $\J$.  Note that $\U|_\J$ is clearly a $\J$-theory, which we call the \textbf{restriction of $\U$ to $\J$}.  Similarly, given a $\V$-monad $\UU$ on $\uV$, with associated $\uV$-theory $\U$, we denote by $\UU|_\J$ the $\J$-ary monad determined by $\U|_\J$, and we call $\UU|_\J$ the \textbf{$\J$-ary restriction} of $\UU$.
\end{DefSub}

\begin{PropSub}\label{thm:jary_restn}
Let $\UU = (U,\eta,\mu)$ be a $\V$-monad on $\uV$, with $\J$-ary restriction $\UU|_\J = (U',\eta',\mu')$.  Then $U \circ j \cong U' \circ j:\J \rightarrow \uV$, where $j:\J \hookrightarrow \uV$ is the inclusion.
\end{PropSub}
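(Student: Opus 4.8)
The plan is to read off the value of each monad's underlying endofunctor on objects of $\J$ from the hom-objects of the associated theory, and to observe that passing from $\U$ to its restriction $\U|_\J$ leaves these hom-objects untouched on the objects of $\J$. Concretely, I will show that both $U(jJ)$ and $U'(jJ)$ are $\V$-naturally isomorphic to the single object $\U(jJ,I)$, whence they are isomorphic to one another.

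First I would recall, as in the proof of \ref{lem:recovering_monad_from_vth}, that the $\uV$-theory $\U$ associated to $\UU$ is the opposite $\uV_\UU^\op$ of the Kleisli $\V$-category, so that its hom-objects are $\U(V,W) = \uV(W,UV)$ for $V,W \in \ob\V$. Specialising to $W = I$ and using the canonical $\V$-natural isomorphism $\uV(I,X) \cong X$, this yields a $\V$-natural isomorphism $U(jJ) \cong \U(jJ,I)$ for $J \in \ob\J$.

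Next I would turn to the restriction. By \ref{def:jary_restn}, $\UU|_\J$ is the $\J$-ary monad determined by the $\J$-theory $\T' := \U|_\J$, which is the full sub-$\V$-category of $\U$ on the objects of $\J$; hence by definition $\T'(J,I) = \U(jJ,I)$ for every $J \in \ob\J$ (here $I \in \ob\J$, since a system of arities contains the unit object, which $j$ sends to $I$). On the other hand, the theory/monad correspondence \pref{par:jary_mnds} exhibits the endofunctor $U'$ underlying $\UU|_\J$ as a left Kan extension along $j$ of the $\V$-functor $J \mapsto \T'(J,I)$. Since $j$ is fully faithful, the enriched co-Yoneda lemma gives that this left Kan extension restricts back along $j$ to the functor it extends, i.e. $U'(jJ) \cong \T'(J,I)$, $\V$-naturally in $J$.

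Finally I would chain the isomorphisms $U'(jJ) \cong \T'(J,I) = \U(jJ,I) \cong U(jJ)$ and assemble them into a $\V$-natural isomorphism $U \circ j \cong U' \circ j$ of $\V$-functors $\J \to \uV$. I expect no genuine obstacle here; the only points requiring care are the justification that the left Kan extension defining $U'$ restricts to its generating functor along the fully faithful $j$, and the verification that all three displayed isomorphisms are $\V$-natural in $J$, so that they compose to a $\V$-natural isomorphism of $\V$-functors rather than a merely pointwise one.
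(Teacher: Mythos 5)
Your proposal is correct and follows essentially the same route as the paper: identify $U\circ j$ with $\U(j-,I)$ via the Kleisli description of the associated $\uV$-theory, note that $\U|_\J(J,I)=\U(jJ,I)$ by definition of the restriction, and use that the left Kan extension along the fully faithful $j$ defining $U'$ restricts back to its generating functor $\U|_\J(-,I)$. The points you flag for care ($\V$-naturality and the restriction of the Kan extension) are exactly the ones the paper's proof relies on implicitly, so there is nothing further to add.
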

\begin{proof}
Let $\U$ be the $\uV$-theory determined by $\UU$, and let $\upsilon:\uV^\op \rightarrow \U$ be the unique morphism of $\uV$-theories.  Then the unique morphism of $\J$-theories $\upsilon':\J^\op \rightarrow \U|_\J$ is simply a restriction of $\upsilon$.  By \ref{par:jary_mnds}, $U'$ is a left Kan extension of $(\U|_\J)(\upsilon'-,I):\J \rightarrow \uV$ along $j$, and $U \cong \U(\upsilon-,I)$.  Hence $U' \circ j \cong (\U|_\J)(\upsilon'-,I) \cong U \circ j$.
\end{proof}

\begin{PropSub}\label{thm:jary_cmt_rest_abs_cmt}
Given a $\J$-ary monad $\PP$ and a $\PP$-algebra $A$, the $\J$-ary commutant $\PPperpJwrt{A}$  is isomorphic to the $\J$-ary restriction of the absolute commutant $\PP^\perp_A$ (recalling that both $\PPperpJwrt{A}$ and $\PP^\perp_A$ necessarily exist, \ref{par:cmtnts_mnds}).
\end{PropSub}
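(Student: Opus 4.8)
The plan is to reduce the statement to the level of theories, where both commutants appear as \emph{full theories of one and the same algebra}, and then to invoke the principle that the full $\J$-theory of an object is exactly the $\J$-restriction of its full $\uV$-theory. First I would unwind $\PPperpJwrt{A}$ via \ref{par:cmtnts_mnds}. Let $\T$ be the $\J$-theory corresponding to $\PP$ and let $A':\T \to \uV$ be the normal $\T$-algebra corresponding to $A$ \pref{par:ntalgs_emalgs}. By definition $\PPperpJwrt{A}$ is the $\J$-ary monad corresponding to the commutant $\T^\perp_{A'}$, which by \ref{par:cmtnts_th} (and \ref{par:full_th}) is the full $\J$-theory of $A'$ in $\Alg{\T}$, with hom-objects $\T^\perp_{A'}(J,K) = \Alg{\T}([J,A'],[K,A'])$ for $J,K \in \ob\J$.

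On the other side, the discussion in \ref{par:cmtnts_mnds} already identifies the absolute commutant $\PP^\perp_A$ with the $\V$-monad corresponding to the full $\uV$-theory $\F$ of the \emph{same} algebra $A'$ in $\Alg{\T}$, namely $\F(V,W) = \Alg{\T}([V,A'],[W,A'])$ for $V,W \in \ob\V$. Consequently, by \ref{def:jary_restn}, the $\J$-ary restriction $(\PP^\perp_A)|_\J$ corresponds to the $\J$-theory $\F|_\J$ obtained by restricting $\F$ to the objects of $\J$. Thus it suffices to exhibit an isomorphism of $\J$-theories $\T^\perp_{A'} \cong \F|_\J$.

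The heart of the argument is then a direct comparison. By \ref{par:lims_algs} the cotensors $[V,A']$ in $\Alg{\T}$ exist for all $V \in \ob\V$ and are computed pointwise, so the designated $\J$-cotensor $[J,A']$ of $A'$ by $J \in \ob\J$ is literally its designated $\uV$-cotensor $[jJ,A']$, where $j:\J \hookrightarrow \uV$ is the inclusion. Hence for all $J,K \in \ob\J$,
$$\F|_\J(J,K) = \F(jJ,jK) = \Alg{\T}([jJ,A'],[jK,A']) = \Alg{\T}([J,A'],[K,A']) = \T^\perp_{A'}(J,K),$$
with composition and identities inherited from $\Alg{\T}$ in both cases. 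More conceptually, for any $\V$-category $\C$ with designated cotensors and any object $C$, the full $\J$-theory of $C$ in $\C$ is precisely the restriction to $\J$ of the full $\uV$-theory of $C$ in $\C$, straight from \ref{par:full_th}; I would record this as the governing observation and apply it to $\C = \Alg{\T}$ and $C = A'$. Finally, since $\T^\perp_{A'} \cong \F|_\J$ as $\J$-theories, the equivalence $\ThJ \simeq \MndJ(\uV)$ \pref{par:jary_mnds} yields $\PPperpJwrt{A} \cong (\PP^\perp_A)|_\J$, as required.

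I expect the only real obstacle to be bookkeeping: confirming that the levelwise identification above is genuinely an isomorphism of $\J$-theories rather than a mere family of isomorphisms in $\V$, i.e. that it respects composition, identities, and the designated $\J$-cotensor cones. This is exactly the structure packaged by the notion of the full $\J$-theory of an object in \ref{par:full_th}, and the compatibility of the designated $\J$- and $\uV$-cotensors of $A'$ noted above is what makes the identification structural rather than ad hoc; so once the two commutants are both presented as full theories of $A'$ in $\Alg{\T}$, this verification should be routine.
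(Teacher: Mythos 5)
Your proposal is correct and follows essentially the same route as the paper's own proof: both arguments present the two commutants as the full $\uV$-theory and the full $\J$-theory of the same algebra in the same $\V$-category of $\T$-algebras, and then observe that restricting the former to the objects of $\J$ yields the latter. Your explicit appeal to the discussion in \ref{par:cmtnts_mnds} identifying $\PP^\perp_A$ with the full $\uV$-theory of $A'$ in $\Alg{\T}$ plays the same role as the paper's identification of the isomorphic $\V$-categories $\Alg{\sP}^!$, $\Alg{\PP}$, and $\Alg{\sS}^!$, so no gap remains.
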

\begin{proof}
Let $\sP$ denote the $\uV$-theory determined by $\PP$, and let $\sS$ denote the $\J$-theory determined by $\PP$.  We may identify the isomorphic $\V$-categories $\Alg{\sP}^!$, $\Alg{\PP}$, and $\Alg{\sS}^!$.  If we write $\sP^\perp$ and $\sS^\perp$ for the commutants of $\sP$ and $\sS$ (respectively) with respect to $A$, then it suffices to show that $\sP^\perp|_\J \cong \sS^\perp$.  But $\sP^\perp$ is the full $\uV$-theory of $A$ in $\Alg{\sP}^! = \Alg{\sS}^!$ \pref{par:cmtnts_th}, and it follows that $\sP^\perp|_\J$ is the full $\J$-theory of $A$ in $\Alg{\sS}^!$, i.e. $\sP^\perp|_\J = \sS^\perp$ \pref{par:cmtnts_th}.
\end{proof}

\begin{ThmSub}\label{thm:t_jary_restn_of_d}
Let $(\uV,\J,\TT,S)$ be a functional-analytic context.  Then $\TT$ is isomorphic to the $\J$-ary restriction of the functional distribution monad $\DD_{\scriptscriptstyle(\J,\TT,S)}$.   I.e. $(\DD_{\scriptscriptstyle(\J,\TT,S)})|_\J \cong \TT$.  In particular, for each object $J$ of $\J$, $D_{\scriptscriptstyle(\J,\TT,S)}(J) \cong T(J)$ by \ref{thm:jary_restn}.
\end{ThmSub}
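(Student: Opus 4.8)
The plan is to read the functional distribution monad as an iterated commutant and then restrict to the arities, chaining together the three structural results already established.

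First I would invoke Theorem~\ref{thm:cdistn_mnd_dbl_cmtnt}, which identifies $\DD_{\scriptscriptstyle(\J,\TT,S)} \cong (\TTperpJ)^\perp$, the absolute commutant of $\TTperpJ$ with respect to $S$. Applying the $\J$-ary restriction functor $(-)|_\J$ \pref{def:jary_restn} to both sides gives $(\DD_{\scriptscriptstyle(\J,\TT,S)})|_\J \cong ((\TTperpJ)^\perp)|_\J$. Next I would apply Proposition~\ref{thm:jary_cmt_rest_abs_cmt} with $\PP := \TTperpJ$ and $A := S$: that proposition asserts precisely that the $\J$-ary restriction of an absolute commutant coincides with the corresponding $\J$-ary commutant, so $((\TTperpJ)^\perp)|_\J \cong (\TTperpJ)^\perp_{\kern-.1ex\scriptscriptstyle\J}$, the $\J$-ary double commutant of $\TT$. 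Finally, the saturation axiom built into the notion of a functional-analytic context \pref{def:fa_ctxt} (see also \pref{par:sat_bal} and \eqref{eq:sat}) states exactly that $(\TTperpJ)^\perp_{\kern-.1ex\scriptscriptstyle\J} \cong \TT$. Composing the three isomorphisms yields $(\DD_{\scriptscriptstyle(\J,\TT,S)})|_\J \cong \TT$, as desired.

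The concluding ``in particular'' clause then follows at once from Proposition~\ref{thm:jary_restn}: taking $\UU := \DD_{\scriptscriptstyle(\J,\TT,S)}$, that proposition gives $D_{\scriptscriptstyle(\J,\TT,S)} \circ j \cong U' \circ j$, where $U'$ underlies the $\J$-ary restriction $\UU|_\J \cong \TT$; hence $D_{\scriptscriptstyle(\J,\TT,S)}(J) \cong T(J)$ for every object $J$ of $\J$.

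I do not anticipate a substantive obstacle here: once Theorem~\ref{thm:cdistn_mnd_dbl_cmtnt} and Propositions~\ref{thm:jary_cmt_rest_abs_cmt} and~\ref{thm:jary_restn} are available, the argument is essentially bookkeeping. The one point demanding care is keeping the two flavours of commutant distinct---the absolute commutant $(-)^\perp$ versus the $\J$-ary commutant $(-)^\perp_{\kern-.1ex\scriptscriptstyle\J}$---and recognizing that the saturation hypothesis is exactly the identity $(\TTperpJ)^\perp_{\kern-.1ex\scriptscriptstyle\J} \cong \TT$ needed to close the chain, which is the conceptual reason saturation was imposed as an axiom.
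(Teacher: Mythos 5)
Your proof is correct and follows exactly the paper's own argument: set $\PP := \TTperpJ$, apply Theorem \ref{thm:cdistn_mnd_dbl_cmtnt} to identify $\DD_{\scriptscriptstyle(\J,\TT,S)}$ with $\PP^\perp$, use Proposition \ref{thm:jary_cmt_rest_abs_cmt} to identify the $\J$-ary restriction of $\PP^\perp$ with $\PPperpJ$, and close the chain with the saturation axiom. The final clause via Proposition \ref{thm:jary_restn} also matches the paper.
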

\begin{proof}
With the notational conventions of \ref{par:mnd_notn_fdistn}, let $\PP := \TTperpJ$.  By \ref{thm:cdistn_mnd_dbl_cmtnt}, $\DD_{\scriptscriptstyle(\J,\TT,S)}$ is the absolute commutant $\PP^\perp$ of $\PP$ with respect to the $\PP$-algebra $S$.  But by \ref{thm:jary_cmt_rest_abs_cmt}, the $\J$-ary restriction $\PP^\perp|_\J$ of $\PP^\perp$ is isomorphic to the $\J$-ary commutant $\PPperpJ$ of $\PP$ with respect to $S$, so $(\DD_{\scriptscriptstyle(\J,\TT,S)})|_\J \cong \PPperpJ = (\TTperpJ)^\perp_{\kern-.1ex\scriptscriptstyle\J} \cong \TT$ since $\TT$ is saturated with respect to $S$.
\end{proof}

\section{Examples of functional distribution monads}\label{sec:exa_cdistn_mnds}

We now describe several specific examples of functional distribution monads, using results that are proved in subsequent sections.

\begin{ExaSub}[\textbf{Radon measures of compact support}]\label{exa:cs_rad_meas}
Let $\V = \Conv$ be the category of convergence spaces \pref{par:conv_sp_rad_meas}.  Let $R$ denote either the real numbers $\RR$ or the complex numbers $\CC$, considered as a commutative ring in $\V$.  The scalar $\RR$-linear context $(\V,\T,S) = (\V,\Mat_R,R)$ in $\V$ is balanced (\ref{exa:cls_exa_ffa_ctxts}, \ref{thm:th_lrmods_sat_bal_iff_rcomm}, \ref{par:sc_rlin_ctxt}), i.e. $\T \cong \T^\perp$, and the $\V$-category of normal $\T$-algebras may be identified with the $\V$-category $\Mod{R}$ of $R$-modules in $\V$ \pref{thm:th_lrmods}, also known as \textit{convergence vector spaces} \cite{BeBu}.  Hence for each convergence space $V$,
$$D_{\scriptscriptstyle(\Mat_{\scalebox{.8}{$\scriptscriptstyle R$}},R)}(V) = \Mod{R}([V,R],R)$$
is the space of all continuous $R$-linear maps $\mu:[V,R] \rightarrow R$, where $[V,R]$ is the space $\uV(V,R)$ of all continuous maps $f:V \rightarrow R$.  When $V$ is a locally compact Hausdorff topological space (considered as an object of $\V$), we deduce by \ref{par:conv_sp_rad_meas} that $(\Mat_R,R)$-distributions $\mu \in D_{\scriptscriptstyle(\Mat_{\scalebox{.8}{$\scriptscriptstyle R$}},R)}(V)$ on $V$ may be identified with compactly supported $R$-valued Radon measures on $V$.
\end{ExaSub}

\begin{ExaSub}[\textbf{Schwartz distributions of compact support, I}]\label{exa:cs_schw_distns_i}
Let $\V$ be either the category $\Fro$ of Fr\"olicher's smooth spaces, or the category $\Diff$ of diffeological spaces \pref{par:fro_diff}.  The real numbers $\RR$ constitute a commutative ring object in $\V$, so we can consider the scalar $\RR$-linear context $(\V,\Mat_\RR,\RR)$ in $\V$ (\ref{exa:cls_exa_ffa_ctxts}, \ref{par:sc_rlin_ctxt}), which is balanced \pref{thm:th_lrmods_sat_bal_iff_rcomm}.  For each object $V$ of $\V$, $D_{\scriptscriptstyle(\Mat_{\scalebox{.8}{$\scriptscriptstyle \RR$}},\RR)}(V)$ is the space of all smooth $\RR$-linear maps $\mu:[V,\RR] \rightarrow \RR$, where $[V,\RR]$ is the space $\uV(V,\RR)$ of all smooth functions $f:V \rightarrow \RR$.  Now let $V$ be a smooth manifold.  In the case that $\V = \Fro$, it follows from \cite[5.1]{FroKr} and \cite[Thm. 6]{Fro:CccAnSmthMaps} that the $(\Mat_\RR,\RR)$-distributions $\mu \in D_{\scriptscriptstyle(\Mat_{\scalebox{.8}{$\scriptscriptstyle \RR$}},\RR)}(V)$ on $V$ are precisely the Schwartz distributions of compact support on $V$.  But since the embedding $\Fro \hookrightarrow \Diff$ preserves exponentials \pref{par:fro_diff}, it follows that the same statement holds in the case where $\V = \Diff$.
\end{ExaSub}

\begin{ExaSub}[\textbf{Schwartz distributions of compact support, II}]\label{exa:cs_schw_distns_ii}
Let $\V$ be the Cahiers topos, or, more generally, the topos $\Shv(\A)$ of sheaves on any product-closed $C^\infty$-site $\A$ \pref{par:cinfty_rings}.  The $C^\infty$-ring $C^\infty(\RR)$ represents a commutative ring object $R$ in $\V$, so we can consider the scalar $R$-linear context $(\V,\Mat_R,R)$ in $\V$ (\ref{exa:cls_exa_ffa_ctxts}, \ref{par:sc_rlin_ctxt}), which is balanced \pref{thm:th_lrmods_sat_bal_iff_rcomm}.  As in \ref{exa:cs_rad_meas} and \ref{exa:cs_schw_distns_i}, $(\Mat_R,R)$-distributions on an object $V$ of $\V$ are precisely $R$-linear morphisms $[V,R] \rightarrow R$, where $[V,R]$ is $\uV(V,R)$ with the obvious $R$-module structure.  Now supposing that $V$ is a smooth manifold, considered as an object of $\V$, one can employ the argument given in \cite[II.3.6]{MoeRey} to show that $(\Mat_R,R)$-distributions on $V$ are therefore in bijective correspondence with compactly supported Schwartz distributions on $M$.  Indeed, there it is proved that this holds in the case where $\V$ is the presheaf category on $\CinftyRing^\op_{\textnormal{fg}}$, and a close scrutiny of the proof therein shows that the same argument works when $\V = [\A^\op,\Set]$, but since finite limits and exponentials in the sheaf category are formed as in the presheaf category, the more general case follows.
\end{ExaSub}

\begin{ExaSub}[\textbf{Non-negative measures of compact support}]\label{exa:nonneg_cs_meas}
As a variation on \ref{exa:cs_rad_meas}, we can take $\V = \Conv$ and consider the scalar $\RR_+$-linear context $(\V,\Mat_{\RR_+},\RR_+)$ in $\V$ \pref{par:sc_rlin_ctxt}, where $\RR_+$ is the space of non-negative real numbers, considered as a rig object in $\V$.  For each convergence space $V$,
$$D_{\scriptscriptstyle(\Mat_{\scalebox{.8}{$\scriptscriptstyle \RR_+$}}\kern-0.3ex,\,\RR_+)}(V) = \Mod{\RR_+}([V,\RR_+],\RR_+)$$
is therefore the space of all continuous $\RR_+$-linear maps $\mu:[V,\RR_+] \rightarrow \RR_+$, where $[V,\RR_+] = \uV(V,\RR_+)$ is the space of all continuous $\RR_+$-valued maps on $V$.  But it is straightforward to show that such a map $\mu$ extends uniquely to a continuous $\RR$-linear map $\hat{\mu}:[V,\RR] \rightarrow \RR$, given by $\hat{\mu}(f) = \mu(f_+) - \mu(f_-)$ where $f_+$ is defined as the pointwise supremum of $f$ and $0$ and $f_-$ is defined as $(-f)_+$, so that $f = f_+ - f_-$.  This describes a bijection between $(\Mat_{\RR_+},\RR_+)$-distributions on $V$ and continuous $\RR$-linear maps $\mu:[V,\RR] \rightarrow \RR$ with the property that $\mu(f) \gt 0$ whenever $f \gt 0$.  When $V$ is a locally compact Hausdorff topological space, we can therefore deduce by \ref{exa:cs_rad_meas} that $(\Mat_{\RR_+},\RR_+)$-distributions $\mu \in D_{\scriptscriptstyle(\Mat_{\scalebox{.8}{$\scriptscriptstyle \RR_+$}}\kern-0.3ex,\,\RR_+)}(V)$ are equivalently described as non-negative compactly supported Radon measures on $V$.
\end{ExaSub}

\begin{ExaSub}[\textbf{Radon probability measures of compact support}]\label{exa:rad_prob_meas_cpct_supp} 
Taking $\V = \Conv$ as in \ref{exa:cs_rad_meas} and \ref{exa:nonneg_cs_meas}, the space $\RR$ of real numbers carries the structure of a preordered ring in $\V$, where $\RR_+$ is the space of all non-negative reals.  Hence we can consider the positive $\RR$-convex context $(\V,\T,S) = (\V,\Mat^\aff_{\RR_+},\RR_+)$ in $\V$ \pref{exa:cls_exa_ffa_ctxts}.  Here $\T$-algebras are $\RR$-convex spaces in $\V$, which we call \textit{convergence convex spaces} \pref{exa:conv_cvx_sp}.  We prove in \ref{thm:commutant_thm_for_concrete_cats} and \ref{par:exa_conv_convex_spaces_cmtnt} that the commutant $\T^\perp$ of $\T$ with respect to $\RR_+$ is the theory of \textit{pointed $\RR_+$-modules} in $\V$ \pref{par:th_pted_right_rmods}, i.e. $\Alg{\T^\perp}^!$ is the $\V$-category $\Mod{\RR_+}^*$ whose objects are $\RR_+$-modules $M$ in $\V$ equipped with a chosen element $* \in M$.  When $\RR_+$ itself is considered as a pointed $\RR_+$-module, its designated element $*$ is the identity element $1 \in \RR_+$.  For each convergence space $V$, the cotensor $[V,\RR_+]$ in $\Mod{\RR_+}^*$ is the space $\uV(V,\RR_+)$ of all non-negative continuous real-valued functions on $V$, with the pointwise $\RR_+$-module structure and designated element $* = 1:V \rightarrow \RR_+$ the constant map with value $1$.  Therefore, 
$$D_{\scriptscriptstyle(\Mat_{\scalebox{.8}{$\scriptscriptstyle \RR_+$}}^{\scalebox{.9}{$\scriptscriptstyle\aff$}}\kern-0.3ex,\,\RR_+)}(V) = \Mod{\RR_+}^*([V,\RR_+],\RR_+)$$
is the space of all continuous maps $\mu:[V,\RR_+] \rightarrow \RR_+$ that are $\RR_+$-linear and send $1$ to $1$.  When $V$ is a locally compact Hausdorff topological space, we thus deduce by the preceding example \ref{exa:nonneg_cs_meas} that $(\Mat_{\RR_+}^\aff,\RR_+)$-distributions $\mu \in D_{\scriptscriptstyle(\Mat_{\scalebox{.8}{$\scriptscriptstyle \RR_+$}}^{\scalebox{.9}{$\scriptscriptstyle\aff$}}\kern-0.3ex,\,\RR_+)}(V)$ are equivalently described as compactly supported Radon \textit{probability} measures $\mu$ on $V$ (since Radon probability measures are precisely those non-negative real-valued Radon measures $\mu$ with the property that $\int 1 \;d\mu = 1$).
\end{ExaSub}

\begin{ExaSub}[\textbf{The filter monad}]\label{exa:filt_mnd}
The two-element set $2 = \{0,1\}$ is a distributive lattice and hence carries two associated rig structures, depending on which of the Boolean operations $\wedge,\vee$ we take as addition.  Let us now view $2$ as a commutative rig by taking $\wedge$ as the addition and $\vee$ as the multiplication.  Hence we can consider the scalar $2$-linear context $(\V,\T,S) = (\Set,\Mat_2,2)$ in $\V = \Set$ (\ref{exa:cls_exa_ffa_ctxts}, \ref{par:sc_rlin_ctxt}), which is balanced \pref{thm:th_lrmods_sat_bal_iff_rcomm}, so that $\T^\perp \cong \T$.  Modules for the commutative rig $2$ are the same as meet semilattices (with top element) \cite[2.10]{Lu:CvxAffCmt}, so the category of normal $\T$-algebras may be identified with the category $\SLat_{\wedge\top}$ of meet semilattices \cite[2.10]{Lu:CvxAffCmt}.  Hence for each set $V$
$$D_{\scriptscriptstyle(\Mat_{\scalebox{.8}{$\scriptscriptstyle 2$}},2)}(V) = \SLat_{\wedge\top}([V,2],2)$$
is the set of all homomorphisms of meet semilattices $\F:[V,2] \rightarrow 2$, where $[V,2]$ is $\Set(V,2)$ with its pointwise meet semilattice structure.  We may identify $[V,2]$ with the powerset $\sP(V)$, under the inclusion order, and it follows that $(\Mat_2,2)$-distributions $\F \in D_{\scriptscriptstyle(\Mat_{\scalebox{.8}{$\scriptscriptstyle 2$}},2)}(V)$ are equivalently described as \textit{filters} on the set $V$ \pref{par:slat_filt}.
\end{ExaSub}

\begin{ExaSub}[\textbf{The proper filter monad}]\label{exa:prop_filt_mnd}
Again viewing the two-element set $2$ as a rig, as in \ref{exa:filt_mnd}, let us now consider the scalar 2-affine context $(\Set,\Mat_2^\aff,2)$ in $\V = \Set$ (\ref{exa:cls_exa_ffa_ctxts}, \ref{par:sc_raff_ctxt}).  The category of normal $\T$-algebras for the theory $\T = \Mat_2^\aff$ is the category of $2$-affine spaces, which may be identified with the category of binary-meet semilattices $\SLat_\wedge$ \pref{par:slat_filt}, by \cite[3.3]{Lu:CvxAffCmt} (and remarks in \ref{par:slat_filt}).  Taking the commutant $\T^\perp$ of $\T$ with respect to the binary-meet semilattice $2$, the category of normal $\T^\perp$-algebras may be identified with the category $\SLat_{\scriptscriptstyle \wedge\top\bot}$ of meet semilattices with a bottom element\footnote{Here we also use the fact that $\SLat_{\scriptscriptstyle \wedge\top\bot}$ is isomorphic to the category of \textit{join} semilattices with \textit{top} element.} \cite[8.2]{Lu:CvxAffCmt}.  Hence, for each set $V$
$$D_{\scriptscriptstyle(\Mat_{\scalebox{.8}{$\scriptscriptstyle 2$}}^{\scalebox{.9}{$\scriptscriptstyle\aff$}}\kern-0.3ex,\,2)}(V) = \SLat_{\scriptscriptstyle \wedge\top\bot}([V,2],2)$$
is the set of all mappings $\F:[V,2] \rightarrow 2$ that preserve binary meets and preserve both the top element and the bottom element, where $[V,2] = \Set(V,2) \cong \sP(V)$ is the powerset of $V$.  Thus we deduce that $(\Mat_2^\aff,2)$-distributions $\F \in D_{\scriptscriptstyle(\Mat_{\scalebox{.8}{$\scriptscriptstyle 2$}}^{\scalebox{.9}{$\scriptscriptstyle\aff$}}\kern-0.3ex,\,2)}(V)$ are equivalently described as \textit{proper filters} on the set $X$ \pref{par:slat_filt}.
\end{ExaSub}

\begin{ExaSub}[\textbf{The ultrafilter monad}]\label{exa:uf_mnd}
Let $\V = \Set$, and take $\T$ to be the initial Lawvere theory $\FinCard^\op$ (\ref{par:jth}, \ref{def:th}).  The category of normal $\T$-algebras is isomorphic to $\Set$ itself (e.g., by \cite[4.2]{Lu:Cmtnts}).  The set $S := 2 = \{0,1\}$ corresponds to a $\T$-algebra $\T \rightarrow \Set$ and so determines a morphism of theories $\underline{2}:\T \rightarrow \Set_{2}$ \pref{par:full_th}, and since $\T = \FinCard^\op$ is the initial Lawvere theory, this is the unique such morphism of theories.  Hence by \cite[5.4]{Lu:CvxAffCmt}, the morphism $\underline{2}$ is \textit{central}, equivalently, its commutant is the full theory $\Set_2$.  In other words, the commutant $\T^\perp$ of $\T$ with respect to $2$ is $\Set_2$.  Hence the category of normal $\T^\perp$-algebras may be identified with the category $\Bool$ of Boolean algebras \cite[2.12]{Lu:CvxAffCmt}.  By \ref{par:cmtnts_th}, the set $2$ carries the structure of a normal $\T^\perp$-algebra, and the corresponding Boolean algebra structure on $2$ is the usual one (by \cite[2.12]{Lu:CvxAffCmt}).  The commutant of $\T^\perp$ with respect to the $\T^\perp$-algebra $2$ is precisely the subtheory $\T^{\perp\perp} \hookrightarrow \Set_2$ in which $\T^{\perp\perp}(n,m) = \Bool(2^n,2^m)$ is the set of all Boolean algebra homomorphisms $h:2^n \rightarrow 2^m$ $(n,m \in \NN)$.  Such a homomorphism $h$ is given by a family of homomorphisms of Boolean algebras $h_j:2^n \rightarrow 2$ indexed by the elements $j$ of the cardinal $m$, but by \ref{par:slat_filt} we know that for each $j$ there is a unique element $k(j) \in n$ such that $h_j$ is the $k(j)$-th projection $\pi_{k(j)}:2^n \rightarrow 2$.  It follows that there is a unique mapping $k:m \rightarrow n$ such that $h = 2^k:2^n \rightarrow 2^m$, showing that the unique morphism of Lawvere theories $\FinCard^\op \rightarrow \T^{\perp\perp}$ is fully faithful and hence is an isomorphism  of Lawvere theories $\T = \FinCard^\op \cong \T^{\perp\perp}$.  Since this isomorphism commutes with the associated morphisms to $\Set_2$, we deduce that $\FinCard^\op$ is saturated with respect to $2$ \pref{par:sat_bal}.  Also, $\FinCard^\op$ is commutative, by \cite[5.4]{Lu:CvxAffCmt}, so $(\Set,\FinCard^\op,2)$ is a finitary functional-analytic context.  For each set $V$, we now deduce that
$$D_{\scriptscriptstyle(\FinCard^\op\kern-0.3ex,\,2)}(V) = \Bool([V,2],2)$$
is the set of all homomorphisms of Boolean algebras $\U:[V,2] \rightarrow 2$ where $[V,2] = \Set(V,2)$ with the pointwise boolean algebra structure.  Identifying $[V,2]$ with the powerset $\sP(V)$ with the usual Boolean operations, we therefore deduce that $(\FinCard^\op,2)$-distributions $\U \in D_{\scriptscriptstyle(\FinCard^\op\kern-0.3ex,\,2)}(V)$ are equivalently described as \textit{ultrafilters} on the set $V$ \pref{par:slat_filt}.
\end{ExaSub}

\section{Further fundamentals of enriched algebra}\label{sec:further_fun}

In order to establish the general classes of examples of functional-analytic contexts described in \ref{exa:cls_exa_ffa_ctxts}, as well as the specific examples of functional distribution monads in \S\ref{sec:exa_cdistn_mnds}, we will need to develop further fundamental aspects of enriched algebra.

\subsection{Modules in \texorpdfstring{$\J$}{J}-algebraic symmetric monoidal closed \texorpdfstring{$\V$}{V}-categories}\label{sec:mod_jalg_smcvcats}

Our study of enriched-categorical aspects of modules and affine spaces over rigs in cartesian closed categories will be enabled by the following general result.  Let $j:\J \hookrightarrow \uV$ be an eleutheric system of arities \pref{par:jary_mnds}.

\begin{PropSubSub}\label{thm:modules_in_talg}
Let $\T$ be a commutative $\J$-theory, and suppose that the category of normal $\T$-algebras $\W = \Alg{\T}^!_0$ has reflexive coequalizers.  Let $R$ be a monoid in the closed symmetric monoidal category $\W$ \pref{thm:talgs_comm_th_smcvcat}.  Then the category of left $R$-modules in $\W$ underlies a strictly $\J$-algebraic $\V$-category over $\uV$.
\end{PropSubSub}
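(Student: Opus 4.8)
The plan is to realize the category of left $R$-modules as the Eilenberg--Moore $\V$-category of a $\V$-monad built from $R$, and then to exhibit its forgetful $\V$-functor to $\uV$ as a composite of two strictly $\V$-monadic $\V$-functors whose induced $\V$-monad on $\uV$ is $\J$-ary. First I would set $\A := \Alg{\T}^!$ and invoke \pref{thm:talgs_comm_th_smcvcat} to endow $\A$ with the structure of a symmetric monoidal closed $\V$-category, together with the symmetric monoidal $\V$-adjunction $F \dashv G : \A \rightarrow \uV$. Since $\W = \A_0$ is the underlying ordinary category and $R$ is a monoid therein, the $\V$-functorial tensor product of $\A$ makes $\MM := R \otimes (-) : \A \rightarrow \A$ into a $\V$-monad, with unit and multiplication induced by those of $R$. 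Because $\A$ is closed, $\MM$ has a right $\V$-adjoint and hence preserves all weighted colimits; and because $\A$ is cotensored \pref{par:lims_algs} and $\V$ has equalizers \pref{par:enr_cat_th}, the Eilenberg--Moore $\V$-category $\Alg{\MM}$ exists (Dubuc \cite{Dub}), and its underlying ordinary category is precisely the category of left $R$-modules in $\W$. This $\Alg{\MM}$ supplies the desired $\V$-category structure, and the forgetful $\V$-functor $U : \Alg{\MM} \rightarrow \A$ is strictly $\V$-monadic, with left adjoint the free $\MM$-algebra $\V$-functor $R \otimes (-)$.

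Next I would consider the composite $H := G \circ U : \Alg{\MM} \rightarrow \uV$, which has a left adjoint, namely $R \otimes F(-)$, and argue that $H$ is strictly $\V$-monadic. This is the enriched analogue of the classical fact that a composite of monadic functors is monadic whenever the top category has reflexive coequalizers: I would verify the hypotheses of the enriched Beck monadicity theorem \cite[II]{Dub}, using that $\Alg{\MM}$ has conical reflexive coequalizers --- these exist because $\W$ has reflexive coequalizers by hypothesis and $\MM$, being a left $\V$-adjoint, preserves them, so they are created by $U$ --- together with the fact that both $U$ and $G$ are strictly $\V$-monadic and hence create the coequalizers of the relevant $H$-contractible pairs.

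It then remains to show that the $\V$-monad on $\uV$ induced by $H$ is $\J$-ary; by \pref{par:ntalgs_emalgs} this yields that $H$ is strictly $\J$-algebraic, which is exactly the assertion. Its underlying endo-$\V$-functor is $S = G \circ \MM \circ F = G(R \otimes F(-))$, and I would show it preserves left Kan extensions along $j$, equivalently the weighted colimits with weights $\uV(j-,V)$ \pref{par:jary_mnds}. Both $F$ and $\MM$ are left $\V$-adjoints and so preserve all such colimits; the crux is that $G$, the forgetful $\V$-functor of the $\J$-ary monad $\TT = GF$, preserves $\J$-flat colimits. Indeed, since the weights $\uV(j-,V)$ are $\J$-flat and $j$ is eleutheric, the colimit $\uV(j-,V) \star TD$ of any transported diagram $TD$ exists, so the conditional preservation built into \pref{par:jary_mnds} applies and $T$ preserves $\uV(j-,V) \star D$ unconditionally, whence so does $T^2$; therefore $G$ creates, and in particular preserves, these colimits, by the enriched form of the standard fact that a forgetful functor creates the colimits preserved by its monad. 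Composing these three preservation statements shows that $S$ preserves $\uV(j-,V)$-weighted colimits, so the induced $\V$-monad is $\J$-ary.

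The main obstacle is this last step: reconciling the \emph{conditional} preservation of $\J$-flat colimits that is built into the definition of a $\J$-ary monad with the unconditional preservation one needs in order to conclude that $G$ creates $\J$-flat colimits, and hence that the triple composite $S$ preserves the weights $\uV(j-,V)$. By contrast, the strict $\V$-monadicity of the composite $H$ is essentially a routine enriched upgrade of the composition of monadic adjunctions, the only genuine input being the conical reflexive coequalizers in $\Alg{\MM}$ furnished by the hypothesis on $\W$.
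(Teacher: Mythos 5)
Your proof is correct and follows essentially the same route as the paper's: realize left $R$-modules as the Eilenberg--Moore $\V$-category of the $\V$-monad $R \otimes (-)$ on $\Alg{\T}^!$, deduce strict $\V$-monadicity of the composite forgetful $\V$-functor from Beck's theorem, and check that the induced $\V$-monad on $\uV$ is $\J$-ary. The only (immaterial) difference is the last step, where the paper argues that the composite monad conditionally preserves $\J$-flat colimits while you show directly that $G$ preserves the weights $\uV(j-,V)$ via creation; both arguments are valid.
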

\begin{proof}
Let $\bar{\W} := \Alg{\T}^!$.  Since $\bar{\W}$ is a symmetric monoidal closed $\V$-category \pref{thm:talgs_comm_th_smcvcat} and $R$ is a monoid in its underlying monoidal category $\W$, the $\V$-functor $R \otimes (-):\bar{\W} \rightarrow \bar{\W}$ underlies a $\V$-monad $\SSS$.  The category of Eilenberg-Moore algebras of the ordinary monad underlying $\SSS$ is precisely the category of left $R$-modules, which is therefore the ordinary category underlying the $\V$-category $\bar{\W}^\SSS$ of $\SSS$-algebras.  Let $\Mod{R} := \bar{\W}^\SSS$, and let $P \dashv Q:\Mod{R} \rightarrow \bar{\W}$ denote the Eilenberg-Moore $\V$-adjunction for $\SSS$.  Since $\bar{\W}$ is a symmetric monoidal closed $\V$-category, the $\V$-endofunctor $S = R \otimes (-):\bar{\W} \rightarrow \bar{\W}$ has a right adjoint and hence preserves (conical) coequalizers, so the Eilenberg-Moore forgetful $\V$-functor $Q:\bar{\W}^\SSS \rightarrow \bar{\W}$ creates coequalizers.  Also, the strictly $\J$-algebraic $\V$-functor $G:\bar{\W} = \Alg{\T}^! \rightarrow \uV$ is strictly $\V$-monadic \pref{par:ntalgs_emalgs} and so creates coequalizers of $G$-contractible pairs \cite[II.2.1]{Dub}, so it follows that the composite $\V$-functor
$$U := \left(\Mod{R} \xrightarrow{Q} \bar{\W} \xrightarrow{G} \uV\right)$$
creates coequalizers of $U$-contractible pairs and hence is strictly $\V$-monadic by \cite[II.2.1]{Dub}.  $G$ has a left adjoint $F$, so a left adjoint to $U$ is obtained as the composite $L := PF$.  The $\V$-monad on $\uV$ induced by the $\V$-adjunction $L \dashv U$ is $UL = GSF$, and since $G$ is $\J$-algebraic, $G$ conditionally preserves $\J$-flat colimits \cite[12.2]{Lu:EnrAlgTh}, so since $S$ and $F$ preserve all colimits it follows that the $\V$-monad $UL$ conditionally preserves $\J$-flat colimits and hence is a $\J$-ary monad \pref{par:jary_mnds}.  Therefore $U$ is strictly $\J$-algebraic, by \ref{par:ntalgs_emalgs}.
\end{proof}

\subsection{Coslices of \texorpdfstring{$\V$}{V}-categories of algebras}\label{sec:coslices}

Letting $\V$ be a cartesian closed category with equalizers and countable colimits, we show in the present section that if $E$ is an object of a discretely finitary algebraic $\V$-category $\A$ over $\uV$, then the \textit{coslice} category $E \slash \A$ is a discretely finitary algebraic $\V$-category over $\uV$.  Let us begin by recalling some well-known facts, all of which are easily proved.

\begin{ParSubSub}\label{par:coslice}
Given an arbitrary category $\A$ with finite coproducts, any object $E$ of $\A$ carries the structure of a monoid in the cocartesian monoidal category $\A$.  The category $\Mod{E}$ of left modules for the monoid $E$ in $\A$ is isomorphic to the coslice category $E \slash \A$ under the object $E$ of $\A$, whose objects are pairs $(A,a)$ with $A \in \ob\A$ and $a:E \rightarrow A$, and we shall freely identify these categories.  The monoid $E$ determines a monad $\TT_E$ on $\A$ whose underlying endofunctor is
$$T_E = E + (-)\;:\;\A \rightarrow \A\;,$$
and it is immediate from the definitions that $\Mod{E}$ is precisely the category of Eilenberg-Moore algebras $\A^{\TT_E}$ of $\TT_E$.
\end{ParSubSub}

\begin{ParSubSub}\label{par:coslice_vcat}
Now let us instead assume that $\A$ is a cotensored $\V$-category with (conical) finite coproducts and $E$ is an object of $\A$.  By \ref{par:coslice} we have an associated monad $\TT_E$ on the underlying ordinary category $\A_0$, and $\TT_E$ clearly underlies a $\V$-monad on $\A$, which we again denote by $\TT_E$.  Hence by \ref{par:coslice} the coslice category $E \slash \A_0$ underlies a cotensored $\V$-category $\A^{\TT_E}$ that we denote by $E \slash \A$.  Given objects $(A,a)$ and $(B,b)$ of $E \slash \A$, the definition of the Eilenberg-Moore $\V$-category $\uV^{\TT_E}$ \cite[\S II.1]{Dub} gives us an expression for the hom-object $(E \slash \A)((A,a),(B,b))$ as an equalizer of a pair of morphisms expressed in terms of the $\TT_E$-algebra structures $(a,1):E + A \rightarrow A$ and $(b,1):E + B \rightarrow B$ carried by $A$ and $B$, and it follows readily that \textit{the hom-object $(E \slash \A)((A,a),(B,b))$ is the equalizer of the following pair:}
\begin{equation}\label{eq:hom_in_coslice_as_equalizer}
\xymatrix{
\A(A,B) \ar@<1.3ex>[rr]^{\A(a,B)} \ar[r]_(.6){!} & 1 \ar[r]_(.4){[b]} & \A(E,B).
}
\end{equation}

The forgetful $\V$-functor $U:E \slash \A \rightarrow \A$ is not only $\V$-monadic but has a special further property, as follows:
\end{ParSubSub}

\begin{PropSubSub}\label{thm:forg_vfunc_on_coslice_cr_conical_coeqs}
The $\V$-monadic $\V$-functor $U:E \slash \A \rightarrow \A$ creates conical\linebreak coequalizers.
\end{PropSubSub}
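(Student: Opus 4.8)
The plan is to use the monadic description of the coslice from \ref{par:coslice_vcat}, where we have $E \slash \A = \A^{\TT_E}$, the $\V$-category of Eilenberg--Moore algebras for the $\V$-monad $\TT_E = (T_E,\eta,\mu)$ whose underlying $\V$-endofunctor is $T_E = E + (-)$, with $U$ the associated Eilenberg--Moore forgetful $\V$-functor. I would then invoke the standard principle of enriched monad theory that such a forgetful $\V$-functor creates those conical colimits that are preserved by the underlying $\V$-endofunctor of the monad---the same principle used in the proof of \ref{thm:modules_in_talg}, where $Q$ creates conical coequalizers because $R \otimes (-)$ preserves them. It therefore suffices to show that $T_E = E + (-)$ preserves conical coequalizers.

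To see this, recall that for each object $C$ of $\A$ the defining property of the conical coproduct $E + V$ yields a $\V$-natural isomorphism $\A(E + V, C) \cong \A(E, C) \times \A(V, C)$. Let $q : B \to L$ be the conical coequalizer of a parallel pair $f, g : A \rightrightarrows B$ in $\A$; by the definition of conical colimit \pref{par:enr_cat_th}, for each $C$ the object $\A(L, C)$ is the equalizer of $\A(f, C), \A(g, C) : \A(B, C) \rightrightarrows \A(A, C)$ in $\V$. Since finite products in $\V$ commute with equalizers, forming $\A(E, C) \times (-)$ and using the isomorphism above shows that $\A(E + L, C)$ is the equalizer of $\A(E + f, C), \A(E + g, C) : \A(E + B, C) \rightrightarrows \A(E + A, C)$, $\V$-naturally in $C$. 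Evaluating with $\V(I, -)$, which preserves equalizers, likewise exhibits $E + L$ as the coequalizer in $\A_0$, so $E + q$ is the conical coequalizer of $E + f, E + g$. Hence $T_E$ preserves conical coequalizers, and therefore so does $T_E^2 = T_E \circ T_E$.

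With $T_E$ (and hence $T_E^2$) preserving conical coequalizers, creation by $U$ follows in the usual way: given a parallel pair in $E \slash \A$ whose image under $U$ has a conical coequalizer $L$ in $\A$, the fact that $T_E L$ is again such a conical colimit lets one transport the pointwise $\TT_E$-algebra structures along the universal cocone to endow $L$ with a unique $\TT_E$-algebra structure making the coprojection an algebra morphism, the preservation by $T_E^2$ being exactly what verifies the associativity law; the resulting algebra is then readily checked to be the conical coequalizer in $E \slash \A$, and $U$ manifestly preserves it. The only substantive point is the verification that $T_E$ preserves conical coequalizers, carried out above, and the sole subtlety there---that the hom-object isomorphism interacts correctly with the conical-limit description of the hom-objects---is immediate from the commutation of finite products with equalizers in $\V$; everything else is a formal consequence of the Eilenberg--Moore creation principle.
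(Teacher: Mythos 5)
Your proof is correct, but it takes a genuinely different route from the paper's. The paper's own argument is a two-line reduction: it cites the well-known fact that the \emph{ordinary} forgetful functor $U_0:E\slash\A_0 \rightarrow \A_0$ creates coequalizers, and then observes that since $\A$ and $E\slash\A$ are cotensored $\V$-categories, conical coequalizers in them coincide with ordinary coequalizers in the underlying categories, so the ordinary-level statement immediately gives the enriched one. You instead stay entirely at the enriched level: you use the identification $E\slash\A = \A^{\TT_E}$ from \bref{par:coslice_vcat}, prove directly that $T_E = E+(-)$ preserves conical coequalizers (via the hom-object isomorphism $\A(E+V,C)\cong\A(E,C)\times\A(V,C)$ and the commutation of finite products with equalizers in $\V$), and then invoke the Eilenberg--Moore creation principle---the same principle the paper itself uses in the proof of \bref{thm:modules_in_talg}. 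Both arguments are sound. The paper's is shorter because it outsources the content to a known ordinary-category result, at the cost of needing the cotensoredness of $\A$ and $E\slash\A$ to transfer between conical and ordinary colimits; yours is self-contained and does not actually use cotensors, only the existence of conical finite coproducts, so it is marginally more general, at the cost of re-deriving the preservation statement and sketching the (standard but nontrivial) transport of algebra structure along the universal cocone.
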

\begin{proof}
It is well-known that the underlying ordinary functor $U_0:E \slash \A_0 \rightarrow \A_0$ creates coequalizers \cite[17.3]{Wy:QTop}.  But since $\A$ and $E \slash \A$ are cotensored $\V$-categories, it follows that conical coequalizers in these $\V$-categories are the same as coequalizers in the underlying ordinary categories (e.g. by \cite[\S 3.8]{Ke:Ba}).
\end{proof}

\begin{CorSubSub}\label{thm:coslice_vmonadic}
Let $G:\A \rightarrow \C$ be a strictly $\V$-monadic $\V$-functor, and let $E$ be an object of $\A$.  Assume that $\A$ is cotensored and has (conical) finite coproducts.  Then the composite $\V$-functor $GU:E \slash \A \rightarrow \C$ is strictly $\V$-monadic.
\end{CorSubSub}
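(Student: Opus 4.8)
The plan is to verify the two conditions that characterize strict $\V$-monadicity, as recalled in the footnote to \ref{par:ntalgs_emalgs}: that $GU$ admits a left adjoint, and that $GU$ creates coequalizers of $GU$-contractible pairs. Once both are established, the result follows from the criterion of \cite[II.2.1]{Dub}. This mirrors closely the argument used in the proof of \ref{thm:modules_in_talg}, with \ref{thm:forg_vfunc_on_coslice_cr_conical_coeqs} playing the role that was there played by the creation of coequalizers by an Eilenberg--Moore forgetful $\V$-functor.

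First I would produce the left adjoint. Since $\A$ is cotensored and has conical finite coproducts, \ref{par:coslice_vcat} exhibits $U:E \slash \A \rightarrow \A$ as a $\V$-monadic $\V$-functor, so $U$ has a left adjoint $P$; and $G$, being strictly $\V$-monadic, has a left adjoint $F$. Composing these, $PF$ is a left adjoint to $GU$.

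Next I would establish the creation property by composing two creation statements. By \ref{thm:forg_vfunc_on_coslice_cr_conical_coeqs}, $U$ creates conical coequalizers; and since $G$ is strictly $\V$-monadic, $G$ creates coequalizers of $G$-contractible pairs by \cite[II.2.1]{Dub}, these coequalizers being conical since they arise as split, hence absolute, coequalizers in $\C$. Now given a $GU$-contractible pair $(f,g)$ in $E \slash \A$, its image $(Uf,Ug)$ is a $G$-contractible pair in $\A$, because $G(Uf)=GU(f)$ and $G(Ug)=GU(g)$. Hence $G$ creates a conical coequalizer of $(Uf,Ug)$ in $\A$ from the corresponding absolute coequalizer of $(GUf,GUg)$ in $\C$; and then $U$, creating conical coequalizers, lifts this uniquely to a conical coequalizer of $(f,g)$ in $E \slash \A$ that $U$ carries to the coequalizer in $\A$. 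By the uniqueness inherent in each creation step, this exhibits $GU$ as creating coequalizers of $GU$-contractible pairs.

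The one point requiring care is the interface between the two creation statements: I must check that the coequalizer produced by the $G$-creation step is genuinely a \emph{conical} coequalizer, so that the hypothesis of \ref{thm:forg_vfunc_on_coslice_cr_conical_coeqs} is met. This is precisely the observation that a $G$-contractible pair yields a split (hence absolute, hence conical) coequalizer in $\C$, whose $G$-created lift to $\A$ is therefore conical as well. With this observation in place, the composition of the two creations is routine, and together with the left adjoint $PF$ the corollary follows from \cite[II.2.1]{Dub}.
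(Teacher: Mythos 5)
Your proof is correct and follows essentially the same route as the paper's: obtain the left adjoint by composing the two left adjoints, then combine the creation of conical coequalizers of $G$-contractible pairs (from the enriched Beck theorem applied to $G$) with the creation of arbitrary conical coequalizers by $U$ from \ref{thm:forg_vfunc_on_coslice_cr_conical_coeqs}. The paper states this composition of creation properties more tersely; your added check that the intermediate coequalizer in $\A$ is conical is exactly the point the paper's citation of \cite[II.2.1]{Dub} is carrying.
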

\begin{proof}
$GU$ has a left adjoint.  By Beck's monadicity theorem, formulated in the enriched context by Dubuc \cite[II.2.1]{Dub}, it therefore suffices to show that $GU$ creates conical coequalizers of $GU$-contractible pairs.  But by the same theorem we know that $G$ creates conical coequalizers of $G$-contractible pairs, so since $U$ creates arbitrary conical coequalizers \pref{thm:forg_vfunc_on_coslice_cr_conical_coeqs} the result follows.
\end{proof}

\begin{ThmSubSub}\label{thm:coslice_of_str_nvalg_cat_nvalg}
Let $\A$ be a discretely finitary algebraic $\V$-category over $\uV$, and let $E$ be an object of $\A$.  Then the coslice $\V$-category $E \slash \A$ is discretely finitary algebraic over $\uV$.
\end{ThmSubSub}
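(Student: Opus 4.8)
The plan is to realize $GU : E \slash \A \to \uV$ as a (strictly) $\DFin_\V$-algebraic $\V$-functor, i.e.\ in the sense of \ref{par:ntalgs_emalgs} to show both that it is strictly $\V$-monadic and that the $\V$-monad it induces on $\uV$ is discretely finitary. Since $\A$ is discretely finitary algebraic we may identify it with $\Alg{\T}^!$ for a discretely finitary theory $\T$, with $G$ strictly $\V$-monadic; by \ref{par:lims_algs} the $\V$-category $\A$ is cotensored, and by \ref{thm:con_colims_algs} it has conical finite coproducts. Hence \ref{thm:coslice_vmonadic} (with $\C = \uV$) immediately yields that $GU$ is strictly $\V$-monadic. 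Writing $F \dashv G$ and letting $\TT_E$ be the monad of \ref{par:coslice}--\ref{par:coslice_vcat}, with underlying endofunctor $T_E = E + (-)$ and free functor $U_L \dashv U$, the induced $\V$-monad on $\uV$ is the composite $M := GU\,U_L F = G\,T_E\,F = G(E + F(-))$. It therefore remains to prove that $M$ is a $\DFin_\V$-ary monad.

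The crux, and the step I expect to be the main obstacle, is a preservation lemma for $T_E = E + (-) : \A \to \A$. Mirroring the proof of \ref{thm:modules_in_talg}, one would like $T_E$ to preserve $\DFin_\V$-flat colimits; but unlike the functor $R \otimes (-)$ used there, coproduct-with-$E$ is \emph{not} cocontinuous (it fails to preserve coproducts), so this requires a genuine argument. The key observation is that $\DFin_\V$-flat weights are ``connected'' in the cartesian closed setting: if $W$ is $\DFin_\V$-flat then, applying its defining preservation property to the arity $0 \in \DFin_\V$ — for which $\uV(0 \cdot 1, -)$ is the constant functor at the terminal object, since $0 \cdot 1$ is initial — one finds that the colimit $W \star \Delta_I$ of $W$ in $\uV$ is isomorphic to $I$, using that the monoidal unit $I$ is terminal in the cartesian closed $\V$. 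Consequently the weighted colimit of the constant diagram at $E$ is the copower of $E$ by $W \star \Delta_I \cong I$, and hence is just $E$; combining this with cocontinuity of $W \star (-)$ in its diagram variable (via a direct comparison of $\A(E + (W\star FD), X) \cong \A(E,X) \times \A(W\star FD, X)$ with $\A(W\star(E + FD), X)$) shows that $T_E$ preserves every $\DFin_\V$-flat colimit existing in $\A$.

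With this lemma the assembly is routine and parallels \ref{thm:modules_in_talg}. To verify that $M$ conditionally preserves $\DFin_\V$-flat colimits — equivalently, is $\DFin_\V$-ary, by \ref{par:jary_mnds} — I take a $\DFin_\V$-flat weight $W$ and a diagram $D$ with $W \star D$ existing in $\uV$ and with $W \star MD$ existing; then $F(W\star D) = W \star FD$ since $F$ is cocontinuous, $T_E(W\star FD) = W \star T_E FD$ by the lemma, and finally, since $G$ conditionally preserves $\DFin_\V$-flat colimits (\cite[12.2]{Lu:EnrAlgTh}, as $G$ is $\DFin_\V$-algebraic) and $W \star GT_E FD = W\star MD$ exists, we obtain $G(W \star T_E F D) \cong W \star MD$, i.e.\ $M(W\star D)\cong W\star MD$. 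Thus $M$ is $\DFin_\V$-ary, and together with the strict $\V$-monadicity of $GU$ this exhibits $E \slash \A$ as a discretely finitary algebraic $\V$-category over $\uV$. (Alternatively, one could check preservation of objectwise-countable $\DFin_\V$-flat colimits and invoke \ref{thm:nvary_monads_via_ctbl_nv_flat_colims}.)
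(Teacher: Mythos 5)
Your proof is correct and follows essentially the same route as the paper's: strict $\V$-monadicity of $GU$ via \ref{thm:coslice_vmonadic}, reduction to showing that $T_E = E + (-)$ preserves the relevant $\DFin_\V$-flat colimits, and the same key observation that the arity $0$ forces $W \star \Delta_I \cong I$ (the paper packages this by factoring $\Delta E$ as $(-)\otimes E$ after $(-)^0 \circ G$ with $(-)^0 \cong \uV(0,-)$, which is your computation in different clothing). The one caveat is that your intermediate lemma as stated (``$T_E$ preserves every $\DFin_\V$-flat colimit existing in $\A$'') is only proved where $W \star \Delta_I$ is known to exist in $\uV$, but this holds in every instance you actually invoke it (in the conditional-preservation setup $W \star \Delta_I \cong \uV(0, W\star D) \cong I$ exists because $W\star D$ exists in $\uV$, and it exists for all objectwise-countable weights), so nothing is lost.
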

\begin{proof}
By \ref{par:ntalgs_emalgs} we know that the associated $\V$-adjunction $F \dashv G:\A \rightarrow \uV$ is strictly $\V$-monadic, so since $\A$ is cotensored and has finite coproducts \pref{thm:con_colims_algs} we can invoke \ref{thm:coslice_vmonadic} to deduce that the composite $\V$-functor $GU:E \slash \A \rightarrow \uV$ is strictly $\V$-monadic.  Letting $L$ denote the left adjoint to the strictly $\V$-monadic $\V$-functor $U:E \slash \A \rightarrow \A$ \pref{par:coslice_vcat}, recall that the $\V$-monad induced by the $\V$-adjunction $L \dashv U$ is $UL = E + (-):\A \rightarrow \A$ \pref{par:coslice_vcat}.  We know that the composite $\V$-adjunction $LF \dashv GU:E \slash \A \rightarrow \uV$ is strictly $\V$-monadic, and so by \ref{par:ntalgs_emalgs}, it suffices to show that the induced $\V$-monad $GULF:\uV \rightarrow \uV$ is a discretely finitary $\V$-monad, equivalently, that $GULF$ preserves objectwise-countable $\DFin_\V$-flat colimits \pref{thm:nvary_monads_via_ctbl_nv_flat_colims}.  But $G$ conditionally preserves $\DFin_\V$-flat colimits, by \cite[6.7]{Lu:EnrAlgTh}, and $\uV$ has objectwise-countable colimits \pref{sec:disc_fin_enr_alg_th}, so $G$ preserves objectwise-countable $\DFin_\V$-flat colimits.  Also, $F$ preserves all colimits, so it suffices to show that the $\V$-functor $UL = E + (-):\A \rightarrow \A$ preserves objectwise-countable $\DFin_\V$-flat colimits.  But $E + (-)$ can be written as a pointwise conical coproduct $\Delta E + 1_\A$ of the constant $\V$-functor $\Delta E:\A \rightarrow \A$ with value $E$ and the identity $\V$-functor $1_\A:\A \rightarrow \A$, so since $1_\A$ preserves colimits it suffices to show that $\Delta E$ preserves objectwise-countable $\DFin_\V$-flat colimits.

Let $\V^\flat$ denote the full sub-$\V$-category of $\uV$ on the objects $V \in \ob\V$ for which a tensor $V \otimes E$ exists in $\A$.  Then we have a $\V$-functor $(-) \otimes E:\V^\flat \rightarrow \A$.  The terminal object $1$ of $\V$ is the unit object of $\V$ and hence lies in $\V^\flat$, so we also have a constant $\V$-functor $\Delta 1:\A \rightarrow \V^\flat$ with value $1$.  The composite 
\begin{equation}\label{eq:comp_1}\xymatrix{\A \ar[rr]^{\Delta 1} & & \V^\flat \ar[rr]^{(-) \otimes E} & & \A}\end{equation}
is isomorphic to $\Delta E$.  The constant $\V$-functor $\Delta 1:\A \rightarrow \uV$ preserves objectwise-countable $\DFin_\V$-flat colimits, since it can be expressed as a composite
$$\xymatrix{\A \ar[rr]^{G} & & \uV \ar[rr]^{(-)^0 \:=\: \Delta 1} & & \uV}$$
of $\V$-functors that both preserve objectwise-countable $\DFin_\V$-flat colimits, as the latter $\V$-functor $(-)^0 \cong \uV(0,-)$ preserves all $\DFin_\V$-flat colimits \pref{par:jary_mnds}.  It follows that $\Delta 1:\A \rightarrow \V^\flat$ preserves objectwise-countable $\DFin_\V$-flat colimits, and, moreover, sends them to colimits in $\V^\flat$ that are preserved by the inclusion $\V^\flat \hookrightarrow \uV$.  Also, $(-) \otimes E:\V^\flat \rightarrow \A$ preserves any such colimit, so the composite \eqref{eq:comp_1} preserves objectwise-countable $\DFin_\V$-flat colimits.
\end{proof}

\subsection{The free enriched theory on a Lawvere theory}\label{sec:free_enr_th}

Let $\V$ be a cartesian closed category with equalizers and countable colimits, and suppose that $\V$ also has intersections of countable families of strong subobjects, so that $\Alg{\T}_\C$ exists for every discretely finitary theory $\T$ and every $\V$-category $\C$ \pref{par:vcat_talgs}.  Let $\kappa$ be an infinite cardinal such that $\V$ has coproducts of families of objects indexed by \textit{$\kappa$-small sets}, i.e., sets of cardinality less than $\kappa$.  For example, we can take $\kappa = \aleph_1$.  The full subcategory $\Set_\kappa \hookrightarrow \Set$ consisting of $\kappa$-small sets is closed under finite products, and the functor
\begin{equation}\label{eq:disc_objs_on_kappa_small_sets}(-) \cdot 1:\Set_\kappa \rightarrow \V\end{equation}
preserves finite products since $\V$ is cartesian closed\footnote{Indeed, given objects $X,Y$ of $\Set_\kappa$, the functor $(-)\times(Y \cdot 1):\V \rightarrow \V$ preserves colimits, so we have isomorphisms $(X \times Y) \cdot 1 \xrightarrow{\sim} X \cdot (Y \cdot 1) \xrightarrow{\sim} (X \cdot 1) \times (Y \cdot 1)$ whose composite is the relevant comparison morphism.  Also $1 \cdot 1 \cong 1$.}.

Letting $\T$ be a $\kappa$-small Lawvere theory (enriched in $\Set$), we can form the free $\V$-category $\T_\V$ on $\T$ \pref{par:enr_cat_th}, which has the same objects as $\T$, with each hom-set $\T_\V(n,m) = \T(n,m) \cdot 1$
obtained as the $\T(n,m)$-fold copower of the terminal object $1$ in $\V$, where $n,m \in \NN$.

\begin{PropSubSub}\label{thm:free_enr_theory}
$\T_\V$ is a theory.  Further,
\begin{enumerate}
\item $\T_\V$ is the free $\V$-enriched theory on $\T$, i.e.
\begin{equation}\label{eq:free_vth_adj}\Th(\T_\V,\U) \;\;\;\cong\;\;\; \Th^{\scriptscriptstyle(\Set)}(\T,\U_0)\end{equation}
naturally in $\U \in \Th$, where $\Th^{\scriptscriptstyle(\Set)}$ denotes the category of Lawvere theories.
\item Let $\C$ be a $\V$-category with conical finite powers.  Then the ordinary category underlying $\Alg{\T_\V}_\C$ is isomorphic to the category of $\T$-algebras in $\C_0$, i.e. \newline $(\Alg{\T_\V}_\C)_0 \cong \Alg{\T}_{\C_0}$.
\item When $\C$ has designated conical finite powers, $(\Alg{\T_\V}^!_\C)_0 \cong \Alg{\T}^!_{\C_0}$.
\end{enumerate}
\end{PropSubSub}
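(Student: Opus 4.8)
The plan is to reduce the entire proposition to two ingredients: the free $\V$-category adjunction recalled in \ref{par:enr_cat_th}, and the fact that the copower functor \eqref{eq:disc_objs_on_kappa_small_sets} preserves finite products. Since $\T_\V$ is the free $\V$-category on the ordinary category $\T$, there is a natural bijection between $\V$-functors $\T_\V \to \C$ and ordinary functors $\T \to \C_0$, mediated by the canonical identity-on-objects functor $c:\T \to (\T_\V)_0$; concretely this bijection sends $A$ to $A_0 \circ c$ and rests on the copower universal property, under which a morphism $\T(n,m)\cdot 1 \to \C(An,Am)$ in $\V$ is the same as a function $\T(n,m) \to \C_0(An,Am)$. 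To see that $\T_\V$ is a theory \pref{def:th}, I would take as designated projections the images $c(\pi_i):n \to 1$ of the power projections of the Lawvere theory $\T$ and check, via the representable criterion of \ref{par:enr_cat_th}, that each family $(c(\pi_i))_i$ exhibits $n$ as a conical $n$-th power of $1$. Indeed, since $n$ is the $n$-th power of $1$ in $\T$, the maps $\T(k,\pi_i)$ give a bijection $\T(k,n) \cong \T(k,1)^n$ in $\Set_\kappa$; applying \eqref{eq:disc_objs_on_kappa_small_sets} (product-preserving) and using that composition in $\T_\V$ is $(-)\cdot 1$ applied to composition in $\T$, I obtain that $\T_\V(k,-)$ sends $(c(\pi_i))_i$ to a product cone $\T_\V(k,n) \cong \T_\V(k,1)^n$ in $\V$. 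The condition $c(\pi_1)=1_1$ is immediate since $\pi_1 = 1_1$ in $\T$.

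For (1), I would restrict the adjunction bijection $\VCAT(\T_\V,\U) \cong \CAT(\T,\U_0)$ to theory morphisms. A morphism of theories $\T_\V \to \U$ is an identity-on-objects $\V$-functor carrying designated projections to designated projections \pref{par:jth}, while a morphism of Lawvere theories $\T \to \U_0$ is an identity-on-objects product-preserving functor, equivalently one sending $\pi_i$ to $\pi_i$. Because $c$ is identity-on-objects and sends $\pi_i^{\T}$ to the designated $\pi_i^{\T_\V}$, and because $\U_0$ is itself a Lawvere theory (its objects are $\NN$ and $n$ is an ordinary power of $1$ in $\U_0$, being a conical such power in $\U$), the bijection matches these two classes of morphisms; naturality in $\U$ is inherited from the adjunction.

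For (2) and (3), I would transport a $\V$-functor $A:\T_\V \to \C$ to $\bar A = A_0\,c:\T \to \C_0$ and argue that the power-preservation conditions correspond. The crucial observation is that, since $\C$ has conical finite powers, an ordinary $n$-th power cone in $\C_0$ is automatically conical: any such cone is isomorphic in $\C_0$, over its projections, to the conical power $A(1)^n$, and conicality is stable under such isomorphisms because each $\C(C,-)$ preserves the conical power and the comparison isomorphism. Hence $A$ preserves the conical power $n$ iff $(\bar A\pi_i)$ is a conical power cone iff it is an ordinary power cone iff $\bar A$ preserves finite powers, so $\T_\V$-algebras in $\C$ \pref{par:alg_jth} correspond to $\T$-algebras in $\C_0$. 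On morphisms, $\V$-naturality of a transformation $A \to B$ is an equality of morphisms out of the copowers $\T_\V(n,m) = \T(n,m)\cdot 1$, which by the copower universal property reduces element-by-element over $\T(n,m)$ to ordinary naturality over $\T$; so $\V$-natural transformations correspond to ordinary ones \pref{par:vcat_talgs}, yielding $(\Alg{\T_\V}_\C)_0 \cong \Alg{\T}_{\C_0}$. For (3) I would simply restrict this isomorphism to the full subcategories of normal algebras: when $\C$ has designated conical powers, $A$ strictly preserves them iff $\bar A$ does (the condition is the same on objects and projections), so the isomorphism cuts down to $(\Alg{\T_\V}^!_\C)_0 \cong \Alg{\T}^!_{\C_0}$.

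The main obstacle I anticipate is exactly the bookkeeping in (2) relating enriched (conical) power-preservation to ordinary power-preservation; the lemma that ordinary finite-power cones in $\C$ are automatically conical, given that $\C$ has conical finite powers, is what makes this transfer clean, and everything else is a formal unwinding of the free $\V$-category adjunction together with the product-preservation of $(-)\cdot 1$.
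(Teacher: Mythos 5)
Your proposal is correct and follows essentially the same route as the paper: both rest on the free $\V$-category adjunction together with the product-preservation of $(-)\cdot 1$, and both reduce the algebra condition to ordinary preservation of finite powers of $1$ (the paper cites an external lemma for the fact that ordinary power cones are automatically conical when conical powers exist, where you prove this inline). The only organizational difference is that the paper derives part (1) last, from part (3), by taking $\C = \U$ and using that theory morphisms are normal algebras with carrier $I$, whereas you obtain (1) directly by restricting the adjunction bijection to theory morphisms; both are sound.
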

\begin{proof}
We have isomorphisms of categories
\begin{equation}\label{eq:univ_freevcat}\VCAT(\T_\V,\C) \cong \CAT(\T,\C_0)\end{equation}
natural in $\C \in \VCAT$ \cite[\S 2.5]{Ke:Ba}.  In particular, the unit of this representation is an identity-on-objects functor $E:\T \rightarrow (\T_\V)_0$.  Given an object $n$ of $\T$, the $n$-th power projections $\pi_i:n \rightarrow 1$ in $\T$ are thus sent to morphisms $E\pi_i:n \rightarrow 1$ in $\T_\V$.  In order to show that $\T_\V$ is a theory it suffices to show that for each object $m$ of $\T$, the morphisms $\T_\V(m,E\pi_i):\T_\V(m,n) \rightarrow \T_\V(m,1)$ present $\T_\V(m,n)$ as an $n$-th power in $\V$.  But these are simply the morphisms
\begin{equation}\label{eqn:power_cone_1}\T(m,\pi_i) \cdot 1:\T(m,n) \cdot 1\rightarrow \T(m,1) \cdot 1\;.\end{equation}
The functor $(-) \cdot 1$ of \eqref{eq:disc_objs_on_kappa_small_sets} preserves finite powers, so the family \eqref{eqn:power_cone_1} is an $n$-th power cone as needed.

Suppose $\C$ has conical finite powers.  Given a functor $A:\T \rightarrow \C_0$ with corresponding $\V$-functor $A^\sharp:\T_\V \rightarrow \C$, we claim that $A$ is a $\T$-algebra if and only if $A^\sharp$ is a $\T_\V$-algebra.  Indeed, in view of \cite[5.9]{Lu:EnrAlgTh}, since $\T_\V$ and $\C$ have conical finite powers, $A^\sharp$ is a $\T_\V$-algebra as soon as its underlying ordinary functor preserves finite powers of $1$, but the designated $n$-th power projections $\pi_i:n \rightarrow 1$ in $\T$ are sent by $A = A^\sharp E$ to the same morphisms that one obtains by applying $A^\sharp$ to the designated $n$-th power projections $E\pi_i$ in $\T_\V$.  Hence the isomorphism \eqref{eq:univ_freevcat} restricts to yield the isomorphism required for 2, and it is now easy to see that this isomorphism restricts further to yield 3.  Since morphisms of theories are certain normal algebras \pref{par:alg_jth}, the isomorphism in 3 restricts further to yield the isomorphism needed in 1 when we take $\C = \U$, and the naturality in $\U$ follows from the naturality of \eqref{eq:univ_freevcat}.
\end{proof}

Specializing \ref{thm:free_enr_theory} to the case of $\C = \uV$ we obtain the following.

\begin{ThmSubSub}\label{thm:vcat_algs_for_a_lth}
Let $\T$ be a Lawvere theory (enriched in $\Set$), and suppose that $\V$ has coproducts of $(\mor\T)$-indexed families.  Then the category $\Alg{\T}_{\V}$ of all $\T$-algebras in $\V$ underlies a (non-strictly) discretely finitary algebraic $\V$-category over $\uV$, and $\Alg{\T}^!_\V$ underlies a (strictly) discretely finitary algebraic $\V$-category over $\uV$.
\end{ThmSubSub}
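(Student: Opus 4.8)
The plan is to derive \ref{thm:vcat_algs_for_a_lth} as the special case $\C = \uV$ of \ref{thm:free_enr_theory}, combined with the characterization of discretely finitary algebraic $\V$-categories recorded in \ref{par:ntalgs_emalgs}. The $\V$-enriched structure witnessing the claim will be carried by the $\V$-categories $\Alg{\T_\V}$ and $\Alg{\T_\V}^!$ of $\T_\V$-algebras (resp. normal $\T_\V$-algebras) in $\uV$ for the discretely finitary theory $\T_\V$; the content of \ref{thm:free_enr_theory} is precisely that their underlying ordinary categories are $\Alg{\T}_\V$ and $\Alg{\T}^!_\V$.

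First I would check that $\T_\V$ may be formed and is a theory. Its construction as a free $\V$-category requires the copowers $\T(n,m) \cdot 1$ ($n,m \in \NN$), each of which is a coproduct of a $\T(n,m)$-indexed family of copies of the terminal object $1$. Since $\V$ has coproducts of $(\mor\T)$-indexed families by hypothesis, and an initial object (as $\V$ has countable colimits), each such copower exists — obtained as a $(\mor\T)$-indexed coproduct whose entries outside $\T(n,m)$ are initial — so $\T_\V$ exists and is a discretely finitary theory by \ref{thm:free_enr_theory}. I would then specialize \ref{thm:free_enr_theory}(2),(3) to $\C = \uV$. The $\V$-category $\uV$ is cartesian and hence has designated conical finite powers (given by finite products in $\V$), and $\uV_0 = \V$, so the cited parts yield isomorphisms of ordinary categories
$$\left(\Alg{\T_\V}\right)_0 \cong \Alg{\T}_\V, \qquad \left(\Alg{\T_\V}^!\right)_0 \cong \Alg{\T}^!_\V,$$
and these are compatible with the forgetful functors to $\V$, since the carrier of a $\T_\V$-algebra is its value at the object $1$, which matches the underlying object of the corresponding $\T$-algebra in $\V$.

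Finally I would invoke \ref{par:ntalgs_emalgs} for the discretely finitary theory $\T_\V$: the carrier $\V$-functor $\Alg{\T_\V}^! \to \uV$ is strictly $\DFin_\V$-algebraic (witnessed by the identity isomorphism), which gives the strict half of the claim for $\Alg{\T}^!_\V$; composing with the equivalence $\Alg{\T_\V}^! \simeq \Alg{\T_\V}$ of \eqref{eq:equiv_talgs_ntalgs}, which commutes with the carrier $\V$-functors, shows that $\Alg{\T_\V} \to \uV$ is (non-strictly) $\DFin_\V$-algebraic, giving the non-strict half for $\Alg{\T}_\V$. Since being a (strictly) discretely finitary algebraic $\V$-category over $\uV$ is by definition exactly the existence of such a $\V$-enrichment together with a (strictly) $\DFin_\V$-algebraic carrier $\V$-functor, this completes the argument. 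There is no genuine obstacle here, as the substantive work is already carried out in \ref{thm:free_enr_theory}; the only points demanding care are the bookkeeping of strict versus non-strict algebraicity and the verification that the forgetful functors correspond under the isomorphisms above.
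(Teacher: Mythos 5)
Your proposal is correct and follows essentially the same route as the paper: the paper's own (two-sentence) proof likewise observes that the initial object lets one pad $(\mor\T)$-indexed coproducts to obtain the copowers $\T(n,m)\cdot 1$, and then invokes \ref{thm:free_enr_theory} with $\C = \uV$ to identify $(\Alg{\T_\V})_0 \cong \Alg{\T}_\V$ and $(\Alg{\T_\V}^!)_0 \cong \Alg{\T}^!_\V$. Your additional explicit appeal to \ref{par:ntalgs_emalgs} and \eqref{eq:equiv_talgs_ntalgs} for the strict versus non-strict bookkeeping is left implicit in the paper but is exactly the intended justification.
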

\begin{proof}
Since $\V$ has an initial object, it follows that $\V$ has coproducts of families indexed by sets of cardinality less than or equal to that of $\mor\T$.  Hence we can invoke \ref{thm:free_enr_theory} to deduce that $(\Alg{\T_\V})_0 \cong \Alg{\T}_\V$ and $(\Alg{\T_\V}^!)_0 \cong \Alg{\T}_\V^!$. 
\end{proof}

\begin{ExaSubSub}\label{exa:th_cmon_ab}
Given a rig $R$ in $\Set$, the category of left $R$-modules is isomorphic to the category of normal $\T$-algebras for a Lawvere theory $\T$, namely the category $\T = \Mat_R$ of $R$-matrices; see \cite[2.8]{Lu:CvxAffCmt}, for example.  In particular, commutative monoids can be described equivalently as left $\NN$-modules and hence as normal $\T$-algebras where $\T = \Mat_\NN$.  Moreover, commutative monoids \textit{in $\V$} are equivalently described as normal $\T$-algebras in $\V$, so we can invoke \ref{thm:vcat_algs_for_a_lth} to deduce the following:
\end{ExaSubSub}

\begin{CorSubSub}\label{thm:comm_mnds_ab_grps_in_v}
The category of commutative monoids in $\V$ underlies a discretely finitary algebraic $\V$-category $\CMon(\V)$ over $\uV$.
\end{CorSubSub}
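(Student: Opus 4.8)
The plan is to exhibit $\CMon(\V)$ as the $\V$-category of normal algebras in $\V$ for a single fixed Lawvere theory, and then to read off the conclusion directly from \ref{thm:vcat_algs_for_a_lth}. The appropriate theory is the Lawvere theory $\Mat_\NN$ of $\NN$-matrices: by \ref{exa:th_cmon_ab}, commutative monoids can be described as left $\NN$-modules, and these are precisely the normal $\Mat_\NN$-algebras, so that commutative monoids in $\V$ are exactly the normal $\Mat_\NN$-algebras in $\V$. Thus, once the hypotheses of \ref{thm:vcat_algs_for_a_lth} have been verified for $\T = \Mat_\NN$, I would simply set $\CMon(\V) := \Alg{\Mat_\NN}^!_\V$ and transport the conclusion along this identification.

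The one hypothesis of \ref{thm:vcat_algs_for_a_lth} that requires a moment's attention is that $\V$ should have coproducts of $(\mor\Mat_\NN)$-indexed families. First I would note that $\mor\Mat_\NN$ is countable: for each pair $(n,m) \in \NN \times \NN$ the hom-set $\Mat_\NN(n,m)$ consists of matrices with natural-number entries of a fixed size determined by $n$ and $m$, and hence is countable, while there are only countably many such pairs $(n,m)$. Since $\V$ is assumed to have countable colimits, it in particular has countable coproducts, and therefore has coproducts of every $(\mor\Mat_\NN)$-indexed family, as required.

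With this verification in place the corollary is immediate: \ref{thm:vcat_algs_for_a_lth} applied to $\T = \Mat_\NN$ shows that $\Alg{\Mat_\NN}^!_\V$ underlies a (strictly) discretely finitary algebraic $\V$-category over $\uV$, and by \ref{exa:th_cmon_ab} its objects are exactly the commutative monoids in $\V$. There is no serious obstacle here; the mathematical substance resides entirely in the cited results, and the only genuine point to check is the countability of $\mor\Mat_\NN$---this matters precisely because \ref{thm:vcat_algs_for_a_lth} is phrased with a coproduct hypothesis indexed by the entire morphism set of the theory rather than merely by its finitary arities.
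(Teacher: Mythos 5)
Your argument is correct and is essentially identical to the paper's: the corollary is stated as an immediate consequence of \ref{exa:th_cmon_ab} (commutative monoids in $\V$ are the normal $\Mat_\NN$-algebras in $\V$) together with \ref{thm:vcat_algs_for_a_lth}, which is exactly your route. Your explicit check that $\mor\Mat_\NN$ is countable, so that the countable-colimit hypothesis on $\V$ supplies the required coproducts, is a point the paper leaves tacit but is verified correctly here.
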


\begin{PropSubSub}\label{thm:free_vth_on_commth_is_comm}
Suppose that the Lawvere theory $\T$ is commutative.  Then the free $\V$-enriched theory $\T_\V$ on $\T$ is commutative.  Hence, by \ref{thm:vcat_algs_for_a_lth}, \ref{thm:talgs_comm_th_smcvcat}, and \ref{thm:con_colims_algs}, the category of normal $\T$-algebras in $\V$ carries the structure of a symmetric monoidal closed $\V$-category $\Alg{\T_\V}^!$.
\end{PropSubSub}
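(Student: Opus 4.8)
The substantive content is the commutativity of $\T_\V$; the concluding ``Hence'' clause is then immediate, since \ref{thm:vcat_algs_for_a_lth} identifies the category of normal $\T$-algebras in $\V$ with $\Alg{\T_\V}^!$, which has reflexive coequalizers by \ref{thm:con_colims_algs}, so that \ref{thm:talgs_comm_th_smcvcat} supplies the symmetric monoidal closed structure. So the plan is to prove that $\T_\V$ is commutative. By \ref{par:comm}, $\T_\V$ is commutative iff the inclusion $Z(\T_\V) \hookrightarrow \T_\V$ of its centre is an isomorphism, and the centre exists because the standing hypotheses on $\V$ guarantee that $\Alg{\T_\V}_{\T_\V}$ exists (\ref{par:vcat_talgs}). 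Recalling from \ref{par:comm} and \ref{par:cmtnts_th} that $Z(\T_\V)$ is the full theory of the generic $\T_\V$-algebra $G := 1_{\T_\V}$ (of carrier the object $1$) in $\Alg{\T_\V}_{\T_\V}$, so that $Z(\T_\V)(n,1) = \Alg{\T_\V}_{\T_\V}([n,G],G)$, it suffices by \ref{par:alg_jth} to show that each structure morphism $Z(\T_\V)(n,1) \hookrightarrow \T_\V(n,1)$ is an isomorphism.

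First I would exploit the copower structure of the hom-objects. Since $\T_\V(n,1) = \T(n,1) \cdot 1$ is a copower of the terminal object, with coproduct injections $E\omega : 1 \to \T_\V(n,1)$ indexed by the operations $\omega \in \T(n,1)$ (where $E : \T \to (\T_\V)_0$ is the identity-on-objects unit functor appearing in the proof of \ref{thm:free_enr_theory}), and since the inclusion $m : Z(\T_\V)(n,1) \hookrightarrow \T_\V(n,1)$ is a monomorphism (\ref{par:jth}), it is enough to show that each injection $E\omega$ factors through $m$, say as $m \cdot g_\omega$ with $g_\omega : 1 \to Z(\T_\V)(n,1)$. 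Indeed, copairing the family $(g_\omega)_\omega$ over the coproduct then yields a morphism $\langle g_\omega\rangle$ with $m \cdot \langle g_\omega\rangle = \id$ (the injections being jointly epimorphic), whence $m$, being both a split epimorphism and a monomorphism, is an isomorphism.

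The heart of the matter is thus to show, for each $\omega \in \T(n,1)$, that $E\omega : n \to 1$ underlies a $\T_\V$-homomorphism $[n,G] \to G$, i.e. lies in $Z(\T_\V)(n,1)$. The decisive tool is \ref{thm:free_enr_theory}(2) with $\C = \T_\V$, giving $(\Alg{\T_\V}_{\T_\V})_0 \cong \Alg{\T}_{(\T_\V)_0}$; under this isomorphism $G = 1_{\T_\V}$ corresponds to the finite-power-preserving functor $E : \T \to (\T_\V)_0$ itself, and the power $[n,G]$ corresponds to the pointwise $n$-th power $E^n$. Hence $E\omega$ is a $\T_\V$-homomorphism precisely when it is a $\T$-homomorphism $E^n \to E$ in $(\T_\V)_0$. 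Now, because $\T$ is a commutative Lawvere theory, \ref{par:comm} applied to $\T$ itself says that $\omega$ is central in $\T$: for every $\psi \in \T(m,1)$ the interchange identity between $\omega$ and $\psi$ — an equation between two morphisms out of the product object $nm$ built from $\omega$, $\psi$, composition, and the power structure — holds in $\T$. Applying the finite-power-preserving functor $E$ transports this identity verbatim into $(\T_\V)_0$, which is exactly the assertion that $E\omega$ preserves the operation $E\psi$; as this holds for all $\psi$, the morphism $E\omega$ is a $\T$-homomorphism $E^n \to E$, producing the required $g_\omega$ and completing the argument.

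The main obstacle is this last transfer. Enriched centrality of $E\omega$ is a priori a condition on the entire hom-object $\T_\V(n,1)$, that is, on all generalized elements, not merely on the $\Set$-level operations coming from $\T$; one cannot simply say ``the generating operations commute'' because the notion of generating set of operations of \ref{par:subth_efth} is available only over $\Set$. What resolves this is precisely the interplay of the copower decomposition of $\T_\V(n,1)$ with the identification $(\Alg{\T_\V}_{\C})_0 \cong \Alg{\T}_{\C_0}$: the former reduces the enriched statement to checking the injections $E\omega$, and the latter reduces the centrality of each $E\omega$ to the ordinary interchange laws, which hold in the commutative theory $\T$ and are carried across by the product-preserving unit $E$.
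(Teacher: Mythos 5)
Your argument is correct, and it turns on the same pivot as the paper's proof: the isomorphism $(\Alg{\T_\V}_{\T_\V})_0 \cong \Alg{\T}_{(\T_\V)_0}$ of \ref{thm:free_enr_theory}, under which the generic algebra $1_{\T_\V}$ corresponds to the unit $E:\T \rightarrow (\T_\V)_0$, so that the enriched centre of $\T_\V$ is controlled by ordinary $\T$-homomorphisms over $(\T_\V)_0$. The difference is one of packaging. Where you decompose each hom-object $\T_\V(n,1)$ as the copower $\T(n,1)\cdot 1$ and show the monomorphism $Z(\T_\V)(n,1)\hookrightarrow \T_\V(n,1)$ is split epi by factoring each injection $E\omega$ through it, the paper reduces instead via the adjunction \eqref{eq:free_vth_adj}: to factor $1_{\T_\V}$ through $Z(\T_\V)$ it suffices to factor $E$ through $Z(\T_\V)_0$ --- the same reduction in substance, since that adjunction is built from exactly this copower structure. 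And where you verify by hand that each $E\omega$ satisfies the interchange law against every $E\psi$ (transported along the power-preserving $E$, with \ref{par:pres_ops} reducing the homomorphism condition to operations of output arity one), the paper identifies $Z(\T_\V)_0$ with the commutant $\T^\perp_E$ over $(\T_\V)_0$ and cites the general fact that a morphism out of a commutative theory commutes with itself. Your version is more elementary and self-contained; the paper's is shorter because both steps are outsourced to established machinery. The one point worth making explicit in a write-up is that the isomorphism of \ref{thm:free_enr_theory}(2) matches the pointwise power $[n,1_{\T_\V}]$ with the $n$-th power of $E$ only up to a carrier-identity isomorphism of algebras --- harmless for deciding whether $E\omega$ is a homomorphism, but it deserves a sentence.
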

\begin{proof}
Letting $\sS := \T_\V$, it suffices to show that the identity morphism on $\sS$ factors through the centre $\iota:Z(\sS) \hookrightarrow \sS$ \pref{par:comm}.  In view of the adjunction \eqref{eq:free_vth_adj}, it suffices to show that the unit morphism $E:\T \rightarrow (\T_\V)_0 = \sS_0$ factors through $\iota_0:Z(\sS)_0 \rightarrow \sS_0$.  The identity $1_\sS:\sS \rightarrow \sS$ is a normal $\sS$-algebra in $\sS$, and $Z(\sS)$ is by definition the full theory of $1_\sS$ in $\Alg{\sS}_\sS$ (\ref{par:comm}, \ref{par:cmtnts_th}).  It follows that the underlying Lawvere theory $Z(\sS)_0$ is the full theory of $1_\sS$ in $(\Alg{\sS}_\sS)_0$.  But by \ref{thm:free_enr_theory}, we have an isomorphism $(\Alg{\sS}_{\sS})_0 \cong \Alg{\T}_{\sS_0}$ under which the $\sS$-algebra $1_\sS$ corresponds to the $\T$-algebra $E$, and this isomorphism commutes with the associated functors valued in $\sS_0$.  Hence $Z(\sS)_0$ is isomorphic to the full theory of $E$ in $\Alg{\T}_{\sS_0}$, i.e. $Z(\sS)_0$ is isomorphic to the commutant $\T^\perp_E$ of $E$ \pref{par:cmtnts_th}.  Moreover, considering $Z(\sS)_0$ as a theory over $\sS_0$, via $\iota_0$, we find that $Z(\sS)_0 \cong \T^\perp_E$ as theories over $\sS_0$.  But $\T$ is commutative and hence $E:\T \rightarrow \sS_0$ commutes with itself, by \cite[5.15]{Lu:Cmtnts}, so $E$ factors through its own commutant $\T^\perp_E \hookrightarrow \sS_0$ and hence factors through $\iota_0:Z(\sS)_0 \rightarrow \sS_0$.
\end{proof}

\begin{ExaSubSub}\label{exa:cmonv_abv_comm_nvalg}
The Lawvere theory $\Mat_\NN$ of commutative monoids is commutative; e.g., see \cite[4.6]{Lu:CvxAffCmt}.  Hence the associated $\V$-enriched theory $(\Mat_\NN)_\V$ is commutative, by \ref{thm:free_vth_on_commth_is_comm}.  Therefore, with reference to \ref{thm:comm_mnds_ab_grps_in_v}, we obtain the following:
\end{ExaSubSub}

\begin{CorSubSub}\label{thm:cmonv_abv_smcvcats}
$\CMon(\V)$ is a symmetric monoidal closed $\V$-category.
\end{CorSubSub}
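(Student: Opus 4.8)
The plan is to specialize the general machinery of \ref{thm:talgs_comm_th_smcvcat} to the single discretely finitary theory arising from the Lawvere theory $\Mat_\NN$ of commutative monoids, and then to transport the resulting symmetric monoidal closed structure across the identification of its enriched normal algebras with $\CMon(\V)$. Since every ingredient has already been assembled in the preceding results, I expect the argument to be a direct application rather than a fresh construction.

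Concretely, I would first invoke \ref{exa:cmonv_abv_comm_nvalg}, which records that the Lawvere theory $\Mat_\NN$ is commutative and hence, by \ref{thm:free_vth_on_commth_is_comm}, that the free $\V$-enriched theory $\sS := (\Mat_\NN)_\V$ is a commutative discretely finitary theory. Next I would identify the $\V$-category $\Alg{\sS}^!$ of normal $\sS$-algebras with $\CMon(\V)$: by \ref{exa:th_cmon_ab} the commutative monoids in $\V$ are precisely the normal $\Mat_\NN$-algebras in $\V$, and by \ref{thm:vcat_algs_for_a_lth} (applied as in \ref{thm:comm_mnds_ab_grps_in_v}) these assemble into the discretely finitary algebraic $\V$-category $\CMon(\V)$ over $\uV$, which is therefore (isomorphic to) $\Alg{\sS}^!$. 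To apply \ref{thm:talgs_comm_th_smcvcat} I must check its standing hypothesis that $\Alg{\sS}^!_0$ has reflexive coequalizers; this is furnished by \ref{thm:con_colims_algs}, since the $\V$-category of normal algebras for any discretely finitary theory has all conical countable colimits, in particular reflexive coequalizers. With $\sS$ commutative and this colimit hypothesis in hand, \ref{thm:talgs_comm_th_smcvcat} endows $\Alg{\sS}^! \cong \CMon(\V)$ with the structure of a symmetric monoidal closed $\V$-category, as required (and the accompanying symmetric monoidal $\V$-adjunction to $\uV$ comes along automatically).

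I do not anticipate a genuine obstacle: the substantive work---that $\Mat_\NN$ is commutative, that the free enriched theory inherits commutativity, and that commutative theories with reflexive coequalizers yield symmetric monoidal closed categories of algebras---has all been discharged in the cited results. The only points needing care are bookkeeping ones, namely confirming that the isomorphism $\Alg{\sS}^! \cong \CMon(\V)$ commutes with the forgetful $\V$-functors to $\uV$, so that the transported monoidal structure is the expected tensor product of commutative monoids in $\V$, and verifying that the running hypotheses on $\V$ (cartesian closed with equalizers and countable colimits) are exactly those in force when $\CMon(\V)$ was constructed in \ref{thm:comm_mnds_ab_grps_in_v}.
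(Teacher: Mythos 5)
Your proposal is correct and follows essentially the same route as the paper: the corollary is obtained there by combining \ref{exa:cmonv_abv_comm_nvalg} (commutativity of $\Mat_\NN$, hence of $(\Mat_\NN)_\V$ via \ref{thm:free_vth_on_commth_is_comm}) with the second assertion of \ref{thm:free_vth_on_commth_is_comm}, which itself rests on exactly the three ingredients you cite, namely \ref{thm:vcat_algs_for_a_lth}, \ref{thm:con_colims_algs} for the reflexive-coequalizer hypothesis, and \ref{thm:talgs_comm_th_smcvcat}. The bookkeeping points you flag (compatibility with the forgetful $\V$-functors and the standing hypotheses on $\V$) are likewise handled by \ref{thm:comm_mnds_ab_grps_in_v} as you indicate.
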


\begin{ParSubSub}[\textbf{Cotensors of $\T_\V$-algebras}]\label{par:cot_tvalgs}
Let $A:\T \rightarrow \V$ be a $\T$-algebra in $\V$.  Under the bijection between $\T$-algebras in $\V$ and $\T_\V$-algebras in $\uV$ \pref{thm:free_enr_theory}, $A$ corresponds to a $\T_\V$-algebra $A^\sharp:\T_\V \rightarrow \uV$.  Given an object $V$ of $\V$, we can form the (pointwise) cotensor $[V,A^\sharp] = \uV(V,A^\sharp-)$ in $\Alg{\T_\V}$ \pref{par:lims_algs}, whose corresponding $\T$-algebra in $\V$ is the functor $\uV(V,A-):\T \rightarrow \V$.
\end{ParSubSub}

\begin{PropSubSub}\label{thm:charn_tvbimorphs}
Let $\G \subseteq \mor\T$ be a generating set of operations for the Lawvere theory $\T$ \pref{par:subth_efth}, and let $A,B,C$ be $\T$-algebras in $\V$, which we can view equivalently as $\T_\V$-algebras in $\uV$ (\ref{thm:free_enr_theory}, \ref{thm:vcat_algs_for_a_lth}).  Then a morphism $f:\ca{A} \times \ca{B} \rightarrow \ca{C}$ in $\V$ is a $\T_\V$-bimorphism \pref{def:bimorph} if and only if the following diagrams commute for every $\omega:n \rightarrow 1$ in $\G$
\begin{equation}\label{eq:diags_char_tvbimorphs}
\xymatrix@!C=10ex @R=5ex{
\ca{A}^n \times \ca{B} \ar[d]_{A\omega \times 1} \ar[r]^{\chi_1} & (\ca{A}\times\ca{B})^n \ar[r]^(.6){f^n} & \ca{C}^n \ar[d]^{C\omega} & \ca{A}\times\ca{B}^n \ar[d]_{1 \times B\omega} \ar[r]^{\chi_2} & (\ca{A}\times\ca{B})^n \ar[r]^(.6){f^n} & \ca{C}^n \ar[d]^{C\omega}\\
\ca{A} \times \ca{B} \ar[rr]_f & & \ca{C} & \ca{A}\times\ca{B} \ar[rr]_f & & \ca{C}
}
\end{equation}
where $\chi_1$ and $\chi_2$ are induced by the families $(\pi_i \times 1)_{i=1}^n$, $(1 \times \pi_i)_{i=1}^n$, respectively, and we have written $\ca{A}^n$ for the $n$-th power $A(n)$ of $\ca{A}$ determined by the $\T$-algebra $A$.
\end{PropSubSub}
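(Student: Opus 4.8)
The plan is to reduce the bimorphism condition to a statement about generating operations and then transpose. By \ref{def:bimorph}, $f$ is a $\T_\V$-bimorphism precisely when both transposes $f_1 \colon \ca{A} \to \uV(\ca{B},\ca{C})$ and $f_2 \colon \ca{B} \to \uV(\ca{A},\ca{C})$ are $\T_\V$-homomorphisms into the cotensors $[\ca{B},C]$ and $[\ca{A},C]$, respectively. First I would note, using \ref{thm:free_enr_theory}(2) with $\C = \uV$, that a $\T_\V$-homomorphism in $\uV$ is the same thing as a $\T$-homomorphism in $\V$ between the corresponding $\T$-algebras; hence \ref{par:pres_ops}, applied with the ambient finite-product category taken to be $\V$ itself, shows that $f_1$ is a $\T_\V$-homomorphism if and only if it preserves each generating operation $\omega \in \G$, and likewise for $f_2$. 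It therefore suffices to show that, for a fixed $\omega \colon n \to 1$ in $\G$, preservation of $\omega$ by $f_1$ is equivalent to commutativity of the left-hand diagram in \eqref{eq:diags_char_tvbimorphs}, and preservation by $f_2$ to the right-hand diagram.

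To treat $f_1$, I would first make the cotensor operation explicit. By \ref{par:cot_tvalgs} the cotensor $[\ca{B},C]$ corresponds to the $\T$-algebra $\uV(\ca{B},C-)$ in $\V$, so its operation $[\ca{B},C](\omega)$ is the image of $C\omega$ under $\uV(\ca{B},-)$ composed with the canonical isomorphism $\uV(\ca{B},\ca{C})^n \cong \uV(\ca{B},\ca{C}^n)$; equivalently, it is characterized by $\Ev \cdot ([\ca{B},C](\omega) \times 1_{\ca{B}}) = C\omega \cdot \langle \Ev \cdot (\pi_i \times 1_{\ca{B}}) \rangle_{i=1}^n$. Preservation of $\omega$ by $f_1$ is the equation $f_1 \cdot A\omega = [\ca{B},C](\omega) \cdot f_1^{\,n}$ of morphisms $\ca{A}^n \to \uV(\ca{B},\ca{C})$, and since transposition across the adjunction $(-)\times\ca{B} \dashv \uV(\ca{B},-)$ is a bijection, this holds iff the transposed equation of morphisms $\ca{A}^n \times \ca{B} \to \ca{C}$ does. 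Using $\Ev \cdot (f_1 \times 1_{\ca{B}}) = f$, the transpose of the left side is $f \cdot (A\omega \times 1)$; and substituting the characterization of $[\ca{B},C](\omega)$ into the right side, a short computation collapses the $i$-th evaluated component to $f \cdot (\pi_i \times 1)$, so that the transpose of the right side is exactly $C\omega \cdot f^n \cdot \chi_1$. Thus preservation of $\omega$ by $f_1$ is equivalent to $f \cdot (A\omega \times 1) = C\omega \cdot f^n \cdot \chi_1$, which is precisely the left-hand diagram.

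The argument for $f_2$ is identical after exchanging the two product factors, using the cotensor $[\ca{A},C]$, the adjunction $(-)\times\ca{A} \dashv \uV(\ca{A},-)$, and the map $\chi_2$ built from $(1 \times \pi_i)_{i=1}^n$; it yields the right-hand diagram. Quantifying over $\omega \in \G$ and combining the two equivalences completes the proof. I expect the only real work to be the bookkeeping in the middle paragraph—correctly reading off $[\ca{B},C](\omega)$ from \ref{par:cot_tvalgs} and tracking the evaluation morphisms through the transposition—while the reduction to generating operations and the symmetry between the two factors are immediate.
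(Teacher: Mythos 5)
Your proposal is correct and follows essentially the same route as the paper's proof: identify $\T_\V$-bimorphisms via the two transposes into the pointwise cotensors of \ref{par:cot_tvalgs}, reduce to the generating operations via \ref{par:pres_ops}, and convert the preservation equations into the diagrams \eqref{eq:diags_char_tvbimorphs} by exponential transposition. The paper leaves the transposition computation as a remark ("readily shown"), whereas you carry it out explicitly; the extra bookkeeping is accurate.
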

\begin{proof}
Identifying $\Alg{\T_\V}_0$ with $\Alg{\T}_\V$, $f$ is a $\T_\V$-bimorphism iff each of its transposes $f_1:\ca{A} \rightarrow \uV(\ca{B},\ca{C})$ and $f_2:\ca{B} \rightarrow \uV(\ca{A},\ca{C})$ is a $\T$-homomorphism, where $\uV(\ca{A},\ca{C})$ and $\uV(\ca{B},\ca{C})$ are regarded as the carriers of the cotensors 
$$\uV(\ca{A},C-),\;\uV(\ca{B},C-)\;:\;\T \rightarrow \V$$
described in \ref{par:cot_tvalgs}.  The characterization of $\T$-homomorphisms given in \ref{par:pres_ops} tells us that $f_1$ and $f_2$ are $\T$-homomorphisms iff certain equations hold for every morphism $\omega:n \rightarrow 1$ in the generating set $\G$.  By way of exponential transposition, these equations are readily shown to be equivalent to the commutativity of the diagrams \eqref{eq:diags_char_tvbimorphs}.
\end{proof}

\begin{CorSubSub}\label{thm:bim_cmons}
Let $\T = \Mat_\NN$ denote the Lawvere theory of commutative monoids, so that $\CMon(\V)$ is the $\V$-category of normal $\T_\V$-algebras \pref{thm:comm_mnds_ab_grps_in_v}.  Given commutative monoids $M,N,P$ in $\V$, a morphism $f:\ca{M} \times \ca{N} \rightarrow \ca{P}$ in $\V$ is a $\T_\V$-bimorphism from $M,N$ to $P$ if and only if $f$ is a bimorphism of commutative monoids \pref{def:rig_ring_module_bim_cmons}.
\end{CorSubSub}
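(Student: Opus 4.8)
The plan is to deduce this corollary directly from the characterization of $\T_\V$-bimorphisms in \ref{thm:charn_tvbimorphs}, specialized to a convenient generating set of operations for the Lawvere theory $\T = \Mat_\NN$ of commutative monoids. First I would fix the generating set $\G = \{+,\,0\} \subseteq \mor\T$ consisting of the binary addition $+ : 2 \rightarrow 1$ and the nullary unit $0 : 0 \rightarrow 1$. That $\G$ is a generating set of operations in the sense of \ref{par:subth_efth} is immediate: every operation $\omega : n \rightarrow 1$ of $\Mat_\NN$ is an $\NN$-linear combination $(x_1,\ldots,x_n) \mapsto \sum_i a_i x_i$, and any subtheory $\sS \subseteq \mor\T$ automatically contains all projections (condition (ii) of \ref{par:subth_efth}); so once $\sS$ also contains $+$ and $0$ it is closed under forming iterated sums of projections and hence contains every such $\omega$, giving $\sS = \mor\T$.

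With this choice, \ref{thm:charn_tvbimorphs} reduces the assertion that $f : \ca{M} \times \ca{N} \rightarrow \ca{P}$ is a $\T_\V$-bimorphism to the commutativity of the two diagrams \eqref{eq:diags_char_tvbimorphs} for each of the two operations $\omega \in \{+,\,0\}$. The remaining work is the routine translation of these four diagrams into the elementwise equations of \ref{def:rig_ring_module_bim_cmons}, using the conventions of \ref{par:notn_fp}. For $\omega = +$ (so $n = 2$), the map $M\omega : \ca{M}^2 \rightarrow \ca{M}$ is the addition of $M$, and the first diagram unwinds, via the projections assembled by $\chi_1$, to the identity $f(m_1 + m_2,n) = f(m_1,n) + f(m_2,n)$, while the second diagram unwinds to $f(m,n_1 + n_2) = f(m,n_1) + f(m,n_2)$; these are precisely the second and third conditions defining a bimorphism of commutative monoids.

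For $\omega = 0$ (so $n = 0$), the powers $\ca{M}^0$, $\ca{N}^0$, $\ca{P}^0$ all degenerate to the terminal object $1$, and the operations $M\omega$, $N\omega$, $P\omega$ become the respective monoid units $0 : 1 \rightarrow \ca{M}$, etc. Tracing the first diagram along the canonical isomorphism $\ca{M}^0 \times \ca{N} \cong \ca{N}$ then yields $f(0,n) = 0$, and the second yields $f(m,0) = 0$, which together constitute the first condition of \ref{def:rig_ring_module_bim_cmons}. Conversely, each of these three elementwise conditions is plainly equivalent, by exponential transposition, to the commutativity of the corresponding diagram, so the two notions of bimorphism coincide. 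I expect no genuine obstacle: the only point requiring care is the bookkeeping in the degenerate case $n = 0$, where one must verify that the diagrams of \eqref{eq:diags_char_tvbimorphs} specialize correctly to the unit-preservation equations rather than collapsing to a vacuous statement.
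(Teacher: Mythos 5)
Your proposal is correct and follows exactly the paper's route: the paper's own proof is the one-line observation that $\{+:2\rightarrow 1,\;0:0\rightarrow 1\}$ is a generating set of operations for $\Mat_\NN$, so the claim follows directly from \ref{thm:charn_tvbimorphs}. You simply spell out the unwinding of the diagrams \eqref{eq:diags_char_tvbimorphs} into the equations of \ref{def:rig_ring_module_bim_cmons}, which the paper leaves implicit.
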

\begin{proof}
Since $\{+:2 \rightarrow 1,\;0:0 \rightarrow 1\} \subseteq \mor\T$ is a generating set of operations for $\T$, this follows directly from \ref{thm:charn_tvbimorphs}.
\end{proof}

\subsection{Modules for rigs in cartesian closed categories}\label{sec:rmods}

As in \ref{sec:free_enr_th}, let $\V$ be a cartesian closed category with equalizers, countable colimits, and intersections of countable families of strong subobjects.

\begin{PropSubSub}\label{thm:rigs_as_monoids_in_cmonv}
Rigs in $\V$ \pref{def:rig_ring_module_bim_cmons} are equivalently defined as monoids in the monoidal category $\CMon(\V)_0$ \pref{thm:cmonv_abv_smcvcats}.  Given a rig $R$ in $\V$, left $R$-modules in $\V$ \pref{def:rig_ring_module_bim_cmons} are equivalently defined as left $R$-modules for the monoid $R$ in the monoidal category $\CMon(\V)_0$.
\end{PropSubSub}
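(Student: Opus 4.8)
The plan is to transport the defining data of a rig, and of a module, across the correspondence between morphisms out of a tensor product in $\CMon(\V)$ and bimorphisms of commutative monoids. Recall that for $\T = \Mat_\NN$ the $\V$-category $\CMon(\V)$ is the $\V$-category of normal $\T_\V$-algebras \pref{thm:comm_mnds_ab_grps_in_v}, carrying the symmetric monoidal closed structure of \ref{thm:cmonv_abv_smcvcats} (via \ref{thm:talgs_comm_th_smcvcat}). Write $F \dashv G:\CMon(\V) \rightarrow \uV$ for the associated free/forgetful $\V$-adjunction, which is symmetric monoidal with $F$ strong monoidal \pref{thm:talgs_comm_th_smcvcat}; in particular the unit object of $\CMon(\V)$ is $F(1)$, the free commutative monoid on the terminal object $1$ of $\V$ \pref{thm:desc_mon_str_in_talgs}, so that morphisms $F(1) \rightarrow R$ in $\CMon(\V)$ correspond bijectively to morphisms $1 \rightarrow \ca{R}$ in $\V$, i.e. to nullary elements of $\ca{R}$.

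First I would record the representability fact that drives everything. By \ref{thm:desc_mon_str_in_talgs} together with \ref{thm:bim_cmons}, for commutative monoids $A,B,C$ in $\V$ the morphisms $A \otimes B \rightarrow C$ in $\CMon(\V)$ correspond, naturally in $C$, to bimorphisms of commutative monoids $\ca{A} \times \ca{B} \rightarrow \ca{C}$ \pref{def:rig_ring_module_bim_cmons}. Iterating this bijection — using its naturality in the codomain and passing through the cotensors $[\ca{A},C]$, which are again objects of $\CMon(\V)$ — yields for each $n$ a bijection, natural in $C$, between morphisms $R^{\otimes n} \rightarrow C$ in $\CMon(\V)$ and those maps $\ca{R}^{\times n} \rightarrow \ca{C}$ in $\V$ that are bimorphisms of commutative monoids in each variable separately. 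Equivalently, each morphism $R^{\otimes n} \rightarrow C$ is detected by, and recovered from, its composite with the universal multimorphism $\ca{R}^{\times n} \rightarrow \ca{R^{\otimes n}}$.

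With this in hand, the first assertion reduces to bookkeeping. A monoid structure on an object $R$ of $\CMon(\V)_0$ is a multiplication $\mu:R \otimes R \rightarrow R$ and a unit $\eta:F(1) \rightarrow R$ subject to the associativity and unit laws, which are equations between morphisms $R^{\otimes 3} \rightarrow R$ and $R^{\otimes 1} \rightarrow R$. Under the above bijections, $\mu$ corresponds to a bimorphism of commutative monoids $\bullet:\ca{R}\times\ca{R} \rightarrow \ca{R}$ and $\eta$ to an element $1:1 \rightarrow \ca{R}$; since both sides of each axiom are multilinear, and the bijections are injective on such morphisms, the monoid axioms for $(\mu,\eta)$ hold if and only if $(\ca{R},\bullet,1)$ is a monoid in $\V$. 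Thus a monoid in $\CMon(\V)_0$ is exactly a commutative monoid $R$ in $\V$ equipped with a bilinear, associative, unital multiplication, i.e. a rig in $\V$ \pref{def:rig_ring_module_bim_cmons}, and the correspondence is compatible with morphisms, a homomorphism of monoids in $\CMon(\V)_0$ being a commutative-monoid homomorphism commuting with $\mu,\eta$, equivalently with $\bullet,1$.

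The second assertion is proved by the identical method. For a rig $R$, viewed as a monoid in $\CMon(\V)_0$, a left $R$-module for this monoid is an object $M$ of $\CMon(\V)_0$ with an action $\alpha:R\otimes M \rightarrow M$ in $\CMon(\V)$ satisfying the associativity and unit laws, which are equations between morphisms $R^{\otimes 2}\otimes M \rightarrow M$ and $F(1)\otimes M \rightarrow M$. The (mixed) representability bijection identifies $\alpha$ with a bimorphism of commutative monoids $*:\ca{R}\times\ca{M} \rightarrow \ca{M}$, and the two axioms translate precisely into the assertion that $*$ is an associative, unital action of the monoid $(\ca{R},\bullet,1)$; together with the bilinearity of $*$ this is exactly the data of a left $R$-module in $\V$ \pref{def:rig_ring_module_bim_cmons}. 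The hard part throughout is the iterated representability step: the binary case is handed to us by \ref{thm:desc_mon_str_in_talgs} and \ref{thm:bim_cmons}, but one must verify that the tensor powers $R^{\otimes n}$ genuinely corepresent $n$-ary commutative-monoid multimorphisms, so that the higher-arity axioms (associativity of multiplication and of the action) are faithfully reflected; this is where the care lies, though it follows formally by induction from the binary case using naturality in the codomain.
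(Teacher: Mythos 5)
Your proof is correct and follows essentially the same route as the paper, whose entire proof is a one-line citation of \bref{exa:cmonv_abv_comm_nvalg}, \bref{thm:desc_mon_str_in_talgs}, \bref{thm:bim_cmons}, and \bref{def:rig_ring_module_bim_cmons} --- precisely the representability of bimorphisms by the tensor product, the identification of the unit object as $F(1)$, and the matching of axioms that you carry out. Your explicit treatment of the iterated representability step (passing through the cotensors to handle the ternary associativity axioms) is a detail the paper leaves implicit, but it is the standard argument and you have it right.
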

\begin{proof}
This follows from \ref{exa:cmonv_abv_comm_nvalg}, \ref{thm:desc_mon_str_in_talgs}, \ref{thm:bim_cmons}, \ref{def:rig_ring_module_bim_cmons}.
\end{proof}

\begin{CorSubSub}\label{thm:rmod_alg}
Given a rig $R$ in $\V$, the category of left $R$-modules in $\V$ underlies a $\V$-category $\Mod{R}$ that is discretely finitary algebraic over $\uV$.
\end{CorSubSub}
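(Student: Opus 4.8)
The plan is to exhibit $\Mod{R}$ as a special case of Proposition \ref{thm:modules_in_talg}, taking the system of arities to be $\J = \DFin_\V$ (which is eleutheric by \ref{par:efth}) and the commutative $\J$-theory to be $\T = (\Mat_\NN)_\V$, the free $\V$-enriched theory on the Lawvere theory $\Mat_\NN$ of commutative monoids. By \ref{exa:cmonv_abv_comm_nvalg} this theory $\T$ is commutative, and by \ref{thm:comm_mnds_ab_grps_in_v} together with \ref{thm:free_enr_theory}(3) its $\V$-category of normal algebras $\Alg{\T}^!$ is precisely $\CMon(\V)$, so that $\W := \Alg{\T}^!_0 = \CMon(\V)_0$.

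Next I would check the hypotheses of \ref{thm:modules_in_talg}. First, $\W$ has reflexive coequalizers: by \ref{thm:con_colims_algs} the $\V$-category of normal $\T$-algebras for a discretely finitary theory has all conical countable colimits, and reflexive coequalizers are among these (the standing assumptions of \S\ref{sec:rmods} on $\V$ are exactly what this needs). Second, \ref{thm:talgs_comm_th_smcvcat} endows $\Alg{\T}^! = \CMon(\V)$ with the structure of a symmetric monoidal closed $\V$-category --- this is precisely the structure recorded in \ref{thm:cmonv_abv_smcvcats} --- so $\W$ is the underlying closed symmetric monoidal category demanded by \ref{thm:modules_in_talg}. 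Third, by \ref{thm:rigs_as_monoids_in_cmonv} the rig $R$ is equivalently a monoid in this monoidal category $\W = \CMon(\V)_0$.

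With these hypotheses in place, \ref{thm:modules_in_talg} yields that the category of left $R$-modules in $\W$ underlies a strictly $\J$-algebraic $\V$-category over $\uV$, i.e.\ a (strictly) discretely finitary algebraic $\V$-category, which we take as $\Mod{R}$. Finally, the second assertion of \ref{thm:rigs_as_monoids_in_cmonv} identifies left $R$-modules for the monoid $R$ in $\CMon(\V)_0$ with left $R$-modules in $\V$ in the sense of \ref{def:rig_ring_module_bim_cmons}, which delivers the claim.

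The only genuinely delicate point is the compatibility of the two monoidal structures on $\CMon(\V)_0$ that appear: the one used implicitly in \ref{thm:rigs_as_monoids_in_cmonv} to speak of the monoid $R$ and its modules, and the one to which \ref{thm:modules_in_talg} is applied. Both, however, arise from \ref{thm:talgs_comm_th_smcvcat} applied to the same commutative theory $(\Mat_\NN)_\V$, so this is a matter of bookkeeping rather than a substantive obstacle; everything else is a direct assembly of the cited results.
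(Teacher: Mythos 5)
Your proposal is correct and follows essentially the same route as the paper, whose proof is precisely the combination of Proposition \ref{thm:modules_in_talg} (applied with $\J = \DFin_\V$ and $\T = (\Mat_\NN)_\V$) and Proposition \ref{thm:rigs_as_monoids_in_cmonv}. You have merely made explicit the hypothesis-checking that the paper leaves implicit, and your closing remark about the two monoidal structures coinciding is indeed just bookkeeping, since both come from \ref{thm:talgs_comm_th_smcvcat} applied to the same commutative theory.
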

\begin{proof}
This follows from \ref{thm:modules_in_talg} and \ref{thm:rigs_as_monoids_in_cmonv}.
\end{proof}

\begin{RemSubSub}\label{rem:opposite_ring_right_rmods}
In view of \ref{thm:rigs_as_monoids_in_cmonv}, since the monoidal category $\CMon(\V)_0$ is symmetric, we can take the \textbf{opposite} $R^\op$ of any rig $R$ in $\V$.  Left $R^\op$-modules can be described equivalently as \textbf{right $R$-modules}.
\end{RemSubSub}

\begin{ParSubSub}[\textbf{Matrix multiplication for an internal rig}]\label{par:matrix_mult}
Given a rig $R$ in $\V$ and natural numbers $n,m$, we call the power $R^{m \times n}$ the \textbf{object of $m \times n$-matrices} over $R$.  Letting $n,m,\ell \in \NN$, if we are given an object $V$ of $\V$ and morphisms $b:V \rightarrow R^{\ell \times m}$ and $a:V \rightarrow  R^{m \times n}$ in $\V$, then we write $ba:V \rightarrow R^{\ell \times n}$ to denote the morphism defined by the following equations
$$(ba)_{ki} = \sum_{j=1}^m b_{kj}a_{ji}\;:\;V \rightarrow R$$
with $k \in \{1,...,\ell\}$ and $i \in \{1,...,n\}$, where we have employed the notation of \ref{par:notn_fp}, \ref{par:rigs_mods_gen_elts}.  In the case where $V = R^{\ell \times m} \times R^{m \times n}$ and $b,a$ are the product projections, the associated morphism $ba$ will be called the \textbf{matrix multiplication morphism} and written as
\begin{equation}\label{eq:matr_mult_mor}\bullet_{nm\ell}:R^{\ell \times m} \times R^{m \times n} \rightarrow R^{\ell \times n}\;.\end{equation}
\end{ParSubSub}

\begin{ParSubSub}[\textbf{The theory of left $R$-modules}]\label{par:th_lrmods}
Given a rig $R$ in $\V$, we deduce by \ref{thm:rmod_alg} that $\Mod{R}$ is isomorphic to the $\V$-category of normal $\T$-algebras for a theory $\T$.  An elementary computation shows that $R$ itself is the free left $R$-module on the terminal object $1$ of $\V$.  Hence, letting $F \dashv G:\Mod{R} \rightarrow \uV$ denote the associated discretely finitary algebraic $\V$-adjunction, we find that the left adjoint $\V$-functor $F$ sends the conical $n$-th copower $n \cdot 1$ of $1$ in $\uV$ to a conical $n$-th copower $F(n \cdot 1)$ of $R$ in $\Mod{R}$ $(n \in \NN)$.  But $\Mod{R}_0$ has finite biproducts, as one readily verifies, so $F(n \cdot 1)$ is an $n$-th power $R^n$ of $R$ in $\Mod{R}_0$, and $R^n$ is moreover a conical $n$-th power in $\Mod{R}$ since $\Mod{R}$ has conical finite powers \pref{par:lims_algs}.

Hence, by \ref{thm:assoc_th_assoc_alg}, the associated theory $\T$ may be defined by
$$\T(n,m) = \Mod{R}(R^m,R^n)\;\;\;\;\;\;(n,m \in \NN = \ob\T)$$
with composition and identities as in $\Mod{R}^\op$.  Since $R$ is a free $R$-module on one generator \pref{par:free_on_n_gens} and $R^m$ (resp. $R^n$) is a conical $m$-th copower (resp. $n$-th power) in $\Mod{R}$, we have a composite isomorphism
$$\Lambda_{nm} = \left(R^{m \times n} \xrightarrow{\sim} \Mod{R}(R,R)^{m \times n} \xrightarrow{\sim} \Mod{R}(R^m,R^n) = \T(n,m)\right)$$
in $\V$.  Explicitly, the composite $R^{m \times n} \xrightarrow{\Lambda_{nm}} \Mod{R}(R^m,R^n) \hookrightarrow \uV(R^m,R^n)$ is the transpose of the matrix multiplication morphism $\bullet_{nm1}:R^m \times R^{m \times n} \rightarrow R^n$ \pref{par:matrix_mult}, where we have identified $R^m$ with the object of row vectors $R^{1 \times m}$.

Hence there is a unique $\V$-category $\Mat_R$ with hom-objects $\Mat_R(n,m) = R^{m \times n}$ such that the isomorphisms $\Lambda_{nm}$ constitute an identity-on-objects isomorphism of $\V$-categories $\Lambda:\Mat_R \xrightarrow{\sim} \T$.  A routine computation shows that the resulting composition morphisms in $\Mat_R$ are the matrix multiplication morphisms \eqref{eq:matr_mult_mor}, and the identity morphism on $n$ in $\Mat_R$ is the evident \textit{identity matrix} $I_n:1 \rightarrow R^{n \times n}$.  We call $\Mat_R$ the \textbf{$\V$-category of $R$-matrices}.
\end{ParSubSub}

\begin{PropSubSub}\label{thm:th_lrmods}
Given a rig $R$ in $\V$, the $\V$-category of $R$-matrices $\Mat_R$ is a theory, and the $\V$-category of normal $\Mat_R$-algebras is isomorphic to the $\V$-category of left $R$-modules $\Mod{R}$.  Given a left $R$-module $M$, the corresponding normal $\Mat_R$-algebra $\underline{M}$ has the same carrier as $M$, and its structure morphisms
$$\underline{M}_n:\Mat_R(n,1) \times M^n \rightarrow M$$
in $\V$ $(n \in \NN)$ are given by
$$\underline{M}_n(r,m) = \sum_{i = 1}^n r_im_i\;:\;R^n \times M^n \rightarrow M\;\;\;\;\text{(where $(r,m):R^n \times M^n$)}$$
in the notation of \ref{par:notn_fp}, \ref{par:rigs_mods_gen_elts}.  For each $n \in \NN$, the designated $n$-th power cone $(\pi_i:n \rightarrow 1)_{i=1}^n$ in the theory $\Mat_R$ consists of the standard basis row-vectors $b_1,...,b_n \in (\Mat_R)_0(n,1) = \V(1,R^{1\times n}) \cong \V(1,R)^n$, with $(b_i)_j = 1$ if $i = j$ and $(b_i)_j = 0$ otherwise.
\end{PropSubSub}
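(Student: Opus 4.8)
The plan is to assemble the statement from the construction already carried out in \ref{par:th_lrmods}, together with the general machinery of \ref{thm:assoc_th_assoc_alg} and \ref{rem:power_cones_in_assoc_th}, so that the only remaining work is two explicit identifications. First I would note that $\Mat_R$ is a theory and that $\Alg{\Mat_R}^! \cong \Mod{R}$. By \ref{thm:rmod_alg} the carrier $\V$-functor $G:\Mod{R} \to \uV$ is discretely finitary algebraic, and the inclusion $E:\T \hookrightarrow \Mod{R}^\op$ of the full sub-$\V$-category on the free modules $R^n = F(n\cdot 1)$ — each free on $n$ generators, by \ref{par:th_lrmods} — satisfies the hypotheses of \ref{thm:assoc_th_assoc_alg}. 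Hence $\T$ is a theory with $\Alg{\T}^! \cong \Mod{R}$ over $\uV$, and transporting along the identity-on-objects isomorphism $\Lambda:\Mat_R \xrightarrow{\sim} \T$ of \ref{par:th_lrmods} makes $\Mat_R$ a theory with $\Alg{\Mat_R}^! \cong \Mod{R}$; the normal $\Mat_R$-algebra $\underline{M}$ corresponding to a module $M$ then has carrier $GM = \ca{M}$.

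Next I would compute the structure morphisms. By \ref{thm:assoc_th_assoc_alg}, the $\T$-algebra structure morphism of $\underline{M}$ at $n$ is the composite
$$\T(n,1) \times (\ca{M})^n \cong R^n \times \Mod{R}(R^n, M) \xrightarrow{1 \times G} R^n \times \uV(R^n, \ca{M}) \xrightarrow{\Ev} \ca{M}$$
using $GE(n) = R^n$ and the copower isomorphism $(\ca{M})^n \cong \Mod{R}(R^n, M)$. Precomposing with $\Lambda_{n1} \times 1$ on $\Mat_R(n,1) \times M^n = R^{1\times n} \times M^n$ yields $\underline{M}_n$. The main obstacle — indeed the only step with real content — is unwinding these identifications explicitly. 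I would argue with generalized elements in the sense of \ref{par:notn_fp} and \ref{par:rigs_mods_gen_elts}: by the transpose-of-$\bullet_{n11}$ description of $\Lambda$ in \ref{par:th_lrmods}, a row vector $r:C \to R^{1\times n}$ corresponds under $\Lambda_{n1}$ to the $R$-homomorphism $R \to R^n$ carrying the generator $1$ to $r$, hence to the element $r \in R^n = GE(n)$, so the ``evident isomorphism'' $\T(n,1) \cong GE(n)$ cancels $\Lambda_{n1}$; meanwhile $m:C \to M^n$ corresponds, via the copower structure of $R^n$, to the unique $R$-homomorphism $\psi_m:R^n \to M$ with $\psi_m(e_i) = m_i$.

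Evaluation then gives $\psi_m(r) = \psi_m\!\left(\sum_{i=1}^n r_i e_i\right) = \sum_{i=1}^n r_i\,\psi_m(e_i) = \sum_{i=1}^n r_i m_i$ by $R$-linearity of $\psi_m$, which establishes the stated formula $\underline{M}_n(r,m) = \sum_{i=1}^n r_i m_i$. I expect this to go through cleanly once the description of $\Lambda_{n1}$ is spelled out, the only care being to keep the power/copower dualization straight (the power $R^n$ in $\Mod{R}$ is simultaneously the $n$-fold biproduct, as recorded in \ref{par:th_lrmods}).

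Finally I would identify the designated power cone using \ref{rem:power_cones_in_assoc_th}: the $i$-th designated projection $\pi_i:n \to 1$ of $\T$ is the unique morphism with $E(\pi_i) = \iota_i$, where $\iota_i:R \to R^n$ is the $i$-th copower coprojection in $\Mod{R}$, i.e.\ the $R$-homomorphism carrying $1$ to the standard basis vector $e_i$. Transporting along $\Lambda_{n1}^{-1}$ and reusing the same description of $\Lambda_{n1}$ — whose classifying element for $\iota_i$ is $\iota_i(1) = e_i$ — shows that the corresponding element of $\Mat_R(n,1) = R^{1\times n}$ is exactly the standard basis row vector $b_i$ with $(b_i)_j = 1$ for $i = j$ and $0$ otherwise, completing the proof.
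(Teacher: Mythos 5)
Your proposal is correct and follows essentially the same route as the paper's proof: the first two claims come from the construction of $\Lambda:\Mat_R \xrightarrow{\sim} \T$ in \ref{par:th_lrmods}, the structure-morphism formula from \ref{thm:assoc_th_assoc_alg}, and the power cone from \ref{rem:power_cones_in_assoc_th} transported along $\Lambda$. The only difference is that you explicitly unwind the generalized-element computation showing $\underline{M}_n(r,m) = \sum_i r_i m_i$, which the paper leaves implicit in its appeal to \ref{thm:assoc_th_assoc_alg}; your unwinding (including the cancellation of $\Lambda_{n1}$ against the evident isomorphism $\T(n,1)\cong GE(n)$) is accurate.
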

\begin{proof}
The first two claims are immediate from \ref{par:th_lrmods} since $\Mat_R \cong \T$.  The third claim follows from \ref{thm:assoc_th_assoc_alg}.  In view of the definition of $\T$ given in \ref{par:th_lrmods}, it follows from \ref{rem:power_cones_in_assoc_th} that the designated $n$-th power projections $\pi_i:n \rightarrow 1$ in $\T$ are precisely the morphisms $\iota_i:R^1 = R \rightarrow R^n$ $(i = 1,...,n)$ that present $R^n$ as an $n$-th copower of $R$ in $\Mod{R}$.  Under the isomorphism $\Lambda:\Mat_R \xrightarrow{\sim} \T$, the morphisms $\pi_i = \iota_i$ in $\T$ correspond to the standard basis row-vectors $b_i$.
\end{proof}

\section{The commutant of the theory of left \texorpdfstring{$R$}{R}-modules in \texorpdfstring{$\V$}{V}}\label{sec:cmtnt_th_lrmods}

For the remainder of the paper, let $\V$ be a cartesian closed category with equalizers, countable colimits, and intersections of countable families of strong subobjects.

\begin{ParSub}\label{par:cmtnt_th_lrmods}
Let $R$ be a rig in $\V$.  In \ref{par:th_lrmods} and \ref{thm:th_lrmods} we saw that left $R$-modules in $\V$ are the same as normal $\Mat_R$-algebras, where the theory $\Mat_R$ is the $\V$-category of $R$-matrices.  Since $R$ itself is a left $R$-module, we have an associated normal $\Mat_R$-algebra $R:\Mat_R \rightarrow \uV$, and the corresponding morphism of theories $R:\Mat_R \rightarrow \uV_R$ \pref{par:full_th} presents $\Mat_R$ as a theory over $\uV_R$.  Hence by \ref{par:cmtnts_th} we can take the commutant $\Mat_R^\perp$ of $\Mat_R$ over $\uV_R$, which is a subtheory of $\uV_R$ whose hom-objects are the subobjects
$$\Mat_R^\perp(m,n) = \Alg{\Mat_R}(R^m,R^n) = \Mod{R}(R^m,R^n) \hookrightarrow \uV(R^m,R^n)\;.$$
But we had observed in \ref{par:th_lrmods} that $\Mat_R$ is isomorphic to a theory $\T$ with $\T(n,m) = \Mod{R}(R^m,R^n)$, and we now deduce that in fact $\T^\op = \Mat_R^\perp$, so that
$$\Mat_R^\perp \cong (\Mat_R)^\op$$
as $\V$-categories.   But in fact there is an isomorphism $(\Mat_R)^\op \cong \Mat_{R^\op}$ given by \textit{transposition} of matrices, where $R^\op$ is the opposite of $R$ \pref{rem:opposite_ring_right_rmods}.  Indeed, for all $n,m \in \NN$ we have canonical isomorphisms
$$t_{nm}\;:\;\Mat_{R^\op}(n,m) = R^{m \times n} \rightarrow R^{n \times m} = \Mat_R(m,n),$$
and for each morphism of the form $a:V \rightarrow R^{m \times n}$ in $\V$ we shall write $a^\intercal:V \rightarrow R^{n \times m}$ to denote the composite $t_{nm} \cdot a$.  An easy computation shows that these isomorphisms $t_{nm}$ constitute an identity-on-objects $\V$-functor $t:\Mat_{R^\op} \rightarrow (\Mat_R)^\op$.  Hence
\begin{equation}\label{eq:cmtnt_th_lrmods_th_rrmods}\Mat_R^\perp \cong (\Mat_R)^\op \cong \Mat_{R^\op}\end{equation}
as $\V$-categories.  The composite isomorphism
\begin{equation}\label{eq:lambda_prime}\Theta\;:\;\Mat_{R^\op} \xrightarrow{\sim} \Mat_R^\perp\end{equation}
in \eqref{eq:cmtnt_th_lrmods_th_rrmods} is an identity-on-objects $\V$-functor, and its structure morphisms are the composite isomorphisms
\begin{equation}\label{eq:str_morphs_lambda_prime}\Theta_{nm} = \left(\Mat_{R^\op}(n,m) = R^{m \times n} \xrightarrow{t_{nm}} R^{n \times m} \xrightarrow{\Lambda_{mn}} \Mod{R}(R^n,R^m)\right)\end{equation}
$(n,m \in \NN)$ where $\Lambda_{mn}$ is the isomorphism described in \ref{par:th_lrmods}.  Explicitly, the composite 
$$R^{m \times n} \xrightarrow{\Theta_{nm}} \Mod{R}(R^n,R^m) \hookrightarrow \uV(R^n,R^m)$$
is the exponential transpose of the morphism $R^n \times R^{m \times n} \rightarrow R^m$ denoted by
$$xa^\intercal\;:\;R^n \times R^{m \times n} \rightarrow R^m\;\;\;\;\text{where $(x,a):R^n \times R^{m \times n}$}$$
in the notation of \ref{par:notn_fp} and \ref{par:matrix_mult}, with the identifications $R^n = R^{1 \times n}$ and $R^m = R^{1 \times m}$.

Further, $\Theta$ sends the designated $n$-th power projections $b_i:n \rightarrow 1$ in $\Mat_{R^\op}$ (i.e., the standard basis row-vectors $b_i \in \V(1,R^{1 \times n})$, \ref{thm:th_lrmods}) to the projections $\pi_i:R^n \rightarrow R$, which are equally the designated projections $\pi_i:n \rightarrow 1$ in $\Mat_R^\perp = (\Mod{R})_R$ \pref{par:cmtnts_th}.  Hence $\Theta$ is an isomorphism of theories.

$\Mat_{R^\op}$ is the theory of \textit{right} $R$-modules \pref{rem:opposite_ring_right_rmods}, and the right $R$-module structure carried by $R$ induces a morphism of theories $\Mat_{R^\op} \rightarrow \uV_R$ by means of which $\Mat_{R^\op}$ can be considered as a theory over $\uV_R$.  Further, using \eqref{eq:str_morphs_lambda_prime} and \ref{thm:th_lrmods}, we readily compute that $\Theta$ is an isomorphism of theories over $\uV_R$.  Therefore, $\Mat_{R^\op}$ is the commutant of $\Mat_R$ over $\uV_R$.  But since this result holds for any rig $R$ in $\V$, we can also invoke it with respect the rig $R^\op$ to deduce the following:
\end{ParSub}

\begin{ThmSub}\label{thm:cmtnt_th_lrmods}
Let $R$ be a rig in $\V$.  Then the theory of left $R$-modules $\Mat_R$ and the theory of right $R$-modules $\Mat_{R^\op}$ are commutants of one another over the full theory $\uV_R$ of $R$ in $\uV$.
\end{ThmSub}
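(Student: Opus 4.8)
The plan is to deduce the theorem from the computation already carried out in \ref{par:cmtnt_th_lrmods} by a short symmetry argument. That paragraph establishes the substantive half: via the identity-on-objects isomorphism $\Theta$ of \eqref{eq:lambda_prime}, the commutant $\Mat_R^\perp$ of $\Mat_R$ over $\uV_R$ is isomorphic, \emph{as a theory over} $\uV_R$, to the theory $\Mat_{R^\op}$ of right $R$-modules, where the latter is regarded as a theory over $\uV_R$ through the right $R$-module structure on $R$. I may therefore take as given that $\Mat_{R^\op} \cong \Mat_R^\perp$ in $\Th \slash \uV_R$, which is one of the two relations the theorem asserts.

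It remains to produce the reverse relation $\Mat_{R^\op}^\perp \cong \Mat_R$ in $\Th \slash \uV_R$, and the key point is that the construction of \ref{par:cmtnt_th_lrmods} applies verbatim to \emph{any} rig, so I would simply run it again with $R^\op$ in place of $R$. Two identifications make this rigorous. First, $(R^\op)^\op = R$ as rigs: forming the opposite reverses the multiplication by precomposing with the symmetry, which is an involution, while leaving the underlying commutative monoid untouched; hence $\Mat_{(R^\op)^\op} = \Mat_R$. Second, the full theories coincide, $\uV_{R^\op} = \uV_R$, since by \ref{par:full_th} the hom-objects $\uV_C(n,m) = \uV(C^n,C^m)$ of the full theory of an object $C$ depend only on the carrier, and $R$ and $R^\op$ share the same carrier. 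Feeding $R^\op$ into the paragraph's conclusion thus yields that the commutant of $\Mat_{R^\op}$ over $\uV_{R^\op} = \uV_R$ is $\Mat_{(R^\op)^\op} = \Mat_R$, giving $\Mat_{R^\op}^\perp \cong \Mat_R$ in $\Th \slash \uV_R$.

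Putting the two isomorphisms together shows that each of $\Mat_R$ and $\Mat_{R^\op}$ is (isomorphic over $\uV_R$ to) the commutant of the other, which is exactly the assertion that they are commutants of one another over $\uV_R$.

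The step I expect to require the most care is the slice-category bookkeeping rather than any calculation, since all the computation lives in \ref{par:cmtnt_th_lrmods}. I would need to check that the morphism $\Mat_R \to \uV_R$ obtained by applying the $R^\op$-version of the construction — namely the one induced by the right $R^\op$-module structure on $R^\op$, i.e. the left $R$-module structure on $R$ — agrees with the original morphism used to present $\Mat_R$ as a theory over $\uV_R$, so that the resulting isomorphisms genuinely live in $\Th \slash \uV_R$ and not merely in $\Th$. Once this matching of base structures and module actions is confirmed, the theorem follows with no further work.
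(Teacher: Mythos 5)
Your proposal is correct and follows exactly the paper's own route: the paper's proof consists of the computation in \bref{par:cmtnt_th_lrmods} establishing $\Mat_{R^\op} \cong \Mat_R^\perp$ over $\uV_R$, followed by the one-line observation that the same result applied to the rig $R^\op$ yields the other half. Your added care about $(R^\op)^\op = R$, $\uV_{R^\op} = \uV_R$, and the matching of the module structures inducing the morphisms to $\uV_R$ is exactly the bookkeeping the paper leaves implicit.
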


\begin{CorSub}\label{thm:th_lrmods_sat_bal_iff_rcomm}
Let $R$ be a rig in $\V$.  Then $\Mat_R$ is saturated with respect to $R$ and hence is a saturated subtheory of the full theory $\uV_R$ of $R$ in $\uV$.  Further, if $R$ is commutative, then the subtheory $\Mat_R \hookrightarrow \uV_R$ is balanced, and $\Mat_R$ is commutative.
\end{CorSub}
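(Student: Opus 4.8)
The plan is to deduce everything directly from Theorem~\ref{thm:cmtnt_th_lrmods} together with the general facts about saturated and balanced theories recorded in \ref{par:sat_bal}; no further matrix computation is needed, since the substantive work has already been carried out in \ref{par:cmtnt_th_lrmods} and \ref{thm:cmtnt_th_lrmods}.

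First I would establish saturation. By \ref{thm:cmtnt_th_lrmods} the theories $\Mat_R$ and $\Mat_{R^\op}$ are commutants of one another over $\uV_R$; in particular $\Mat_R \cong (\Mat_{R^\op})^\perp$ as theories over $\uV_R$. Thus $\Mat_R$ is (isomorphic to) the commutant of a $\J$-theory over $\uV_R$, so by the characterization of saturated theories in \ref{par:sat_bal} it is saturated with respect to $R$. The same remarks in \ref{par:sat_bal} then guarantee that a saturated $\J$-theory over $\uV_R$ is necessarily a subtheory of $\uV_R$, which yields the asserted conclusion that $\Mat_R$ is a saturated subtheory of $\uV_R$. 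Equivalently, one may observe directly that $\Mat_R^{\perp\perp} \cong (\Mat_{R^\op})^\perp \cong \Mat_R$ over $\uV_R$, using that $(-)^\perp$ preserves isomorphisms.

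Next I would treat the commutative case. When $R$ is commutative its multiplication is symmetric, so $R^\op = R$ as rigs, with the right $R$-module structure on $R$ coinciding with its left $R$-module structure; hence $\Mat_{R^\op} = \Mat_R$ as theories over $\uV_R$. Combining this identification with the isomorphism $\Mat_R^\perp \cong \Mat_{R^\op}$ over $\uV_R$ supplied by \ref{thm:cmtnt_th_lrmods} gives $\Mat_R^\perp \cong \Mat_R$ over $\uV_R$, i.e.\ $\Mat_R$ is balanced with respect to $R$. Finally, since any balanced $\J$-theory over a base is necessarily commutative \pref{par:sat_bal} (via the criterion $\T \lt \T^\perp$ drawn from \cite[8.10]{Lu:Cmtnts}), it follows that $\Mat_R$ is commutative.

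The only point requiring any care is the verification that $\Mat_{R^\op}$ and $\Mat_R$ coincide \emph{as theories over $\uV_R$} in the commutative case --- that is, that the identification $R^\op = R$ is compatible with the two morphisms into $\uV_R$ --- but this is immediate once one notes that the left and right actions of $R$ on itself agree when $R$ is commutative. There is thus no genuine obstacle: the corollary is a formal consequence of the reciprocity established in \ref{thm:cmtnt_th_lrmods} and the abstract Galois-theoretic machinery of commutants summarized in \ref{par:sat_bal}.
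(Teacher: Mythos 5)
Your argument is correct and is essentially the paper's own proof: saturation follows from Theorem \ref{thm:cmtnt_th_lrmods} together with the observations in \ref{par:sat_bal} that commutants are saturated and that saturated theories are subtheories, and in the commutative case the identification $R^\op = R$ turns the reciprocity of \ref{thm:cmtnt_th_lrmods} into balancedness, whence commutativity by \ref{par:sat_bal}. The extra care you take in checking that $\Mat_{R^\op}$ and $\Mat_R$ agree as theories \emph{over} $\uV_R$ is a reasonable point to make explicit, but it does not change the route.
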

\begin{proof}
The first claim follows from \ref{thm:cmtnt_th_lrmods} and \ref{par:sat_bal}.  Further, if $R$ is commutative then $R^\op = R$ and hence it follows from \ref{thm:cmtnt_th_lrmods} that $\Mat_R \hookrightarrow \uV_R$ is balanced, so $\Mat_R$ is commutative by \ref{par:sat_bal}.
\end{proof}

\begin{ParSub}\label{par:sc_rlin_ctxt}
Letting $R$ be a commutative rig $R$ in $\V$, we deduce by \ref{thm:th_lrmods_sat_bal_iff_rcomm} that $(\V,\Mat_R,R)$ is a balanced finitary functional-analytic context \pref{def:fa_ctxt}.  We call $(\V,\Mat_R,R)$ the \textbf{scalar $R$-linear context} in $\V$.
\end{ParSub}

\begin{CorSub}\label{thm:charn_fdistns_in_sc_rlin_ctxt}
Let $R$ be a commutative rig in $\V$, and let $V$ be an object of $\V$.  Then functional distributions on $V$ in the scalar $R$-linear context $(\V,\Mat_R,R)$ are the same as $R$-linear morphisms $\mu:[V,R] \rightarrow R$ in $\V$, and moreover $D_{\scriptscriptstyle(\Mat_{\scalebox{.8}{$\scriptscriptstyle R$}},R)}(V) = \Mod{R}([V,R],R)$.
\end{CorSub}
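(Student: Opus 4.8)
The plan is to unwind Definition \ref{def:cdisnt_mnd} in the present context and then invoke the fact that $\Mat_R$ is balanced with respect to $R$. By Definition \ref{def:cdisnt_mnd}, the object of $(\Mat_R,R)$-distributions on $V$ is
$$D_{\scriptscriptstyle(\Mat_R,R)}(V) = \Alg{\Mat_R^\perp}([V,R],R)\;,$$
where $\Mat_R^\perp$ is the commutant of $\Mat_R$ with respect to the $\Mat_R$-algebra $R$ \pref{par:notn}, and $[V,R]$ is the cotensor of $R$ by $V$ in $\Alg{\Mat_R^\perp}$ \pref{par:lims_algs}. The main task is therefore to identify $\Alg{\Mat_R^\perp}$ with $\Mod{R}$ in a way that sends the distinguished $\Mat_R^\perp$-algebra $R$ to the $R$-module $R$ and the cotensor $[V,R]$ to the function module $\uV(V,R)$ with its pointwise structure.

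First I would observe that, since $R$ is commutative, Corollary \ref{thm:th_lrmods_sat_bal_iff_rcomm} tells us that $\Mat_R$ is balanced with respect to $R$, i.e. the commutant $\Mat_R^\perp$ is isomorphic to $\Mat_R$ as a subtheory of $\uV_R$. Hence the $\V$-category of normal $\Mat_R^\perp$-algebras is isomorphic to that of normal $\Mat_R$-algebras, which by Proposition \ref{thm:th_lrmods} is the $\V$-category $\Mod{R}$ of left $R$-modules in $\V$. Because the balanced isomorphism $\Mat_R^\perp \cong \Mat_R$ is an isomorphism of theories over $\uV_R$, it carries the distinguished $\Mat_R^\perp$-algebra with carrier $R$ (arising from \ref{par:notn}, \ref{par:cmtnts_th}) to the standard $\Mat_R$-algebra $R$, i.e. to $R$ regarded as a left $R$-module.

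Next I would identify the cotensor $[V,R]$. By \ref{par:lims_algs}, applied to the $\Mat_R^\perp$-algebra $R$, the cotensor $[V,R]$ in $\Alg{\Mat_R^\perp}$ has carrier $\uV(V,R)$; transported along the identification $\Alg{\Mat_R^\perp} \simeq \Mod{R}$ just described, this is the left $R$-module of functions $V \to R$ with the pointwise $R$-module structure. Consequently
$$D_{\scriptscriptstyle(\Mat_R,R)}(V) = \Alg{\Mat_R^\perp}([V,R],R) = \Mod{R}([V,R],R)$$
is the object of $R$-module homomorphisms from $[V,R]$ to $R$, and individual $(\Mat_R,R)$-distributions $\mu$ are exactly the $R$-linear morphisms $[V,R] \to R$, as claimed.

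The only genuine obstacle is bookkeeping: one must confirm that the balanced isomorphism of Corollary \ref{thm:th_lrmods_sat_bal_iff_rcomm}—obtained by composing the transposition isomorphism $\Mat_{R^\op} \cong \Mat_R^\perp$ of \eqref{eq:lambda_prime} with the identification $R^\op = R$ valid for commutative $R$—indeed matches the $\Mat_R^\perp$-algebra $R$ with $R$ as an $R$-module, and respects the pointwise cotensor structure. I expect this to be immediate from the fact that all the relevant isomorphisms are isomorphisms of theories \emph{over} $\uV_R$, so that the underlying carriers and the evident function-module structures are preserved throughout; no computation beyond tracing these identifications is required.
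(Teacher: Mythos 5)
Your proposal is correct and follows exactly the route the paper intends: the paper states this corollary without proof, treating it as immediate from Definition \ref{def:cdisnt_mnd} together with the fact (\ref{thm:th_lrmods_sat_bal_iff_rcomm}, \ref{par:sc_rlin_ctxt}) that the scalar $R$-linear context is balanced, so that $\Alg{\Mat_R^\perp} \simeq \Alg{\Mat_R} \simeq \Mod{R}$ over $\uV$ and the cotensor $[V,R]$ is the pointwise function module. Your bookkeeping about the isomorphism being one of theories over $\uV_R$ is exactly the right justification and nothing further is needed.
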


\section{Affine and convex spaces in cartesian closed categories}\label{sec:aff_cvx_sp}

\begin{ParSub}[\textbf{The affine core of a theory}]\label{par:aff_core}
Let $\T$ be a theory (enriched in $\V$, \ref{def:th}).  Recalling that $n \in \ob\T = \NN$ is an $n$-th power of $1$ in $\T$, let us write $\delta_n:1 \rightarrow n$ to denote the diagonal morphism in $\T$.  For all $n,m \in \NN$, let us denote by $\iota_{nm}:\T^\aff(n,m) \hookrightarrow \T(n,m)$ the equalizer of the morphisms $\phi_{nm} = \T(\delta_n,m):\T(n,m) \rightarrow \T(1,m)$ and $\psi_{nm} = \left(\T(n,m) \xrightarrow{!} 1 \xrightarrow{[\delta_{m}]} \T(1,m)\right)$.
\end{ParSub}

\begin{PropSub}\label{thm:aff_core}
The equalizers $\iota_{nm}$ in \ref{par:aff_core} are the structure morphisms of a subtheory inclusion $\T^\aff \hookrightarrow \T$.
\end{PropSub}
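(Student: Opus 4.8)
The plan is to verify the three conditions (i), (ii), (iii) of \ref{par:subth_efth} characterizing when a family of subobjects $\iota_{nm}:\T^\aff(n,m) \hookrightarrow \T(n,m)$ assembles into a subtheory. The guiding idea is that the equalizer $\iota_{nm}$ is the internal form of the condition that an operation $\omega:n \rightarrow m$ \emph{preserves diagonals}, i.e. $\omega \circ \delta_n = \delta_m$: the morphism $\phi_{nm} = \T(\delta_n,m)$ is ``precompose by $\delta_n$'', while $\psi_{nm}$ is the constant morphism at $\delta_m$. Since $\V$ is cartesian closed with equalizers, $\otimes = \times$ and all powers in sight are conical, so I may manipulate these morphisms freely using bifunctoriality of the hom together with the associativity and unit laws of $\T$.

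Condition (ii) is immediate: each projection $\pi_i:n \rightarrow 1$ satisfies $\pi_i \circ \delta_n = 1_1 = \delta_1$ (a projection of a diagonal is the identity, using $\delta_1 = 1_1$ from \ref{par:jth}), so $\phi_{n1} \circ [\pi_i] = [\pi_i \circ \delta_n] = [\delta_1]$ agrees with $\psi_{n1} \circ [\pi_i] = [\delta_1]$; hence $[\pi_i]$ factors through $\iota_{n1}$. For condition (iii) I would show that the equalizer $\T^\aff(n,m)$ is itself an $m$-th power of $\T^\aff(n,1)$, compatibly with the power structure on $\T(n,m)$. Writing $p_j = \T(n,\pi_j):\T(n,m) \rightarrow \T(n,1)$ and $q_j = \T(1,\pi_j):\T(1,m) \rightarrow \T(1,1)$ for the component projections, the fact that precomposition commutes with postcomposition gives $q_j \circ \phi_{nm} = \phi_{n1} \circ p_j$, and $\pi_j \circ \delta_m = \delta_1$ gives $q_j \circ \psi_{nm} = \psi_{n1} \circ p_j$. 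Thus the parallel pair $(\phi_{nm},\psi_{nm})$ is, componentwise, the $m$-fold power of $(\phi_{n1},\psi_{n1})$; since equalizers commute with products, $\T^\aff(n,m) \cong \T^\aff(n,1)^m$ and the canonical isomorphism $\T(n,1)^m \cong \T(n,m)$ restricts as required.

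The main work, and the step I expect to be the genuine obstacle, is condition (i): closure under composition. Here I would exploit associativity in the form $\T(\delta_n,\ell) \circ c_{nm\ell} = c_{1m\ell} \circ (\phi_{nm} \times \id)$, which expresses $(\sigma\omega)\circ\delta_n = \sigma\circ(\omega\circ\delta_n)$, where $c_{nm\ell}:\T(n,m)\times\T(m,\ell) \rightarrow \T(n,\ell)$ and $c_{1m\ell}:\T(1,m)\times\T(m,\ell)\rightarrow\T(1,\ell)$ denote composition. Restricting along $\iota_{nm}\times\iota_{m\ell}$ and invoking the equalizer relation $\phi_{nm}\circ\iota_{nm} = \psi_{nm}\circ\iota_{nm} = [\delta_m]\circ{!}$ replaces the first factor by the constant $\delta_m$, after which $c_{1m\ell}\circ([\delta_m]\times\id)$ is precisely $\phi_{m\ell} = \T(\delta_m,\ell)$. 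One further application of the equalizer relation $\phi_{m\ell}\circ\iota_{m\ell} = \psi_{m\ell}\circ\iota_{m\ell} = [\delta_\ell]\circ{!}$ then collapses $\phi_{n\ell}\circ c_{nm\ell}\circ(\iota_{nm}\times\iota_{m\ell})$ to the constant $[\delta_\ell]\circ{!}$ (precomposed with the projection $\mathrm{pr}_2$, which is absorbed into the unique map to the terminal object). Since $\psi_{n\ell}\circ c_{nm\ell}\circ(\iota_{nm}\times\iota_{m\ell})$ is visibly this same constant, the universal property of the equalizer $\iota_{n\ell}$ supplies the desired factorization, establishing (i).

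The three conditions together identify the $\iota_{nm}$ as the structure morphisms of a subtheory $\T^\aff \hookrightarrow \T$, as claimed. I stress that the only delicacy lies in marshalling the associativity and unit coherences of $\T$ internally in $\V$; conceptually, condition (i) merely records that a composite of two diagonal-preserving operations again preserves diagonals, and conditions (ii) and (iii) record that projections are diagonal-preserving and that the diagonal-preservation constraint is imposed one output coordinate at a time.
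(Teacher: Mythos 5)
Your proof is correct and follows exactly the route the paper takes: the paper's own proof simply asserts that it is straightforward to check conditions (i), (ii), (iii) of \bref{par:subth_efth}, and your argument supplies precisely those verifications (closure under composition via associativity and two applications of the equalizer relation, projections via $\pi_i\circ\delta_n = 1_1 = \delta_1$, and compatibility with the power structure via commutation of equalizers with finite powers).
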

\begin{proof}
It is straightforward to check that the $\iota_{nm}$ satisfy conditions (i), (ii), (iii) in the characterization of subtheories given in \ref{par:subth_efth}.
\end{proof}

\begin{DefSub}\label{def:aff_core}
Given a theory $\T$, we call the subtheory $\T^\aff \hookrightarrow \T$ of \ref{thm:aff_core} the \textbf{affine core} of $\T$.
\end{DefSub}

\begin{ExaSub}[\textbf{The theory of left $R$-affine spaces}]\label{exa:th_lraffsp}
Letting $R$ be a rig in $\V$, recall that the $\V$-category of $R$-matrices $\T = \Mat_R$ is the theory of left $R$-modules \pref{thm:th_lrmods}.  In this example, the diagonal morphisms $\delta_n \in \T_0(1,n) = \V(1,R^{n \times 1}) = \V(1,R^n)$ are the column vectors $(1,....,1):1 \rightarrow R^n$ induced by $n$ copies of the unit $1:1 \rightarrow R$ of $R$.  Consequently, the subobjects $\Mat_R^\aff(n,m) \hookrightarrow \Mat_R(n,m) = R^{m \times n}$ are the equalizers of the morphisms $\phi_{nm},\psi_{nm}:R^{m \times n} \rightarrow R^m$ characterized by the equations
$$
\begin{array}{rclcccr}
(\phi_{nm}(a))_j & = & \displaystyle{\sum_{i = 1}^n a_{ji}} & : & R^{m \times n} \rightarrow R & &\\
(\psi_{nm}(a))_j & = & 1                     & : & R^{m \times n} \rightarrow R & & \text{where $a:R^{m \times n}$}\\
\end{array}
$$
in the notation of \ref{par:notn_fp} and \ref{par:rigs_mods_gen_elts}, with $j = 1,...,m$, so that $\Mat^\aff_R(n,m)$ deserves to be construed as the object of all $R$-matrices in which each row sums to 1.  In particular, let us denote by
$$R^{n,\aff} \hookrightarrow R^n$$
the subobject $\Mat^\aff_R(n,1) \hookrightarrow R^{1 \times n} = R^n$ of row vectors with sum $1$.
\end{ExaSub}

\begin{DefSub}\label{def:raff_sp}
Given a rig $R$ in $\V$, we call normal $\Mat_R^\aff$-algebras \textbf{(left) $R$-affine spaces} (in $\V$), and we define $\Aff{R} = \Alg{\Mat_R^\aff}^!$.  Morphisms in $\Aff{R}$ will be called \textbf{(left) $R$-affine morphisms}.  Given a left $R$-affine space $A$, whose carrier we shall write also as $A$, we denote the associated structure morphisms $R^{n,\aff} \times A^n \rightarrow A$ $(n \in \NN)$ by 
\begin{equation}\label{eq:aff_comb}\sum_{i = 1}^n c_ix_i\;\;\;\;\;\text{where $(c,x):R^{n,\aff} \times A^n$}\end{equation}
in the notation of \ref{par:notn_fp}.  The indicated summation expression is called a \textbf{(left) $R$-affine combination}.
\end{DefSub}

\begin{RemSub}\label{rem:rmod_raff}
Given a rig $R$ in $\V$, the subtheory inclusion $\Mat_R^\aff \hookrightarrow \Mat_R$ induces a $\V$-functor $\Mod{R} = \Alg{\Mat_R}^! \rightarrow \Alg{\Mat_R^\aff}^! = \Aff{R}$, so that every left $R$-module $M$ carries the structure of a left $R$-affine space.
\end{RemSub}

\begin{PropSub}\label{thm:raffsp_comm_alg}
If $R$ is a commutative rig in $\V$, then the theory $\Mat_R^\aff$ of $R$-affine spaces is commutative.
\end{PropSub}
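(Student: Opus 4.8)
The plan is to deduce commutativity of $\Mat_R^\aff$ formally from the already-established fact that $\Mat_R$ is balanced with respect to $R$ (\ref{thm:th_lrmods_sat_bal_iff_rcomm}), rather than manipulating affine combinations directly. The starting point is the criterion recalled in \ref{par:sat_bal} (namely \cite[8.10]{Lu:Cmtnts}): a subtheory $\T$ of a base $\J$-theory $\U$ is commutative if and only if $\T \lt \T^\perp$ as subtheories of $\U$, where $\T^\perp$ is the commutant of $\T$ over $\U$. I would take the base to be the full theory $\uV_R$ of $R$ in $\uV$ and regard $\Mat_R^\aff$ as a subtheory of $\uV_R$ via the composite inclusion $\Mat_R^\aff \hookrightarrow \Mat_R \hookrightarrow \uV_R$, which is a subtheory inclusion because $\Mat_R^\aff \hookrightarrow \Mat_R$ is one by \ref{thm:aff_core} and $\Mat_R \hookrightarrow \uV_R$ is one by \ref{thm:th_lrmods_sat_bal_iff_rcomm}. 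Since the commutant of every $\J$-theory over $\uV_R$ exists (\ref{par:sat_bal}), both the criterion and the Galois-connection formalism of \ref{par:sat_bal} are available in the preordered class $\SubTh(\uV_R)$.

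It then suffices to show $\Mat_R^\aff \lt (\Mat_R^\aff)^\perp$ in $\SubTh(\uV_R)$. By construction $\Mat_R^\aff \lt \Mat_R$ in $\SubTh(\uV_R)$, and the commutant operation $(-)^\perp$ is order-reversing, since it forms a Galois connection $(-)^\perp \dashv (-)^\perp : \SubTh(\uV_R)^\op \rightarrow \SubTh(\uV_R)$ (\ref{par:sat_bal}). Hence $\Mat_R^\perp \lt (\Mat_R^\aff)^\perp$. Now the hypothesis that $R$ is commutative enters: since $R^\op = R$, Theorem \ref{thm:cmtnt_th_lrmods} gives $\Mat_R^\perp \cong \Mat_{R^\op} = \Mat_R$ as subtheories of $\uV_R$, i.e. $\Mat_R$ is balanced (\ref{thm:th_lrmods_sat_bal_iff_rcomm}). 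Therefore $\Mat_R \lt (\Mat_R^\aff)^\perp$, and combining with $\Mat_R^\aff \lt \Mat_R$ yields $\Mat_R^\aff \lt (\Mat_R^\aff)^\perp$, whence $\Mat_R^\aff$ is commutative by the criterion.

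I expect the argument to be essentially bookkeeping once the inclusions are set up, and I do not anticipate a genuine obstacle; the one point deserving care is to ensure that every comparison of subtheories (the inclusion $\Mat_R^\aff \lt \Mat_R$, the antitone step, and the identification $\Mat_R^\perp \cong \Mat_R$) is carried out inside the single preordered class $\SubTh(\uV_R)$, so that the commutant occurring in \cite[8.10]{Lu:Cmtnts} is computed over the same base throughout. Should one instead prefer a self-contained verification, one could apply the bimorphism criterion of \ref{thm:charn_tvbimorphs} to a generating set of affine operations and check that any two $R$-affine combinations interchange, using commutativity of $R$; but the Galois-theoretic route above is cleaner and reuses \ref{thm:cmtnt_th_lrmods} directly.
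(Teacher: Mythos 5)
Your argument is correct, but it takes a different route from the paper's. The paper's proof is a one-liner: $\Mat_R^\aff$ is by construction a subtheory of $\Mat_R$, which is commutative by \ref{thm:th_lrmods_sat_bal_iff_rcomm}, and then it invokes the general lemma \cite[5.16]{Lu:Cmtnts} that a subtheory of a commutative theory is commutative. You instead re-derive that inheritance in this particular situation via the double-commutant Galois connection on $\SubTh(\uV_R)$: from $\Mat_R^\aff \lt \Mat_R$ and antitonicity you get $\Mat_R^\perp \lt (\Mat_R^\aff)^\perp$, and balancedness of $\Mat_R$ then gives $\Mat_R^\aff \lt \Mat_R = \Mat_R^\perp \lt (\Mat_R^\aff)^\perp$, which is the commutativity criterion of \cite[8.10]{Lu:Cmtnts}. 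All the ingredients you use are recorded in \ref{par:sat_bal} (existence of all commutants over the full theory $\uV_R$, the Galois connection, and the criterion $\T \lt \T^\perp$), and the composite of faithful morphisms is faithful, so the bookkeeping you flag does go through. Two small observations: your argument uses more than is needed, since the chain $\Mat_R^\aff \lt \Mat_R \lt \Mat_R^\perp \lt (\Mat_R^\aff)^\perp$ only requires that $\Mat_R$ be \emph{commutative} as a subtheory of $\uV_R$ (i.e. $\Mat_R \lt \Mat_R^\perp$), not balanced; and, run with that weaker hypothesis, your argument is in effect a proof of the cited lemma \cite[5.16]{Lu:Cmtnts} in the presence of a base theory over which all commutants exist, whereas the paper's citation does not require choosing such a base. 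What your route buys is self-containedness relative to the facts already recalled in \ref{par:sat_bal}; what the paper's route buys is brevity and slightly greater generality of the inheritance principle being invoked.
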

\begin{proof}
By definition $\Mat_R^\aff$ is a subtheory of the theory $\Mat_R$, which is commutative \pref{thm:th_lrmods_sat_bal_iff_rcomm}, so we can apply \cite[5.16]{Lu:Cmtnts}.
\end{proof}

\begin{ParSub}\label{par:preord_ring}
A \textbf{preordered ring} is a ring $R$ equipped with a preorder such that\footnote{Preordered rings can be described equivalently as monoids in a certain symmetric monoidal category of \textit{preordered abelian groups} \cite[3.4]{Lu:CvxAffCmt}.} (i) if $r \lt r'$ and $s \lt s'$ in $R$ then $r + s \lt r' + s'$, (ii) if $0 \lt r$ and $0 \lt s$ in $R$ then $0 \lt rs$, and (iii) $0 \lt 1$.  On the other hand, a preordered ring $R$ can be defined equivalently as a ring $R$ equipped with a given sub-rig $R_+ \hookrightarrow R$ (see \cite[3.4]{Lu:CvxAffCmt}).  The corresponding preorder on $R$ is then given by $r \lt s \Leftrightarrow s - r \in R_+$, so that $R_+ \subseteq R$ then consists of all elements $s \in R$ such that $0 \lt s$.  Either of these two equivalent definitions yields a description of preordered rings as the models of a finite limit sketch (see, e.g., \cite{BaWe}), so we obtain a notion of preordered ring in an arbitrary category with finite limits, such as $\V$.  For simplicity we will employ the second definition:  By definition, a \textbf{preordered ring in} $\V$ is a ring $R$ in $\V$ equipped with a sub-rig $R_+ \hookrightarrow R$, which we call the \textbf{positive part} of $R$.  We shall say that a preordered ring $R$ in $\V$ is \textbf{strong} if the inclusion $R_+ \hookrightarrow R$ is a strong monomorphism in $\V$.
\end{ParSub}

\begin{DefSub}\label{def:rcvx_sp}
Given a preordered ring $R$ in $\V$, a \textbf{(left) $R$-convex space} (in $\V$) is, by definition, a left $R_+$-affine space (in $\V$).  We define $\Cvx{R} := \Aff{R_+} = \Alg{\Mat_{R_+}^\aff}^!$.
\end{DefSub}

For example, when $\V = \Set$ and $R = \RR$ is the ordered ring of real numbers, $\RR$-convex spaces are usually called \text{convex spaces}; see \cite{Lu:CvxAffCmt}.

\begin{ExaSub}[\textbf{Convergence convex spaces}]\label{exa:conv_cvx_sp}
Letting $\V = \Conv$ be the category of convergence spaces, the real numbers constitute a preordered ring $\RR$ in $\Conv$, where $\RR_+ \hookrightarrow \RR$ is the subspace consisting of the non-negative reals.  By way of definition, a \textbf{convergence convex space} is an $\RR$-convex space in $\Conv$.  For example, if $M$ is an $\RR$-module in $\Conv$, i.e. a \textit{convergence vector space} \cite{BeBu}, then $M$ carries the structure of an $\RR_+$-module in $\Conv$, so $M$ carries the structure of a convergence convex space by \ref{rem:rmod_raff}.  Hence any convex subset $A$ of $M$ carries the structure of a convergence convex space.
\end{ExaSub}

\section{Theories of affine and convex spaces in \texorpdfstring{$\V$}{V} as commutants}\label{sec:th_aff_cvx_sp_as_cmtnts}

In the present section we show that the theory $\Mat_R^\aff$ of left $R$-affine spaces for a rig $R$ in $\V$ can be described as a commutant, over $\uV_R$, of the theory of \textit{pointed right $R$-modules}.  We define the latter notion by way of the following more general scheme:

\begin{ParSub}[\textbf{Pointed $\T$-algebras}]\label{par:pted_talgs}
Given a theory $\T$ (enriched in $\V$, \ref{def:th}), a \textbf{pointed normal $\T$-algebra} is, by definition, a pair $(A,*)$ consisting of a normal $\T$-algebra $A$ and a morphism $*:1 \rightarrow \ca{A}$ in $\V$, or equivalently, a morphism $*:F1 \rightarrow A$ where $F1$ denotes the free normal $\T$-algebra on one generator.  Letting $\A = \Alg{\T}^!$, pointed normal $\T$-algebras are therefore the objects of the coslice $\V$-category $F1 \slash \A$ \pref{par:coslice_vcat}, which we write also as $\A^* = (\Alg{\T}^!)^*$.  By \ref{thm:coslice_of_str_nvalg_cat_nvalg}, this $\V$-category $\A^*$ is discretely finitary algebraic over $\uV$, so we may identify $\A^*$ with the $\V$-category of normal $\T^*$-algebras for a theory $\T^*$, i.e.
$$(\Alg{\T}^!)^* = \Alg{\T^*}^!\;.$$

Let $F \dashv G:\A \rightarrow \uV$ and $L \dashv U:\A^* \rightarrow \A$ be the associated $\V$-adjunctions \pref{par:coslice_vcat}, so that the composite $\V$-adjunction $LF \dashv GU:\A^* \rightarrow \uV$ exhibits $\A^*$ as discretely finitary algebraic over $\uV$ \pref{thm:coslice_of_str_nvalg_cat_nvalg}.  The left adjoint $L$ sends each object $A$ of $\A$ to the object $LA = (F1 + A,\iota_1)$ of $\A^*$, where $\iota_1:F1 \rightarrow F1 + A$ is the coproduct injection.  Therefore $LF$ sends each object $V$ of $\uV$ to the object $(F1 + FV,\iota_1)$ of $\A^*$, which we may identify with the object $(F(1 + V),F\iota_1)$, where the latter instance of $\iota_1$ denotes the coproduct injection $\iota_1:1 \rightarrow 1 + V$.

Hence by \ref{rem:assoc_th}, the associated theory $\T^*$ has hom-objects
$$\T^*(n,m) = (GULFn)^m = (GF(1 + n))^m = |F(1 + n)|^m\;.$$
Given any pointed normal $\T$-algebra $(A,*)$, the corresponding normal $\T^*$-algebra has carrier $\ca{A}$, and by using \ref{rem:assoc_th} and \eqref{eq:str_morphs_corr_alg_for_assoc_th} we can show straightforwardly that its associated structural morphisms
\begin{equation}\label{eq:str_morphs_assoc_tstar_alg}\T^*(n,1) = |F(1 + n)| \rightarrow \uV(|A|^n,|A|)\;\;\;\;(n \in \NN)\end{equation}
correspond by exponential transposition to the composites
$$|A|^n \xrightarrow{(* \cdot !,1)} |A| \times |A|^n = |A|^{1 + n} \xrightarrow{\sim} \A(F(1+n),A) \hookrightarrow \uV(|F(1+n)|,|A|),$$
where $*:1 \rightarrow \ca{A}$ is the `point' carried by $A$.
\end{ParSub}

Now let $R$ be an arbitrary rig in $\V$.

\begin{ParSub}[\textbf{The theory of pointed right $R$-modules}]\label{par:th_pted_right_rmods}
Recalling that right $R$-modules are the same as left $R^\op$-modules, we can identify the $\V$-category of right $R$-modules $\A = \Mod{R^\op}$ with the $\V$-category of normal $\Mat_{R^\op}$-algebras, by \ref{thm:th_lrmods}.  A \textbf{pointed right $R$-module} is, by definition, a pointed normal $\Mat_{R^\op}$-algebra \pref{par:pted_talgs}, i.e. a right $R$-module $M$ equipped with a designated morphism $*:1 \rightarrow \ca{M}$ in $\V$, equivalently, a morphism $*:R \rightarrow M$ in $\Mod{R^\op}$ (since $R$ is the free right $R$-module on one generator, \ref{par:th_lrmods}).  By \ref{par:pted_talgs}, the $\V$-category of pointed right $R$-modules $\Mod{R^\op}^* = R \slash \Mod{R^\op}$ may be identified with the $\V$-category of normal $\Mat_{R^\op}^*$-algebras for a theory $\Mat_{R^\op}^* = (\Mat_{R^\op})^*$, where
$$\Mat_{R^\op}^*(n,m) = (R^{1 + n})^m\;.$$
We can consider $R$ itself as a pointed right $R$-module $(R,1)$, where $1:1 \rightarrow R$ is the identity element, so by \ref{par:full_th} the corresponding normal $\Mat_{R^\op}^*$-algebra $R:\Mat_{R^\op}^* \rightarrow \uV$ determines a corresponding morphism of theories
\begin{equation}\label{eq:can_mor_matropstar_to_vr}\Phi^R:\Mat_{R^\op}^* \rightarrow \uV_R\end{equation}
into the full theory $\uV_R$ of $R$ in $\uV$.  Let us denote the structural morphisms of $\Phi^R$ by
\begin{equation}\label{eq:str_morphs_of_morph_to_vr_det_by_r}\Phi^R_n := \Phi^R_{n1}\;:\;\Mat_{R^\op}^*(n,1) = R^{1+n} \longrightarrow \uV(R^n,R)\;\;\;\;\;\;(n \in \NN)\end{equation}
and denote the exponential transpose of $\Phi^R_n$ by
\begin{equation}\label{eq:phi_bar}\bar{\Phi}^R_n:R^n \times R^{1+n} \rightarrow R\;.\end{equation}
Using \ref{par:pted_talgs}, we readily deduce that $\Phi^R_n$ and $\bar{\Phi}^R_n$ are characterized by the equations
\begin{equation}\label{eq:formula_for_phirn}(\Phi^R_n(u))(x) = \bar{\Phi}^R_n(x,u) = u_0 + \sum_{i=1}^n x_iu_i\;\;\;\;\;\;\text{where $(x,u):R^n \times R^{1+n}$}\end{equation}
in the notation of \ref{par:notn_fp} and \ref{par:rigs_mods_gen_elts}; here we consider $R^n$ and $R^{1+n}$ as products $\prod_{i = 1}^n R$ and $\prod_{i = 0}^nR$, respectively, so that $x = (x_1,...,x_n)$ and $u = (u_0,u_1,...,u_n)$ are the first and second projections of the binary product $R^n \times R^{1+n}$.
\end{ParSub}

$R$ itself is a left $R$-affine space and so determines a morphism of theories $\Mat_R^\aff \rightarrow \uV_R$ \pref{par:full_th} by means of which we shall regard $\Mat_R^\aff$ as a theory over $\uV_R$.

\begin{ThmSub}\label{thm:th_raffsp_as_cmtnt}
The theory of left $R$-affine spaces for a rig $R$ in $\V$ is the commutant of the theory of pointed right $R$-modules, where both are considered as theories over the full theory $\uV_R$ of $R$ in $\uV$ \pref{par:full_th}.  Symbolically, $\Mat_R^\aff \cong (\Mat_{R^\op}^*)^\perp$ over $\uV_R$.
\end{ThmSub}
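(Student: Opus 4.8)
The plan is to compute both subtheories of $\uV_R$ explicitly and to show that they have the same family of hom-objects. Since the commutant $(\Mat_{R^\op}^*)^\perp$ is by construction a subtheory of $\uV_R$ \pref{par:cmtnts_th}, and since $\Mat_R^\aff$ is a subtheory of $\Mat_R$ \pref{thm:aff_core}, which is in turn a subtheory of $\uV_R$ via the left $R$-module structure on $R$ \pref{thm:th_lrmods_sat_bal_iff_rcomm}, both sides are subtheories of the common ambient theory $\uV_R$. A subtheory is determined by its family of hom-subobjects, so it suffices to identify these (and to note, via \ref{rem:rmod_raff}, that the map $\Mat_R^\aff \to \uV_R$ is precisely the one induced by the $R$-affine structure carried by $R$).

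First I would unwind the commutant. By \ref{par:cmtnts_th} and \ref{par:th_pted_right_rmods}, $(\Mat_{R^\op}^*)^\perp(n,m) = \Alg{\Mat_{R^\op}^*}([n,R],[m,R])$ is the object of pointed-right-$R$-module homomorphisms between the cotensors of the pointed right $R$-module $(R,1)$ by $n$ and by $m$. Using the coslice description $\Alg{\Mat_{R^\op}^*}^! = R \slash \Mod{R^\op}$ \pref{par:pted_talgs}, the forgetful $\V$-functor to $\Mod{R^\op}$ preserves cotensors (being a right adjoint, \ref{par:coslice_vcat}), so the underlying right $R$-module of $[n,R]$ is $R^n$; and since a cotensor by $n\cdot 1$ is a conical $n$-th power \pref{sec:disc_fin_enr_alg_th}, while the power of a pointed object carries the diagonal point, the structure map of $[n,R]$ is the diagonal $\delta_n\colon R \to R^n$. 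The key computation is then the hom-object formula \eqref{eq:hom_in_coslice_as_equalizer}: it exhibits $(\Mat_{R^\op}^*)^\perp(n,m)$ as the equalizer of the pair
$$\Mod{R^\op}(R^n,R^m) \rightrightarrows \Mod{R^\op}(R,R^m),$$
whose two legs are precomposition with the point $\delta_n$ of $[n,R]$ and the constant map at the point $\delta_m$ of $[m,R]$, respectively.

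Next I would match this against the affine core. By \ref{thm:cmtnt_th_lrmods}, $\Mat_R \cong (\Mat_{R^\op})^\perp$ over $\uV_R$, so $\Mat_R(n,m)$ is exactly the subobject $\Mod{R^\op}(R^n,R^m) \hookrightarrow \uV(R^n,R^m)$ of right $R$-linear maps, and $\Mat_R(1,m) = \Mod{R^\op}(R,R^m)$. Under this identification the theory-diagonal $\delta_n\colon 1 \to n$ of $\Mat_R$, viewed inside $\uV_R$, is precisely the diagonal map $R \to R^n$ that serves as the point of the cotensor $[n,R]$. Consequently the two morphisms $\phi_{nm} = \Mat_R(\delta_n,m)$ and $\psi_{nm}$ (the constant at $\delta_m$) of \ref{par:aff_core} coincide, on the nose, with the two legs of the coslice equalizer above. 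Hence $\Mat_R^\aff(n,m)$, defined as the equalizer of $\phi_{nm}$ and $\psi_{nm}$ \pref{exa:th_lraffsp}, is the same subobject of $\uV(R^n,R^m)$ as $(\Mat_{R^\op}^*)^\perp(n,m)$. As this holds for all $n,m \in \NN$, the two subtheories of $\uV_R$ coincide, yielding the asserted isomorphism over $\uV_R$.

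The main obstacle is the bookkeeping needed to see that the two equalizer diagrams are genuinely equal rather than merely equal on generalized elements. This rests on two identifications: (i) that the point of the cotensor $[n,R]$ is the theory-diagonal $\delta_n = \Delta\colon R \to R^n$, for which the coslice/pointed-object product computation is essential; and (ii) that the isomorphism $\Mat_R \cong (\Mat_{R^\op})^\perp$ of \ref{thm:cmtnt_th_lrmods} carries $\Mat_R(\delta_n,m)$ to precomposition with that same diagonal, so that no variance or transposition discrepancy intervenes between the left-module and right-module conventions. Once these are secured, the equality of the two equalizers, and hence of the two subtheories, is immediate.
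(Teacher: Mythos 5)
Your proposal is correct and follows essentially the same route as the paper's proof: both reduce via Theorem \bref{thm:cmtnt_th_lrmods} to identifying $\Mat_R^\aff(n,m)$ with the hom-object $(R \slash \Mod{R^\op})((R^n,\delta_n),(R^m,\delta_m))$, and both observe that the equalizer defining the affine core \pref{par:aff_core} coincides with the coslice equalizer \eqref{eq:hom_in_coslice_as_equalizer} once the point of the $n$-th power of $(R,1)$ is recognized as the diagonal $\delta_n$. The paper phrases this as the identity $(\Mat_{R^\op}^\perp)^\aff = (\Mat_{R^\op}^*)^\perp$ of subtheories of $\uV_R$, which is exactly your comparison of hom-subobjects.
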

\begin{proof}
The proof is analogous to the argument in \cite[7.2]{Lu:CvxAffCmt} in the $\Set$-based case.  By \ref{thm:cmtnt_th_lrmods} we know that $\Mat_R \cong \Mat_{R^\op}^\perp$ over $\uV_R$, so it suffices to show that
$$(\Mat_{R^\op}^\perp)^\aff = (\Mat_{R^\op}^*)^\perp$$
as theories over $\uV_R$, recalling that the left-hand side denotes the affine core of $\Mat_{R^\op}^\perp$ \pref{def:aff_core}.  Let us denote the category $\Alg{\Mat_{R^\op}}^! = \Mod{R^\op}$ by $\A$.  Recalling that $\Mat_{R^\op}^\perp(n,m) = \A(R^n,R^m)$ for all $n,m \in \NN$, note that $(\Mat_{R^\op}^\perp)^\aff$ is by definition the subtheory of $\Mat_{R^\op}^\perp$ whose hom-objects $(\Mat_{R^\op}^\perp)^\aff(n,m)$ are the equalizers of the pairs
\begin{equation}\label{eq:pair_of_which_the_homs_of_matropperpaff_are_equalizers}
\xymatrix{
\A(R^n,R^m) \ar@<1.3ex>[rr]^{\A(\delta_n,R^m)} \ar[r]_(.6){!} & 1 \ar[r]_(.4){[\delta_m]} & \A(R^1,R^m)
}
\end{equation}
where $\delta_n = (1,...,1):R^1 = R \rightarrow R^n$ is the diagonal morphism.  As in \ref{par:th_pted_right_rmods}, we can consider $R$ itself as an object of $\A^* = R \slash \A$, namely the pair $(R,1_R:R \rightarrow R)$, and the $n$-th power of this object in $\A^*$ is $(R^n,\delta_n)$.  Recall that $(\Mat_{R^\op}^*)^\perp$ is by definition the subtheory of $\uV_R$ whose hom-objects are
$$(\Mat_{R^\op}^*)^\perp(n,m) = \A^*(R^n,R^m) = (R \slash \A)((R^n,\delta_n),(R^m,\delta_m)),$$
but by \ref{par:coslice_vcat} \eqref{eq:hom_in_coslice_as_equalizer} these are precisely the equalizers of the pairs \eqref{eq:pair_of_which_the_homs_of_matropperpaff_are_equalizers} and the result follows.
\end{proof}

\begin{ParSub}\label{par:sc_raff_ctxt}
Let $R$ be a commutative rig $R$ in $\V$.  Then the theory of $R$-affine spaces $\Mat_R^\aff$ is saturated with respect to $R$, by \ref{thm:th_raffsp_as_cmtnt} and \ref{par:sat_bal}.  $\Mat_R^\aff$ is also commutative \pref{thm:raffsp_comm_alg}, so $(\V,\Mat^\aff_R,R)$ is a finitary functional-analytic context, which we call the \textbf{scalar $R$-affine context} in $\V$.
\end{ParSub}

\section{The commutant of the theory of convex spaces in \texorpdfstring{$\V$}{V}}\label{sec:cmtnt_of_th_cvx_sp}

In the previous section, we showed that for a rig $S$ in $\V$, the theory $\Mat_S^\aff$ of left $S$-affine spaces for a rig $S$ in $\V$ is the commutant of the theory $\Mat_{S^\op}^*$ of pointed right $S$-modules, over $\uV_S$.  However, in view of Example \ref{exa:prop_filt_mnd}, it is not in general true that the commutant of $\Mat_S^\aff$ over $\uV_S$ is $\Mat_{S^\op}^*$.  In the present section, we will prove that for a wide class of rigs $S$ in categories $\V$, these theories $\Mat_S^\aff$ and $\Mat_{S^\op}^*$ \textit{are} in fact commutants of one another over $\uV_S$.  In particular, we shall show that this is the case if the rig $S$ is a \textit{ring}.  More generally, we focus on rigs $S$ in $\V$ that arise as the positive parts $S = R_+$ of certain preordered rings $R$ in $\V$ \pref{par:preord_ring}; the case where $S$ is a ring is included as a special case, since here we can take $R = S$ and $R_+ = R$.

Hence in the present section our aim is to establish conditions on a preordered ring $R$ in $\V$ that entail that the theory $\Mat_{R_+}^\aff$ of left $R$-convex spaces and the theory $\Mat_{R_+^\op}^*$ of pointed right $R_+$-modules are commutants of one another over $\uV_{R_+}$.  In the case where $\V = \Set$, a result of this type was established in the author's recent paper \cite{Lu:CvxAffCmt}, so we will begin by reviewing that result, and then we will see how this $\Set$-based result can be used to prove a more general result in the $\V$-based setting.

\begin{DefSub}\label{def:fa_preord_dalg}
Let $R$ be a preordered ring in $\Set$.
\begin{enumerate}
\item $R$ is \textbf{archimedean} if $R$ has the property that for any element $r$ of $R$, if the set $\{nr \mid n \in \NN\}$ has an upper bound in $R$, then $r \lt 0$.
\item (\cite[10.18]{Lu:CvxAffCmt}) $R$ is \textbf{firmly archimedean} if $R$ is archimedean and for every element $s$ of $R_+$ there is some $n \in \NN$ such that $s \lt n$ in $R$, where we have written simply $n$ to denote the sum of $n$ instances of the unit element $1$ of $R$.
\item We denote by $\DD$ the ring of \textbf{dyadic rationals}, i.e. the subring of $\QQ$ consisting of all elements of the form $\frac{p}{2^n}$ with $p \in \ZZ$ and $n \in \NN$, equivalently, the localization of $\ZZ$ with respect to the multiplicative subset $\{2^n \mid n \in \NN\} \subseteq \ZZ$.  We regard $\DD$ as a preordered ring, under the usual total order that $\DD$ inherits from $\QQ$.
\item (\cite[10.14]{Lu:CvxAffCmt}) $R$ is said to be a \textbf{preordered algebra over the dyadic rationals}, or a \textbf{preordered $\DD$-algebra} (in $\Set$), if there exists a monotone homomorphism of rings $\DD \rightarrow R$ (which is necessarily unique if it exists, by the universal property of the localization $\DD$ of $\ZZ$).  Equivalently, $R$ is a preordered $\DD$-algebra if the element $2 = 1 + 1$ of $R$ is invertible and its inverse lies in $R_+$ \cite[10.13]{Lu:CvxAffCmt}.
\end{enumerate}
\end{DefSub}

\begin{ThmSub}[{\cite[10.20, 9.3]{Lu:CvxAffCmt}}]\label{thm:cmtnt_th_in_set}\emptybox
\begin{enumerate}
\item If $R$ is a firmly archimedean preordered $\DD$-algebra (in $\Set$), then the theory $\Mat_{R_+}^\aff$ of left $R$-convex spaces and the theory $\Mat_{R_+^\op}^*$ of pointed right $R_+$-modules are commutants of one another over $\Set_{R_+}$.
\item If $R$ is a ring (in $\Set$), then the theory $\Mat_R^\aff$ of left $R$-affine spaces and the theory $\Mat_{R^\op}^*$ of pointed right $R$-modules are commutants of one another over $\Set_R$.
\end{enumerate}
\end{ThmSub}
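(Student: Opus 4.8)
The plan is to reduce the statement to a \emph{saturation} assertion and then isolate its genuinely analytic content. Since the result is quoted from \cite[10.20, 9.3]{Lu:CvxAffCmt}, it would suffice here to cite it, but the substance can be organized as follows. Specializing Theorem \ref{thm:th_raffsp_as_cmtnt} to $\V = \Set$ with the rig $R_+$ (resp.\ $R$) in place of the generic rig, we already know that $\Mat_{R_+}^\aff \cong (\Mat_{R_+^\op}^*)^\perp$ over $\Set_{R_+}$ (resp.\ $\Mat_R^\aff \cong (\Mat_{R^\op}^*)^\perp$ over $\Set_R$). By the Galois connection of \ref{par:sat_bal}, asserting that the two theories are commutants \emph{of one another} amounts to the further claim that $\Mat_{R_+^\op}^*$ is saturated, i.e.\ $(\Mat_{R_+}^\aff)^\perp \cong \Mat_{R_+^\op}^*$. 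The inclusion $\Mat_{R_+^\op}^* \subseteq (\Mat_{R_+}^\aff)^\perp$ is automatic from that Galois connection (it is the expansiveness of the double commutant), so the whole task is the reverse inclusion.

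Next I would make both sides concrete. By \eqref{eq:formula_for_phirn}, an element of $\Mat_{R_+^\op}^*(n,1)$ is precisely a map $R_+^n \to R_+$ of the form $x \mapsto u_0 + \sum_{i=1}^n x_i u_i$ with $u_0, u_1, \ldots, u_n \in R_+$, whereas an element of $(\Mat_{R_+}^\aff)^\perp(n,1)$ is, by \ref{par:cmtnts_th}, a function $\omega : R_+^n \to R_+$ that is a homomorphism of $R_+$-convex spaces --- i.e.\ preserves every $R_+$-affine combination $\sum c_i x_i$ with $c_i \in R_+$ and $\sum c_i = 1$. Thus the reverse inclusion is exactly the assertion that every convex-combination-preserving self-map $\omega$ of the relevant powers of $R_+$ is affine-linear of this explicit shape; the statement for codomains $m > 1$ then follows componentwise via condition (iii) of \ref{par:subth_efth}.

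For part \textit{(2)}, where $R = R_+$ is a ring, this is a short algebraic computation. Put $u_0 = \omega(0)$ and $u_i = \omega(e_i) - u_0$, the subtraction taking place in the ring $R$. Since $R$ is a ring, any $x \in R^n$ can be written as the genuine $R$-affine combination $x = \bigl(1 - \sum_i x_i\bigr)\cdot 0 + \sum_i x_i\, e_i$, whose coefficients lie in $R$ and sum to $1$; applying the homomorphism $\omega$ and expanding yields $\omega(x) = u_0 + \sum_i x_i u_i$, as required. This recovers \cite[9.3]{Lu:CvxAffCmt}.

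Part \textit{(1)} is the main obstacle, and is where the hypotheses of \ref{def:fa_preord_dalg} become indispensable. The difficulty is that when coefficients are constrained to $R_+$, the coefficient $1 - \sum_i x_i$ used above need not be positive once $\sum_i x_i > 1$, so the one-line argument collapses. I would instead proceed in two stages. First, using that $R$ is a preordered $\DD$-algebra --- so that $\tfrac12 \in R_+$ and binary midpoints exist --- I would show that $\omega$ respects all \emph{dyadic} convex combinations by iterating midpoint preservation, so that on the dyadic locus $\omega$ already agrees with the candidate map $x \mapsto u_0 + \sum_i x_i u_i$. Second, I would upgrade this to all of $R_+^n$ by an order-theoretic rigidity argument: the archimedean hypothesis forces a convex homomorphism squeezed between dyadic approximants to be pinned down uniquely, while the \emph{firmly} archimedean hypothesis (every $s \in R_+$ satisfies $s < n$ for some integer $n$) guarantees that these approximants are order-cofinal, so the squeeze reaches every point. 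Making this approximation precise --- i.e.\ showing that an archimedean preorder leaves no room for exotic, non-affine convex homomorphisms --- is the technical heart of \cite[10.20]{Lu:CvxAffCmt}, and is the step I would expect to absorb essentially all of the effort.
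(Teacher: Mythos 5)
The paper contains no proof of \ref{thm:cmtnt_th_in_set}: the result is imported verbatim from \cite{Lu:CvxAffCmt}, so your opening observation that a citation suffices is exactly what the paper does, and there is no internal argument to compare against. Your reduction of ``commutants of one another'' to a single surjectivity claim --- via \ref{thm:th_raffsp_as_cmtnt} and the Galois connection of \ref{par:sat_bal} --- is correct and matches how the paper itself packages the assertion in \ref{par:setup_for_cmtnt_lem}, namely that $\Phi^{R_+}_n$ is an isomorphism; and your argument for part (2) is complete and standard.

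For part (1), however, your sketch names the right hypotheses but misassigns their roles, and it omits one necessary step. First, there is no ``squeeze between dyadic approximants'' available: a preordered ring carries no completeness, so one cannot pass to limits. What actually works is a single rescaling. Given $x \in R_+^n$, firm archimedeanness yields $\sum_i x_i \lt 2^k$ for some $k$, so $2^{-k}x$ is a genuine $R_+$-convex combination of $0,e_1,\dots,e_n$, while also $2^{-k}x = 2^{-k}\cdot x + (1-2^{-k})\cdot 0$ is an $R_+$-convex combination of $x$ and $0$; evaluating $\omega(2^{-k}x)$ both ways and cancelling the invertible factor $2^{-k}$ gives $\omega(x) = u_0 + \sum_i x_i u_i$ in $R$ in one step, with no approximation. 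Second --- and this is the genuine gap --- surjectivity of $\Phi^{R_+}_n : R_+^{1+n} \to \Aff{R_+}(R_+^n,R_+)$ requires the coefficients $u_i = \omega(e_i)-\omega(0)$ to lie in $R_+$, not merely in $R$; otherwise the constructed preimage is not an element of $\Mat_{R_+^\op}^*(n,1) = R_+^{1+n}$. This positivity is precisely where the archimedean axiom enters: from the formula just established, $u_0 + m\,u_i = \omega(m e_i) \in R_+$ for every $m \in \NN$, so the set $\{m(-u_i) \mid m \in \NN\}$ is bounded above by $u_0$ and archimedeanness forces $-u_i \lt 0$, i.e.\ $u_i \in R_+$. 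Your sketch never addresses this sign issue, and instead spends the archimedean hypothesis on a limiting argument that cannot be carried out; as written, part (1) of the proposal is incomplete.
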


\begin{ParSub}\label{par:setup_for_cmtnt_lem}
In order to prove related results in the $\V$-based setting, let us now consider a given preordered ring $R$ in $\V$.  Note that the hom-set $\V(V,R)$ carries the structure of a preordered ring in $\Set$.  Indeed, since preordered rings in a category with finite limits are the models of a finite limit sketch \pref{par:preord_ring}, this follows from the fact that $\V(V,-):\V \rightarrow \Set$ preserves limits.  Explicitly, $\V(V,-)$ sends $R$ to a ring $\V(V,R)$ in $\Set$, and the positive part $\V(V,R)_+ \subseteq \V(V,R)$ consists of all $f \in \V(V,R)$ such that $f$ factors through $R_+ \hookrightarrow R$.

By \ref{thm:th_raffsp_as_cmtnt} and \ref{par:cmtn}, we know that $\Mat_{R_+}^\aff$ and $\Mat_{R_+^\op}^*$ commute over $\uV_{R_+}$, i.e. their associated morphisms to $\uV_{R_+}$ commute, so by \ref{par:cmtn} we deduce that the morphism $\Phi^{R_+}:\Mat_{R_+^\op}^* \rightarrow \uV_{R_+}$ of \eqref{eq:can_mor_matropstar_to_vr} factors through the commutant $(\Mat^\aff_{R_+})^\perp \hookrightarrow \uV_{R_+}$ of $\Mat_{R_+}^\aff$ over $\uV_{R_+}$, via a unique morphism that we shall denote also by
\begin{equation}\label{eq:cmtnt_compn}\Phi^{R_+}:\Mat_{R^\op_+}^* \rightarrow (\Mat_{R_+}^\aff)^\perp\;.\end{equation}
Let us say that $R$ \textbf{has the commutant property} if $\Phi^{R_+}$ is an isomorphism $\Mat_{R^\op_+}^* \rightarrow (\Mat_{R_+}^\aff)^\perp$.  Since $\Phi^{R_+}$ is the unique morphism $\Phi^{R_+}:\Mat_{R^\op_+}^* \rightarrow (\Mat_{R_+}^\aff)^\perp$ in $\Th \slash \uV_{R_+}$, we deduce by \ref{thm:th_raffsp_as_cmtnt} that
\begin{equation}\label{eq:val_cmtnt_th}
\begin{minipage}{3.6in}
\textit{$R$ has the commutant property if and only if $\Mat_{R_+}^\aff$ and $\Mat_{R_+^\op}^*$ are commutants of one another over $\uV_{R_+}$.}
\end{minipage}
\end{equation}
\end{ParSub}

Terminology concerning generators and generating families in categories is somewhat variable; herein, a \textit{generating class} for a category $\C$ is, by definition, a class of objects $\G \subseteq \ob\C$ such that for all $f,g:A \rightarrow B$ in $\C$, if $f \cdot x = g \cdot x$ for all $G \in \G$ and all $x:G \rightarrow A$ in $\C$, then $f = g$.

\begin{PropSub}\label{thm:cmtnt_lem}
Let $R$ be a strong preordered ring in $\V$ \pref{par:preord_ring}, and suppose that $\V$ has a generating class $\G$ such that, for each object $V \in \G$, the preordered ring $\V(V,R)$ in $\Set$ has the commutant property.  Then $R$ has the commutant property.
\end{PropSub}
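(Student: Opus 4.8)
The plan is to show that the comparison morphism $\Phi^{R_+}\colon \Mat_{R_+^\op}^* \to (\Mat_{R_+}^\aff)^\perp$ of \eqref{eq:cmtnt_compn} is an isomorphism, since by \eqref{eq:val_cmtnt_th} this is precisely the assertion that $R$ has the commutant property. As $\Phi^{R_+}$ is a morphism of theories over $\uV_{R_+}$, by \ref{par:alg_jth} it suffices to prove that each component $\Phi^{R_+}_n\colon \Mat_{R_+^\op}^*(n,1) = R_+^{1+n} \to (\Mat_{R_+}^\aff)^\perp(n,1)$ is an isomorphism in $\V$, where the codomain is the subobject of $\uV(R_+^n,R_+)$ consisting of the $R_+$-affine homomorphisms $R_+^n \to R_+$, and $\Phi^{R_+}_n$ sends a coefficient vector $u$ to the affine-combination operation $x \mapsto u_0 + \sum_i x_i u_i$ of \eqref{eq:formula_for_phirn}. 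Throughout I would use that, for each $V \in \ob\V$, the functor $\V(V,-)$ preserves limits, so that $\V(V,R)$ is a preordered ring in $\Set$ with positive part $\V(V,R)_+ \cong \V(V,R_+)$ (using that $R_+ \hookrightarrow R$ is a monomorphism), and that by hypothesis $\V(V,R)$ has the commutant property whenever $V \in \G$.

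First I would construct a candidate inverse by coefficient extraction. Evaluating an affine homomorphism at the global points $0, e_1, \dots, e_n$ of $R_+^n$ and composing with $R_+ \hookrightarrow R$ yields a morphism $(\Mat_{R_+}^\aff)^\perp(n,1) \to R^{1+n}$; postcomposing with the $\V$-morphism $(w_0,\dots,w_n) \mapsto (w_0, w_1 - w_0, \dots, w_n - w_0)$ — which uses that $R$ is a ring — gives a morphism $\psi_n^{\circ}\colon (\Mat_{R_+}^\aff)^\perp(n,1) \to R^{1+n}$, and a direct computation shows that $\psi_n^{\circ}\circ \Phi^{R_+}_n$ equals the inclusion $R_+^{1+n}\hookrightarrow R^{1+n}$. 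The first use of the hypotheses is to corestrict $\psi_n^{\circ}$ along the strong monomorphism $R_+^{1+n}\hookrightarrow R^{1+n}$ (a finite product of copies of the strong monomorphism $R_+\hookrightarrow R$, since $R$ is strong). Pulling this strong monomorphism back along $\psi_n^{\circ}$ gives a strong monomorphism $P \rightarrowtail (\Mat_{R_+}^\aff)^\perp(n,1)$, and every generalized element $c\colon G \to (\Mat_{R_+}^\aff)^\perp(n,1)$ with $G \in \G$ factors through $P$, because the extracted coefficients of the associated $\V(G,R)_+$-affine homomorphism are positive by the commutant property of $\V(G,R)$. Hence $P \rightarrowtail (\Mat_{R_+}^\aff)^\perp(n,1)$ is epic by the generating property of $\G$, and a strong monomorphism that is also an epimorphism is an isomorphism, so $\psi_n^{\circ}$ factors through $R_+^{1+n}$, yielding $\psi_n\colon (\Mat_{R_+}^\aff)^\perp(n,1) \to R_+^{1+n}$ with $\psi_n \circ \Phi^{R_+}_n = 1$.

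It remains to prove the reconstruction identity $\Phi^{R_+}_n \circ \psi_n = 1$, and this is the main obstacle. The difficulty is that an internal affine homomorphism $R_+^n \to R_+$ carries more information than the induced set-map $\V(G,R_+)^n \to \V(G,R_+)$ on which the $\Set$-commutant property bears: the sections $(1_G, y)\colon G \to G \times R_+^n$ indexed by $y \in \V(G,R_+^n)$ need not be jointly epimorphic, so controlling these set-maps does not a priori control the morphisms themselves. The resolution is to test the equality of the two endomorphisms of $(\Mat_{R_+}^\aff)^\perp(n,1)$ against generators and then, for each $\G$-point $c$, to test the resulting equality of morphisms $G \times R_+^n \to R_+$ against all generalized elements $(z_1,z_2)\colon W \to G \times R_+^n$ with $W \in \G$. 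Here the naturality of the $\V$-morphism $\psi_n$ gives $\psi_n(c \cdot z_1) = (\psi_n c)\cdot z_1$, while $c \cdot z_1$ is itself a $\G$-point, so applying the commutant property of $\V(W,R)$ to $c \cdot z_1$ and evaluating at $z_2$ yields exactly $\tilde{c}(z_1,z_2) = \bar{\Phi}^{R_+}_n(z_2, (\psi_n c)\cdot z_1)$ by \eqref{eq:formula_for_phirn}, which is the required identity. Since $\G$ is generating, this forces $\Phi^{R_+}_n \circ \psi_n = 1$, completing the proof that each $\Phi^{R_+}_n$, and hence $\Phi^{R_+}$, is an isomorphism.
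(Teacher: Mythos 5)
Your argument is correct, and it rests on the same two pillars as the paper's proof: the coefficient-extraction morphism into $R^{1+n}$ (the paper's $\rho$, your $\psi_n^\circ$), and the reduction of internal identities to statements about the $\Set$-valued preordered rings $\V(W,R)$ with $W \in \G$ --- including the key manoeuvre of precomposing a $\G$-point $c$ with $z_1$ so that the set-based commutant property can be applied to the $\G$-point $c \cdot z_1$. The packaging differs, though. The paper proves $\Phi^{R_+}_n$ is a strong monomorphism (for which the extraction morphism valued merely in $R^{1+n}$ suffices) and then an epimorphism, by showing $\V(V,\Phi^{R_+}_n)$ is surjective for each $V \in \G$ and using that the restricted Yoneda functor $\V \rightarrow [\G^\op,\Set]$ is faithful and hence reflects epimorphisms. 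You instead build an explicit two-sided inverse, which forces one step the paper's route avoids entirely: corestricting $\psi_n^\circ$ along $R_+^{1+n} \rightarrowtail R^{1+n}$. The paper only ever evaluates the extraction at $\G$-points, where positivity of the coefficients comes for free from the $\Set$-level commutant property, whereas you must internalize that positivity; your pullback-of-a-strong-mono argument does this correctly. Two elisions are worth filling in (both are sound, and both are treated at length in the paper's proof): first, that the set-map $\V(G,R_+)^n \rightarrow \V(G,R_+)$ induced by a $\G$-point of $\Aff{R_+}(R_+^n,R_+)$ really is $\V(G,R_+)$-affine, which requires unwinding the hom-object as a pairwise equalizer of the pairs expressing preservation of affine combinations and then applying $\V(G,-)$; and second, that the unique coefficient tuple furnished by the set-based commutant property is computed by the same evaluate-at-$0,b_1,\dots,b_n$-and-subtract formula as $\psi_n^\circ$, which is the content of the explicit inversion formula in the cited set-based result \cite[10.1, 10.10]{Lu:CvxAffCmt}.
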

\begin{proof}
With reference to \ref{par:th_pted_right_rmods}, it suffices (by \ref{par:alg_jth}) to show that for each $n \in \NN$ the structural morphism 
$$\Phi^{R_+}_n := \Phi^{R_+}_{n1}:R_+^{1+n} = \Mat_{R_+^\op}^*(n,1) \rightarrow (\Mat_{R_+}^\aff)^\perp(n,1) = \Aff{R_+}(R_+^n,R_+)$$
is an isomorphism in $\V$.  Considering $R^{1+n}$ as a product $\prod_{i = 0}^n R$, we shall define a morphism $\rho:\Aff{R_+}(R_+^n,R_+) \rightarrow R^{1+n}$ by  the equations
$$
\begin{array}{ccll}
(\rho(\varphi))_0 & = & \varphi(0)                & \text{where $\varphi:\Aff{R_+}(R_+^n,R_+)$}\\
(\rho(\varphi))_i & = & \varphi(b_i) - \varphi(0) & (i = 1,...,n)\\
\end{array}
$$
in the notation of \ref{par:notn_fp} and \ref{par:rigs_mods_gen_elts}, where $b_i:1 \rightarrow R_+^n$ denotes the $i$-th standard basis vector; here we have omitted from our notation all instances of the inclusion $\Aff{R_+}(R_+^n,R_+) \hookrightarrow \uV(R_+^n,R_+)$ and the inclusion $\iota:R_+ \hookrightarrow R$.  We know that $\Phi^{R_+}_n$ is given by the formula \eqref{eq:formula_for_phirn}, and an easy computation shows that the diagram
$$
\xymatrix{
R_+^{1+n} \ar@{^(->}[dr]_{\iota^{1+n}} \ar[r]^(.4){\Phi^{R_+}_n} & \Aff{R_+}(R_+^n,R_+) \ar[d]^\rho\\
& R^{1+n}
}
$$
commutes.  Since $\iota$ is a strong monomorphism, it follows that $\iota^{1+n}$ is a strong monomorphism (e.g., by \cite[6.2(2)]{Lu:EnrFactnSys}), so $\Phi^{R_+}_n$ is a strong monomorphism (e.g., by \cite[6.2(1)]{Lu:EnrFactnSys}).  Therefore, it suffices to show that $\Phi^{R_+}_n$ is an epimorphism.  Let $J:\G \hookrightarrow \V$ denote the inclusion of $\G$ as a full subcategory of $\V$, and let $Y:\V \rightarrow [\G^\op,\Set]$ denote the functor given by $YV = \V(J-,V)$.  Since $\G$ is a generating class, we know that $Y$ is faithful, so $Y$ reflects epimorphisms and hence it suffices to show that $Y(\Phi^{R_+}_n)$ is an epimorphism in $[\G^\op,\Set]$.  To this end, it suffices to let $V \in \G$ and show that $\V(V,\Phi^{R_+}_n):\V(V,R_+^{1+n}) \rightarrow \V(V,\Aff{R_+}(R_+^n,R_+))$ is surjective.

It follows from \cite[4.11(2)]{Lu:Cmtnts} and \ref{def:raff_sp} that the subobject $\Aff{R_+}(R_+^n,R_+) \hookrightarrow \uV(R_+^n,R_+)$ is a \textit{pairwise equalizer} (in the sense of \cite[2.1]{Lu:Cmtnts}) of a family of parallel pairs
$$P_j,Q_j\;:\;\uV(R_+^n,R_+) \rightarrow \uV(R_+^{j,\aff} \times (R_+^n)^j,R_+)\;\;\;\;\;\;(j \in \NN),$$
whose transposes $\bar{P_j},\bar{Q_j}:\uV(R_+^n,R_+) \times R_+^{j,\aff} \times (R_+^n)^j \rightarrow R_+$ are given by the equations
$$
\begin{array}{rcll}
\displaystyle{\bar{P_j}(\varphi,a,x)} & \displaystyle{=} & \displaystyle{\varphi(\sum_{i=1}^j a_ix_i)} &\\
\displaystyle{\bar{Q_j}(\varphi,a,x)} & \displaystyle{=} & \displaystyle{\sum_{i=1}^j a_i\varphi(x_i)} & \text{where $(\varphi,a,x):\uV(R_+^n,R_+) \times R_+^{j,\aff} \times (R_+^n)^j$}\\
\end{array}
$$
in the notation of \ref{par:notn_fp} and \ref{par:rigs_mods_gen_elts}, where we have omitted instances of the inclusion $R_+^{j,\aff} = \Mat_{R_+}^\aff(j,1) \hookrightarrow R_+^j$ \pref{exa:th_lraffsp} and considered $R_+^n$ as a left $R_+$-module.

This pairwise equalizer is sent by $\V(V,-):\V \rightarrow \Set$ to a pairwise equalizer in $\Set$, and by exponential transposition we obtain a characterization of $\V(V,\Aff{R_+}(R_+^n,R_+))$ as a pairwise equalizer
$$\V(V,\Aff{R_+}(R_+^n,R_+)) \rightarrow \V(V,\uV(R_+^n,R_+)) \cong \V(R_+^n \times V, R_+)$$
of a family of parallel pairs of the form
$$p_j,q_j:\V(R_+^n \times V,R_+) \rightarrow \V(R_+^{j,\aff} \times (R_+^n)^j \times V,R_+)\;\;\;\;\;\;(j \in \NN)$$
in $\Set$.  Explicitly, given a morphism $\varphi:R_+^n \times V \rightarrow R_+$ in $\V$, we compute that $p_j$ and $q_j$ send $\varphi$ to the morphisms $p_j(\varphi),q_j(\varphi):R_+^{j,\aff} \times (R_+^n)^j \times V \rightarrow R_+$ given by the equations
\begin{equation}\label{eq:pj_qj}
\begin{array}{rcll}
\displaystyle{(p_j(\varphi))(a,x,v)} & = & \displaystyle{\varphi\left(\sum_{i=1}^j a_ix_i,v\right)} & \\
\displaystyle{(q_j(\varphi))(a,x,v)} & = & \displaystyle{\sum_{i=1}^j a_i\varphi(x_i,v)} & \text{where $(a,x,v):R_+^{j,\aff} \times (R_+^n)^j \times V$}\\
\end{array}
\end{equation}
in the notation of \ref{par:notn_fp} and \ref{par:rigs_mods_gen_elts}.

A morphism $\varphi:R_+^n \times V \rightarrow R_+$ lies in the subobject $\V(V,\Aff{R_+}(R_+^n,R_+)) \hookrightarrow \V(R_+^n \times V,R_+)$ if and only if $p_j(\varphi) = q_j(\varphi):R_+^{j,\aff} \times (R_+^n)^j \times V \rightarrow R_+$ for all $j \in \NN$.  Since $\G$ is a generating class, it is equivalent to require that for each object $W \in \G$ and each morphism $\zeta:W \rightarrow R_+^{j,\aff} \times (R_+^n)^j \times V$, the equation $p_j(\varphi) \cdot \zeta = q_j(\varphi) \cdot \zeta:W \rightarrow R_+$ holds.  But morphisms $\zeta:W \rightarrow R_+^{j,\aff} \times (R_+^n)^j \times V$ are in bijective correspondence with triples $\zeta = (a,x,v)$ with $a \in \V(W,R_+^{j,\aff})$, $x \in \V(W,(R_+^n)^j)$, and $v \in \V(W,V)$.  Recall that $\V(W,R)$ is a preordered ring with positive part $\V(W,R)_+ \cong \V(W,R_+)$ \pref{par:setup_for_cmtnt_lem}, so that since $\V(W,-):\V \rightarrow \Set$ preserves limits and $R_+^{j,\aff}$ is defined as a certain equalizer \pref{exa:th_lraffsp}, it follows that such triples $\zeta = (a,x,v)$ are equivalently described as triples $\zeta = (a,x,v)$ with $a \in \V(W,R_+)^{j,\aff}$, $x \in \V(W,R_+^n)^j$, $v \in \V(W,V)$, and by using the equations in \eqref{eq:pj_qj}, we deduce that the equation $p_j(\varphi) \cdot \zeta = q_j(\varphi) \cdot \zeta$ asserts precisely that
$$\varphi \cdot \left(\sum_{i=1}^j a_ix_i,v\right) = \sum_{i=1}^j a_i(\varphi \cdot (x_i,v)),$$
where the left $\V(W,R_+)$-affine combination $\sum_{i=1}^j a_ix_i$ is taken within the left $\V(W,R_+)$-affine space $\V(W,R_+^n)$, and the left $\V(W,R_+)$-affine combination on the right-hand side is taken within the rig $\V(W,R_+)$ itself.

Hence the subobject $\V(V,\Aff{R_+}(R_+^n,R_+)) \hookrightarrow \V(R_+^n \times V,R_+)$ consists of precisely those morphisms $\varphi:R_+^n \times V \rightarrow R_+$ such that for every $W \in \G$ and every morphism $v:W \rightarrow V$ in $\V$, the associated mapping
\begin{equation}\label{eq:assoc_left_vwrplusaffine_map}\varphi_v := \varphi \cdot (-,v):\V(W,R_+)^n \cong \V(W,R_+^n) \rightarrow \V(W,R_+)\end{equation}
is a left $\V(W,R_+)$-affine map.  But by assumption we know that the preordered ring $\V(W,R)$ has the commutant property when $W \in \G$, so the mapping
\begin{equation}\label{eq:bij_witnessing_that_vwr_val_cmtnt_th}\Phi^{\V(W,R_+)}_n:\V(W,R_+)^{1+n} \rightarrow \Aff{\V(W,R_+)}\bigl(\V(W,R_+)^n,\V(W,R_+)\bigr)\end{equation}
is a bijection.  Therefore, if we now fix a morphism $\varphi:R_+^n \times V \rightarrow R_+$ satisfying the given condition, then for each $v:W \rightarrow V$ with $W \in \G$ we know that the left $\V(W,R)_+$-affine map $\varphi_v$ of \eqref{eq:assoc_left_vwrplusaffine_map} is of the form $\Phi^{\V(W,R_+)}(u^{\varphi_v})$ for a unique element $u^{\varphi_v} \in \V(W,R_+)^{1+n}$; we will use the same notation $u^{\varphi_v}$ for the corresponding morphism $u^{\varphi_v}:W \rightarrow R_+^{1+n}$.  In particular, the identity morphism $1:V \rightarrow V$ determines an associated element $u^{\varphi_1} \in \V(V,R_+^{1+n})$, and it now suffices to show that the composite mapping
\begin{equation}\label{eq:eq1}\V(V,R_+^{1+n}) \xrightarrow{\V(V,\Phi^{R_+}_n)} \V(V,\Aff{R_+}(R_+^n,R_+)) \hookrightarrow \V(R_+^n \times V,R_+)\end{equation}
sends $u^{\varphi_1}$ to $\varphi$, so that the needed surjectivity of $\V(V,\Phi^{R_+}_n)$ is thus obtained.

For each morphism $v:W \rightarrow V$ with $W \in \G$, we can consult \cite[10.1; 10.10($2'$)]{Lu:CvxAffCmt} in order to obtain a description of the element $u^{\varphi_v} \in \V(W,R_+^{1+n}) \cong \V(W,R_+)^{1+n}$; explicitly, $u^{\varphi_v}$ is the $(1+n)$-tuple $(u^{\varphi_v}_0,...,u^{\varphi_v}_n)$ with $u^{\varphi_v}_0 = \varphi_v(0)$ and with $u^{\varphi_v}_i = \varphi_v(b_i) - \varphi_v(0)$ for all $i = 1,...,n$, where $b_i \in \V(W,R_+)^n$ is the $i$-th standard basis vector.  Since the elements $0,b_1,...,b_n$ of $\V(W,R_+^n) \cong \V(W,R_+)^n$ factor through the similarly named elements $0,b_1,...,b_n$ of $\V(1,R_+^n)$, it is straightforward to compute that $u^{\varphi_v}$ can be expressed as the composite
\begin{equation}\label{eq:expr_for_wvarphiv_in_terms_of_wvarphi1_and_v}u^{\varphi_v} = u^{\varphi_1} \cdot v = \left(W \xrightarrow{v} V \xrightarrow{u^{\varphi_1}} R_+^{1+n}\right).\end{equation}

In order to show that the composite \eqref{eq:eq1} sends $u^{\varphi_1}$ to $\varphi$, we must show that $\varphi$ is the composite
\begin{equation}\label{eq:composite}\psi = \left(R_+^n \times V \xrightarrow{1 \times u^{\varphi_1}} R_+^n \times R_+^{1+n} \xrightarrow{\bar{\Phi}^{R_+}_n}  R_+\right),\end{equation}
where $\bar{\Phi}^{R_+}_n$ is defined in \eqref{eq:phi_bar}.  Using the formula for $\bar{\Phi}^{R_+}_n$ given in \eqref{eq:formula_for_phirn}, we find that $\psi$ is characterized by the equation
\begin{equation}\label{eq:charn_lambda}\psi(x,v) = (u^{\varphi_1}(v))_0 + \sum_{i=1}^nx_i(u^{\varphi_1}(v))_i\;\;\;\;\;\text{where $(x,v):R_+^n \times V$}\end{equation}
in the notation of \ref{par:notn_fp} and \ref{par:rigs_mods_gen_elts}.

Since $\G$ is a generating class, it suffices to let $\tau:W \rightarrow R_+^n \times V$ be an arbitrary morphism with $W \in \G$ and show that $\varphi \cdot \tau = \psi \cdot \tau:W \rightarrow R_+$.  But $\tau = (x,v)$ for some $x = (x_1,...,x_n):W \rightarrow R_+^n$ and some $v:W \rightarrow V$.  Hence we can use \eqref{eq:charn_lambda} and \eqref{eq:expr_for_wvarphiv_in_terms_of_wvarphi1_and_v} to deduce that
$$\psi \cdot \tau = u^{\varphi_v}_0 + \sum_{i=1}^nx_iu^{\varphi_v}_i\;:\;W \rightarrow R_+$$
where $x_1,...,x_n,u^{\varphi_v}_0,...,u^{\varphi_v}_n:W \rightarrow R_+$ are considered as elements of the rig $\V(W,R_+)$ and the right-hand side is evaluated therein.  Identifying $u^{\varphi_v} \in \V(W,R_+^{1+n})$ and $x \in \V(W,R_+^n)$ with their corresponding elements of $\V(W,R_+)^{1+n}$ and $\V(W,R_+)^n$, respectively, we thus deduce by \eqref{eq:formula_for_phirn} that the map $\Phi^{\V(W,R_+)}(u^{\varphi_v}):\V(W,R_+)^n \rightarrow \V(W,R_+)$ sends $x$ to $\psi \cdot \tau$, i.e.
$$\psi \cdot \tau = (\Phi^{\V(W,R_+)}(u^{\varphi_v}))(x)\;.$$
But we know that $(\Phi^{\V(W,R_+)}(u^{\varphi_v})) = \varphi_v = \varphi \cdot (-,v)$, so $\psi \cdot \tau = \varphi \cdot (x,v) = \varphi \cdot \tau$ as needed.
\end{proof}

As a first corollary, we obtain the desired result for affine spaces over a ring in $\V$:

\begin{ThmSub}\label{thm:cmtnt_th_raff_sp}
Let $R$ be a ring in $\V$.  Then the theory $\Mat_R^\aff$ of left $R$-affine spaces and the theory $\Mat_{R^\op}^*$ of pointed right $R$-modules are commutants of one another over $\uV_R$.
\end{ThmSub}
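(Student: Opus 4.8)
The plan is to recognize the statement as the special case of the \emph{commutant property} \pref{par:setup_for_cmtnt_lem} in which the ring $R$ is given the trivial positive part, and then to deduce it from the general commutant lemma \ref{thm:cmtnt_lem} together with the $\Set$-based result \ref{thm:cmtnt_th_in_set}(2). First I would regard $R$ as a strong preordered ring in $\V$ \pref{par:preord_ring} by taking $R_+ = R$, with the positive part supplied by the identity morphism $R \hookrightarrow R$; since the identity is certainly a strong monomorphism, $R$ is strong in the required sense. Under this choice $\Mat_{R_+}^\aff = \Mat_R^\aff$ and $\Mat_{R_+^\op}^* = \Mat_{R^\op}^*$ as theories over $\uV_{R_+} = \uV_R$, so by \eqref{eq:val_cmtnt_th} the assertion to be proved is \emph{precisely} that $R$ has the commutant property.

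Next I would invoke \ref{thm:cmtnt_lem} with the generating class $\G = \ob\V$, which is always a generating class (for $f,g \colon A \to B$ one applies the defining condition with the identity $x = 1_A$). It then remains to verify the hypothesis of \ref{thm:cmtnt_lem}: that for each object $V$ of $\V$ the preordered ring $\V(V,R)$ in $\Set$ has the commutant property. Because $\V(V,-) \colon \V \to \Set$ preserves limits \pref{par:setup_for_cmtnt_lem} and $R_+ = R$, the positive part of $\V(V,R)$ is $\V(V,R)_+ \cong \V(V,R_+) = \V(V,R)$; that is, $\V(V,R)$ is simply a ring in $\Set$ carrying its full positive part. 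Consequently the commutant property for $\V(V,R)$ reduces, by the $\Set$-instance of \eqref{eq:val_cmtnt_th}, to the assertion that $\Mat_{\V(V,R)}^\aff$ and $\Mat_{\V(V,R)^\op}^*$ are commutants of one another over $\Set_{\V(V,R)}$, which is exactly \ref{thm:cmtnt_th_in_set}(2) applied to the ring $\V(V,R)$.

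With both hypotheses of \ref{thm:cmtnt_lem} verified, I conclude that $R$ has the commutant property, and hence, by the identifications of the first paragraph, that $\Mat_R^\aff$ and $\Mat_{R^\op}^*$ are commutants of one another over $\uV_R$. I do not expect any genuine obstacle here: the entire content lies in observing that the trivial positive part $R_+ = R$ converts the ring hypothesis into the commutant-property hypothesis, and the only point requiring care is to confirm that each hom-ring $\V(V,R)$ has \emph{full} positive part, so that the ring case \ref{thm:cmtnt_th_in_set}(2)---rather than the more delicate firmly-archimedean case \ref{thm:cmtnt_th_in_set}(1)---is the relevant $\Set$-based input.
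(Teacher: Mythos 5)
Your proposal is correct and coincides with the paper's own proof: the paper likewise regards $R$ as a strong preordered ring with $R_+ = R$, takes $\G = \ob\V$, observes that each $\V(V,R)$ has full positive part and hence has the commutant property by \ref{thm:cmtnt_th_in_set}(2) and \eqref{eq:val_cmtnt_th}, and then applies \ref{thm:cmtnt_lem}. No gaps.
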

\begin{proof}
Consider $R$ as a strong preordered ring in $\V$ with $R_+ = R$, and take $\G = \ob\V$.  Then for each $V \in \G$, the associated preordered ring $\V(V,R)$ has $\V(V,R)_+ = \V(V,R)$ and hence has the commutant property, by \ref{thm:cmtnt_th_in_set}(2) and \eqref{eq:val_cmtnt_th}.  Hence we can invoke \ref{thm:cmtnt_lem} to deduce that $R$ has the commutant property, and the result follows by \eqref{eq:val_cmtnt_th}.
\end{proof}

As another corollary to \ref{thm:cmtnt_lem} and \ref{thm:cmtnt_th_in_set}, we obtain a general result for convex spaces over a preordered ring in $\V$:

\begin{ThmSub}\label{thm:enr_commutant_theorem}
Let $R$ be a strong preordered ring in $\V$, and suppose that $\V$ has a generating class $\G$ such that for each object $V \in \G$, the preordered ring $\V(V,R)$ in $\Set$ is a firmly archimedean preordered $\DD$-algebra.  Then the theory $\Mat_{R_+}^\aff$ of left $R$-convex spaces and the theory $\Mat_{R_+^\op}^*$ of pointed right $R_+$-modules are commutants of each other over $\uV_{R_+}$.
\end{ThmSub}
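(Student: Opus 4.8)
The plan is to deduce this theorem as a direct corollary of the reduction principle \ref{thm:cmtnt_lem} together with the set-based commutant theorem \ref{thm:cmtnt_th_in_set}. By the equivalence recorded in \eqref{eq:val_cmtnt_th}, the assertion that $\Mat_{R_+}^\aff$ and $\Mat_{R_+^\op}^*$ are commutants of one another over $\uV_{R_+}$ is precisely the assertion that $R$ \textit{has the commutant property} in the sense of \ref{par:setup_for_cmtnt_lem}. So the entire task reduces to verifying that $R$ has the commutant property, after which one final invocation of \eqref{eq:val_cmtnt_th} yields the stated conclusion.

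To establish that $R$ has the commutant property, I would apply \ref{thm:cmtnt_lem}. Its three hypotheses are: that $R$ is a strong preordered ring in $\V$, that $\V$ possesses a generating class $\G$, and that for each $V \in \G$ the set-based preordered ring $\V(V,R)$ has the commutant property. The first two are given verbatim in the hypotheses of the present theorem, so the only point requiring argument is the third. Fix an object $V \in \G$. By assumption $\V(V,R)$ is a firmly archimedean preordered $\DD$-algebra in $\Set$ (\ref{def:fa_preord_dalg}), which is exactly the input hypothesis of part (1) of \ref{thm:cmtnt_th_in_set}; that result therefore gives that $\Mat_{\V(V,R)_+}^\aff$ and $\Mat_{\V(V,R)_+^\op}^*$ are commutants of one another over $\Set_{\V(V,R)_+}$. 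Since $\Set$ is itself a cartesian closed category to which the whole discussion of \ref{par:setup_for_cmtnt_lem} applies, I may read the equivalence \eqref{eq:val_cmtnt_th} with the base category taken to be $\Set$; applied to the preordered ring $\V(V,R)$, it translates the commutant statement just obtained into the assertion that $\V(V,R)$ has the commutant property. As $V \in \G$ was arbitrary, the third hypothesis of \ref{thm:cmtnt_lem} holds.

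Thus all hypotheses of \ref{thm:cmtnt_lem} are met, and that proposition delivers the commutant property for $R$; a concluding application of \eqref{eq:val_cmtnt_th} (now in the base category $\V$) completes the proof. I expect essentially no computational obstacle here: all of the genuine content is already packaged into \ref{thm:cmtnt_lem}, which performs the pointwise-over-$\G$ reduction from the enriched setting to the set-based setting, and into \ref{thm:cmtnt_th_in_set}(1), which supplies the set-level commutation. The only real care needed is the bookkeeping of applying the equivalence \eqref{eq:val_cmtnt_th} twice — once in $\Set$ to convert \ref{thm:cmtnt_th_in_set}(1) into the pointwise commutant property, and once in $\V$ to convert the commutant property of $R$ back into the desired statement about $\Mat_{R_+}^\aff$ and $\Mat_{R_+^\op}^*$ — and in confirming that the hypothesis ``firmly archimedean preordered $\DD$-algebra'' on each $\V(V,R)$ matches exactly the hypothesis of \ref{thm:cmtnt_th_in_set}(1).
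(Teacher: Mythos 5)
Your proposal is correct and is essentially identical to the paper's own proof, which also derives the theorem immediately from Proposition \ref{thm:cmtnt_lem}, Theorem \ref{thm:cmtnt_th_in_set}(1), and the equivalence \eqref{eq:val_cmtnt_th}. You have merely spelled out the bookkeeping (applying \eqref{eq:val_cmtnt_th} once in $\Set$ and once in $\V$) that the paper leaves implicit.
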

\begin{proof}
This follows immediately from \ref{thm:cmtnt_lem}, \ref{thm:cmtnt_th_in_set}(1), and \eqref{eq:val_cmtnt_th}.
\end{proof}

\begin{CorSub}\label{thm:pos_rcvx_distn_charn_thm}
Let $R$ be a commutative preordered ring in $\V$, and suppose that the hypotheses of \ref{thm:enr_commutant_theorem} are satisfied.  Then for each object $V$ of $\V$, functional distributions on $V$ in the positive $R$-convex context $(\V,\Mat_{R_+}^\aff,R_+)$ \pref{exa:cls_exa_ffa_ctxts} are the same as homomorphisms of pointed $R_+$-modules $\mu:[V,R_+] \rightarrow R_+$.  Moreover,
$$D_{\scriptscriptstyle(\Mat_{\scalebox{.8}{$\scriptscriptstyle R_+$}}^{\scalebox{.9}{$\scriptscriptstyle\aff$}}\kern-0.3ex,\,R_+)}(V) = \Mod{R_+}^*([V,R_+],R_+)\;.$$
\end{CorSub}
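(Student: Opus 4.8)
The plan is to read the conclusion off from the commutant theorem \ref{thm:enr_commutant_theorem} together with the definition of the functional distribution monad \pref{def:cdisnt_mnd}, so that the corollary becomes a matter of unwinding definitions. First I would recall that, by \ref{exa:cls_exa_ffa_ctxts}, the positive $R$-convex context is \emph{by definition} the scalar $R_+$-affine context $(\V,\Mat_{R_+}^\aff,R_+)$, which is a finitary functional-analytic context by \ref{par:sc_raff_ctxt}; here I use that $R_+$ is a \emph{commutative} rig, which holds because $R$ is a commutative preordered ring and $R_+$ inherits its multiplication from $R$. Thus in the notation of \pref{def:cdisnt_mnd} we have $\T = \Mat_{R_+}^\aff$ and $S = R_+$, and by \ref{par:notn} the relevant commutant $\T^\perp$ is the commutant of $\Mat_{R_+}^\aff$ with respect to the $\T$-algebra $R_+$, formed over the full theory $\uV_{R_+}$.

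Next I would invoke \ref{thm:enr_commutant_theorem}, whose hypotheses are assumed, to conclude that $\Mat_{R_+}^\aff$ and $\Mat_{R_+^\op}^*$ are commutants of one another over $\uV_{R_+}$; in particular $\T^\perp \cong \Mat_{R_+^\op}^*$ as theories over $\uV_{R_+}$. Since $R$ is commutative, the rig $R_+$ is commutative, so $R_+^\op = R_+$ \pref{rem:opposite_ring_right_rmods}, whence $\T^\perp \cong \Mat_{R_+}^*$ is the theory of pointed $R_+$-modules. Consequently the $\V$-category of normal $\T^\perp$-algebras is $\Mod{R_+}^*$ \pref{par:th_pted_right_rmods}, and I would henceforth identify $\T^\perp$-algebras with pointed $R_+$-modules.

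Then I would simply substitute into \pref{def:cdisnt_mnd}. Functional distributions on $V$ are by definition the $\T^\perp$-algebra homomorphisms $\mu:[V,S] \rightarrow S$, and $D_{\scriptscriptstyle(\Mat_{R_+}^\aff,R_+)}(V) = \Alg{\T^\perp}([V,S],S)$. Under the identification above, the dualizing $\T^\perp$-algebra $S = R_+$ corresponds to $R_+$ regarded as a pointed $R_+$-module pointed by its unit $1:1 \rightarrow R_+$ \pref{par:th_pted_right_rmods}, while the cotensor $[V,S]$ has carrier $\uV(V,R_+) = [V,R_+]$ \pref{par:lims_algs} and, as a pointed $R_+$-module, carries the pointwise structure with point the constant map $1:V \rightarrow R_+$. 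Hence functional distributions on $V$ are exactly homomorphisms of pointed $R_+$-modules $[V,R_+] \rightarrow R_+$, and $D_{\scriptscriptstyle(\Mat_{R_+}^\aff,R_+)}(V) = \Mod{R_+}^*([V,R_+],R_+)$, as claimed.

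The only genuine point requiring care is the bookkeeping in the last step: one must verify that the isomorphism of theories $\T^\perp \cong \Mat_{R_+}^*$ furnished by \ref{thm:enr_commutant_theorem} is compatible with the dualizing object, i.e. that it carries the $\T^\perp$-algebra $S$ and its cotensor $[V,S]$ to the correctly pointed $R_+$-modules, so that the hom-objects $\Alg{\T^\perp}([V,S],S)$ and $\Mod{R_+}^*([V,R_+],R_+)$ agree. This is what makes the argument go through, and it is ensured precisely because \ref{thm:enr_commutant_theorem} produces an isomorphism of theories \emph{over} $\uV_{R_+}$, so it respects the canonical morphisms determined by $R_+$; the remaining identifications of $S$ and of $[V,S]$ as pointed $R_+$-modules then follow from \ref{par:cmtnts_th} and \ref{par:lims_algs}, and everything else is a direct reading of the definitions.
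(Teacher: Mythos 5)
Your argument is correct and is exactly the reasoning the paper intends: the corollary is stated without proof precisely because it follows by combining Theorem \ref{thm:enr_commutant_theorem} with \ref{par:sc_raff_ctxt} and unwinding Definition \ref{def:cdisnt_mnd}, just as you do (compare the parallel discussion in Example \ref{exa:rad_prob_meas_cpct_supp}). Your closing remark about the isomorphism of theories being over $\uV_{R_+}$, and hence compatible with the dualizing object and its cotensors, correctly isolates the one point of bookkeeping that makes the substitution legitimate.
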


As a corollary to \ref{thm:enr_commutant_theorem}, we obtain the following result in the case where $\V$ is a concrete category over $\Set$:

\begin{CorSub}\label{thm:commutant_thm_for_concrete_cats}
Suppose that the `underlying set' functor $\V(1,-):\V \rightarrow \Set$ is faithful, and let $R$ be a strong preordered ring in $\V$ whose underlying preordered ring $\V(1,R)$ in $\Set$ is a firmly archimedean preordered $\DD$-algebra.  Then the theory $\Mat_{R_+}^\aff$ of left $R$-convex spaces and the theory $\Mat_{R_+^\op}^*$ of pointed right $R_+$-modules are commutants of each other over $\uV_{R_+}$.
\end{CorSub}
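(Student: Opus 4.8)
The plan is to derive this statement as a direct instance of Theorem \ref{thm:enr_commutant_theorem}, by choosing as the generating class $\G$ the singleton $\{1\}$ consisting of the unit (equivalently terminal) object $1$ of the cartesian closed category $\V$. Once this choice is justified, the conclusion of the corollary is literally the conclusion of \ref{thm:enr_commutant_theorem}, so no further work on the commutant theories $\Mat_{R_+}^\aff$ and $\Mat_{R_+^\op}^*$ themselves is required.

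First I would check that $\G = \{1\}$ is indeed a generating class for $\V$, in the sense fixed just before \ref{thm:cmtnt_lem}. Unwinding that definition for $\G = \{1\}$, the requirement is that for any parallel pair $f,g:A \to B$ in $\V$, if $f \cdot x = g \cdot x$ for every morphism $x:1 \to A$, then $f = g$. Since the morphisms $x:1 \to A$ are precisely the elements of the set $\V(1,A)$, and the function $\V(1,f):\V(1,A) \to \V(1,B)$ acts by $x \mapsto f \cdot x$, this condition says exactly that $\V(1,f) = \V(1,g)$ implies $f = g$, i.e. that the functor $\V(1,-):\V \to \Set$ is faithful. This is precisely the standing hypothesis of the corollary, so $\{1\}$ is a generating class for $\V$.

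It then remains only to match the second hypothesis of \ref{thm:enr_commutant_theorem}. With $\G = \{1\}$, the requirement that $\V(V,R)$ be a firmly archimedean preordered $\DD$-algebra for each $V \in \G$ reduces to the single assertion that $\V(1,R)$ is such a preordered ring, which is exactly what is assumed. Since $R$ is moreover given to be a strong preordered ring in $\V$, all hypotheses of \ref{thm:enr_commutant_theorem} are satisfied with this choice of generating class, and the stated commutant relation between $\Mat_{R_+}^\aff$ and $\Mat_{R_+^\op}^*$ over $\uV_{R_+}$ follows at once. The only point that demands any care is the identification of faithfulness of $\V(1,-)$ with the generation property of the singleton $\{1\}$; this is entirely routine, so I anticipate no substantial obstacle.
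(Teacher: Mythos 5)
Your proposal is correct and is exactly the intended argument: the paper states this as an immediate corollary of Theorem \ref{thm:enr_commutant_theorem}, with the implicit specialization $\G = \{1\}$, and your observation that faithfulness of $\V(1,-)$ is precisely the statement that $\{1\}$ is a generating class is the only point needing verification. No gaps.
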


\begin{ExaSub}[\textbf{Convergence convex spaces}]\label{par:exa_conv_convex_spaces_cmtnt}
Take $\V = \Conv$ to be the category of convergence spaces \pref{par:conv_sp_rad_meas}, and take $R = \RR$.  The inclusion $\RR_+ \hookrightarrow \RR$ is an embedding, equivalently, a strong monomorphism in $\V$ \cite[11.9]{Wy:QTop}.  The forgetful functor $U \cong \V(1,-):\V \rightarrow \Set$ is faithful and sends $R$ to the firmly archimedean preordered $\DD$-algebra $\RR$, so we can apply \ref{thm:commutant_thm_for_concrete_cats} to deduce that the theory of convergence convex spaces \pref{exa:conv_cvx_sp} and the theory of pointed $\RR_+$-modules in $\V$ are commutants of one another over $\uV_{\RR_+}$.  Hence we can now employ the reasoning in \ref{exa:rad_prob_meas_cpct_supp} to deduce the following:
\end{ExaSub}

\begin{ThmSub}
Let $\V$ be the category $\Conv$ of convergence spaces.  For a locally compact Hausdorff topological space $V$, functional distributions on $V$ in the positive $\RR$-convex context $(\V,\Mat_{\RR_+}^\aff,\RR_+)$ are in bijective correspondence with compactly supported Radon probability measures on $V$.
\end{ThmSub}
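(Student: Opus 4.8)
The plan is to obtain this statement by feeding the commutant identification of \ref{par:exa_conv_convex_spaces_cmtnt} into the measure-theoretic analysis sketched in \ref{exa:rad_prob_meas_cpct_supp}, thereby rendering the conclusion of that example unconditional in the present concrete setting. Since \ref{par:exa_conv_convex_spaces_cmtnt} has just established that the commutant $\T^\perp$ of $\T = \Mat_{\RR_+}^\aff$ with respect to $\RR_+$ is the theory of pointed right $\RR_+$-modules, we know that $(\Conv,\Mat_{\RR_+}^\aff,\RR_+)$ is a genuine finitary functional-analytic context and that $\Alg{\T^\perp}^!$ may be identified with the $\V$-category $\Mod{\RR_+}^*$ of pointed $\RR_+$-modules in $\Conv$ \pref{par:th_pted_right_rmods}.

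First I would unwind Definition \ref{def:cdisnt_mnd} in this context, so that $D_{\scriptscriptstyle(\Mat^\aff_{\RR_+},\RR_+)}(V) = \Mod{\RR_+}^*([V,\RR_+],\RR_+)$ and a functional distribution on $V$ is exactly a homomorphism of pointed $\RR_+$-modules $\mu:[V,\RR_+] \to \RR_+$. By \ref{par:lims_algs} the cotensor $[V,\RR_+]$ is formed pointwise, so its carrier is the internal hom $\uV(V,\RR_+)$, namely the convergence space of continuous non-negative real-valued functions on $V$ equipped with the pointwise $\RR_+$-module operations and with designated point the constant function $1$. Thus a functional distribution on $V$ is precisely a continuous $\RR_+$-linear map $\mu:\uV(V,\RR_+) \to \RR_+$ satisfying $\mu(1) = 1$.

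Next I would invoke \ref{exa:nonneg_cs_meas}, which for locally compact Hausdorff $V$ furnishes a bijection between continuous $\RR_+$-linear maps $\mu:\uV(V,\RR_+) \to \RR_+$ and non-negative compactly supported Radon measures on $V$, obtained by passing to the unique $\RR$-linear extension $\hat{\mu}:\uV(V,\RR) \to \RR$ and applying the representation of compactly supported Radon measures recalled in \ref{par:conv_sp_rad_meas}. Under this bijection the unitality constraint $\mu(1) = 1$ corresponds to $\int 1\,d\mu = 1$, which is exactly the defining property of a Radon probability measure. Restricting the bijection to those $\mu$ satisfying $\mu(1) = 1$ therefore identifies the functional distributions on $V$ with the compactly supported Radon probability measures, as required.

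Since the substantive content resides in the commutant theorem \ref{thm:enr_commutant_theorem} and its concrete instance \ref{par:exa_conv_convex_spaces_cmtnt}, together with the classical representation theorem of \ref{par:conv_sp_rad_meas}, there is no genuine obstacle at this final stage; the one point demanding care is the bookkeeping of the pointed structure, namely verifying that the designated point of the cotensor $[V,\RR_+]$ is indeed the constant function $1$, so that the homomorphism condition in $\Mod{\RR_+}^*$ encodes precisely the normalization $\mu(1) = 1$ and hence the probability condition, rather than some other affine relation.
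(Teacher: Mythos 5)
Your proposal is correct and follows essentially the same route as the paper: the commutant identification of \ref{par:exa_conv_convex_spaces_cmtnt} (via \ref{thm:commutant_thm_for_concrete_cats}) feeds into the unwinding in \ref{exa:rad_prob_meas_cpct_supp}, which reduces to the bijection with non-negative compactly supported Radon measures from \ref{exa:nonneg_cs_meas} and isolates the probability condition as $\mu(1)=1$. The point you flag about the designated point of the cotensor $[V,\RR_+]$ being the constant function $1$ is exactly the bookkeeping the paper records in \ref{exa:rad_prob_meas_cpct_supp}, so there is no gap.
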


Now let us consider an example of a category $\V$ that is not concrete over $\Set$ but where Theorem \ref{thm:enr_commutant_theorem} still applies.

\begin{ExaSub}[\textbf{The line object in the Cahiers topos}]\label{exa:line_obj_cah_top}
Let $\V = \Shv(\C^\op_\text{C})$ be the Cahiers topos, recalling that $\C_\text{C}$ is the full subcategory $\iota:\C_\text{C} \hookrightarrow \CinftyRing$ consisting of the $C^\infty$-rings of the form $C^\infty(M) \otimes_\infty W$ where $M$ is a smooth manifold and $W$ is a Weil algebra \pref{par:cinfty_rings}.  The embedding $i:\Mf \rightarrow \V$ sends each manifold $M$ to the representable presheaf determined by $C^\infty(M)$, so since binary products in $\C_\text{C}^\op$ are given by $\otimes_\infty$ \pref{par:cinfty_rings} it follows that the image of the Yoneda embedding $y:\C_\text{C}^\op \rightarrow \V$ consists of the products $i(M) \times y(W)$ in $\V$, where $M$ is a manifold and $W$ is a Weil algebra.  There is a fully faithful functor $i':\Mf' \rightarrow \V$ from the category $\Mf'$ of smooth manifolds-with-boundary to $\V$, given by sending a manifold-with-boundary $K$ to the sheaf $i'(K) = \CinftyRing(C^\infty(K),\iota-):\C_\text{C} \rightarrow \Set$ on $\C_\text{C}^\op$ \cite[III.9.6]{Kock:SDG}.  The line object $R = i(\RR)$ is a preordered ring in $\V$ when we take $R_+ = i'(\RR_+)$ \cite[III.11.3]{Kock:SDG}, and we deduce the following:
\end{ExaSub}

\begin{PropSub}\label{thm:cah}
The preordered ring $R = i(\RR)$ in the Cahiers topos $\V$ satisfies the hypotheses of Theorem \ref{thm:enr_commutant_theorem} when we take
$$\G = \{i'(K_{nr}) \times y(W) \mid \textnormal{$n\in \NN$, $r \in (0,\infty)$, $W$ any Weil algebra}\} \hookrightarrow \V$$
where for every natural number $n$ and every positive real number $r$ we write $K_{nr}$ to denote the closed ball of radius $r$ about the origin in $\RR^n$, considered as a manifold-with-boundary.  In particular, $\G$ is a generating class for $\V$.
\end{PropSub}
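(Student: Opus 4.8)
The plan is to verify the two hypotheses of Theorem \ref{thm:enr_commutant_theorem}: that $R = i(\RR)$ is a strong preordered ring in $\V$, and that for every $V \in \G$ the preordered ring $\V(V,R)$ is a firmly archimedean preordered $\DD$-algebra; the concluding assertion that $\G$ is a generating class is established along the way. Strongness is immediate. The inclusion $R_+ = i'(\RR_+) \hookrightarrow i(\RR) = R$ is induced by $\RR_+ \hookrightarrow \RR$ in $\Mf'$, and is a monomorphism: at each stage $A \in \C_\text{C}$ it is precomposition with the surjective restriction map $C^\infty(\RR) \twoheadrightarrow C^\infty(\RR_+)$, hence injective. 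Since every monomorphism in a topos is regular and therefore strong, $R$ is a strong preordered ring.

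Next I would show that $\G$ is a generating class. Because the site $\C_\text{C}^\op$ carries a subcanonical topology, the representables generate $\V$, and by \ref{par:cinfty_rings} and \ref{exa:line_obj_cah_top} these are precisely the objects $i(M) \times y(W)$ with $M$ a manifold and $W$ a Weil algebra. It therefore suffices to exhibit, for each such representable, a jointly epic family of morphisms out of objects of $\G$. Covering $M$ by chart domains $U_\alpha$ diffeomorphic to open subsets of Euclidean space yields a jointly epic family $\{i(U_\alpha) \times y(W) \to i(M) \times y(W)\}$, since open covers are covering families in the topology. For each $U_\alpha$, the open balls whose closures lie in $U_\alpha$ cover $U_\alpha$, and each such inclusion factors through the corresponding closed ball $\bar{B} \subseteq U_\alpha$, so the family $\{i'(\bar{B}) \times y(W) \to i(U_\alpha) \times y(W)\}$ is jointly epic; here each $\bar{B}$ is affinely diffeomorphic to some $K_{nr}$, so $i'(\bar{B}) \times y(W)$ is isomorphic to an object of $\G$. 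As $(-) \times y(W)$ has a right adjoint and hence preserves epimorphic families, composing these produces the desired jointly epic family from $\G$, and so $\G$ generates.

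Finally, for $V = i'(K_{nr}) \times y(W) \in \G$ I would identify the preordered ring $\V(V,R)$. Using the computations of Kock \cite[III.9, III.11]{Kock:SDG} together with the fact that binary products in $\C_\text{C}^\op$ are given by $\otimes_\infty$ \pref{par:cinfty_rings}, the set $\V(V,R)$ is the underlying ring of the $C^\infty$-ring $C^\infty(K_{nr}) \otimes_\infty W$, with the augmentation $W \to \RR$ inducing a reduction homomorphism $\pi : \V(V,R) \to C^\infty(K_{nr})$ that exhibits $C^\infty(K_{nr})$ as a retract. Since $\V(V,R)$ is an $\RR$-algebra it is automatically a preordered $\DD$-algebra (the element $2$ is invertible with inverse the positive constant $\tfrac{1}{2}$). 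The crux is the description of the positive cone: an element $a$ lies in $\V(V,R)_+ = \V(V,R_+)$ if and only if the corresponding $C^\infty$-ring map $C^\infty(\RR) \to C^\infty(K_{nr}) \otimes_\infty W$ factors through the restriction $C^\infty(\RR) \twoheadrightarrow C^\infty(\RR_+)$, i.e. annihilates every $f$ vanishing on $[0,\infty)$. Because such an $f$ is flat along $[0,\infty)$ (all of its derivatives vanish there), it kills the nilpotent Taylor terms contributed by the maximal ideal of $W$, and the condition collapses to $\pi(a)(x) \gt 0$ for all $x \in K_{nr}$. In other words the Weil directions are order-invisible, and the preorder on $\V(V,R)$ is precisely $a \gt 0 \Leftrightarrow \pi(a) \gt 0$ pointwise.

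With this description the remaining verifications are short and rest on compactness of $K_{nr}$. For the archimedean property, if $\{na\}$ is bounded above by $b$ then $n\,\pi(a) \lt \pi(b)$ in $C^\infty(K_{nr})$ for every $n$, forcing $\pi(a) \lt 0$ pointwise, whence $\pi(-a) \gt 0$ and $a \lt 0$. For firm archimedeanness, given $s \in \V(V,R)_+$ the continuous function $\pi(s)$ attains a finite supremum on the compact set $K_{nr}$, so for some $n \in \NN$ we have $\pi(n - s) = n - \pi(s) \gt 0$, i.e. $s \lt n$. The main obstacle is the positivity computation of the previous paragraph: pinning down the exact preorder on $C^\infty(K_{nr}) \otimes_\infty W$ and verifying that smooth flatness renders the nilpotent part order-invisible. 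Once this is in hand the archimedean properties reduce cleanly to the compact manifold-with-boundary $K_{nr}$, and it is exactly this compactness (which fails for $\RR^n$) that forces the use of closed balls. Applying Theorem \ref{thm:enr_commutant_theorem} then completes the proof.
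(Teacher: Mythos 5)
Your overall architecture is sound, and your treatment of strongness (monos in a topos are regular, hence strong) and of the generating class essentially matches the paper's: the paper reduces to covering $i(\RR^n)$ by the closed balls via their interiors, while you cover chart domains by closed balls directly, and both arguments use that $(-) \times y(W)$ preserves jointly epimorphic families. The genuine divergence is in how the preorder on $\V(V,R)$ for $V = i'(K_{nr}) \times y(W)$ is identified. The paper never computes this hom-ring at all: it quotes Kock [III.11.4] to the effect that the preorder on $\V(y(C),R)$ is \emph{pointwise with respect to global points} for every representable $y(C)$, deduces formally that the same holds for \emph{every} object $V$ of $\V$, and then uses the fact that $T = y(W)$ has a unique global point $t$ to show that restriction along $t' = (1,t \cdot !):K \rightarrow K \times T$ both preserves and reflects the preorder, so that firm archimedeanness transfers from $\V(K,R) \cong C^\infty(K_{nr})$, where it follows from boundedness of smooth functions on the compact ball. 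You instead compute $\V(V,R)$ as the underlying ring of $C^\infty(K_{nr}) \otimes_\infty W$ and determine its positive cone by a Taylor-flatness argument.

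That route can likely be made to work, but as written it has two genuine gaps. First, neither $i'(K_{nr})$ nor $R_+ = i'(\RR_+)$ is representable in $\Shv(\C_\text{C}^\op)$, so the identifications $\V(V,R) \cong C^\infty(K_{nr}) \otimes_\infty W$ and, more delicately, $\V(V,R_+) \cong \CinftyRing(C^\infty(\RR_+),\, C^\infty(K_{nr}) \otimes_\infty W)$ are not instances of the Yoneda lemma; the former is a Kock--Lawvere-type computation specific to the Cahiers topos and the latter needs a separate argument (e.g.\ writing $V$ as a colimit of representables and checking the factorization condition stagewise). You cite Kock [III.9, III.11] but do not supply this. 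Second, your flatness computation --- that an $f$ vanishing on $[0,\infty)$ annihilates all nilpotent Taylor terms, so positivity of $a$ collapses to pointwise positivity of $\pi(a)$ --- is essentially a re-proof of Kock's III.11.4 in the case at hand; it is correct in spirit, but it needs the explicit description of the $C^\infty$-operations of $C^\infty(K_{nr}) \otimes_\infty W$ as finite Taylor expansions along the nilpotents, and it should also record the converse direction (if $\pi(a)(x_0) < 0$ somewhere, exhibit an $f$ vanishing on $[0,\infty)$ with $f(a) \neq 0$, using that the real part of $f(a)$ is $f(\pi(a))$). Once the order description is secured, your archimedean and firmness verifications coincide with the paper's. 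In short, the paper's global-points argument is designed precisely to sidestep the computations you flag as ``the main obstacle''; your version is viable but requires those two identifications to be proved rather than asserted.
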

\begin{proof}
First note that we have an isomorphism of preordered rings $\V(1,R) \cong \RR$.  For each $C \in \C_\text{C}$, it follows\footnote{Here we use the fact that every $C \in \C_\text{C}$ has a presentation $C \cong C^\infty(\RR^m)\slash J$ as required in \cite[III.11.4]{Kock:SDG}, and we note also that $\V(y(C),R) = \V(y(C),y(C^\infty(\RR)) \cong \CinftyRing(C^\infty(\RR),C) \cong \ca{C}$ (since $C^\infty(\RR)$ is the free $C^\infty$-ring on one generator) and that $\V(1,y(C)) = \V(y(\RR),y(C)) \cong \CinftyRing(C,\RR) \cong Z(J)$, where $Z(J) \subseteq \RR^m$ is the set of all common zeroes of $J$.} from \cite[III.11.4]{Kock:SDG} that the preorder carried by $\V(y(C),R)$ is \textit{pointwise with respect to global points}, in the sense that
$$\text{$f \lt g$ in $\V(y(C),R)$} \;\;\;\;\;\;\Leftrightarrow\;\;\;\;\;\; \forall x \in \V(1,y(C)) \;\;:\;\; \text{$f \cdot x \lt g \cdot x$ in $\V(1,R) \cong \RR$}.$$
From this it follows that for \textit{any} object $V$ of $\V$, the preorder carried by $\V(V,R)$ is also pointwise w.r.t. global points, since the following statements are equivalent\footnote{Indeed, we can show that successive pairs of statements are equivalent, using the Yoneda lemma and the fact that $1 = y(\RR)$.}: (1) $0 \lt f$ in $\V(V,R)$; (2) $f:V \rightarrow R$ factors through $R_+ \hookrightarrow R$; (3) for each morphism $v:y(C) \rightarrow V$, with $C \in \C_\text{C}$, the composite $f \cdot v:y(C) \rightarrow R$ factors through $R_+ \hookrightarrow R$; (4) for each morphism $v:y(C) \rightarrow V$ with $C \in \C_\text{C}$, $0 \lt f \cdot v$ in $\V(y(C),R)$; (5) for all $C \in \C_\text{C}$ and all $1 \xrightarrow{x} y(C) \xrightarrow{v} V$ in $\V$, $0 \lt f \cdot v \cdot x$ in $\V(1,R)$; (6) for all $x \in \V(1,V)$, $0 \lt f \cdot x$ in $\V(1,R) \cong \RR$.

Hence, letting $K = i'(K_{nr})$ with $n \in \NN$ and $r \in (0,\infty)$, the preordered ring $\V(K,R)$ is isomorphic to the partially ordered ring $C^\infty(K_{nr})$ of all smooth real-valued functions on $K_{nr}$, under the pointwise order.  Hence since every such function is bounded, we deduce by \cite[10.21(3)]{Lu:CvxAffCmt} that $\V(K,R) \cong C^\infty(K_{nr})$ is a firmly archimedean preordered $\DD$-algebra.

Now given also a Weil algebra $W$, we know that $T = y(W)$ has exactly one global point $t:1 \rightarrow T$ in $\V$; indeed since $W$ is a \textit{local} $C^\infty$-ring \cite[I.3, I.3.13]{MoeRey}, there is a unique homomorphism of $C^\infty$-rings $W \rightarrow \RR$ \cite[I.3.8, I.3.18]{MoeRey}.  By pullback, the point $t$ induces a morphism $t' = (1,t \cdot !):K \rightarrow K \times T$, and we find that every global point $1 \rightarrow K \times T$ is of the form $(x,t) = t' \cdot x$ for a unique point $x:1 \rightarrow K$.  Hence, since the preorders on $\V(K,R)$ and $\V(K \times T,R)$ are pointwise w.r.t. global points, we deduce that for all $f,g \in \V(K \times T,R)$,
$$f \lt g\;\text{in $\V(K \times T,R)$}\;\Leftrightarrow\; (\forall x \in \V(1,K)\::\:f \cdot t' \cdot x \lt g \cdot t' \cdot x) \;\Leftrightarrow\; f \cdot t' \lt g \cdot t'\;\text{in $\V(K,R)$}\;.$$
Hence since $\V(K,R)$ is a firmly archimedean preordered $\DD$-algebra, it follows that $\V(K\times T,R)$ is a firmly archimedean preordered $\DD$-algebra.

Let us say that an object $W$ of the topos $\V$ is \textit{covered} by a given class of objects $\sH$ in $\V$ if there exists a jointly epimorphic family $(f_\gamma:V_\gamma \rightarrow W)_{\gamma \in \Gamma}$ whose domains $V_\gamma$ lie in $\sH$.  By \ref{exa:line_obj_cah_top}, $\V$ has a generating class consisting of the objects $i(M) \times T$ where $M$ is a manifold and $T = y(W)$ for a Weil algebra $W$.  Hence, in order to show that $\G$ is a generating class, it suffices to show that each such object $i(M) \times T$ is covered by $\G$.  But since jointly epimorphic families are preserved by the left adjoint functor $(-) \times T:\V \rightarrow \V$, it suffices to show that $i(M)$ is covered by $\K = \{i'(K_{nr}) \mid n \in \NN, r \in (0,\infty)\}$.  But since $i:\Mf \rightarrow \V$ sends open covers to jointly epimorphic families \cite[III.9.4, III.4.C]{Kock:SDG}, it follows that $i(M)$ is covered by the singleton class $\{i(\RR^n)\}$ where $n$ is the dimension of $M$.  Hence it suffices to show that $i(\RR^n)$ is covered by $\K$, so it suffices to show that the inclusions $K_{nr} \hookrightarrow \RR^n$ $(r>0)$ are sent by $i':\Mf' \rightarrow \V$ to a jointly epimorphic family in $\V$.  But the family consisting of the interiors $\mathring{K}_{nr} \hookrightarrow \RR^n$ of these subsets $K_{nr}$ is sent by $i:\Mf \rightarrow \V$ to a jointly epimorphic family, and the result follows.
\end{proof}

\bibliographystyle{amsplain}
\bibliography{bib}

\newcommand{\noopsort}[1]{}
\providecommand{\bysame}{\leavevmode\hbox to3em{\hrulefill}\thinspace}
\providecommand{\MR}{\relax\ifhmode\unskip\space\fi MR }
\providecommand{\MRhref}[2]{%
  \href{http://www.ams.org/mathscinet-getitem?mr=#1}{#2}
}
\providecommand{\href}[2]{#2}
\begin{thebibliography}{10}

\bibitem{Ave}
T.~Avery, \emph{Codensity and the {G}iry monad}, J. Pure Appl. Algebra
  \textbf{220} (2016), no.~3, 1229--1251.

\bibitem{BaHo}
J.~C. Baez and A.~E. Hoffnung, \emph{Convenient categories of smooth spaces},
  Trans. Amer. Math. Soc. \textbf{363} (2011), no.~11, 5789--5825.

\bibitem{BaWe}
M.~Barr and C.~Wells, \emph{Toposes, triples and theories}, Repr. Theory Appl.
  Categ. (2005), no.~12, Corrected reprint of the 1985 original.

\bibitem{BeBu}
R.~Beattie and H.-P. Butzmann, \emph{Convergence structures and applications to
  functional analysis}, Kluwer Academic Publishers, Dordrecht, 2002.

\bibitem{BoDay}
F.~Borceux and B.~Day, \emph{Universal algebra in a closed category}, J. Pure
  Appl. Algebra \textbf{16} (1980), no.~2, 133--147.

\bibitem{Bou:EspFonc}
N.~Bourbaki, \emph{\'{E}l\'ements de math\'ematique. {F}asc. {X}. {P}remi\`ere
  partie. {L}ivre {III}: {T}opologie g\'en\'erale. {C}hapitre 10: {E}spaces
  fonctionnels}, Deuxi\`eme \'edition, Actualit\'es Sci. Indust., No. 1084.
  Hermann, Paris, 1961.

\bibitem{Bou}
\bysame, \emph{\'{E}l\'ements de math\'ematique. {F}asc. {XIII}. {L}ivre {VI}:
  {I}nt\'egration. {C}hapitres 1, 2, 3 et 4}, Deuxi\`eme \'edition.
  Actualit\'es Scientifiques et Industrielles, No. 1175, Hermann, Paris, 1965.

\bibitem{Dob}
E.{-}E. Doberkat, \emph{Eilenberg-{M}oore algebras for stochastic relations},
  Inform. and Comput. \textbf{204} (2006), no.~12, 1756--1781.

\bibitem{Dub:EnrStrSem}
E.~J. Dubuc, \emph{Enriched semantics-structure (meta) adjointness}, Rev. Un.
  Mat. Argentina \textbf{25} (1970), 5--26.

\bibitem{Dub}
\bysame, \emph{Kan extensions in enriched category theory}, Lecture Notes in
  Mathematics, Vol. 145, Springer-Verlag, 1970.

\bibitem{Dub:ModSDG}
\bysame, \emph{Sur les mod\`eles de la g\'eom\'etrie diff\'erentielle
  synth\'etique}, Cahiers Topologie G\'eom. Diff\'erentielle \textbf{20}
  (1979), no.~3, 231--279.

\bibitem{EiKe}
S.~Eilenberg and G.~M. Kelly, \emph{Closed categories}, Proc. {C}onf.
  {C}ategorical {A}lgebra ({L}a {J}olla, {C}alif., 1965), Springer, 1966,
  pp.~421--562.

\bibitem{Fri}
T.~Fritz, \emph{Convex spaces {I}: {D}efinition and examples}, Available at
  https://arxiv.org/abs/0903.5522, 2009.

\bibitem{Fro:SmthStr}
A.~Fr{\"o}licher, \emph{Smooth structures}, Lecture Notes in Math. \textbf{962}
  (1982), 69--81.

\bibitem{Fro:CccAnSmthMaps}
\bysame, \emph{Cartesian closed categories and analysis of smooth maps},
  Lecture Notes in Math. \textbf{1174} (1986), 43--51.

\bibitem{FroKr}
A.~Fr{\"o}licher and A.~Kriegl, \emph{Linear spaces and differentiation
  theory}, John Wiley \& Sons Ltd., 1988.

\bibitem{Giry}
M.~Giry, \emph{A categorical approach to probability theory}, Lecture Notes in
  Math., vol. 915, Springer, 1982, pp.~68--85.

\bibitem{Jac:SemWkngContr}
B.~Jacobs, \emph{Semantics of weakening and contraction}, Ann. Pure Appl. Logic
  \textbf{69} (1994), no.~1, 73--106.

\bibitem{Jac:Conv}
\bysame, \emph{Convexity, duality and effects}, Theoretical computer science,
  IFIP Adv. Inf. Commun. Technol., vol. 323, Springer, 2010, pp.~1--19.

\bibitem{JoPl}
C.~Jones and G.~Plotkin, \emph{A probabilistic powerdomain of evaluations},
  Proceedings of the Fourth Annual Symposium on Logic in Computer Science, IEEE
  Press, 1989, pp.~186--195.

\bibitem{Kei:PrCpctOrd}
K.~Keimel, \emph{The monad of probability measures over compact ordered spaces
  and its {E}ilenberg-{M}oore algebras}, Topology Appl. \textbf{156} (2008),
  no.~2, 227--239.

\bibitem{Ke:Doctr}
G.~M. Kelly, \emph{Doctrinal adjunction}, Lecture Notes in Math. \textbf{420}
  (1974), 257--280.

\bibitem{Ke:FL}
\bysame, \emph{Structures defined by finite limits in the enriched context.
  {I}}, Cahiers Topologie G\'eom. Diff\'erentielle \textbf{23} (1982), no.~1,
  3--42.

\bibitem{Ke:Ba}
\bysame, \emph{Basic concepts of enriched category theory}, Repr. Theory Appl.
  Categ. (2005), no.~10, Reprint of the 1982 original [Cambridge Univ. Press].

\bibitem{Kock:Comm}
A.~Kock, \emph{Monads on symmetric monoidal closed categories}, Arch. Math.
  (Basel) \textbf{21} (1970), 1--10.

\bibitem{Kock:DblDln}
\bysame, \emph{On double dualization monads}, Math. Scand. \textbf{27} (1970),
  151--165 (1971).

\bibitem{Kock:ClsdCatsGenCommMnds}
\bysame, \emph{Closed categories generated by commutative monads}, J. Austral.
  Math. Soc. \textbf{12} (1971), 405--424.

\bibitem{Kock:ProResSynthFuncAn}
\bysame, \emph{Some problems and results in synthetic functional analysis},
  Category theoretic methods in geometry ({A}arhus, 1983), Various Publ. Ser.
  (Aarhus), vol.~35, Aarhus Univ., 1983, Available at
  http://home.imf.au.dk/kock/PRSFA.pdf, pp.~168--191.

\bibitem{Kock:SDG}
\bysame, \emph{Synthetic differential geometry}, second ed., Cambridge
  University Press, Cambridge, 2006, Also available at
  http://home.imf.au.dk/kock/sdg99.pdf.

\bibitem{Kock:MndExtQu}
\bysame, \emph{Monads and extensive quantities}, arXiv:1103.6009, 2011.

\bibitem{Kock:Dist}
\bysame, \emph{Commutative monads as a theory of distributions}, Theory Appl.
  Categ. \textbf{26} (2012), No. 4, 97--131.

\bibitem{Law:ProbMap}
F.~W. Lawvere, \emph{The category of probabilistic mappings}, Preprint, 1962.

\bibitem{Law:PhD}
\bysame, \emph{Functorial semantics of algebraic theories}, Dissertation,
  Columbia University, New York. Available in: \emph{Repr. Theory Appl. Categ.}
  \textbf{5} (2004), 1963.

\bibitem{Law:ProbsAlgTh}
\bysame, \emph{Some algebraic problems in the context of functorial semantics
  of algebraic theories}, Reports of the {M}idwest {C}ategory {S}eminar. {II},
  Springer, 1968, pp. 41--61, Available in: \emph{Repr. Theory Appl. Categ.}
  \textbf{5} (2004).

\bibitem{Lei:CodUF}
T.~Leinster, \emph{Codensity and the ultrafilter monad}, Theory Appl. Categ.
  \textbf{28} (2013), No. 13, 332--370.

\bibitem{Lin:CoeqCatsAlgs}
F.~E.~J. Linton, \emph{Coequalizers in categories of algebras}, Lecture Notes
  in Math. (1969), 75--90.

\bibitem{Lu:AlgThVectInt}
R.~B.~B. Lucyshyn-Wright, \emph{Algebraic theory of vector-valued integration},
  Adv. Math. \textbf{230} ({\noopsort{a}}2012), no.~2, 552--576.

\bibitem{Lu:PhD}
\bysame, \emph{{R}iesz-{S}chwartz extensive quantities and vector-valued
  integration in closed categories}, Ph.D. thesis, York University,
  {\noopsort{b}}2013, \href{http://arxiv.org/abs/1307.8088}{arXiv:1307.8088}.

\bibitem{Lu:EnrFactnSys}
\bysame, \emph{Enriched factorization systems}, Theory Appl. Categ. \textbf{29}
  ({\noopsort{c}}2014), no.~18, 475--495.

\bibitem{Lu:ComplClDens}
\bysame, \emph{Completion, closure, and density relative to a monad, with
  examples in functional analysis and sheaf theory}, Theory Appl. Categ.
  \textbf{29} ({\noopsort{d}}2014), 896--928.

\bibitem{Lu:CT2015}
\bysame, \emph{A general theory of measure and distribution monads founded on
  the notion of commutant of a subtheory}, Talk at \textit{Category Theory
  2015}, Aveiro, Portugal, June {\noopsort{e}}2015.

\bibitem{Lu:EnrAlgTh}
\bysame, \emph{Enriched algebraic theories and monads for a system of arities},
  Theory Appl. Categ. \textbf{31} ({\noopsort{f}}2016), 101--137.

\bibitem{Lu:RelSMCI}
\bysame, \emph{Relative symmetric monoidal closed categories {I}:
  {A}utoenrichment and change of base}, Theory Appl. Categ. \textbf{31}
  ({\noopsort{g}}2016), no.~6, 138--174.

\bibitem{Lu:CvxAffCmt}
\bysame, \emph{Convex spaces, affine spaces, and commutants for algebraic
  theories}, Appl. Categ. Structures ({\noopsort{h}}2017, in press), available
  online,
  \href{https://doi.org/10.1007/s10485-017-9496-9}{doi:10.1007/s10485-017-9496-9}.

\bibitem{Lu:Cmtnts}
\bysame, \emph{Commutants for enriched algebraic theories and monads}, Appl.
  Categ. Structures ({\noopsort{i}}2017, to appear), also available as
  \href{https://arxiv.org/abs/1604.08569}{arXiv:1604.08569}.

\bibitem{Lu:CT2016}
\bysame, \emph{Enriched algebraic theories, monads, and commutants in the
  foundations of categorical distribution theory}, Talk at
  \textit{International Category Theory Conference CT 2016}, Halifax, Canada,
  August {\noopsort{j}}2016.

\bibitem{Meng}
X.~Meng, \emph{Categories of convex sets and of metric spaces, with
  applications to stochastic programming and related areas}, Ph.D. thesis,
  State University of New York at Buffalo, 1987.

\bibitem{MoeRey}
I.~Moerdijk and G.~E. Reyes, \emph{Models for smooth infinitesimal analysis},
  Springer, 1991.

\bibitem{Pow:DiscLTh}
J.~Power, \emph{Discrete {L}awvere theories}, Lecture Notes in Comput. Sci.,
  vol. 3629, Springer, 2005, pp.~348--363.

\bibitem{QueRey:Dist}
N.~V. Qu\^e and G.~E. Reyes, \emph{Th\'eorie des distributions et th\'eor\`eme
  d'extension de {W}hitney,}, Analyse dans les topos lisses (ed. G. E. Reyes),
  Rapport de Recherches D.M.S. 80-12, Universit\'e de Montr\'eal, 1980.

\bibitem{Seal:TensMndActn}
G.~J. Seal, \emph{Tensors, monads and actions}, Theory Appl. Categ. \textbf{28}
  (2013), No. 15, 403--433.

\bibitem{Sta}
A.~Stacey, \emph{Comparative smootheology}, Theory Appl. Categ. \textbf{25}
  (2011), No. 4, 64--117.

\bibitem{Swi}
T.~\'Swirszcz, \emph{Monadic functors and convexity}, Bull. Acad. Polon. Sci.
  S\'er. Sci. Math. \textbf{22} (1974), no.~1, 39--42.

\bibitem{Wy:QTop}
O.~Wyler, \emph{Lecture notes on topoi and quasitopoi}, World Scientific
  Publishing Co. Inc., 1991.

\end{thebibliography}

\end{document}